\theoremstyle{plain}
\newtheorem{theorem}{Theorem}[section]
\newtheorem{lemma}[theorem]{Lemma}
\newtheorem{proposition}[theorem]{Proposition}
\newtheorem{corollary}[theorem]{Corollary}
\newtheorem{definition}[theorem]{Definition}
\theoremstyle{remark}
\newtheorem*{remark}{Remark}
\DeclareMathOperator{\GL}{GL}
\DeclareMathOperator{\depth}{depth}
\DeclareMathOperator{\Supp}{Supp}
\DeclareMathOperator{\Def}{Def}
\DeclareMathOperator{\Nil}{Nil}
\DeclareMathOperator{\Pol}{Pol}
\DeclareMathOperator{\Frob}{Frob}
\DeclareMathOperator{\Hom}{Hom}
\DeclareMathOperator{\rank}{rank}
\DeclareMathOperator{\gr}{gr}
\DeclareMathOperator{\Gal}{Gal}
\DeclareMathOperator{\ad}{ad}
\DeclareMathOperator{\diag}{diag}
\DeclareMathOperator{\End}{End}
\DeclareMathOperator{\tr}{tr}
\DeclareMathOperator{\pr}{pr}
\DeclareMathOperator{\Lie}{Lie}
\DeclareMathOperator{\cha}{char}
\DeclareMathOperator{\Res}{Res}
\DeclareMathOperator{\Ind}{Ind}
\DeclareMathOperator{\nInd}{n-Ind}
\DeclareMathOperator{\Fil}{Fil}
\DeclareMathOperator{\Art}{Art}
\DeclareMathOperator{\St}{Sp}
\DeclareMathOperator{\Iw}{Iw}
\DeclareMathOperator{\rec}{rec}
\DeclareMathOperator{\Spec}{Spec}
\newcommand{\cA}{{\mathcal A}}
\newcommand{\cC}{{\mathcal C}}
\newcommand{\cD}{{\mathcal D}}
\newcommand{\cG}{{\mathcal G}}
\newcommand{\cH}{{\mathcal H}}
\newcommand{\cL}{{\mathcal L}}
\newcommand{\cM}{{\mathcal M}}
\newcommand{\cO}{{\mathcal O}}
\newcommand{\cS}{{\mathcal S}}
\newcommand{\cT}{{\mathcal T}}
\newcommand{\fra}{{\mathfrak a}}
\newcommand{\frf}{{\mathfrak f}}
\newcommand{\frl}{{\mathfrak l}}
\newcommand{\ffrm}{{\mathfrak m}}
\newcommand{\frp}{{\mathfrak p}}
\newcommand{\bbA}{{\mathbb A}}
\newcommand{\bbC}{{\mathbb C}}
\newcommand{\bbF}{{\mathbb F}}
\newcommand{\bbN}{{\mathbb N}}
\newcommand{\bbQ}{{\mathbb Q}}
\newcommand{\bbR}{{\mathbb R}}
\newcommand{\bbT}{{\mathbb T}}
\newcommand{\bbZ}{{\mathbb Z}}
\newcommand{\ilim}{\mathop{\varinjlim}\limits}
\newcommand{\plim}{\mathop{\varprojlim}\limits}
\newcommand{\wv}{{\widetilde{v}}}
\newcommand{\barQl}{{\overline{\bbQ}_l}}
\author[J. Thorne]{Jack Thorne}
\address{Department of Mathematics, Harvard University, 1 Oxford Street, Cambridge, MA 02138, USA}
\email{thorne@math.harvard.edu}
\title[Representations with small residual image]{On the automorphy of $l$-adic Galois representations with small residual image}
\begin{document}

\begin{abstract}We prove new automorphy lifting theorems for essentially conjugate self-dual Galois representations into $\GL_n$. Existing theorems require that the residual representation have `big' image, in a certain technical sense. Our theorems are based on a strengthening of the Taylor-Wiles method which allows one to weaken this hypothesis. 
\end{abstract}
\maketitle
\tableofcontents

\section*{Introduction}

In this paper we prove new automorphy lifting theorems for essentially conjugate self-dual Galois representations into $\GL_n$, over CM imaginary fields. The main improvement on existing lifting theorems is to weaken the hypothesis `big' slightly. 

We follow the structure of the arguments of \cite{Clo08} closely. Broadly speaking, in order to prove an automorphy lifting theorem one proceeds as follows. Given a residual Galois representation, one can construct a universal deformation ring $R$ classifying deformations with certain properties (e.g. de Rham, ramified at only finitely many places). On the other hand, a space of automorphic forms gives rise to a Hecke algebra $\bbT$, and there is a map $R \rightarrow \bbT$ which classifies the `universal automorphic deformation'. One hopes to show that this is in fact an isomorphism, thereby showing that all the deformations of a given residual Galois representation of fixed type arise from automorphic forms. (This is not always possible in practice, but approximations to $R= \bbT$ type results still yield useful information).

An essential step in the proof is the introduction of auxiliary sets of primes, in what are called `Taylor-Wiles systems'. The existence of primes satisfying the relevant criteria is a problem in Galois cohomology, and a positive solution can be given when the image of the residual representation satisfies a corresponding hypothesis. 

The condition used in \cite{Clo08} was that the image of the residual representation was `big', in a certain technical sense. The main innovation in this paper is to allow more general types of ramification at the primes of our Taylor-Wiles systems, which allows us to weaken the restrictions on the image of the residual representation. We call subgroups satisfying this new condition `adequate'. 

This new condition is often satisfied in practice. In particular, we show in the appendix that, working with an $n$-dimensional representation in characteristic $l$, it is satisfied whenever $l \geq 2(n+1)$ and the representation is absolutely irreducible.

The arguments in this paper can be used to strengthen all existing automorphy lifting theorems for $\GL_n$. We have chosen to generalize two existing automorphy lifting theorems, with an eye towards applications to potential automorphy (cf. \cite{Bar10a}). First, we give a minimal lifting theorem, applicable in situations where an $R=\mathbb{T}$ type result is still inaccessible. Second, we give a strengthening of one of Geraghty's ordinary lifting theorems (cf. \cite{Ger09}) which is useful, for example, in changing the weight. 

We now give an outline of the contents of this paper. In the first section we recall some results on the Galois representations associated to automorphic forms. In section 2, we recall the definition of `big' and give the definition of `adequate' subgroups.

In section 3, we recall some foundational material from \cite{Clo08} on the deformation theory of Galois representations valued in the group $\cG_n$ (whose definition is recalled below). We also take the opportunity to define some local deformation problems that are used later. These definitions have been heavily inspired by the papers \cite{Tay08} and \cite{Bar10}.

In section 4, we define the new local deformation problem that is used in the definition of the Taylor-Wiles systems, and prove the existence of such systems under the hypothesis that the image of the residual representation is adequate. Then in section 5 we perform the local calculations on the automorphic side needed in order to be able to construct the Hecke modules in the Taylor-Wiles patching argument. This is the technical heart of the paper, and contains the only essentially new material.

In sections 6 and 7 we use these calculations to deduce an automorphy lifting theorem in the `minimal' case. Then in sections 8 and 9 we use these calculations again to extend Geraghty's ordinary lifting theorems. In section 10 we prove some technical results on the finiteness of certain deformation rings that we hope will be useful to other authors. 

Finally in the appendix we give a discussion of the properties of adequate subgroups.

\subsection*{Acknowledgements} I would like to thank my advisor Richard Taylor for drawing my attention to these problems and for many helpful conversations. I would also like to thank Florian Herzig for comments on an earlier draft of this paper.

\section*{Notation}

If $F$ is a field of characteristic zero, we write $G_F$ for its absolute Galois group. If $F/F^+$ is a quadratic extension of such fields, we write $\delta_{F/F^+}$ for the non-trivial character of $\Gal(F/F^+)$. We write $\epsilon_l : G_F \rightarrow \bbZ_l^\times$ for the $l$-adic cyclotomic character. If the prime $l$ is understood, we will write $\epsilon_l = \epsilon$.

We fix an algebraic closure $\barQl$ of $\bbQ_l$. If $F$ is a number field and $\chi$ is a character $\bbA_F^\times/F^\times \rightarrow \bbC^\times$ of type $A_0$ (i.e. the restriction of $\chi$ to $(F \otimes \bbR)^\times_0$ is given by $\prod_{\tau : F \hookrightarrow \bbC} x_\tau^{a_\tau}$ for some integers $a_\tau$), and $\iota$ is an isomorphism $\barQl \overset{\sim}{\rightarrow} \bbC$, then we write $r_{l, \iota}(\chi)$ for the associated character of $G_F \rightarrow \barQl^\times,$ given by the formula
\begin{equation*}\iota\left( (r_{l, \iota}(\chi) \circ \Art_F)(x) \prod_{\tau \in \Hom(F, \bbC)} x_{\iota^{-1}\tau}^{-a_\tau}\right) = \chi(x) \prod_{\tau\in\Hom(F, \bbC)}x_\tau^{-a_\tau},\end{equation*}
where $\Art_F$ is the global Artin map
\begin{equation*}\Art_F = \prod_v \Art_{F_v} : \bbA^\times_F \rightarrow G_F^{ab}.\end{equation*}
We normalize the local Artin maps $\Art_{F_v}$ to take uniformizers to geometric Frobenii.

If $F_v$ is a finite extension of $\bbQ_l$ inside $\barQl$, we will write $\cO_{F_v}$ for its ring of integers and $k(v)$ for its residue field. We will write $\Frob_v$ for the geometric Frobenius element in $G_{F_v}/I_{F_v}$. Suppose that $\rho : G_{F_v} \rightarrow \GL_n(\barQl)$ is a continuous representation, and take an embedding $\tau : F_v \hookrightarrow \barQl$. We write $HT_\tau(\rho)$ for the multiset of integers whose elements are the integers $i$ such that $\gr^i (\rho \otimes_{\tau, F_v} B_{dR})^{G_{F_v}} \neq 0$, with multiplicity $\dim_\barQl \gr^i (\rho \otimes_{\tau, F_v} B_{dR})^{G_{F_v}}$. Thus, for example, we have $HT_\tau(\epsilon) = \{ -1 \}$ for any $\tau$. 

If $\pi$ is an irreducible admissible representation of $\GL_n(F_v)$ over $\barQl$ then we will use the notation $r_l(\pi)$ introduced on page 81 of \cite{Clo08} to denote the $l$-adic representation associated to $\pi$ under the local Langlands correspondence, when it exists. 

If $F$ is a number field and $\rho : G_F \rightarrow \GL_n(\barQl)$ is a continuous representation, and $\tau : F \hookrightarrow \barQl$, we write $HT_\tau(\rho)$ to mean $HT_\tau(\rho|_{G_{F_v}})$, where $v$ is the place of $F$ induced by the embedding $\tau$. Thus for the character $r_{l, \iota}(\chi)$ defined above, we have
\begin{equation*}HT_\tau(r_{l, \iota}(\chi)) = \{ -a_{\iota \tau}\}.\end{equation*}

We write $\bbZ^n_+ \subset \bbZ^n$ for the set of tuples $\lambda = (\lambda_1, \dots, \lambda_n)$ of integers with $\lambda_1 \geq \dots \geq \lambda_n$.

If $\rho : G \rightarrow \GL(V)$ is a representation of a group $G$ on a vector space $V$, then we write $\ad \rho$ (resp. $\ad^0 \rho)$ for the representation of $G$ on $\End V$ given by conjugation by $\rho$ (resp. the subspace of trace zero endomorphisms).

\section{Automorphic forms on $\GL_n$ and their associated Galois representations}

This paper is dedicated to proving that certain Galois representations are automorphic. In this section we briefly review what this means. 

 Suppose that $F$ is an imaginary CM field with totally real subfield $F^+$. Let $c$ be the non-trivial element of $\Gal(F/F^+)$. Consider a pair $(\pi, \chi)$, where $\pi$ is a cuspidal automorphic representation $\pi$ of $\GL_n(\bbA_F)$ and $\chi$ a character of $\bbA_{F^+}^\times$ of type $A_0$. We say that $(\pi, \chi)$ is RAECSDC (regular, algebraic, essentially conjugate self-dual, cuspidal) if:
\begin{enumerate} \item $\pi^c \cong \pi^\vee \otimes ( \chi \circ \bbN_{F/F^+} \circ \det).$
 \item $\chi_v(-1) = (-1)^n$ for each $v \mid \infty$.
 \item The infinitesimal character of $\pi_\infty$ agrees with the infinitesimal character of an algebraic representation of $\Res^F_\bbQ \GL_n$.
\end{enumerate}
(Note that this differs slightly from the definition given in \cite{Bar09}).

Take $\lambda \in (\bbZ^n_+)^{\Hom(F,\bbC)}$. We let $\Xi_\lambda$ be the algebraic representation of $\GL_n^{\Hom(F, \bbC)}$ which is the tensor product of the irreducible representations with highest weight $\lambda_\tau$. If $\pi_\infty$ has the same infinitesimal character as $\Xi_\lambda^\vee$, then we say that $\pi$ has weight $\lambda$.

Let $C$ be an algebraically closed field of characteristic zero. If $w\in \bbZ$, We write $(\bbZ^n_+)_w^{\Hom(F, C)} \subset (\bbZ^n_+)^{\Hom(F,C)}$ for the set of $\lambda$ with
\begin{equation*}\lambda_{\tau, i} + \lambda_{\tau \circ c, n+1-i} = w\end{equation*}
for each $\tau$. Note that if $\lambda$ is the weight of a representation $\pi$ as above then $\lambda$ must lie in $(\bbZ^n_+)_w^{\Hom(F, \bbC)}$ for some $w$.

The following theorem is \cite{Bar09}, Theorem 1.2.

\begin{theorem}\label{glc} Let $(\pi, \chi)$ be as above, and choose an isomorphism $\iota : \barQl \rightarrow \bbC$. Then there exists a continuous semisimple representation $r_{l, \iota} : G_{F} \rightarrow \GL_n(\barQl)$, uniquely characterized by the following properties:
\begin{enumerate} \item $r_{l, \iota}(\pi)^c = r_{l, \iota}(\pi)^\vee \epsilon^{1-n} r_{l, \iota}(\chi)|_{G_F}.$
\item For each place $v \nmid l$ of $F$ we have
\begin{equation*}(r_{l, \iota}(\pi)|_{G_{F_v}})^{ss} \cong r_l(\iota^{-1}\pi_v)^\vee(1-n)^{ss}.\end{equation*}
\item For each place $v \mid l$ of $F$, $r_{l, \iota}(\pi)|_{G_{F_v}}$ is de Rham. If $\pi_v$ is unramified, then it is even crystalline. Moreover, we have for each $\tau : F \hookrightarrow \bbC$,
\begin{equation*}\text{HT}_{\iota^{-1} \tau}(r_{l, \iota}(\pi)) = \{ \lambda_{\tau, j} + n - j \}_{j=1, \dots, n}.\end{equation*}
\end{enumerate}
\end{theorem}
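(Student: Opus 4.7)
The plan is to establish uniqueness first, then produce the representation via the cohomology of unitary Shimura varieties, extending to the general case by congruences. Uniqueness is immediate: by the Brauer--Nesbitt theorem combined with Chebotarev density, a continuous semisimple representation $r : G_F \to \GL_n(\barQl)$ is determined by the characteristic polynomials $\det(1 - r(\Frob_v)T)$ for $v$ in any density-one set. Property (ii) pins these polynomials down for all $v \nmid l$ at which $\pi_v$ is unramified, via the Satake isomorphism, and all but finitely many places are of this type.

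For existence, the strategy is to descend $\pi$ to a unitary similitude group over $F^+$ and realize $r_{l, \iota}(\pi)$ in the \'etale cohomology of a unitary Shimura variety. Conditions (1)--(3) on $(\pi, \chi)$---in particular the sign condition $\chi_v(-1) = (-1)^n$ at infinity and the algebraicity/regularity of the weight---are precisely what is needed to invoke Arthur--Clozel and Labesse quadratic base change, producing an automorphic representation $\Pi$ of a unitary similitude group $G/F^+$ with $G \otimes_{F^+} F \cong \GL_n \times \mathbb{G}_m$ whose stable base change to $\GL_n(\bbA_F)$ recovers $\pi$ up to the appropriate twist by $\chi$. Choosing $G$ so that it gives rise to a PEL Shimura variety $\Sh_K$ (say of signature $(n-1,1)$ at one archimedean place and definite at the others), the regularity of $\lambda$ ensures that $\Pi_\infty$ is a discrete series whose infinitesimal character matches $\Xi_\lambda$, so that $\Pi_f$ contributes to $H^{n-1}_{\text{\'et}}(\Sh_K \otimes \o{F}^+, \cL_\lambda)$. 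The desired representation $r_{l, \iota}(\pi)$ is then extracted, after pushing forward along $\iota$, from the $\Pi_f$-isotypic component of this cohomology.

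The cohomological construction is direct only when $\Pi$ has a discrete series component at some finite place so that a single $L$-packet contributes cleanly; in general one approximates $\pi$ by a family of such cohomological automorphic representations with matching Satake parameters modulo growing powers of $l$, and assembles $r_{l, \iota}(\pi)$ first as a pseudocharacter (after Chenevier--Harris, Shin, and subsequent work), then upgrades it to an honest continuous representation using the irreducibility of a generic specialization together with a stable lattice argument. Property (ii) at unramified $v \nmid l$ follows from the Eichler--Shimura / Kottwitz trace formula; at ramified $v \nmid l$ it is a considerably deeper local-global compatibility statement due to Taylor--Yoshida and, in full generality, Caraiani. Property (iii) follows from the crystalline comparison theorem of Faltings--Tsuji at unramified $v \mid l$ and from more refined $p$-adic Hodge theoretic arguments in general. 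The hardest step, and the one needing the most intricate input from the geometry of integral models, is local-global compatibility at ramified places---both at $v \nmid l$, where it rests on an analysis of nearby cycles, and at $v \mid l$, where the de Rham property must be established beyond the crystalline case.
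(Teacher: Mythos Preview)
The paper does not prove this theorem at all: immediately before the statement it says ``The following theorem is \cite{Bar09}, Theorem 1.2,'' and no proof is given. So there is nothing to compare your argument against in this paper --- the author is simply quoting a known result as a black box.

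Your sketch is a reasonable high-level outline of how the construction is actually carried out in the literature (descent to a unitary group via Labesse, realization in the \'etale cohomology of a PEL Shimura variety of Harris--Taylor type, pseudocharacter approximation when no Steinberg component is available, and local-global compatibility via Taylor--Yoshida, Shin, Chenevier--Harris, Caraiani, etc.). It is not, however, a proof in any self-contained sense: each of the steps you name is itself a major theorem, and the sketch does not engage with the technical content of any of them. If the intent is to indicate that the result is known and where it comes from, the appropriate thing to do is exactly what the paper does --- cite it. If the intent is to give a genuine proof, then what you have written is an annotated bibliography rather than an argument, and would need to be replaced by pointers to specific theorems in the cited works together with an explanation of why the hypotheses here match those required there.
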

Consider a representation $\rho : G_{F} \rightarrow \GL_n(\barQl)$. Given a choice of $\lambda \in (\bbZ_+^n)^{\Hom(F, \barQl)}$, we write $\iota_\ast \lambda \in (\bbZ_+^n)^{\Hom(F, \bbC)}$ for the element with \[ (\iota_\ast \lambda)_{\tau, i} = \lambda_{\iota \tau, i}. \] If $\rho$ satisfies the conditions characterizing $r_{l, \iota}(\pi)$ above for some $\pi$ and $\iota$, with $\pi$ of weight $\iota_\ast \lambda$, then we say that $\rho$ is \emph{automorphic of weight $\lambda$}.

In Definition 5.1.2 of \cite{Ger09} it is defined what it means for an automorphic representation $\pi$ as above to be $\iota$-ordinary. If we have a representation $\rho : G_{F} \rightarrow \GL_n(\barQl)$ and an isomorphism $\rho \cong r_{l, \iota}(\pi)$ for some $\iota$ and some $\iota$-ordinary RAECSDC representation $\pi$, then we say that $\rho$ is \emph{ordinarily automorphic}.

Finally, if $L/F$ is an extension of CM imaginary fields, and $\lambda \in (\bbZ^n_+)^{\Hom(F, C)}$, then we define $\lambda_L \in (\bbZ^n_+)^{\Hom(L,C)}$ by $(\lambda_L)_\tau = \lambda_{\tau|_F}.$ We recall the following (\cite{Bar09}, Lemma 1.4).

\begin{lemma}\label{basechange} Suppose that $L/F$ is a soluble extension of CM imaginary fields. Let $\chi : G_{F^+} \rightarrow \barQl^\times$ be a continuous character with $\chi(c_v)$ independent of $v | \infty$, and let $r : G_F  \rightarrow \GL_n(\barQl)$ be a continuous semisimple representation with 
$r^c \cong r^\vee \epsilon^{1-n} \chi$.
Suppose that $r|_{G_L}$ is irreducible and automorphic of weight $\mu$. Then:
\begin{enumerate} \item There exists $\lambda \in (\bbZ^n_+)^{\Hom(F, \barQl)}$ such that $\mu = \lambda_L$.
\item $r$ is automorphic of weight $\lambda$.
\end{enumerate}
\end{lemma}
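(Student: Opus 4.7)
The plan is to use soluble (cyclic) base change descent in the Arthur–Clozel framework. Using induction on $[L:F]$, I would reduce to the case where $L/F$ is cyclic of prime degree with both fields CM imaginary; this reduction rests on the existence of a tower of intermediate CM imaginary fields inside the Galois closure $M/F$, whose Galois group is soluble and therefore admits a composition series with cyclic prime-order quotients (the intermediate fields are CM imaginary because they contain $F$ and are stable under the restriction of complex conjugation). If needed, one first passes upward from $\pi_L$ to a cuspidal automorphic representation $\pi_M$ on $\GL_n(\bbA_M)$ via cyclic base change, with cuspidality at each stage ensured by irreducibility of the corresponding restriction of $r$, and then descends down the tower from $M$ to $F$ one cyclic prime step at a time.

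For the cyclic prime-degree step, let $\pi_L$ be a RAECSDC automorphic representation of $\GL_n(\bbA_L)$ of weight $\mu$ with $r|_{G_L} \cong r_{l,\iota}(\pi_L)$. Because $r$ is defined over $F$, we have $r|_{G_L}^\sigma \cong r|_{G_L}$ for every $\sigma \in \Gal(L/F)$; applying Theorem~\ref{glc}(ii) at unramified finite places and strong multiplicity one gives $\pi_L^\sigma \cong \pi_L$. The Arthur–Clozel cyclic descent theorem then produces a cuspidal automorphic representation $\pi_F$ of $\GL_n(\bbA_F)$ whose base change to $L$ is $\pi_L$, unique up to twist by a character of $\bbA_F^\times/F^\times \bbN_{L/F}\bbA_L^\times$; cuspidality of $\pi_F$ is forced by the irreducibility of $r|_{G_L}$.

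Next I would verify that a suitable twist of $\pi_F$ forms a RAECSDC pair with a Hecke character $\chi'$ of $\bbA_{F^+}^\times/(F^+)^\times$ satisfying $r_{l,\iota}(\chi') = \chi$. Such a $\chi'$ of the required infinity type and with $\chi'_v(-1) = (-1)^n$ for $v \mid \infty$ exists by the hypothesis that $\chi(c_v)$ is independent of $v \mid \infty$, combined with the sign data inherited from $\pi_L$. The relation $r^c \cong r^\vee \epsilon^{1-n}\chi|_{G_F}$ together with compatibility of cyclic base change with contragredient and central character twists implies that $\pi_F^c$ and $\pi_F^\vee \otimes (\chi' \circ \bbN_{F/F^+} \circ \det)$ have isomorphic base changes to $\GL_n(\bbA_L)$, hence differ by a character pulled back from $\Gal(L/F)$; replacing $\pi_F$ by the appropriate twist within its base-change fiber arranges equality on the nose.

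Finally, for (i), I define $\lambda \in (\bbZ^n_+)^{\Hom(F, \barQl)}$ by $\lambda_{\tau_0} = \mu_\tau$ for any lift $\tau : L \hookrightarrow \barQl$ of $\tau_0$; this is well-defined because the Hodge–Tate weights of $r$ at $\tau$ depend only on $\tau|_F$, and by construction $\mu = \lambda_L$. Theorem~\ref{glc} applied to the RAECSDC pair $(\pi_F, \chi')$, together with compatibility of base change with local Langlands away from $l$ and with de Rham data above $l$, then yields $r \cong r_{l,\iota}(\pi_F)$, proving (ii). The main obstacle is the twist bookkeeping in the descent: producing $\chi'$ precisely and selecting $\pi_F$ from its $\Gal(L/F)$-character orbit so that all three RAECSDC conditions hold on the nose, rather than merely up to a controlled ambiguity.
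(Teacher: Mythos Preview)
The paper does not give its own proof of this lemma; it simply recalls it from \cite{Bar09}, Lemma 1.4. Your sketch is the standard Arthur--Clozel soluble descent argument that underlies that reference, and the overall strategy (reduce to cyclic prime degree, use Galois invariance of $\pi_L$ coming from $r$ being defined over $F$, descend, then fix up the RAECSDC data and read off the weight from Hodge--Tate numbers) is correct.

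One correction: your parenthetical justification that the intermediate fields in the tower are CM imaginary ``because they contain $F$ and are stable under the restriction of complex conjugation'' is not valid in general---a number field containing a CM field need not itself be CM (complex conjugation need not be central in the Galois group of the closure). Fortunately you do not need this. Arthur--Clozel cyclic base change and descent operate over arbitrary number fields, so the intermediate steps only require cuspidal regular algebraic representations of $\GL_n$; the conjugate self-duality and the character $\chi'$ only have to be arranged at the bottom field $F$, where the CM structure is available by hypothesis. Cuspidality at each intermediate field $L_i$ follows because $G_L \subset G_{L_i}$, so irreducibility of $r|_{G_L}$ forces irreducibility of $r|_{G_{L_i}}$. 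With that adjustment your outline matches the cited argument; as you note, the genuine work is in the twist bookkeeping at the final step.
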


\section{Bigness revisited}\label{bigness}

Let $k$ be a finite field of characteristic $l$, and let $G$ be a subgroup of $\GL_n(k)=\GL(V)$, which acts absolutely irreducibly in the natural representation. We assume that $k$ is large enough to contain all eigenvalues of all elements of $G$. If $g \in G$ and $\alpha \in k$ is an eigenvalue of $g$, then we write $e_{g, \alpha} : V \rightarrow V$ for the $g$-equivariant projection to the generalized $\alpha$-eigenspace.

We recall the following definitions from \cite{Clo08}. 

\begin{definition} $\cG_n$ is a group scheme over $\bbZ$, defined as the semi-direct product of $\GL_n \times \GL_1$ by the group $\{1, j\}$, which acts on $\GL_n \times \GL_1$ by
\begin{equation*}j(g, \mu)j^{-1} = (\mu {}^t g^{-1}, \mu).\end{equation*}
We write $\nu$ for the natural homomorphism $\cG_n \rightarrow \GL_1$ which takes $(g, \mu)$ to $\mu$ and $j$ to $-1$, and $\cG_n^0$ for the connected component of $\cG_n$. Note that $\cG_n$ acts naturally on $\Lie \GL_n \subset \Lie \cG_n$. We will sometimes write this representation as $\ad V$.
\end{definition} 

\begin{definition} Let $G$ be as above. We say that $G$ is \emph{big} if the following conditions are satisfied: \begin{itemize} \item $H^0(G, \ad^0 V)=0.$
\item $H^1(G, k) = 0$.
\item $H^1(G, \ad^0 V)=0$.
\item For every irreducible $k[G]$-submodule $W \subset \ad^0 V$, there exists an element $g \in G$ with a multiplicity one eigenvalue $\alpha$ such that $\tr e_{g, \alpha} W \neq 0$.
\end{itemize}

Similarly, we say that a subgroup $G$ of $\cG_n(k)$ is \emph{big} if it satisfies the following conditions. Let $G^0 = G \cap \cG_n^0(k)$. \begin{itemize}
\item $H^0(G, \ad V)=0$.
\item $H^1(G^0, k)=0$.
\item $H^1(G, \ad V)=0$.
\item For every irreducible $k[G]$-submodule $W \subset \ad V$, there exists an element $g \in G^0$ with a multiplicity one eigenvalue $\alpha$ such that $\tr e_{g, \alpha} W \neq 0$. 
\end{itemize}
\end{definition}

We remark as in \cite{Clo08} that if $G/G^0$ surjects onto $\cG_n(k)/\cG_n^0(k)$ and $G^0$ is big then $G$ is big. We now make the following revised definitions:

\begin{definition} Let $G \subset \GL(V)$ be as above. We say that $G$ is \emph{adequate} if the following conditions are satisfied.
 \begin{itemize} \item $H^0(G, \ad^0 V)=0.$
\item $H^1(G, k) = 0$.
\item $H^1(G, \ad^0 V)=0$.
\item For every irreducible $k[G]$-submodule $W \subset \ad^0 V$, there exists an element $g \in G$ with an eigenvalue $\alpha$ such that $\tr e_{g, \alpha} W \neq 0$.
\end{itemize}

Similarly, we say that a subgroup $G$ of $\cG_n(k)$ is \emph{adequate} if it satisfies the following conditions. Let $G^0 = G \cap \cG_n^0(k)$. \begin{itemize}
\item $H^0(G, \ad V)=0$.
\item $H^1(G^0, k)=0$.
\item $H^1(G, \ad V)=0$.
\item For every irreducible $k[G]$-submodule $W \subset \ad V$, there exists an element $g \in G^0$ with an eigenvalue $\alpha$ such that $\tr e_{g, \alpha} W \neq 0$. 
\end{itemize}
\end{definition}
Thus the only difference between `adequate' and `big' is that we no longer require the eigenvalue $\alpha$ to have multiplicity one. We have the following.
\begin{lemma} \begin{enumerate} \item Big $\Rightarrow$ adequate.
\item Suppose that $l \geq 2(n+1)$. If $G$ is a subgroup of $\cG_n(k)$ which surjects onto $\cG_n(k)/\cG_n^0(k)$, and $G^0$ acts absolutely irreducibly, then $G$ is adequate.
\end{enumerate}
\end{lemma}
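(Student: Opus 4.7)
Part (1) is immediate on comparing the definitions: the first three bullets match word-for-word, and a multiplicity-one eigenvalue is in particular an eigenvalue, so every big subgroup is adequate.

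For Part (2), the plan is to reduce the $\cG_n$-statement to the analogous $\GL_n$-statement --- that every absolutely irreducible subgroup of $\GL_n(k)$ is adequate whenever $l \geq 2(n+1)$ --- and to defer the proof of that $\GL_n$-version to the appendix (as the introduction indicates). The lemma itself is then essentially the reduction, mirroring the observation about ``big'' made just above the revised definitions.

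To execute the reduction, first note that $l \geq 2(n+1) \geq 4$ forces $l$ to be odd, so $[G:G^0]=2$ is coprime to $l$, and inflation-restriction gives $H^i(G, M) \cong H^i(G^0, M)^{G/G^0}$ for every $l$-torsion $G$-module $M$. Second, since $l > n$ we have a $G$-stable decomposition $\ad V = k \cdot I \oplus \ad^0 V$, on which the coset of $j$ acts by $-1$ on the first summand (because $j \cdot I = -{}^t I = -I$). Combining these two inputs, each of the four adequate conditions for $G$ reduces to the corresponding condition for $G^0 \subset \GL(V)$: the vanishing of $H^0(G, \ad V)$ uses Schur's lemma on $\ad^0 V$ together with the sign action on $k \cdot I$; the vanishing of $H^1(G, \ad V)$ follows once $H^1(G^0, k)$ and $H^1(G^0, \ad^0 V)$ both vanish; and the eigenvalue condition for an irreducible $k[G]$-submodule $W \subset \ad V$ reduces, on choosing an irreducible $k[G^0]$-submodule $W_0 \subset W$, to the same condition for $G^0$. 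The case $W_0 = k \cdot I$ is automatic, since the relevant trace is just an eigenvalue multiplicity, non-zero in $k$ because $l > n$; the case $W_0 \subset \ad^0 V$ is precisely the eigenvalue condition in the $\GL(V)$-definition of adequate applied to $G^0$.

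The hard part is therefore the appendix result, not the reduction above. I would expect its proof to require genuine structural input on absolutely irreducible subgroups of $\GL_n(k)$ in characteristic $l$ large relative to $n$ --- such as an Aschbacher- or Larsen-type classification, or explicit control of the simple constituents of $\ad^0 V$ as a $G^0$-module --- in order to simultaneously establish the two cohomology vanishings and produce, for each simple constituent $W \subset \ad^0 V$, an element $g \in G^0$ whose (possibly higher-multiplicity) eigenspace projection has non-zero trace on $W$.
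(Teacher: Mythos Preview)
Your proposal is correct and follows the same route as the paper: both parts are handled exactly as in the paper's proof, namely Part (1) is immediate from the definitions and Part (2) is reduced to the $\GL_n$-version (deferred to the appendix) via the observation that $G^0$ adequate implies $G$ adequate when $G$ surjects onto $\cG_n(k)/\cG_n^0(k)$. The paper states this reduction only for ``big'' and then invokes it for ``adequate'' without further comment; you have simply written out the details of that reduction (inflation--restriction for the index-$2$ subgroup, the splitting $\ad V = k\cdot I \oplus \ad^0 V$ when $l>n$, the sign action of $j$ on scalars, and the handling of the eigenvalue condition), which is entirely appropriate.
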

\begin{proof}
The first part is obvious. For the second, it is proved in the appendix that when $l \geq 2(n+1)$, any subgroup of $\GL_n(k)$ which acts absolutely irreducibly is adequate. The result now follows from the remarks above.
\end{proof}

\section{Deformations of Galois representations}\label{galoisdef}

In this section we recall from \cite{Clo08} some of the concepts that we will need relating to deformations of Galois representations valued in $\cG_n$. Let $F$ be an imaginary CM field with totally real subfield $F^+$. We fix a finite set of places $S$ of $F^+$ which split in $F$ and write $F(S)$ for the maximal extension of $F$ unramified outside $S$. We write $G_{F^+, S}=\Gal(F(S)/F^+)$ and $G_{F, S} \subset G_{F^+, S}$ for the subset of elements fixing $F$. For each $v \in S$ we choose a place $\wv$ of $F$ above it, and write $\widetilde{S}$ for the set of these places. 

We fix a finite field $k$ of characteristic $l$ and a representation $\overline{r} : G_{F^+, S} \rightarrow \cG_n(k)$ such that $G_{F,S} = \overline{r}^{-1}(\GL_n \times \GL_1(k))$. Let $K$ be a finite extension of $\bbQ_l$ in $\barQl$ with ring of integers $\cO$, maximal ideal $\lambda$, and residue field $k$. Choose a character $\chi : G_{F^+, S} \rightarrow \cO^\times$ such that $\nu \circ \overline{r} = \overline{\chi}$. We will consider deformations of $\overline{r}$ to objects of $\cC_\cO$, the category of complete Noetherian local $\cO$-algebras with residue field $k$. If $\wv \in S$, we write $\overline{r}|_{G_{F_\wv}}$ for the composite
\begin{equation*}G_{F_\wv} \rightarrow G_{F, S} \rightarrow \cG_n^0(k) \rightarrow \GL_n(k).\end{equation*}

\begin{definition} A lifting of $\overline{r}$ (resp. $\overline{r}|_{G_{F_\wv}}$) to an object $R$ of $\cC_\cO$ is a continuous homomorphism $r : G_{F^+, S} \rightarrow \cG_n(R)$ (resp. $r : G_{F_\wv} \rightarrow \GL_n(R))$ with $r \mod \ffrm_R = \overline{r}$ (resp. = $\overline{r}|_{G_{F_\wv}})$ and $\nu \circ r = \chi$ (resp. no further condition). Two liftings are said to be \emph{equivalent} if they are conjugate by an element of $1 + M_n(\ffrm_R) \subset \GL_n(R)$. An equivalence class of liftings is called a \emph{deformation}.

Let $T \subset S$. By a \emph{$T$-framed lifting} of $\overline{r}$ to $R$ we mean a tuple $(r; \alpha_v)_{v \in T}$ where $r$ is a lifting of $\overline{r}$ and $\alpha_v \in 1 + M_n(\ffrm_R)$. We call two framed liftings $(r; \alpha_v)$ and $(r'; \alpha'_v)$ equivalent if there is an element $\beta \in 1 + M_n(\ffrm_R)$ with $r' = \beta r \beta^{-1}$ and $\alpha'_v = \beta \alpha_v$. By a \emph{$T$-framed deformation} of $\overline{r}$ we mean an equivalence class of framed liftings.
\end{definition}

\begin{definition} If $v \in S$ then we define a \emph{local deformation problem} at $v$ to be a subfunctor $\cD_v$ of the functor of all liftings of $\overline{r}|_{G_{F_\wv}}$ to objects of $\cC_\cO$ satisfying the following conditions: 
\begin{enumerate} \item $(k, \overline{r}) \in \cD_v.$
\item Suppose that $(R_1, r_1)$ and $(R_2, r_2) \in \cD_v$, that $I_1$ (resp. $(I_2)$) is a closed ideal of $R_1$ (resp. $R_2$) and that $f : R_1/I_1 \rightarrow R_2/I_2$ is an isomorphism in $\cC_\cO$ such that $f(r_1 \mod I_1) = r_2 \mod I_2$. Let $R_3$ denote the subring of $R_1 \times R_2$ consisting of pairs with the same image in $R_1/I_1 \cong R_2/I_2$. Then $(R_3, r_1 \times r_2) \in \cD_v$.
\item If $(R_j, r_j)$ is an inverse system of elements of $\cD_v$ then \begin{equation*}(\lim R_j, \lim r_j) \in \cD_v.\end{equation*}
\item $\cD_v$ is closed under equivalence.
\item If $R \subset S$ is an inclusion in $\cC_\cO$ and if $r : G_F \rightarrow \GL_n(R)$ is a lifting of $\overline{r}$ such that $(S, r) \in \cD_v$ then $(R, r) \in \cD_v$.
\end{enumerate}
\end{definition}

We recall (cf. \cite{Bar09}, Lemma 3.2) that to give a local deformation problem, it suffices to give a quotient $R$ of the universal lifting ring $R^\square$ of $\overline{r}|_{G_{F_\wv}}$ which is  reduced, and such that the defining ideal of $R$ is invariant under the natural conjugation action of $1 + M_n(\ffrm_{R^\square})$.

Given a collection of deformation problems $\cD_v$ for $v \in S$, we have a (global) deformation problem
\begin{equation*}\cS = \left(F/F^+, S, \widetilde{S}, \cO, \overline{r}, \chi, \{ \cD_v\}_{v \in S}\right).\end{equation*}
\begin{definition} Let $T \subset S$. We call a $T$-framed lifting $(r; \alpha_v)_{v \in T}$ of $\overline{r}$ of \emph{type $\cS$} if for all $v \in S$, the restriction $r|_{G_{F_\wv}}$ lies in $\cD_v$. We say that a $T$-framed deformation is of type $\cS$ is some (equivalently any) element of the equivalence class is of type $\cS$.

We let $\Def_\cS^{\square_T}$ denote the functor which associates to an object $R$ of $\cC_\cO$ the set of all $T$-framed deformations of $\overline{r}$ to $R$ of type $\cS$. If $T = S$ then we refer to framed deformations and write $\Def_\cS^\square$. If $T=\emptyset$ we refer to deformations and write $\Def_\cS$.

If $R_v$ denotes the ring representing the local deformation problem $\cD_v$, then we write
\begin{equation*} R^{\text{loc}}_{\cS,T} = \widehat{\otimes}_{v \in T} R_v.
\end{equation*}

\end{definition}
The following is \cite{Clo08}, Proposition 2.2.9.
\begin{proposition} Suppose that $\overline{r}|_{G_{F,S}}$ is absolutely irreducible. Then the functors $\Def_\cS^{\square_T}, \Def_\cS^{\square}, \Def_\cS$ are represented by objects of $\cC_\cO$. We write respectively $R_\cS^{\square_T}, R_\cS^\square$ and $R_\cS^\text{univ}$ for the representing objects.
\end{proposition}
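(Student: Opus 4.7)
The plan is to verify Schlessinger's criteria for pro-representability of each functor, handling the framed versions $\Def_\cS^{\square_T}$ first and then deducing the unframed case $\Def_\cS$ from these. The framed functors should be straightforward because fixing framings rigidifies the problem and eliminates the automorphism issue; the unframed case requires Schur's lemma to rigidify, and this is where the absolute irreducibility hypothesis on $\overline{r}|_{G_{F,S}}$ is genuinely used.

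First I would check that the tangent space of each functor is finite-dimensional. The tangent space of $\Def_\cS$ is a subspace of $H^1(G_{F^+, S}, \ad \overline{r})$ cut out by the local conditions and by the fixed-multiplier condition $\nu \circ r = \chi$; this is finite-dimensional because $F(S)/F^+$ is unramified outside a finite set of places and the Galois cohomology of such a group is finitely generated. For the framed functors one adds a finite-dimensional term of dimension $|T| n^2$ coming from the framings at places in $T$, so finite-dimensionality is preserved. Next I would verify Schlessinger's Mayer--Vietoris condition: given a fiber product $R_3 = R_1 \times_A R_2$ in $\cC_\cO$ with at least one map to $A$ surjective, axioms (2) and (3) of a local deformation problem guarantee that type-$\cS$ liftings to $R_3$ are in bijection with compatible pairs of type-$\cS$ liftings to $R_1$ and $R_2$; the condition $\nu \circ r = \chi$ is a closed condition preserved by fiber products, and the framings at $v \in T$ glue uniquely. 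Hence the framed type-$\cS$ lifting functors are in fact sheaves on $\cC_\cO$, which combined with the tangent-space finiteness gives pro-representability of $\Def_\cS^{\square_T}$, including the case $T=S$ that yields $\Def_\cS^\square$.

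Finally, for $\Def_\cS$ itself, I would realize it as the quotient of the lifting functor by the conjugation action of $1 + M_n(\ffrm_R)$. The critical ingredient is that the stabilizer of any lift $r \colon G_{F^+, S} \to \cG_n(R)$ under this action is precisely the subgroup of scalars $1 + \ffrm_R \subset 1 + M_n(\ffrm_R)$, which then acts trivially on equivalence classes. This stabilizer computation reduces inductively on the length of $R$ to the assertion $H^0(G_{F, S}, \ad \overline{r}) = k$, equivalently $H^0(G_{F, S}, \ad^0 \overline{r}) = 0$, which is exactly Schur's lemma applied to the absolutely irreducible representation $\overline{r}|_{G_{F,S}}$. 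Once this is in hand, Schlessinger's axiom (H4) follows and $\Def_\cS$ is pro-representable.

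The main obstacle in carrying out this plan is the centralizer computation for $R$-valued lifts: one must control the relevant obstruction at each stage of the induction and verify that the fixed-multiplier condition $\nu \circ r = \chi$ forces any conjugating element to lie in the $\GL_n$-factor of $\cG_n$, so that Schur's lemma for $\overline{r}|_{G_{F,S}}$, rather than some stronger statement for $\overline{r}$ itself, suffices. The remaining verifications are formal consequences of the axioms defining the local deformation problems $\cD_v$ together with the closedness of the multiplier condition.
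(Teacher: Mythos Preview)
Your proposal is correct and follows the standard Schlessinger-criteria argument. Note, however, that the paper does not give its own proof of this proposition at all: it simply cites it as \cite{Clo08}, Proposition 2.2.9. So there is no ``paper's proof'' to compare against; what you have sketched is essentially the argument one finds in the cited reference.

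One small remark on your final paragraph: the equivalence relation is already defined via conjugation by elements of $1 + M_n(\ffrm_R) \subset \GL_n(R)$, so there is no issue of a conjugating element possibly lying in the $j$-component of $\cG_n$. The genuine content, as you correctly identify, is that the centraliser in $1+M_n(\ffrm_R)$ of a lift $r$ reduces to scalars. Restricting to $G_{F,S}$ and invoking Schur for the absolutely irreducible $\overline{r}|_{G_{F,S}}$ already forces any such $\beta$ to be scalar; compatibility with the $j$-component then imposes an additional condition of the shape $\beta A {}^t\beta = A$, which for scalar $\beta \in 1+\ffrm_R$ becomes $\beta^2 = 1$, hence $\beta = 1$ since $l$ is odd. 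This is handled in \cite{Clo08} (see e.g.\ their Lemma 2.1.8 and the surrounding discussion), and is the only place the irreducibility hypothesis is used.
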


\subsection*{Local deformation problems in the case $l = p$}

Here we define some local deformation problems that will be useful later. Fix a finite extension $L_v$ of $\bbQ_l$. Assume that $K$ is large enough to contain every embedding of $L_v$ in $\barQl$. Now fix a representation $\overline{\rho} : G_{L_v} \rightarrow \GL_n(k)$, and write $\rho^\square : G_{L_v} \rightarrow \GL_n(R^\square)$ for the universal (unrestricted) lifting. Given an element $\lambda \in (\bbZ_+^n)^{\Hom(L_v, K)}$ we define for every $\tau \in \Hom(L_v, K)$ a multiset of integers 
\begin{equation*}H_\tau = \{ \lambda_{\tau, j} + n - j\}_{j=1, \dots, n}.\end{equation*}

\begin{theorem} Let $\lambda$ be as above. There exists a reduced $l$-torsion free quotient $R_v^{\lambda, \text{cr}}$ of $R^\square$, uniquely characterised by the property that a homomorphism $\zeta : R^\square \rightarrow \barQl$ factors through $R_v^{\lambda, \text{cr}}$ if and only if $\rho = \zeta \circ \rho^\square : G_{L_v} \rightarrow \GL_n(\barQl)$ is crystalline, with $HT_\tau(\rho) = H_\tau$ for each $\tau : L_v \hookrightarrow \barQl$.

Moreover, $\Spec R_v^{\lambda, \text{cr}}[1/l]$ is formally smooth over $K$ and equidimensional of dimension $n^2 + \frac{1}{2}n(n-1)[L_v : \bbQ_l]$.
\end{theorem}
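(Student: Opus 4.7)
My plan is to deduce this from Kisin's construction of crystalline deformation rings with fixed Hodge--Tate weights, combined with a tangent-space computation on the generic fibre. The characterizing property already determines $R_v^{\lambda,\mathrm{cr}}$ on $\barQl$-points, so the content is (i) the existence of such a quotient and (ii) the geometry of its generic fibre.

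For existence, I would argue as follows. The crystalline condition with fixed HT weights $\{H_\tau\}$ cuts out a subset $X$ of $(\Spec R^\square[1/l])(\barQl)$, and by Kisin's resolution of the crystalline locus via moduli of Breuil--Kisin modules, $X$ is Zariski closed and in fact a union of irreducible components of $\Spec R^\square[1/l]$. Let $\fra$ be the intersection of the corresponding minimal primes of $R^\square$ and set $R_v^{\lambda,\mathrm{cr}} := R^\square/\sqrt{\fra}$. This quotient is reduced by construction; it is $l$-torsion free because every associated prime of $R_v^{\lambda,\mathrm{cr}}$ is obtained from a minimal prime in the generic fibre. Uniqueness with the stated characterizing property is then automatic, since a reduced, $l$-torsion-free noetherian $\cO$-algebra is determined by its set of $\barQl$-points.

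For the generic fibre I would compute the relative tangent space at an arbitrary $\barQl$-point $x$ corresponding to a crystalline $\rho:G_{L_v}\to\GL_n(\barQl)$. Since $\lambda_\tau\in\bbZ_+^n$ is weakly decreasing, the integers $\lambda_{\tau,j}+n-j$ are strictly decreasing in $j$, so the HT weights are regular. The tangent space at $x$ is identified with the space of $\barQl$-valued crystalline cocycles $Z^1_g(G_{L_v},\ad\rho)$, which has dimension
\[
\dim H^1_g(G_{L_v},\ad\rho) + n^2 - \dim H^0(G_{L_v},\ad\rho).
\]
The Bloch--Kato formula gives $\dim H^1_g(G_{L_v},\ad\rho) - \dim H^0(G_{L_v},\ad\rho) = \dim_K (\ad\rho\otimes_{\bbQ_l} B_{\mathrm{dR}}/\Fil^0)^{G_{L_v}}$, and regularity of the HT weights makes this equal to $\tfrac12 n(n-1)[L_v:\bbQ_l]$. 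Thus the tangent dimension at every $\barQl$-point is exactly $n^2+\tfrac12 n(n-1)[L_v:\bbQ_l]$; matching this against the Krull dimension at $x$ forces the generic fibre to be regular, hence formally smooth over $K$ and equidimensional of the asserted dimension.

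The main technical obstacle is the first step: proving that the crystalline-with-fixed-HT-weights locus is a union of irreducible components of $\Spec R^\square[1/l]$. This rests on Kisin's construction of a projective resolution of the crystalline deformation functor using integral $p$-adic Hodge theory, and is the deep input. Once that is in hand, the smoothness and dimension assertions are essentially a formal consequence of local Tate duality together with the Bloch--Kato calculation of $H^1_g$.
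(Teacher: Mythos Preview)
The paper does not give a proof here; it simply records that the result is deduced in \cite{Bar10} from Kisin's theorems in \cite{Kis08}. Your proposal is a reasonable outline of what those references contain, and is essentially correct in spirit.

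Two small corrections. First, the tangent space to the crystalline locus at a crystalline point is governed by $H^1_f$ (crystalline extensions), not $H^1_g$ (de Rham extensions); the dimension formula you wrote down is in fact the Bloch--Kato formula for $H^1_f$ when $\ad\rho$ is crystalline, so the numerics are right but the label is off. Second, the sentence ``matching this against the Krull dimension at $x$ forces the generic fibre to be regular'' is circular unless you have an independent lower bound on the Krull dimension, which you have not supplied. In Kisin's argument the logic runs the other way: his projective resolution by a moduli space of Breuil--Kisin modules is itself formally smooth and becomes an isomorphism after inverting $l$, which yields formal smoothness of $\Spec R_v^{\lambda,\mathrm{cr}}[1/l]$ directly; the tangent-space calculation then reads off the dimension.
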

\begin{proof} This is deduced in \cite{Bar10} from the theorems of \cite{Kis08}.
\end{proof}
Note in particular that the ring $R_v^{\lambda, \text{cr}}$ defines a local deformation problem. There is also a potentially crystalline version:
\begin{theorem} Let $\lambda$ be as above, and let $L_v'$ be a finite extension of $L_v$. There exists a reduced $l$-torsion free quotient $R_v^{\lambda, L_v'-\text{cr}}$ of $R^\square$, uniquely characterised by the property that a homomorphism $\zeta : R^\square \rightarrow \barQl$ factors through $R_v^{\lambda, L_v'-\text{cr}}$ if and only if $\rho = \zeta \circ \rho^\square : G_{L_v} \rightarrow \GL_n(\barQl)$ is de Rham with $HT_\tau(\rho) = H_\tau$ for each $\tau : L_v \hookrightarrow \barQl$, and moreover $\rho|_{G_{L'_v}}$ is crystalline.

Moreover, $\Spec R_v^{\lambda, L_v'-\text{cr}}[1/l]$ is equidimensional of dimension $n^2 + \frac{1}{2}n(n-1)[L_v : \bbQ_l]$.
\end{theorem}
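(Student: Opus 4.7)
The plan is to reduce to Kisin's theorems on potentially semistable deformation rings with fixed Hodge--Tate and inertial type (from \cite{Kis08}), precisely as they are used in the crystalline case in \cite{Bar10}. The key observation is that a de Rham lift $\rho$ of $\overline{\rho}$ that becomes crystalline on $G_{L'_v}$ has a well-defined inertial type $\tau_\rho$---the semisimplification of the restriction to $I_{L_v}$ of the associated Weil--Deligne representation---and that this type is trivial on $I_{L'_v}$.

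First I would replace $L'_v$ by its Galois closure over $L_v$; this does not change the condition, since a potentially crystalline representation becomes crystalline over any finite Galois extension containing a field of crystallinity. Then the set $\cT$ of inertial types that arise in this way is finite: each factors through the finite quotient $I_{L_v}/I_{L'_v}$, which has only finitely many isomorphism classes of $n$-dimensional $\barQl$-representations.

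For each $\tau \in \cT$, Kisin's theorem produces a reduced, $l$-torsion free quotient $R^{\lambda, \tau}$ of $R^\square$ whose $\barQl$-points are the potentially crystalline lifts of $\overline{\rho}$ with Hodge--Tate weights $H_\tau$ and inertial type $\tau$, and whose generic fiber is equidimensional of dimension $n^2 + \tfrac{1}{2}n(n-1)[L_v : \bbQ_l]$. I would then define $R_v^{\lambda, L_v'-\mathrm{cr}}$ as the quotient of $R^\square$ by the intersection of the defining ideals of the $R^{\lambda, \tau}$ over $\tau \in \cT$. This quotient is automatically reduced (the intersection of radical ideals is radical) and $l$-torsion free; its $\barQl$-points are precisely the union of those of the $R^{\lambda, \tau}$, which is what the universal property demands; and uniqueness follows because a reduced, $l$-torsion free quotient of $R^\square$ is determined by its closed points in characteristic zero.

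For the dimension: $\Spec R_v^{\lambda, L_v'-\mathrm{cr}}[1/l]$ is set-theoretically the union of the $\Spec R^{\lambda, \tau}[1/l]$, and equidimensionality of each piece gives equidimensionality of the union. The main obstacle will be verifying that Kisin's dimension formula from the crystalline setting extends to the potentially crystalline case with fixed inertial type; this amounts to extending the parameter-count for filtered $(\phi, N, \Gal(L'_v/L_v))$-modules with prescribed Hodge--Tate and inertial type, and is a direct generalization of the argument already in \cite{Kis08} and \cite{Bar10}.
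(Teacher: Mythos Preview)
Your proposal is correct and is essentially the same approach as the paper's: both reduce to the results of \cite{Kis08}, the paper in a single sentence and you by spelling out one concrete way to make the reduction (decompose by inertial type, apply Kisin to each, and take the union). One small correction: the dimension formula you flag as the ``main obstacle'' is not an extension you need to carry out yourself---Kisin's paper already treats the potentially semistable case with fixed Hodge type and fixed Galois (inertial) type, and the dimension formula $n^2 + \tfrac{1}{2}n(n-1)[L_v:\bbQ_l]$ for the generic fibre is established there in that generality, so you can simply cite it.
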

\begin{proof}
This also follows in a straightforward manner from the results of \cite{Kis08}.
\end{proof}
\begin{definition} Let $\rho_1, \rho_2 : G_{L_v} \rightarrow \GL_n(\cO)$ be continuous lifts of ${\overline{\rho}}$. We write $\rho_1 \sim \rho_2$ if the following are satisfied:
\begin{enumerate} \item There exists $\lambda \in (\bbZ^n_+)^{\Hom(L_v,K)}$ and a finite extension $L'_v/L_v$ such that both $\rho_1$ and $\rho_2$ correspond to points of $R_v^{\lambda, L'_v-\text{cr}}$.
\item $\rho_1$ and $\rho_2$ give rise to closed points on the same irreducible component of $\Spec R_v^{\lambda,  L'_v-\text{cr}} \otimes_\cO \barQl$.
\end{enumerate}
\end{definition}
Note that $\rho_1 \sim \rho_2$ implies that $\rho_1|_{G_{L''_v}} \sim \rho_2|_{G_{L''_v}}$ for any finite extension $L''_v/L_v$. 

Suppose that $\cC$ is an irreducible component of $R_v^{\lambda, L'_v-\text{cr}} \otimes_\cO \barQl$. Then we write $R_v^{\lambda, \cC}$ for the maximal reduced, $l$-torsion free quotient of $R_v^{\lambda, L'_v-\text{cr}}$ such that $\Spec R_v^{\lambda, \cC} \otimes_\cO \barQl$ is equal to this irreducible component. (This exists provided that $K$ is large enough, which we always assume).

\begin{lemma}\label{defproblem} Say that a lift $(\rho, R)$ is of type $\cD_v^\cC$ if the induced map $R^\square \rightarrow R$ factors through $R_v^{\lambda, \cC}$. Then $\cD_v^\cC$ is a local deformation problem.
\end{lemma}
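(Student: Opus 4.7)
The plan is to apply the criterion recalled just before the statement of the lemma (from \cite{Bar09}, Lemma 3.2): to define a local deformation problem, it is enough for $R_v^{\lambda, \cC}$ to be reduced and for its defining ideal in $R^\square$ to be invariant under conjugation by $1 + M_n(\ffrm_{R^\square})$. Reducedness is built into the definition of $R_v^{\lambda, \cC}$, so all the content lies in the conjugation-invariance of the ideal.

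Since $R_v^{\lambda, L_v'-\text{cr}}$ itself is a local deformation problem (by the preceding theorem), the conjugation action of the group functor $F(R) := 1 + M_n(\ffrm_R)$ on $R^\square$ descends to $R_v^{\lambda, L_v'-\text{cr}}$. Because $R_v^{\lambda, \cC}$ is characterised as the maximal reduced, $l$-torsion free quotient whose $\barQl$-points on the generic fiber are exactly those of the component $\cC$, it then suffices to show that the induced action of $F$ preserves $\cC$ setwise inside $\Spec R_v^{\lambda, L_v'-\text{cr}}[1/l] \otimes_K \barQl$.

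For this I would argue by a connectedness argument along conjugation orbits. Given a finite extension $K'/K$, an element $g \in 1 + M_n(\ffrm_{\cO_{K'}})$ and an $\cO_{K'}$-valued lift $\rho$ corresponding to a closed point of $\cC$, form the one-parameter family $g_t = 1 + t(g - 1)$. For $t$ in a Zariski-open subset $U \subset \bbA^1_{K'}$ containing $0$ and $1$, the element $g_t$ lies in $\GL_n(\cO_{K'})$ and reduces to $1$, so $g_t \rho g_t^{-1}$ is a crystalline lift of $\overline{r}|_{G_{L_v}}$ with the prescribed Hodge--Tate weights, yielding a morphism $U \to \Spec R_v^{\lambda, L_v'-\text{cr}}[1/l]$. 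The image is irreducible, hence lies in the closure of a single irreducible component; since $\rho$ and $g \rho g^{-1}$ are the images of $t = 0$ and $t = 1$, they lie on the same component, necessarily $\cC$. A Jacobson-density argument, using that $R_v^{\lambda, \cC}$ is reduced and $l$-torsion free and so is determined by its $\barQl$-points on the generic fiber, then promotes this pointwise statement to the formal conjugation-invariance of the defining ideal of $R_v^{\lambda, \cC}$ inside $R^\square$.

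The main obstacle I expect is this last step: passing from $\cC$ being preserved setwise on closed $\barQl$-points to the defining ideal being preserved by the formal conjugation action of the group functor $F$. The standard tool here is that the classical rigid points are Zariski-dense in $\Spec R_v^{\lambda, L_v'-\text{cr}}[1/l]$, so they detect the ideal cutting out $\cC$; together with the reducedness and $\cO$-flatness of $R_v^{\lambda, \cC}$, this is what allows the verification at closed points to propagate to the full scheme-theoretic stability of the ideal.
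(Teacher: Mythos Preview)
The paper's own proof is just a pointer to Lemma~1.2.1 of \cite{Bar10a}, so there is no line-by-line comparison to make; your strategy---reduce to conjugation-invariance of the defining ideal, then show that conjugation by an element of $1+M_n(\ffrm)$ cannot move a lift off its irreducible component by connecting it to the identity through a one-parameter family---is exactly the standard one.

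There is, however, a genuine gap in the execution. You claim that $t\mapsto g_t\rho g_t^{-1}$ defines a morphism from a Zariski-open $U\subset\bbA^1_{K'}$ to $\Spec R_v^{\lambda,L_v'\text{-cr}}[1/l]$. This does not make sense as written: a lift of $\overline{\rho}$ to a ring $A$ only gives a map out of $R^\square$ when $A$ lies in $\cC_\cO$, and the coordinate ring of a Zariski-open in $\bbA^1_{K'}$ is neither complete nor local. Moreover, for non-integral $t\in K'$ the matrix $g_t$ need not lie in $1+M_n(\ffrm_{\cO_{K'}})$ (or even in $M_n(\cO_{K'})$), so the set of good $t$ is an analytic disc, not a Zariski-open. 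So the ``connected image'' step is not justified.

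The fix is minor and keeps your idea intact. Replace $\bbA^1_{K'}$ by a genuine object of $\cC_\cO$: set $A=\cO_{K'}[[S]]$ and $g_S=1+(S/\pi)(g-1)$, where $\pi$ is a uniformizer of $K'$; then $g_S\in 1+M_n(\ffrm_A)$, the lift $g_S\rho g_S^{-1}$ gives a map $R_v^{\lambda,L_v'\text{-cr}}\to A$, and since $A$ is a domain this map factors through a single minimal prime. Specialising at $S=0$ and $S=\pi$ (both in $\ffrm_A$, so convergence is automatic) recovers $\rho$ and $g\rho g^{-1}$ on that same component. Alternatively, and perhaps more cleanly, one can argue directly with the universal action map $\alpha:R_v^{\lambda,L_v'\text{-cr}}\to R_v^{\lambda,L_v'\text{-cr}}[[X_{ij}]]$: the section $X\mapsto 0$ forces $\alpha^{-1}(\frp[[X]])\subset\frp$ for each minimal prime $\frp$, and minimality of $\frp$ then gives $\alpha(\frp)\subset\frp[[X]]$; specialising $X_{ij}\mapsto(\gamma-1)_{ij}$ shows every $\phi_\gamma$ preserves $\frp$. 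With either repair, your density argument at the end goes through unchanged.
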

\begin{proof}
The proof is identical to that of Lemma 1.2.1 of \cite{Bar10a}.
\end{proof}

\begin{definition}\label{ord} Let $\lambda \in (\bbZ_+^n)^{\Hom(L_v, K)}$. We say that a continuous representation $\rho : G_{L_v} \rightarrow \GL_n(\cO)$ is \emph{ordinary of weight $\lambda$} if:
\begin{enumerate} \item There exists a increasing invariant filtration $\Fil^i$ of $\cO^n$, with each  $\gr^i \cO^n$ an $\cO$-module of rank one. Write $\chi_i$ for the character $G_{L_v} \rightarrow \cO^\times$ giving the action on $\gr^i \cO^n$.
\item For every $\alpha \in L_v^\times$ sufficiently close to 1, we have
\begin{equation*}(\chi_i \circ \Art_{L_v}(\alpha)) = \prod_\tau (\tau(\alpha))^{-(\lambda_{\tau, n - i + 1} + i - 1)}.\end{equation*}
\end{enumerate}
We say that $\rho$ is \emph{ordinary} if it is ordinary of some weight.
\end{definition}
Thus $HT_\tau(\rho) = H_\tau$, whenever $\rho$ is ordinary of weight $\lambda$.
\begin{theorem} Suppose $\overline{\rho}$ is the trivial representation. Write $\Lambda_v$ for the completed group ring of the group $I_{L_v}^{ab}(l)^n$, where $(l)$ denotes pro-$l$ completion. There exists a reduced $l$-torsion free quotient $R^{\triangle, ar}_{\Lambda_v}$ of $\Lambda_v \widehat{\otimes}_\cO R^\square$ satisfying the following properties:
\begin{enumerate} \item Let $\widetilde{\chi} = (\widetilde{\chi}_1, \dots, \widetilde{\chi}_n)$ denote the universal $n$-tuple of characters of $I_{L_v}^{ab}(l)$. Let $\zeta : R^{\triangle, ar}_{\Lambda_v} \rightarrow \barQl$ be a continuous homomorphism. Then $\zeta \circ \rho^\square$ is conjugate to a representation
\begin{equation*}\left(\begin{array}{ccccc} \chi_1 & \ast & \dots & \ast & \ast \\	
0 & \chi_2 & \dots & \ast & \ast \\
\vdots & \vdots & \ddots & \vdots & \vdots \\
0 & 0 & \dots & \chi_{n-1} & \ast \\
0 & 0 & \dots & 0 & \chi_n
\end{array}\right)\end{equation*}
where $\chi|_{I_{L_v}} = (\chi_1|_{I_{L_v}}, \dots, \chi_n|_{I_{L_v}}) = \zeta \circ \widetilde{\chi}.$
\item Let $Q$ be a minimal prime of $\Lambda$. Then $R^{\triangle, ar}_{\Lambda_v}/Q$ is irreducible.
\end{enumerate}
Moreover, $R^{\triangle, ar}_{\Lambda_v}$ defines a local deformation problem.
\end{theorem}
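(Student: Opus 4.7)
The plan is to construct $R^{\triangle, ar}_{\Lambda_v}$ as the scheme-theoretic image of an auxiliary ``flagged'' moduli space and then verify the listed properties.

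\textit{Construction.} On the category $\cC_{\Lambda_v}$ of complete Noetherian local $\Lambda_v$-algebras with residue field $k$, define a functor $\cF$ sending $R$ (with structure map $\zeta \colon \Lambda_v \to R$) to the set of pairs $(\rho, F^\bullet)$, where $\rho \colon G_{L_v} \to \GL_n(R)$ lifts the trivial representation and $F^\bullet$ is an increasing $\rho$-stable filtration of $R^n$ by $R$-direct summands, with $\gr^i F^\bullet$ free of rank one and $F^\bullet \otimes_R k$ equal to the standard flag on $k^n$, such that the induced diagonal characters $\chi_i$ satisfy $\chi_i|_{I_{L_v}} = \zeta \circ \widetilde{\chi}_i$. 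A standard representability argument shows $\cF \cong \Hom(S, -)$ for some $S \in \cC_{\Lambda_v}$. The forgetful map $(\rho, F^\bullet) \mapsto \rho$ gives, via Yoneda, a $\Lambda_v$-algebra homomorphism $\phi \colon \Lambda_v \widehat{\otimes}_\cO R^\square \to S$. Take $R^{\triangle, ar}_{\Lambda_v}$ to be the maximal reduced and $l$-torsion-free quotient of $(\Lambda_v \widehat{\otimes}_\cO R^\square)/\ker(\phi)$.

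\textit{Property (1) and local deformation problem.} A $\barQl$-valued point $\zeta$ of $R^{\triangle, ar}_{\Lambda_v}$ factors through $(\Lambda_v \widehat{\otimes}_\cO R^\square)/\ker(\phi)$; since the condition ``$\rho$ admits a stable flag with prescribed graded inertial characters'' is Zariski-closed on the generic fiber, such a $\zeta$ produces a lift $\rho = \zeta\circ\rho^\square$ with such a flag, and choosing a basis adapted to it exhibits $\rho$ in the stated upper-triangular form. For the last clause, one applies the criterion recalled after the definition of local deformation problem: the defining ideal of $R^{\triangle, ar}_{\Lambda_v}$ inside $R^\square$ (after fixing a section $\cO \to \Lambda_v$) is invariant under the conjugation action of $1 + M_n(\ffrm_{R^\square})$, because the functor $\cF$ is stabilized by the automorphism $(\rho, F^\bullet) \mapsto (g \rho g^{-1}, g F^\bullet)$, and reducedness is built into the construction.

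\textit{Property (2) and the main obstacle.} Let $Q$ be a minimal prime of $\Lambda_v$. Over $\Lambda_v/Q$, the moduli $\cF$ classifies iterated extensions of characters: given $F^\bullet_{\leq i}$, extending by one further step amounts to choosing a class in $\Ext^1_{G_{L_v}}(\chi_{i+1}, \chi_j)$ for an appropriate $j$, subject to the prescribed inertial condition on $\chi_{i+1}$. Local Tate duality shows that on the generic point of $\Spec \Lambda_v/Q$ these $\Ext^1$'s have constant rank, so $S/QS$ is generically a formal affine-space tower over $\Lambda_v/Q$, hence irreducible; the image $(\Lambda_v \widehat{\otimes} R^\square)/(\ker\phi + Q)$ is then a subring of this domain and therefore also a domain, and passing to its reduced $l$-flat quotient preserves irreducibility. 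The main difficulty lies exactly here: the dimensions of the relevant $\Ext^1$-groups can jump on the special fiber of $\Spec \Lambda_v/Q$ when two universal characters accidentally collide, so $S/QS$ itself is not a priori a bundle over $\Lambda_v/Q$. One must therefore argue that $S/QS$ equals the scheme-theoretic closure of its generic fiber, and that this irreducibility descends to $R^{\triangle, ar}_{\Lambda_v}/Q$ after $l$-flat reduction; this is ultimately a local-cohomology computation analogous to those in \cite{Ger09}.
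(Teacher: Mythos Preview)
The paper's own ``proof'' is a single sentence: it cites \cite{Ger09} for everything. Your plan is essentially a sketch of Geraghty's construction, and as such it is on the right track, but there is one genuine gap and one point where you correctly flag but do not resolve the difficulty.

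The gap is in your verification of property~(1). You argue that a $\barQl$-point $\zeta$ of $R^{\triangle, ar}_{\Lambda_v}$, hence of $(\Lambda_v\widehat{\otimes}_\cO R^\square)/\ker\phi$, produces a representation admitting a stable flag ``since the condition\dots is Zariski-closed on the generic fiber.'' But Zariski-closedness of the flag condition only tells you that the locus of such representations is closed; it does not tell you that every $\barQl$-point of the scheme-theoretic image of $\Spec S$ actually lifts to $S$. What you need here is \emph{properness}: in Geraghty's setup the auxiliary space is realized as a closed subscheme of the flag variety over $\Spec(\Lambda_v\widehat{\otimes}_\cO R^\square)$, which is proper over the base. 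Properness guarantees that the scheme-theoretic image is closed and, since $\barQl$ is algebraically closed, that every $\barQl$-point of the image is hit by a $\barQl$-point upstairs. Your purely functorial description on $\cC_{\Lambda_v}$ obscures this; you should either work with the honest projective scheme or explain why your $S$ is proper over the base.

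For property~(2), you correctly identify the crux: the $\Ext^1$-dimensions jump where the universal characters collide, so $S/QS$ is not obviously a formal affine-space bundle, and one must show it is still irreducible (indeed, that it equals the closure of its generic fiber). You defer this to \cite{Ger09}, which is exactly what the paper does; but note that Geraghty's argument is not merely a local-cohomology computation---it proceeds by an explicit inductive analysis of the space of filtrations, and the irreducibility of $R^{\triangle, ar}_{\Lambda_v}/Q$ is then deduced from that of $S/QS$ together with surjectivity of the proper map on components.
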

\begin{proof}
This is all proved in \cite{Ger09}.
\end{proof}

Finally we define a lifting ring which classifies representations which are both ordinary and semistable. 
\begin{theorem} There exists a reduced $l$-torsion free quotient $R_v^{\lambda, \text{ss-ord}}$ of $R^\square$ such that if $\zeta : R^\square \rightarrow \barQl$ is a homomorphism, then $\zeta$ factors through $R_v^{\lambda, \text{ss-ord}}$ if and only if: 
\begin{enumerate} \item $\zeta \circ \rho^\square$ is ordinary of weight $\lambda$.
\item $\zeta \circ \rho^\square$ is semistable.
\end{enumerate}
Moreover, $\Spec R_v^{\lambda, \text{ss-ord}}$ is equidimensional of dimension $n^2 + \frac{1}{2}n(n-1)[L_v : \bbQ_l]$.
\end{theorem}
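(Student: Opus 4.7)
The plan is to piece together two existing constructions: Kisin's semistable deformation ring (the direct semistable analogue of the crystalline ring $R_v^{\lambda,\text{cr}}$ stated above) and Geraghty's weight-$\lambda$ ordinary deformation ring. First I would invoke Kisin's theorem to produce a reduced $l$-torsion free quotient $R_v^{\lambda,\text{ss}}$ of $R^\square$ whose $\barQl$-points are precisely the semistable lifts of $\overline{\rho}$ with $HT_\tau$-weights $H_\tau$; by Kisin this ring is equidimensional on the generic fibre of dimension $n^2+\tfrac{1}{2}n(n-1)[L_v:\bbQ_l]$.

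Next I would cut out the ordinary locus. Geraghty's theorem gives a ring $R^{\triangle,ar}_{\Lambda_v}$ varying the diagonal inertial characters; imposing the specific condition on inertia from Definition \ref{ord} (i.e.\ killing $\widetilde{\chi}_i(\Art_{L_v}(\alpha))-\prod_\tau\tau(\alpha)^{-(\lambda_{\tau,n-i+1}+i-1)}$ for $\alpha$ in a sufficiently small neighbourhood of $1$ in $L_v^\times$) yields a quotient $R_v^{\lambda,\text{ord}}$ of $R^\square$ whose $\barQl$-points are the ordinary lifts of weight $\lambda$. I would then define
\[
R_v^{\lambda,\text{ss-ord}} := R_v^{\lambda,\text{ss}}\otimes_{R^\square}R_v^{\lambda,\text{ord}},
\]
replaced by its maximal reduced $l$-torsion free quotient, and check that this captures exactly the lifts satisfying (i) and (ii).

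The key step (and the main obstacle) is the equidimensionality. For this I would argue that the ordinary-semistable locus is a union of irreducible components of $\Spec R_v^{\lambda,\text{ss}}[1/l]$. The main input is that, on the rigid generic fibre of $R_v^{\lambda,\text{ss}}$, the condition of being ordinary is constructible and is stable under specialization within irreducible components, because the existence of the triangular filtration with prescribed inertial characters can be formulated via conditions on the underlying filtered $(\varphi,N)$-module that are cut out by the vanishing of Ext groups that are locally constant in the family (this is in essence the same argument used by Kisin in the context of ordinary crystalline deformation rings). Hence each irreducible component of $R_v^{\lambda,\text{ss}}[1/l]$ is either contained in the ordinary locus or disjoint from it, so $R_v^{\lambda,\text{ss-ord}}[1/l]$ is a union of irreducible components of $R_v^{\lambda,\text{ss}}[1/l]$ and inherits the equidimensionality of dimension $n^2+\tfrac{1}{2}n(n-1)[L_v:\bbQ_l]$.

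Finally, to finish I would verify nonemptiness of the ordinary-semistable locus (so the dimension statement is meaningful), which is automatic since any ordinary crystalline lift of weight $\lambda$ in the trivial monodromy case is in particular semistable ordinary; and confirm that the resulting ring indeed defines a local deformation problem, using the standard criterion (reduced, $l$-torsion free, conjugation-invariant) recalled after Lemma \ref{defproblem}. The only genuinely nontrivial point in the entire argument is the component-level compatibility of the ordinary condition with the semistable one; the rest is bookkeeping.
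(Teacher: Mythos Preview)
The paper's proof is a single citation to \cite{Ger09}, Section 3, where the ring (there denoted $R^{\triangle_{\lambda_w},\text{st}}$) is constructed and all its properties established. So you are reconstructing Geraghty's argument rather than comparing against one written out in this paper.

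Your plan has the right shape, but the step you correctly identify as the main obstacle---showing the ordinary locus is a union of irreducible components of $\Spec R_v^{\lambda,\text{ss}}[1/l]$---is not established by your argument. You claim ordinariness is ``constructible and stable under specialization'' and justify this by saying the triangular-filtration condition is ``cut out by the vanishing of Ext groups that are locally constant in the family.'' This does not work: the existence of an invariant flag with prescribed graded pieces is not the vanishing of an Ext group, and in any case cohomology dimensions in a family are only upper semicontinuous, not locally constant. Even granting constructibility and stability under specialization, that would only make the ordinary locus \emph{closed}; your conclusion that ``each component is either contained in or disjoint from the ordinary locus'' requires it to be clopen, which does not follow. (Kisin's $n=2$ argument, which you invoke, relies on the explicit classification of rank-$2$ weakly admissible modules and does not generalize in the way you suggest.)

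What Geraghty actually does is compute the dimension of the weight-$\lambda$ ordinary lifting ring \emph{directly}, by a Galois-cohomology calculation exploiting the Borel structure: one works with the functor of lifts equipped with a flag (which is relatively representable over the ordinary ring), and applies the local Euler characteristic formula to the upper-triangular adjoint to obtain the dimension. Once one knows independently that the ordinary ring has generic fibre equidimensional of dimension $n^2+\tfrac{1}{2}n(n-1)[L_v:\bbQ_l]$, its image in $\Spec R_v^{\lambda,\text{ss}}[1/l]$, being closed of full dimension in an equidimensional scheme, is forced to be a union of components. Your sketch is missing precisely this independent dimension count; without it there is nothing ruling out that the ordinary locus is a proper closed subvariety of some semistable component.
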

\begin{proof}
This follows from the results of \cite{Ger09}, Section 3  (where the ring $R_v^{\lambda, \text{ss-ord}}$ is denoted $R^{\triangle_{\lambda_w}, \text{st}}).$
\end{proof}
\subsection*{Local deformation problems in the case $l \neq p$}

Let $L_v$ be a finite extension of $\bbQ_p$. Let $\overline{\rho} : G_{L_v} \rightarrow \GL_n(\cO)$ be a continuous representation. Let $R^\square$ denote the universal (unrestricted) lifting ring. Then $R^\square\otimes_\cO\barQl$ is equidimensional of dimension $n^2$ (see \cite{Gee06}). Let $\cC$ be an irreducible component or a set of irreducible components. Then we will write $R_v^\cC$ for the maximal quotient of $R^\square$ which is reduced and $l$-torsion free, and such that $\Spec R_v^\cC \otimes_\cO \barQl$ consists of the irreducible components in $\cC$. (Again, this requires $K$ to be sufficiently large). We write $R_v^{\overline{\square}}$ for the maximal reduced and $l$-torsion free quotient of $R^\square$.

The proof of the following lemma is exactly the same as that of Lemma \ref{defproblem} above.
\begin{lemma} Say that a lift $(\rho, R)$ is of type $\cD^\cC_v$ if the induced map $R^\square \rightarrow R$ factors through $R_v^\cC$. Then $\cD^\cC_v$ is a local deformation problem.
\end{lemma}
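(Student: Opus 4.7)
The plan is to invoke the criterion recalled in the paper immediately after Definition 3.3: to define a local deformation problem, it suffices to exhibit a quotient of $R^\square$ which is reduced and whose defining ideal is invariant under the natural conjugation action of $1+M_n(\ffrm_{R^\square})$. So the strategy is simply to verify these two properties for $R_v^\cC$; the argument should parallel verbatim the proof of Lemma \ref{defproblem}, since in both the $l=p$ and $l\neq p$ settings $R_v^\cC$ is constructed from $R^\square$ by the same prescription (maximal reduced, $l$-torsion free quotient whose $\barQl$-spectrum is a prescribed union of irreducible components of $\Spec R^\square \otimes_\cO \barQl$).

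First I would dispatch the reducedness: this is tautological, being part of the very definition of $R_v^\cC$. (The fact that such a maximal reduced, $l$-torsion free quotient exists uses only that $R^\square \otimes_\cO \barQl$ is equidimensional of dimension $n^2$, as cited from \cite{Gee06}, together with our standing assumption that $K$ is large enough to split the irreducible components individually over $\cO$.)

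Next I would verify invariance of the defining ideal under conjugation. Any $g \in 1 + M_n(\ffrm_{R^\square})$ defines a continuous $\cO$-algebra automorphism $\phi_g$ of $R^\square$ by transporting $\rho^\square$ to $g\rho^\square g^{-1}$. For any closed point $\zeta : R^\square \to \barQl$, the point $\zeta \circ \phi_g$ classifies the $\GL_n(\barQl)$-conjugate of $\zeta \circ \rho^\square$; equivalent liftings define points in the same connected (hence in the same irreducible) component of $\Spec R^\square \otimes_\cO \barQl$, so $\phi_g$ fixes each irreducible component set-wise. In particular it preserves the distinguished collection $\cC$, and the maximality characterization of $R_v^\cC$ then forces $\phi_g$ to descend to an automorphism of $R_v^\cC$, i.e.\ the defining ideal is stable under the conjugation action.

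The only genuinely non-formal ingredient is the claim that conjugation by elements of $1+M_n(\ffrm_{R^\square})$ permutes irreducible components of $\Spec R^\square\otimes_\cO\barQl$ trivially; this is exactly the place where the argument of Lemma \ref{defproblem} (following \cite{Bar10a}, Lemma 1.2.1) does all the real work, and I expect it to be the main technical point to cite carefully here. Once that is in hand, the criterion from Lemma 3.2 of \cite{Bar09} applies verbatim and the lemma follows.
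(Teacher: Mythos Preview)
Your proposal is correct and follows the same approach as the paper: the paper's proof is literally ``The proof is exactly the same as that of Lemma \ref{defproblem} above,'' which in turn cites \cite{Bar10a}, Lemma 1.2.1, and you have accurately unpacked what that argument is (apply the reduced-quotient criterion from \cite{Bar09}, Lemma 3.2, noting that conjugation by $1+M_n(\ffrm_{R^\square})$ preserves each irreducible component of the generic fibre). Your identification of the one non-formal step is also on the mark.
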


\begin{definition} Let $\rho_1, \rho_2 : G_{L_v} \rightarrow \GL_n(\cO)$ be continuous lifts of ${\overline{\rho}}$. We write $\rho_1 \sim \rho_2$ if the points of $\Spec R^\square \otimes_\cO \barQl$ induced by $\rho_1$ and $\rho_2$ lie on a common irreducible component.

We write $\rho_1 \leadsto \rho_2$ if $\rho_1 \sim \rho_2$ and moreover the point of $\Spec R^\square \otimes_\cO \barQl$ corresponding to $\rho_1$ lies on a \emph{unique} irreducible component.
\end{definition}

\begin{proposition} Suppose that $\rho_1, \rho_2 : G_{L_v} \rightarrow \GL_n(\cO)$ are continuous unramified lifts of $\overline{\rho}$. Suppose that $(\rho_1 \otimes_\cO \barQl)^{ss} = (r_l(\pi)^\vee(1-n))^{ss}$ for some generic unramified smooth irreducible representation $\pi$ of $\GL_n(L_v)$ over $\barQl$. Then $\rho_1 \leadsto \rho_2$.
\end{proposition}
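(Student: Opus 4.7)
The plan is to verify the two requirements separately: first that $\rho_1 \sim \rho_2$, i.e.\ that any two unramified lifts of $\overline{\rho}$ lie on a common irreducible component of $\Spec R^\square \otimes_\cO \barQl$, and second that $\rho_1$ lies on a unique such component.

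For the first claim, since $\overline{\rho}$ is unramified, I would describe the functor of unramified lifts explicitly: such a lift to $R \in \cC_\cO$ is determined by the image of $\Frob_v$, which can be any matrix reducing to $\overline{\rho}(\Frob_v)$ modulo $\ffrm_R$, and this choice is unconstrained. Hence the universal unramified lift is represented by $R^{\square, \mathrm{unr}} \cong \cO[[X_{ij}]]_{1 \leq i,j \leq n}$, a formal power series ring in $n^2$ variables, and the natural surjection $R^\square \twoheadrightarrow R^{\square, \mathrm{unr}}$ realizes $\Spec R^{\square, \mathrm{unr}} \otimes_\cO \barQl$ as a closed irreducible subscheme of $\Spec R^\square \otimes_\cO \barQl$ of dimension $n^2$. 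Since the ambient scheme is equidimensional of dimension $n^2$ by \cite{Gee06}, this subscheme must be a single irreducible component. Both $\rho_1$ and $\rho_2$ factor through it, so $\rho_1 \sim \rho_2$.

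For the second claim, I would show that $\rho_1$ is a smooth point of $\Spec R^\square \otimes_\cO \barQl$, which implies it lies on a unique irreducible component. By standard obstruction theory for framed deformations, smoothness at $\rho_1$ follows from the vanishing $H^2(G_{L_v}, \ad \rho_1) = 0$. Local Tate duality (applicable since $l \neq p$) identifies this group with the dual of $H^0(G_{L_v}, \ad \rho_1 \otimes \epsilon)$. As $\rho_1$ is unramified with Frobenius eigenvalues $\beta_1, \dots, \beta_n$, the latter vanishes precisely when no eigenvalue of $\Frob_v$ on $\ad \rho_1 \otimes \epsilon$ equals $1$, that is, when $\beta_i \neq q \beta_j$ for all $(i,j)$, where $q = \#k(v)$.

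The main obstacle is to verify that this eigenvalue condition is implied by the genericity of $\pi$. Unwinding the normalizations of the local Langlands correspondence used in \cite{Clo08}, the hypothesis $(\rho_1 \otimes_\cO \barQl)^{ss} = (r_l(\pi)^\vee(1-n))^{ss}$ pins down the multiset $\{\beta_i\}$ in terms of the Satake parameters of $\pi$. An irreducible unramified admissible representation $\pi$ of $\GL_n(L_v)$ is generic exactly when it is isomorphic to the full normalized parabolic induction of unramified characters of the Borel subgroup, which is the classical condition that no pair of Satake parameters has ratio $q^{\pm 1}$. Tracking this through duality and the Tate twist yields $\beta_i/\beta_j \neq q$ for $i \neq j$, while the case $i = j$ is automatic since $q \neq 1$. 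Hence $H^2(G_{L_v}, \ad \rho_1) = 0$, so $\rho_1$ is smooth, and combined with $\rho_1 \sim \rho_2$ this gives $\rho_1 \leadsto \rho_2$.
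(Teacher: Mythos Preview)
Your proof is correct and follows essentially the same approach as the paper: both establish $\rho_1 \sim \rho_2$ via the unramified locus being an irreducible component (the paper simply calls this ``immediate''), and both deduce smoothness at $\rho_1$ from the vanishing of $H^0(G_{L_v}, \ad(\rho_1)(1))$, which in turn follows from the genericity of $\pi$ forcing no two Frobenius eigenvalues to differ by $q_v$. Your treatment is simply more explicit about the Tate duality step and about why the unramified lifting ring is a power series ring of the correct dimension.
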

\begin{proof} It is immediate that $\rho_1 \sim \rho_2$. A standard argument shows that $\Spec R^\square \otimes_\cO \barQl$ is formally smooth at the point corresponding to $\rho_1 \otimes_\cO \barQl$ if the group $H^0(G_{L_v}, \ad ( \rho_1 \otimes_\cO \barQl ) (1))$ vanishes. The genericity of $\pi$ implies that no two of the eigenvalues of $\rho_1(\Frob_v)$ differ by $\# k(v) = q_v$, so the result follows.
\end{proof}

We will need a slight generalization of the `level-raising' deformation problems of \cite{Tay08}. Before defining these, we give a geometrical lemma.

\begin{lemma}\label{matrices} Let $q$ be an integer, and let $\alpha_1, \dots, \alpha_n$ be roots of unity in $\cO^\times$. Let $\cM(\prod_{i=1}^n(X-\alpha_i),q)$ be the moduli space over $\cO$ of pairs of $n \times n$ matrices $(\Phi, \Sigma)$ where $\Phi$ is invertible, $\cha_\Sigma(X) = \prod_{i=1}^n(X-\alpha_i)$, and $\Phi \Sigma \Phi^{-1} = \Sigma^q.$ (Thus  $\cM(\prod_{i=1}^n(X-\alpha_i),q)$ is an affine $\cO$-scheme).

\begin{enumerate} \item Suppose first that $\alpha_j = 1$ for each $j$. Let $\cM_i$ denote the irreducible components of $\cM((X-1)^n, q)$ with their reduced subscheme structure. Then each $\cM_i \otimes K$ is non-empty of dimension $n^2$. Moreover, the distinct irreducible components of $\cM((X-1)^n,q) \otimes k$ are the $\cM_i \otimes k$ and each is non-empty and generically reduced.
\item Suppose that $q \equiv 1 \mod l$, and that the $\alpha_j$ are distinct roots of unity in $1 + \lambda \subset \cO^\times$. If $q \neq 1$ we suppose that they are distinct $(q-1)^\text{st}$ roots of unity. Then $\cM(\prod_{i=1}^n(X-\alpha_i),q)^\text{red}$ is flat over $\cO$.
\end{enumerate}
\end{lemma}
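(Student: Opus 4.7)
For Part (1), the approach is to stratify $\cM((X-1)^n,q)$ by the Jordan type of $\Sigma$, parameterize irreducible components by partitions of $n$, and identify each component's dimension by a fibration calculation. I would assume throughout $l \nmid q$ (the case relevant to applications). For each partition $\lambda = (\lambda_1 \geq \cdots \geq \lambda_r)$ of $n$, let $\cU_\lambda \subset \cM((X-1)^n,q)$ be the locally closed subscheme where $\Sigma$ has Jordan type $\lambda$, cut out by the determinantal conditions on $\rank(\Sigma - I)^k$. Projection to $\Sigma$ realizes $\cU_\lambda$ geometrically as a fibration over the unipotent conjugacy class $C_\lambda \subset \GL_n$; the fiber over $\Sigma_0 \in C_\lambda$ is the torsor $\{\Phi \in \GL_n : \Phi \Sigma_0 = \Sigma_0^q \Phi\}$ under the centralizer $Z_{\GL_n}(\Sigma_0)$, nonempty since $\Sigma_0^q$ has the same Jordan type as $\Sigma_0$ when $l \nmid q$. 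The dimension count $\dim \cU_\lambda = \dim C_\lambda + \dim Z_{\GL_n}(\Sigma_0) = n^2$ is independent of $\lambda$. Both $C_\lambda$ and the centralizer are connected (unipotent centralizers in $\GL_n$ are connected), so $\cU_\lambda$ is geometrically irreducible. Setting $\cM_i := \overline{\cU_{\lambda_i}}$ with reduced structure produces the desired components of $\cM$; they are distinct because they share dimension $n^2$ but have distinct generic Jordan types, and generic reducedness follows from the smoothness of $\cU_\lambda$ as a fibration of smooth varieties.

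For Part (2), the crucial observation is that since $\alpha_i^{q-1} = 1$ (vacuously when $q=1$), we have $\alpha_i^q = \alpha_i$; thus over $\bar K$ the matrix $\Sigma$ with distinct eigenvalues $\alpha_i$ is diagonalizable, and the matrix identity $\Sigma^q = \Sigma$ holds on the generic fiber $\cM_K$. Since the entries of $\Sigma^q - \Sigma$ are polynomials in the entries of $\Sigma$ that vanish identically on $\cM_K$, this identity extends to the schematic closure of $\cM_K$. On this closure the defining relation $\Phi \Sigma \Phi^{-1} = \Sigma^q$ simplifies to $\Phi \Sigma = \Sigma \Phi$, and $\cM_K$ is a $\bbG_m^n$-bundle over the regular semisimple conjugacy class of $\diag(\alpha_1, \ldots, \alpha_n)$: smooth and geometrically irreducible of dimension $n^2$. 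Let $\cY \subseteq \cM^\text{red}$ denote the schematic closure of $\cM_K$ with reduced structure; then $\cY$ is reduced, irreducible, dominant over $\cO$, and hence $\cO$-flat. The flatness claim of the lemma reduces to showing $\cM^\text{red} = \cY$.

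To prove this equality, I would first identify the special fiber $\cM(\prod(X-\alpha_i),q) \otimes k$ with $\cM((X-1)^n,q) \otimes k$ scheme-theoretically, using that the elementary symmetric functions $e_i(\alpha_1, \ldots, \alpha_n)$ reduce to $\binom{n}{i} = e_i(1, \ldots, 1) \pmod \lambda$. By Part (1) its irreducible components are indexed by partitions $\lambda$ of $n$. Given such a $\lambda$, decompose $\{1, \ldots, n\} = \bigsqcup_{j=1}^r I_j$ with $|I_j| = \lambda_j$ and define $\Sigma_\lambda \in M_n(\cO)$ to be block diagonal with $j$th block the $\lambda_j \times \lambda_j$ companion matrix of $\prod_{i \in I_j}(X-\alpha_i)$. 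Then $\Sigma_\lambda$ has characteristic polynomial $\prod_i(X-\alpha_i)$ and satisfies $\Sigma_\lambda^q = \Sigma_\lambda$ over $\cO$ (by the interpolation above), so $(I, \Sigma_\lambda) \in \cM(\cO)$ lies on $\cY$. Its special fiber has blocks the companion matrices of $(X-1)^{\lambda_j}$, each of which has minimal polynomial equal to its characteristic polynomial and thus is conjugate to a single Jordan block of size $\lambda_j$: the special fiber has Jordan type exactly $\lambda$. Hence $\cY_k$ meets every irreducible component of $\cM_k$, and by dimension comparison from Part (1) contains each such component, so $\cY = \cM^\text{red}$.

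The main obstacle, I expect, is the flatness assertion in Part (2): one must rule out irreducible components of $\cM^\text{red}$ contained in the special fiber, and the block-diagonal companion-matrix construction is the key technical device for doing so, providing an explicit lift of a point of each component of $\cM_k$ to an $\cO$-point of $\cM$ lying on the closure of $\cM_K$. A secondary technicality is the scheme-theoretic identification of the special fibers, which relies crucially on $\alpha_i \equiv 1 \pmod \lambda$; for Part (1), verifying irreducibility of $\cU_\lambda$ appeals to the (standard) connectedness of unipotent centralizers in $\GL_n$.
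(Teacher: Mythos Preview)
Your stratification by Jordan type and fibration over nilpotent orbits is exactly the paper's approach to Part~(i). However, there is a genuine gap: you have not shown that the special fibre $\cM_i \otimes k$ is \emph{irreducible}. You correctly observe that each $\cM_i$ is integral and flat over $\cO$, so $\cM_i \otimes k$ is equidimensional of dimension $n^2$ and hence a union of irreducible components of $\cM \otimes k$. It certainly contains the component $\overline{\cU_{\lambda_i} \otimes k}$. But nothing you have said rules out $\cM_i \otimes k$ also containing $\overline{\cU_\mu \otimes k}$ for some $\mu \neq \lambda_i$; the counting argument (same number of partitions as components) does not help, since the components could be shared. The paper handles this by introducing a second projection, to the characteristic polynomial of $\Phi$: one shows that for any field-valued point $(\Phi,\Sigma)$ with $\Sigma$ of Jordan type $\sigma$, the polynomial $\cha_\Phi$ lies in a certain closed subscheme $\Pol(\sigma,q)$ of the space of monic polynomials (the roots of $\cha_\Phi$ fall into chains $\{a, qa, \dots, q^{n_i-1}a\}$). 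One then exhibits, for each $\sigma$, a $\bar k$-point of $\cM(\sigma)^0$ whose $\cha_\Phi$ lies in $\Pol(\sigma,q)$ but in no $\Pol(\sigma',q)$ for $\sigma'\neq\sigma$; this separates the special fibres. The characteristic-$l$ verification that $\cha_\Phi \in \Pol(\sigma,q)$ is the main technical point, done by choosing $m$ with $mq \equiv 1 \pmod{l^M}$ for $l^M > n$ and using $\Phi \Sigma^m \Phi^{-1} = \Sigma$.

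The same gap infects your Part~(ii). Your strategy (take the closure $\cY$ of the generic fibre, then show $\cY_k$ meets every component of $\cM_k$ via explicit $\cO$-points) is appealing and different from the paper's approach, which simply refers to Taylor's argument with $\exp(N)$ replaced by $1+N$. But your witness point $(I, \Sigma_\lambda)$ does not do the job: already for $n=2$ and $\lambda = (1,1)$ you get $(I,I)$, which lies in $\overline{\cU_{(2)} \otimes k}$ as well (take $\Sigma_t = \begin{psmallmatrix}1&t\\0&1\end{psmallmatrix}$, $\Phi_t = I$, and let $t\to 0$). So knowing $(I,\bar\Sigma_\lambda) \in \cY_k$ tells you only that $\cY_k$ contains \emph{some} component through that point, not necessarily the component $\overline{\cU_\lambda \otimes k}$. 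To repair this within your framework you would again need the $\cha_\Phi$ separation: replace $I$ by a $\Phi \in Z(\Sigma_\lambda)(\cO)$ whose reduction has characteristic polynomial lying in $\Pol(\lambda,q)(k)$ but in no other $\Pol(\mu,q)(k)$.
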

\begin{proof}

We treat the first part of the lemma.  When $l>n$ and $q \equiv 1 \mod l$, this is Lemma 3.2 of \cite{Tay08}. In fact, the proof there works without any restriction on $q$. Taylor has given another proof valid without restriction on $l$, and I thank him for allowing me to reproduce this here.

Let us abbreviate $\cM = \cM((X-1)^n, q)$. Let $\Nil$ be the space of $n \times n$ nilpotent matrices, and $\Pol$ the space of monic degree $n$ polynomials. $\cM$ has natural maps to $\Nil$ and $\Pol$ taking a pair $(\Phi, \Sigma)$ to $N = \Sigma - 1$ and $\cha_\Phi(X)$, respectively.

Let $\sigma$ be a partition of $n$, say $n = n_1 + \dots + n_r$, and let $\Nil(\sigma)$ be the corresponding subvariety of $\Nil$, as defined in \cite{Tay08} (it is the variety of nilpotent matrices whose Jordan normal form refines the partition $\sigma$). It has an open subvariey $\Nil(\sigma)^0$ (the variety of nilpotent matrices whose Jordan normal form corresponds to the partition $\sigma$). Let $\Pol(\sigma, q)$ be the subvariety of $\Pol$ consisting of polynomials whose multiset of roots can be partitioned into $r$ multisets of the form $\{ \alpha, q\alpha, \dots, q^{n_i - 1}\alpha \}$.

We consider the following subschemes of $\cM$:
\begin{enumerate} \item $\cM(\sigma)^0$ is the locally closed pre-image of $\Nil(\sigma)^0$ in $\cM$.
\item $\cM(\sigma)$ is the reduced subscheme of the closure of $\cM(\sigma)^0$ in $\cM$.
\item $\cM(\sigma)'$ is the reduced subscheme of the intersection of the pre-image of $\Nil(\sigma)$ under the first map and the pre-image of $\Pol(\sigma, q)$ under the second map above.
\end{enumerate}

Let $L$ be a field, and let $(\Phi, \Sigma)$ be an $L$-point of $\cM(\sigma)^0$. Then $\cha_\Phi(X) \in \Pol(\sigma, q)(L)$. When $L$ has characteristic zero, this can be deduced just as in \cite{Tay08}. Suppose instead that $L$ has characteristic $l$. Let $M$ be an integer such that $l^M > n$, and choose a positive integer $m$ such that $mq \equiv 1 \mod l^M$. Then $\Phi \Sigma \Phi^{-1} = \Sigma^{mq} = \Sigma$. Writing $N = \Sigma - 1$, we have 
\begin{equation*}\Phi(N+1)^m\Phi^{-1} = (N+1)\end{equation*}
and hence
\begin{equation*}\Phi(m^i N^i + O(N^{i+1})) = N^i \Phi.\end{equation*}
It follows that $\Phi$ preserves $\ker N^i$. We deduce that
\begin{equation*}\cha_{\Phi|_{\ker N^{i+1}/\ker N^{i}}}(X) \mid \cha_{\Phi|_{\ker N^{i}/\ker N^{i-1}}}(qX),\end{equation*}
and hence $\cha_\Phi(X) \in \Pol(\sigma, q)(L).$

We deduce the chain of inclusions
\begin{equation*}(\cM(\sigma)^0)^\text{red} \subset \cM(\sigma) \subset \cM(\sigma)',\end{equation*}
these schemes being reduced and the inclusions holding on field-valued points. Just as in \cite{Tay08}, we have that the projection $\cM(\sigma)^0 \rightarrow \Nil(\sigma)^0$ is isomorphic locally in the Zariski topology to the projection
\begin{equation*}\Nil(\sigma)^0 \times Z_{\GL_n}(N(\sigma)) \rightarrow \Nil(\sigma)^0,\end{equation*}
where $N(\sigma)$ is a nilpotent Jordan matrix corresponding to the partition $\sigma$. In particular, $\cM(\sigma)^0$ is smooth over $\cO$ and the fibres are integral of dimension $n^2$. Now, choose $a_1, \dots, a_r \in \overline{k}^\times$ such that the multisets $S_i = \{ a_i, q a_i, \dots, q^{n_i - 1}a_i\}$ for $i = 1, \dots, r$ are disjoint. Choose a matrix $\Phi(\sigma, a)$ such that \begin{equation*}\cha_\Phi(X) = \prod_{j=1}^r\prod_{\alpha \in S_j} (X - \alpha)\end{equation*}
and $(\Phi, 1 + N(\sigma)) \in \cM(\sigma)^0$. Then this tuple defines a point of
\begin{equation*}\left( \cM(\sigma)^0 - \bigcup_{\sigma' \neq \sigma} \cM(\sigma')' \right)\left(\overline{k}\right) \subset \left( \cM(\sigma) - \bigcup_{\sigma' \neq \sigma} \cM(\sigma') \right)\left(\overline{k}\right) .\end{equation*}
Thus the $\cM(\sigma)$ are the irreducible components of $\cM$, and they satisfy the conditions of the lemma.

The second part of the lemma follows from an analogous modification to the argument given in \cite{Tay08} (replace $\exp(N)$ by the map $N \mapsto 1 + N$; the proof then goes through essentially unchanged).
\end{proof}

Now let $\chi_{v,1}, \dots, \chi_{v,n} : \cO_{L_v}^\times \rightarrow \cO^\times$ be finite order characters, which become trivial on reduction modulo $\lambda$. Suppose that $\#k(v) = q_v \equiv 1 \mod l$. Suppose that $\overline{\rho}$ is trivial. We write $\cD_v^\chi$ for the set of liftings $\rho$ of $\overline{\rho}$ to objects of $\cO$ such that for all $\sigma \in I_{L_v}$, we have
\begin{equation*}\cha_{\rho(\sigma)}(X) = \prod_{i=1}^n\left(X-\chi_{v,j}(\Art_{L_v}(\sigma)\right)^{-1}).\end{equation*}
This is a local deformation problem, studied in \cite{Tay08}. Write $R_v^{\chi_v}$ for the corresponding local lifting ring. We have the following.
\begin{proposition}\label{levelraising} \begin{enumerate}\item Suppose that $\chi_{v,j} = 1$ for each $j$. Then each irreducible component of $R_v^1$ has dimension $n^2 + 1$, and every prime of $R_v^1$ minimal over $\lambda$ contains a unique minimal prime. Every generic point is of characteristic zero.
 \item Suppose that the $\chi_{v,j}$ are pairwise distinct. Then $\Spec R_v^{\chi_v}$ is irreducible of dimension $n^2 + 1$, and its generic point is of characteristic zero.
\end{enumerate}
\end{proposition}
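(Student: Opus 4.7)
The plan is to identify $\Spec R_v^{\chi_v}$ with the complete local ring of the reduced matrix-moduli scheme of Lemma \ref{matrices} at the trivial point $(I,I)$, and to extract both parts of the proposition from that lemma. The first step is to reduce to matrix moduli. Since $l \neq p$, the pro-$p$ wild inertia must act trivially in any lift to an object of $\cC_\cO$ (whose $1 + M_n(\ffrm)$ is pro-$l$), so a lift is determined by $\Phi := \rho(\phi)$ for a chosen Frobenius lift $\phi$ and $\Sigma := \rho(\Sigma_0)$ for a topological generator $\Sigma_0$ of the pro-$l$ quotient of tame inertia, subject to $\Phi \Sigma \Phi^{-1} = \Sigma^{q_v}$. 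Because $\overline{\rho} = 1$, both $\Phi$ and $\Sigma$ lift $I$, and the condition $\cha_{\rho(\sigma)}(X) = \prod_j(X - \chi_{v,j}(\Art_{L_v}(\sigma))^{-1})$ for all $\sigma \in I_{L_v}$ collapses to the single condition $\cha_\Sigma(X) = \prod_j(X - \alpha_j)$, where $\alpha_j := \chi_{v,j}(\Art_{L_v}(\Sigma_0))^{-1} \in 1 + \lambda$. Thus $R_v^{\chi_v}$ is the complete local ring of $\cM(\prod_j(X-\alpha_j), q_v)^{\text{red}}$ at $(I,I)$.

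For part (1), I would apply Lemma \ref{matrices}(1) to $\cM((X-1)^n, q_v)$: the components $\cM_i$ are $\cO$-flat of dimension $n^2 + 1$ with distinct generically reduced special fibres, and the point $(I,I)$ (where $N = \Sigma - I = 0$) lies on every $\cM_i \otimes k$, since the zero matrix is in the closure of every nilpotent orbit $\Nil(\sigma)^0$. Granted analytic irreducibility of each $\cM_i$ (and of each $\cM_i \otimes k$) at $(I,I)$, one obtains a bijection between minimal primes of $R_v^1$ and the $\cM_i$; dimension $n^2 + 1$ and characteristic zero at each generic point follow from $\cO$-flatness, while generic reducedness forces the specialization map from integral components to special-fibre components to be bijective, giving the claim that each prime minimal over $\lambda$ contains a unique minimal prime.

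For part (2), I would first check that the generic fibre is irreducible of dimension $n^2$. When the $\alpha_j$ are pairwise distinct, $\Phi \Sigma \Phi^{-1} = \Sigma^{q_v}$ forces $\Sigma$ regular semisimple with eigenvalues $\alpha_j$ and $\Phi$ to permute its eigenspaces via $\alpha_j \mapsto \alpha_j^{q_v}$; the hypothesis that the $\alpha_j$ are distinct $(q_v-1)$st roots of unity when $q_v \neq 1$ (and trivially when $q_v = 1$) forces this permutation to be trivial, so $\Phi$ commutes with $\Sigma$, and the moduli is a fibre bundle over the regular semisimple conjugacy class with fibre a maximal torus, hence a single irreducible $n^2$-dimensional variety. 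Combined with Lemma \ref{matrices}(2) (flatness of $\cM^{\text{red}}$), this makes $\cM^{\text{red}}$ irreducible of dimension $n^2 + 1$ with generic point in characteristic zero; irreducibility of $R_v^{\chi_v}$ then reduces to identifying the flat limit at $(I,I)$ with a single component of $\cM((X-1)^n, q_v) \otimes k$ — the closure of the regular nilpotent stratum — and verifying analytic irreducibility there.

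The hard part in both cases is this passage from global to analytic irreducibility at the very singular point $(I,I)$, together with, in part (2), the identification of the flat limit with a single special-fibre component. Both are to be handled through an enhancement of the local product description $\cM(\sigma)^0 \simeq \Nil(\sigma)^0 \times Z_{\GL_n}(N(\sigma))$ used in the proof of Lemma \ref{matrices}, combined with the normality of nilpotent orbit closures in type $A$, following the strategy of \cite{Tay08}.
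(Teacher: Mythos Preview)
Your proposal is correct and follows essentially the same route as the paper, which simply cites Lemma 2.7 (and, for part (ii), Lemmas 3.3 and 3.4) of \cite{Tay08} together with Lemma \ref{matrices}; what you have written is an accurate unpacking of those citations. One small slip: the identification you write down gives $R_v^{\chi_v}$ as the completion of $\cM(\prod_j(X-\alpha_j),q_v)$ itself at $(I,I)$, not of its reduction---the deformation ring need not be reduced a priori---but since the proposition only concerns minimal primes, dimensions and generic points, and since $\cM$ is excellent (so reduction and completion interact well), this is harmless. Your identification of the hard step (analytic irreducibility of the $\cM_i$ and $\cM_i\otimes k$ at $(I,I)$, via normality of nilpotent orbit closures in type $A$ and the local product description) and of the flat limit in part (ii) as the regular-nilpotent component is exactly the content of Taylor's Lemmas 3.2--3.4, so you are pointing at the right ingredients.
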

\begin{proof} The first part of the proposition follows from Lemma 2.7 of \cite{Tay08} and Lemma \ref{matrices} above. The second part follows from Lemmas 2.7, 3.3 and 3.4 of \cite{Tay08} and Lemma \ref{matrices} above. (The proofs of these other results do not use the assumption $l > n$).
\end{proof}

Now suppose that $\overline{\rho}$ is not necessarily trivial, and that $q_v \not\equiv 1 \mod l$. We consider the set $\cD^a_v$ of liftings $\rho$ to objects of $\cO$ such that for all $\sigma \in I_{L_v}$, we have
\begin{equation*}\cha_{\rho(\sigma)}(X) = (X-1)^n.\end{equation*}
This is a local deformation problem. Let $R_v^a$ denote the corresponding local lifting ring.

\begin{proposition}\label{smallprimes} Each irreducible component of $R_v^a$ has dimension $n^2 + 1$, and every prime of $R_v^a$ minimal over $\lambda$ contains a unique minimal prime. Every generic point is of characteristic zero.
\end{proposition}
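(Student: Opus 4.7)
The plan is to identify $R_v^a$ with the completion of the moduli scheme $\cM = \cM((X-1)^n, q_v)$ of Lemma \ref{matrices} at the point $x = (\overline{\rho}(\Phi_0), \overline{\rho}(\sigma_0))$, where $\Phi_0$ is a Frobenius lift and $\sigma_0$ is a generator of the pro-$l$ part of tame inertia, and then read off the conclusion from Lemma \ref{matrices}(1). The argument is parallel to the proof of Proposition \ref{levelraising}(1), except that here we work in the regime $q_v \not\equiv 1 \bmod l$.

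First I would show that any lift $\rho \in \cD_v^a(R)$ is automatically trivial on wild inertia $P_{L_v}$ and on the prime-to-$l$ part of tame inertia. For the residual representation: the characteristic polynomial condition forces $\overline{\rho}(P_{L_v})$ to consist of unipotent elements of $\GL_n(k)$, which have $l$-power order; since $P_{L_v}$ is pro-$p$ with $p \neq l$, we conclude $\overline{\rho}(P_{L_v}) = 1$. For any lift, $\rho(P_{L_v})$ then lies in the pro-$l$ kernel $1 + M_n(\ffrm_R)$, and a pro-$p$ subgroup of a pro-$l$ group is trivial. The same reasoning kills the away-from-$l$ part of tame inertia. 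Hence $\rho$ is entirely determined by the pair $(\Phi, \Sigma) = (\rho(\Phi_0), \rho(\sigma_0))$ satisfying $\Phi\Sigma\Phi^{-1} = \Sigma^{q_v}$ and $\cha_\Sigma(X) = (X-1)^n$. This is exactly the data of an $R$-point of $\cM$ reducing to $x$, so (modulo taking the reduced $l$-torsion free quotient if needed) $R_v^a \cong \hat{\cO}_{\cM, x}$.

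Finally, I invoke Lemma \ref{matrices}(1). The irreducible components $\cM_i$ of $\cM$ are $\cO$-flat of relative dimension $n^2$ with nonempty characteristic-zero fibers, and the $\cM_i \otimes k$ are the distinct, generically reduced irreducible components of $\cM \otimes k$. Completing at $x$, the irreducible components of $\Spec R_v^a$ come from those $\cM_i$ passing through $x$, each of dimension $n^2 + 1$, each with generic point in characteristic zero by flatness. The distinctness of the $\cM_i \otimes k$ (each a separate component of the special fiber) combined with the $\cO$-flatness of each $\cM_i$ then gives the final claim: any formal component of $\Spec R_v^a \otimes k$ embeds in a unique global component $\cM_i$ through $x$, hence in a unique formal component of $\Spec R_v^a$. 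Thus each prime of $R_v^a$ minimal over $\lambda$ contains a unique minimal prime.

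The main technical point is the identification in the second paragraph --- namely, verifying that with the standard reduced and $l$-torsion-free quotient operations implicit in the definition of $R_v^a$, the universal framed lifting ring really is the completed local ring of $\cM$ at $x$ with no extra decoration. Once that is in hand, everything else is a direct translation of the statement of Lemma \ref{matrices}(1) into the language of the local lifting ring.
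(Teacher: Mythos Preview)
Your proposal is correct and follows the same route as the paper, which simply invokes Lemma 2.7 of \cite{Tay08} together with Lemma \ref{matrices} and the instruction to complete $\cM$ at the point of the special fibre corresponding to $\overline{\rho}$. Your second paragraph unpacks the identification that Lemma 2.7 of \cite{Tay08} packages, and your third paragraph reads off the conclusion from Lemma \ref{matrices}(1) exactly as intended; the only thing to be aware of is that the passage from the global component statement for $\cM$ to the corresponding statement for the completed local ring (your ``hence in a unique formal component'') is part of what Lemma 2.7 of \cite{Tay08} supplies, so in a fully self-contained write-up that step deserves a sentence of justification.
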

\begin{proof}
This follows from Lemma 2.7 of \cite{Tay08} and Lemma \ref{matrices} above (complete at the point in the special fibre of $\cM$ corresponding to $\overline{\rho}$).
\end{proof}

Finally, suppose that $\overline{\rho}$ is unramified and make no hypothesis on $q_v$. We write $\cD^{ur}_v$ for the collection of all unramified liftings of $\overline{\rho}$.

\begin{proposition} $\cD_v^{ur}$ is a local deformation problem, and the corresponding lifting ring $R^{ur}_v$ is formally smooth over $\cO$.
\end{proposition}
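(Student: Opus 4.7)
The plan is to verify the five axioms of a local deformation problem directly, and then to identify $R_v^{ur}$ with the completed local ring of $\GL_n$ at a smooth point.

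First I would check that $\cD_v^{ur}$ is a local deformation problem. The key observation is that a lift $r$ of $\overline{\rho}$ to an object $R$ of $\cC_\cO$ is unramified if and only if $r(I_{L_v}) = \{1\}$, and this property is manifestly preserved by all the constructions in the definition: the trivial lift $\overline{\rho}$ is unramified by hypothesis; if $r_1$ and $r_2$ are unramified then so is the fiber product $r_1 \times r_2$ over a common quotient (since $(r_1 \times r_2)(I_{L_v}) \subset \{1\} \times \{1\}$); inverse limits of unramified representations are unramified; conjugation by $\beta \in 1 + M_n(\ffrm_R)$ preserves the triviality of inertial action; and the pullback of an unramified lift along a subring inclusion $R \hookrightarrow S$ is still unramified. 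Each condition is immediate.

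Next I would establish formal smoothness. Since any $r \in \cD_v^{ur}(R)$ factors through $G_{L_v}/I_{L_v}$, which is topologically generated by $\Frob_v$, and since $\GL_n(R)$ is profinite for $R \in \cC_\cO$ (being the limit of the finite groups $\GL_n(R/\ffrm_R^m)$), the functor $\cD_v^{ur}$ is naturally isomorphic to the functor
\begin{equation*}
R \longmapsto \bigl\{\, M \in \GL_n(R) \;:\; M \bmod \ffrm_R = \overline{\rho}(\Frob_v) \,\bigr\}.
\end{equation*}
This is the functor of points of the completed local ring $\widehat{\cO}_{\GL_n, \overline{\rho}(\Frob_v)}$ at the $k$-rational point $\overline{\rho}(\Frob_v)$. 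Since $\GL_n$ is smooth over $\Spec \cO$ of relative dimension $n^2$, this completed local ring is isomorphic to a formal power series ring $\cO[[X_1, \dots, X_{n^2}]]$, which is formally smooth over $\cO$.

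There is no genuine obstacle; the statement is essentially a formality once one records that the functor of unramified lifts is, up to rigid identification, the functor of matrices lifting $\overline{\rho}(\Frob_v)$. The only point that requires even a moment's attention is the continuity of $r$, which follows from the fact that $\widehat{\bbZ}$ is profinite and $\GL_n(R)$ is profinite for $R$ in $\cC_\cO$.
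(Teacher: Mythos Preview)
Your proof is correct; the paper states this proposition without proof, treating it as standard. Your argument---checking the axioms directly and then identifying the functor of unramified lifts with the completed local ring of $\GL_n$ at $\overline{\rho}(\Frob_v)$---is exactly the expected one.
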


\section{Galois theory}
The following generalizes the discussion in Section 2.5 of \cite{Clo08}. 

\begin{definition} Suppose that \begin{equation*}\cS = \left(F/F^+, S, \widetilde{S}, \cO, \overline{r}, \chi, \{ \cD_v \}_{v \in S}\right)\end{equation*} is a global deformation problem and that $T \subset S$. Let $Q$ be a finite set of primes $v \not\in S$ of $F^+$ which split in $F$ and for which \begin{equation*}\bbN v \equiv 1 \mod l.\end{equation*} Let $\widetilde{Q}$ denote a set of primes of $F$ containing exactly one prime $\wv$ of $F$ lying above each prime of $Q$. 

If $v \in Q$ then $\overline{r}|_{G_{F_\wv}}$ is unramified. We write it in the form $\overline{s}_v \oplus \overline{\psi}_v$ where $\psi_v$ is an eigenspace of Frobenius corresponding to an eigenvalue $\alpha_v$, on which Frobenius acts semisimply. Then we define a second global deformation problem \begin{equation*}\cS_Q = \cS_{Q, \{ \overline{\psi}_v\}_{v \in Q}} = (F/F^+, S \cup Q, \widetilde{S} \cup \widetilde{Q}, \cO, \overline{r}, \chi, \{ \cD_v \}_{v \in S \cup Q}),\end{equation*} where for $v \in Q$ we take $\cD_v$ to consist of all lifts which are $1 + M_n(\mathfrak{m}_R)$-conjugate to one of the form $s_v \oplus \psi_v$, where $s_v$ is unramified and $\psi_v$ may be ramified, but the image of inertia under $\psi_v$ is contained in the set of scalar matrices.
\end{definition}
Note that if one chooses the $\overline{\psi}_v$ to be one-dimensional, then this just recovers Definition 2.5.7 of \cite{Clo08}. The following lemma shows that the above definition makes sense.
\begin{lemma} Let $F_v$ be a finite extension of $\bbQ_p$, and let $\overline{r} : G_{F_v} \rightarrow \GL_n(k) = \overline{s} \oplus \overline{\psi}$ and $\cD_v$ be as above. Then $\cD_v$ is a local deformation problem.
\end{lemma}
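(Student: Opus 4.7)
The plan is to verify the five axioms for $\cD_v$ directly. The central observation is that the direct sum decomposition $\overline{r} = \overline{s} \oplus \overline{\psi}$ is \emph{canonical}: since $\overline{r}|_{G_{F_v}}$ is unramified and $\Frob_v$ acts semisimply on $\overline{\psi}$ with eigenvalue $\overline{\alpha}$ distinct from the eigenvalues on $\overline{s}$, the polynomial $\cha_{\overline{r}(\Frob_v)}(X)$ factors in $k[X]$ as a coprime product $\overline{g}(X) \cdot (X - \overline{\alpha})^m$, and the projections onto $\overline{s}$ and $\overline{\psi}$ are the associated spectral projectors. For any lift $r : G_{F_v} \to \GL_n(R)$ of $\overline{r}$ to an object of $\cC_\cO$, I first fix a lift $\phi$ of $\Frob_v$ to $G_{F_v}$; then by Hensel's lemma the factorization of $\cha_{r(\phi)}(X)$ lifts uniquely to $R[X]$, yielding canonical idempotents $e_1^r, e_2^r \in M_n(R)$ summing to $1$, commuting with $r(\phi)$, and reducing modulo $\ffrm_R$ to the standard projections onto $\overline{s}$ and $\overline{\psi}$.

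The next step is to prove the reformulation: $(R, r) \in \cD_v$ if and only if, for all $\sigma \in I_{F_v}$, one has (a) $e_i^r r(\sigma) = r(\sigma) e_i^r$, (b) $e_1^r r(\sigma) e_1^r = e_1^r$, and (c) $e_2^r r(\sigma) e_2^r$ is a scalar multiple of $e_2^r$. The ``only if'' direction is an explicit matrix calculation after conjugating $r$ into block diagonal form $s \oplus \psi$, where the $e_i^r$ become the block projections. For the ``if'' direction, $R^n = \mathrm{im}(e_1^r) \oplus \mathrm{im}(e_2^r) =: V_1^r \oplus V_2^r$ is a $G_{F_v}$-stable decomposition into free $R$-submodules of the expected ranks (free by Nakayama, since they lift the standard free $k$-modules), and bases lifting the standard ones assemble into an element $g \in 1 + M_n(\ffrm_R)$ with $g^{-1} r g = s \oplus \psi$ of the required form. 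Independence of the choice of $\phi$ (given (a)--(c)) follows from uniqueness of Hensel: any other lift of $\Frob_v$ also has block diagonal image under $r$, hence yields the same idempotents.

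Granted this reformulation, the five axioms follow essentially formally. Axioms (1) and (4) are immediate from the construction and the conjugation-equivariance $e_i^{g^{-1}rg} = g^{-1} e_i^r g$. Axioms (2), (3), and (5) reduce to the naturality and uniqueness of the Hensel construction: the idempotents are compatible with inverse limits, fiber products, and inclusions $R \subset S$, and conditions (a)--(c) are closed conditions on entries of matrices, hence preserved under these operations. The one delicate point I anticipate is the freeness of $V_i^r$ in the ``if'' direction; this is handled by Nakayama, given that $V_i^r \otimes_R k = V_i^{\overline{r}}$ is free of the expected rank over $k$.
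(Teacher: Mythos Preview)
Your argument is correct and rests on the same idea as the paper's: Hensel's lemma applied to $\cha_{r(\phi)}(X)$ yields a canonical splitting lifting $\overline{s}\oplus\overline{\psi}$, and this splitting is functorial enough to push through fiber products, limits, and subrings. The paper carries this out by choosing compatible bases of the two summands and assembling explicit change-of-basis matrices $A_i\in 1+M_n(\ffrm_{R_i})$, whereas you package the same content as a reformulation of $\cD_v$ via the spectral idempotents $e_1^r,e_2^r$ and conditions (a)--(c), which makes the verification of all five axioms more uniform.

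One small point to tighten: condition (c), ``$e_2^r r(\sigma)e_2^r$ is a scalar multiple of $e_2^r$'', is not literally a closed condition on matrix entries because of the existential quantifier on the scalar. You should note that it is equivalent to the genuinely closed condition
\[
(e_2^r r(\sigma)e_2^r)_{ij}\,(e_2^r)_{kl}=(e_2^r r(\sigma)e_2^r)_{kl}\,(e_2^r)_{ij}\quad\text{for all }i,j,k,l,
\]
using that some entry of $\overline{e}_2$ is nonzero (hence the corresponding entry of $e_2^r$ is a unit) to recover the scalar. With this in hand, your claim that (a)--(c) are preserved under fiber products, inverse limits, and inclusions $R\subset S$ is justified, and axiom (v) in particular goes through (the scalar $c$ a priori lies in $S$, but the formula above shows it lies in $R$).
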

\begin{proof} Of the conditions required for $\cD_v$ to be a local deformation problem (recalled in the previous section), only (ii) and (iii) present any difficulty. Let us treat (ii) first. Let $F$ be a Frobenius lift. By Hensel's lemma, applied to the characteristic polynomial of $r_i(F)$, there is for each $i$ a unique splitting $r_i \cong s_i \oplus \psi_i$ where $s_i$, $\psi_i$ are subrepresentations of $r_i$ lifting $\overline{s}$ and $\overline{\psi}$, respectively. Similarly, we have $r_i \mod I_i \cong \widetilde{s} \oplus \widetilde{\psi}$ for unique subrepresentations $\widetilde{s}, \widetilde{\psi}$. 

Let $\widetilde{e}_1, \dots, \widetilde{e}_n$ be elements of $R_i/I_i$ lifting the standard basis of $k^n$ such that $\widetilde{e}_1, \dots, \widetilde{e}_m$ lie in $\widetilde{s}$ and $\widetilde{e}_{m+1}, \dots, \widetilde{e}_{n}$ lie in $\widetilde{\psi}$. By Nakayama's lemma, $\widetilde{e}_1, \dots, \widetilde{e}_m$ is a basis of $\widetilde{s}$, and similarly for $\widetilde{\psi}$.

Now lift these again to elements $e^i_1, \dots, e^i_n$ of $R_i$ for $i=1,2$. Let $A_i$ be the change of basis matrix. Then we have $A_i \in 1 + M_n(\ffrm_{R_i})$. Setting $A_3 = A_1 \times A_2$, we find $A_3 \in 1 + M_n(\ffrm_{R_3})$ and 
\begin{equation*}A_3 (r_1 \times r_2) A_3^{-1} = (s_1 \times s_2) \oplus (\psi_1 \times \psi_2).\end{equation*}
Hence $(R_3, r_1 \times r_2) \in \cD_v$.

For (iii), let $(R, r) = (\lim R_j, \lim r_j)$. Then applying Hensel's lemma again gives a splitting $r \cong s \oplus \psi$, for subrepresentations $s, \psi$. (Note that the property of inertia commuting with every choice of Frobenius lift $F$ is preserved under passing to the limit). Applying the uniqueness of the splitting of each $r_j$, we see that $s$ must be unramified, and that inertia acts centrally through $\psi$. The result follows.
\end{proof}
The following is proved in the same way as Lemma 2.5.8 of \cite{Clo08} (in fact the statement below is identical, but our notation $\cS_Q$ now means something slightly more general). We refer back to \cite{Clo08}, Section 2 for the definition of $L_v$ and $L_v^\perp$ below (informally, these are the local conditions in cohomology coming from our choice of local deformation problems $\cD_v$).
\begin{lemma} Suppose that we are in the situation of the above definition, and that $\overline{r}$ is absolutely irreducible. Suppose also that for $v \in S - T$ we have
\begin{equation*}\dim_k L_v - \dim_k H^0(G_{F_\wv}, \ad \overline {r}) = \left\{\begin{array}{ll} [F_v^+ : \bbQ_l]n(n-1)/2 & \text{if }v \mid l \\ 0 & \text{if } v \nmid l.\end{array}\right.\end{equation*} Then $R^{\square_T}_{\cS_Q}$ can be topologically generated over $R^\text{loc}_{\cS, T} = R^\text{loc}_{\cS_Q, T}$ by \begin{equation*}\dim_k H^1_{\cL(Q)^\perp, T}(G_{F^+, S}, \ad \overline{r}(1)) + \# Q - \sum_{v \in T, v \mid l} [F_v^+ : \bbQ_l]n(n-1)/2\end{equation*} \begin{equation*} - \dim_k H^0(G_{F^+, S}, \ad \overline{r}(1)) - n \sum_{v \mid \infty} (1 + \chi(c_v))/2\end{equation*} elements.
\end{lemma}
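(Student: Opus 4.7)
The statement generalizes Lemma 2.5.8 of \cite{Clo08} (where $\dim \overline{\psi}_v = 1$). I would follow the proof of that lemma essentially verbatim; the only genuinely new ingredient is the local tangent-space calculation at primes $v \in Q$, which is also the main obstacle.

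First, apply the standard presentation lemma for framed deformation rings (\cite{Clo08}, Proposition 2.2.9 together with the $T$-framed variant). Since $\overline{r}|_{G_{F,S}}$ is absolutely irreducible and $T \cap Q = \emptyset$ (so that $R^{\text{loc}}_{\cS_Q, T} = R^{\text{loc}}_{\cS, T}$), the ring $R^{\square_T}_{\cS_Q}$ can be topologically generated over $R^{\text{loc}}_{\cS, T}$ by $\dim_k H^1_{\cL(Q), T}(G_{F^+, S \cup Q}, \ad \overline{r})$ elements, where $\cL(Q)_v \subset H^1(G_{F_\wv}, \ad \overline{r})$ is the tangent space to $\cD_v$.

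Second, apply the Greenberg-Wiles / Poitou-Tate duality formula, exactly as in the proof of Lemma 2.5.8 of \cite{Clo08}, to express this dimension as $\dim H^1_{\cL(Q)^\perp, T}(G_{F^+, S \cup Q}, \ad \overline{r}(1))$ plus contributions $\dim L_v - h^0(G_{F_\wv}, \ad \overline{r})$ for $v \in S \cup Q$, together with archimedean and global $H^0$ terms. The hypothesis on $v \in S - T$ cancels those local contributions. For $v \in T$ with $v \mid l$, the local Euler-characteristic formula together with removal of framing gives the $-[F_v^+ : \bbQ_l] n(n-1)/2$ term. The archimedean contribution produces $-n \sum_{v \mid \infty} (1 + \chi(c_v))/2$, and $-h^0(\ad \overline{r}(1))$ comes from the global Euler characteristic. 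The dual Selmer group over $G_{F^+, S \cup Q}$ agrees with that over $G_{F^+, S}$ at $v \in Q$, by inflation-restriction (since $\overline{r}$ is unramified there).

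The genuinely new calculation, and the main obstacle, is showing $\dim L_v - h^0(G_{F_\wv}, \ad \overline{r}) = 1$ for each $v \in Q$, independent of $\dim \overline{\psi}_v$. An infinitesimal deformation in $\cD_v$ is conjugate to $s_v \oplus \psi_v$ with $s_v$ unramified and $\psi_v|_{I_{F_\wv}}$ a scalar character. This splits the tangent space into unramified deformations (of dimension $\dim H^1_{\text{nr}}(G_{F_\wv}, \ad \overline{r}) = h^0$, by the cohomology of $\widehat{\bbZ}$) plus infinitesimal scalar characters $\chi : I_{F_\wv}^{\text{tame}} \to k$, of which there is a $1$-dimensional space. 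The Frobenius compatibility $\chi^{q_v} = \chi$ is automatic to first order because $q_v \equiv 1 \pmod l$, and crucially this contribution remains $1$-dimensional because scalars span a one-dimensional subspace of $\End \overline{\psi}_v$; one must track the conjugation equivalence in the definition of $\cD_v$ carefully to avoid overcounting inertial deformations. Summing over $v \in Q$ yields the $+\#Q$ term in the asserted formula.
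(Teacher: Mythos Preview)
Your proposal is correct and matches the paper's approach exactly: the paper simply says the lemma ``is proved in the same way as Lemma 2.5.8 of \cite{Clo08}'' and notes that only the meaning of $\cS_Q$ has changed. You correctly identify the one point that genuinely needs rechecking, namely the local computation $\dim_k L_v - h^0(G_{F_\wv},\ad\overline{r}) = 1$ for $v\in Q$ when $\dim\overline{\psi}_v$ may exceed $1$; the paper does not spell this out, but your sketch (using that Frobenius acts as a scalar on $\overline\psi_v$ so $\ad\overline\psi_v$ is trivial, that the cross terms $\Hom(\overline s_v,\overline\psi_v)$ and $\Hom(\overline\psi_v,\overline s_v)$ have vanishing $H^1$, and that the inertial contribution is the one-dimensional $\Hom(I_{F_\wv}^{\mathrm{tame}},Z(\overline\psi_v))$) is correct.
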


\begin{proposition}\label{Chebotarev} Let $q_0 \in \bbZ_{\geq 0}$. Suppose given a deformation problem \begin{equation*}\cS = (F/F^+, S, \widetilde{S}, \cO, \overline{r}, \chi, \{ \cD_v \}_{v \in S}).\end{equation*} Suppose that $\overline{r}$ is absolutely irreducible and that $\overline{r}(G_{F^+(\zeta_l)})$ is adequate. Suppose also that for $v \in S - T$ we have \begin{equation*}\dim_k L_v - \dim_k H^0(G_{F_\wv}, \ad \overline {r}) = \left\{\begin{array}{ll} [F_v^+ : \bbQ_l]n(n-1)/2 & \text{if }v \mid l \\ 0 & \text{if } v \nmid l.\end{array}\right.\end{equation*}
Let $q$ be the larger of $\dim_k H^1_{\cL^\perp, T}(G_{F^+, S}, \ad \overline{r} (1))$ and $q_0$. 

Then for any $N \in \bbZ_{\geq 1}$ we can find $(Q, \widetilde{Q}, \{ \overline{\psi}_v\}_{v \in Q})$ as in the above definition, such that \begin{itemize} \item $\# Q = q \geq q_0$.
\item If $v \in Q$ then $\bbN v \equiv 1 \mod l^N$.
\item $R^{\square_T}_{\cS_Q}$ can be topologically generated over $R^\text{loc}_{\cS, T} = R^\text{loc}_{\cS_Q, T}$ by \begin{equation*} \# Q - \sum_{v \in T, v \mid l} [F_v^+ : \bbQ_l]n(n-1)/2 - n \sum_{v \mid \infty} (1 + \chi(c_v))/2\end{equation*} elements.
\end{itemize}
\end{proposition}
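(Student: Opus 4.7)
The plan is to follow the Chebotarev argument of Proposition 2.5.9 of \cite{Clo08}, replacing ``big'' by ``adequate'' throughout and making use of the more flexible local Taylor--Wiles deformation condition defined just above. In view of the preceding lemma, it suffices to construct $Q$ (together with the auxiliary $\widetilde{Q}$ and $\overline{\psi}_v$) satisfying $\#Q = \max(q_0, \dim_k H^1_{\cL^\perp, T}(G_{F^+, S}, \ad \overline{r}(1)))$, $\bbN v \equiv 1 \mod l^N$ for each $v \in Q$, and $H^1_{\cL(Q)^\perp, T}(G_{F^+, S}, \ad \overline{r}(1)) = 0$. The needed vanishing $H^0(G_{F^+, S}, \ad \overline{r}(1)) = 0$ follows from adequacy, since $\epsilon$ is trivial on $G_{F^+(\zeta_l)}$ and $H^0(\overline{r}(G_{F^+(\zeta_l)}), \ad V) = 0$.

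First I would analyze the local condition $L_v$ at a prospective Taylor--Wiles prime $v$ with $\bbN v \equiv 1 \mod l^N$. Given a decomposition $\overline{r}|_{G_{F_\wv}} = \overline{s}_v \oplus \overline{\psi}_v$ with $\overline{\psi}_v$ the generalized $\alpha_v$-eigenspace of $\overline{r}(\Frob_v)$, the new local deformation condition requires inertia to act on $\overline{\psi}_v$ through scalars and trivially on $\overline{s}_v$. A direct tangent-space computation, using $\bbN v \equiv 1 \mod l$ and the structure of tame inertia, then shows that under local Tate duality a class $[\phi] \in H^1(G_{F_\wv}, \ad \overline{r}(1))$ fails to lie in $L_v^\perp$ precisely when $\tr\bigl(e_{\overline{r}(\Frob_v), \alpha_v} \cdot \phi(\sigma_v)\bigr) \neq 0$ for a generator $\sigma_v$ of tame inertia.

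Next I would carry out the Chebotarev step. Fix $0 \neq [\phi] \in H^1_{\cL^\perp, T}(G_{F^+,S}, \ad \overline{r}(1))$, and let $F^+_N$ denote the fixed field of $\ker \overline{r}$ with $\zeta_{l^N}$ adjoined. The adequacy hypotheses $H^1(G^0, k) = 0$ and $H^1(G, \ad V) = 0$ imply (via inflation--restriction, exactly as in Lemmas 2.5.4--2.5.6 of \cite{Clo08}) that $\phi|_{G_{F^+_N}}$ descends to a nonzero $\overline{r}(G_{F^+})$-equivariant homomorphism $G_{F^+_N}^{ab} \to \ad \overline{r}(1)$, the image of which contains some irreducible $k[\overline{r}(G_{F^+})]$-submodule $W \subseteq \ad \overline{r}$. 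Applying the trace condition in the definition of adequate to $W$ produces an element $g_0 \in \overline{r}(G_{F^+})^0$ and an eigenvalue $\alpha$ of $g_0$ on $V$ with $\tr(e_{g_0, \alpha} W) \neq 0$. Composing a lift of $g_0$ with an element of $G_{F^+_N}$ whose image under $\phi$ lies in $W$ with nonzero $e_{g_0, \alpha}$-projection, a standard Chebotarev density argument produces a prime $v \notin S$, split in $F/F^+$, with $\bbN v \equiv 1 \mod l^N$, such that $\overline{r}(\Frob_v)$ is conjugate to $g_0$; taking $\overline{\psi}_v$ to be the generalized $\alpha$-eigenspace then guarantees $[\phi]|_v \notin L_v^\perp$ by the computation of the preceding paragraph.

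Iterating this construction kills $H^1_{\cL^\perp, T}$ one class at a time; after at most $\dim_k H^1_{\cL^\perp, T}$ steps the dual Selmer group vanishes, and if $q_0$ is larger we simply pad with additional primes of the same type, which can only shrink the already-zero dual Selmer group. The principal obstacle is the middle step: in the original ``big'' formulation, the eigenvalue $\alpha$ had multiplicity one so $\overline{\psi}_v$ was automatically a line and the pairing collapsed to a scalar; with only adequacy the generalized eigenspace may be higher-dimensional, which is exactly why the more general local deformation problem defined above was needed, and one must verify that the weaker trace condition still suffices to detect a nonzero local pairing.
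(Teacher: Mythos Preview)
Your overall strategy matches the paper's: compute $L_v^\perp$ at Taylor--Wiles primes, reduce to killing one dual Selmer class at a time, and use adequacy plus Chebotarev to find a suitable $(\sigma_0,\alpha)$. But your local analysis contains a genuine error that breaks the argument as written.

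You assert that $[\phi]\in H^1(G_{F_\wv},\ad\overline{r}(1))$ fails to lie in $L_v^\perp$ precisely when $\tr\bigl(e_{\overline r(\Frob_v),\alpha_v}\cdot\phi(\sigma_v)\bigr)\neq 0$ for a generator $\sigma_v$ of tame inertia. This is the wrong element to evaluate at. The subspace $L_v^\perp$ consists of \emph{unramified} classes whose projection to $H^1(G_{F_\wv},\ad\overline\psi(1))$ lands in $H^1(G_{F_\wv},\ad^0\overline\psi(1))$; for an unramified class, $\phi(\sigma_v)=0$, so your criterion is vacuously satisfied and detects nothing. The classes you need to kill come from the global $H^1_{\cL^\perp,T}$ and are unramified at every $v\notin S$; the correct map is
\[
[\phi]\longmapsto \tr\, e_{\Frob_\wv,\alpha_v}\,\phi(\Frob_\wv)\in k,
\]
and $H^1_{\cL(Q)^\perp,T}$ is exactly the kernel of the sum of these over $v\in Q$. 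Once this is corrected, your Chebotarev paragraph does produce $\sigma_0\in G_{F(\zeta_{l^N})}$ with $\tr\, e_{\sigma_0,\alpha}\phi(\sigma_0)\neq 0$ (by the same $\tau\sigma$ trick the paper uses), and then any place $v$ with $\Frob_v$ conjugate to $\sigma_0$ works.

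One further point you should make explicit: the local deformation problem at $v\in Q$ requires Frobenius to act \emph{semisimply} on $\overline\psi_v$, whereas adequacy only hands you an arbitrary $(g,\alpha)$. This is easily repaired---replace $g$ by $g^{l^m}$ for $m\gg 0$, which is semisimple (the field has characteristic $l$) and has the same generalized eigenspace decomposition, so $e_{g,\alpha}=e_{g^{l^m},\alpha^{l^m}}$---but it should be said.
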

\begin{proof} Given $(Q, \widetilde{Q}, \{ \overline{\psi}_v\}_{v \in Q})$ there is an exact sequence
\begin{equation*}\xymatrix{0 \ar[r] & H^1(G_{F^+, S}, \ad\overline{r}(1)) \ar[r] & H^1(G_{F^+, S \cup Q}, \ad \overline{r}(1)) \ar[r] &}\end{equation*}
\begin{equation*}\xymatrix { \ar[r] & \bigoplus_{v \in Q}  H^1(I_{F_\wv}, \ad \overline{s} (1)) \oplus H^1(I_{F_\wv}, \ad \overline{\psi} (1)),}\end{equation*}
the last arrow given by restriction. Now, for $v \in Q$,  $L_v$ is the subspace of 
\begin{equation*}H^1(G_{F_\wv}, \ad \overline{r}(1)) = H^1(G_{F_\wv}, \ad \overline{s}(1)) \oplus H^1(G_{F_\wv}, \ad \overline{\psi}(1))\end{equation*}
whose projection to $H^1(I_{F_\wv}, \ad \overline{s}(1))$ is trivial and whose projection to $H^1(I_{F_\wv}, \ad \overline{\psi}(1))$ actually takes values in 
\begin{equation*}H^1(I_{F_\wv}, Z(\overline{\psi})(1)),\end{equation*} 
where $Z(\overline{\psi}) \subset \ad \overline{\psi}$ is the subspace of scalar matrices.

Recall that the complement $L_v^\perp$ of $L_v$ is taken with respect to the pairing $\ad \overline{r} \times \ad \overline{r}(1) \rightarrow k(1)$ given by $(A, B) \mapsto \tr AB$. It follows that for $v \in Q$, $L_v^\perp$ is identified with the subspace of unramified cohomology classes in $H^1(G_{F_\wv}, \ad \overline{r} (1))$ whose projection to $H^1(G_{F_\wv}, \ad \overline{\psi} (1))$ actually takes values in $H^1(G_{F_\wv}, \ad^0 \overline{\psi} (1))$. In particular, the group $H^1_{\cL(Q)^\perp, T}(G_{F^+, S\cup Q},\ad \overline{r} (1)) $ is the kernel of the map
\[ H^1_{\cL^\perp, T}(G_{F^+,S},\ad \overline{r}(1))\rightarrow \oplus_{v \in Q} k\]
given by $[\phi] \mapsto (\tr e_{\Frob_\wv, \alpha_v} \phi(\Frob_\wv))_{v \in Q}$. By the previous lemma, we'll be done if for any non-zero cohomology class \begin{equation*}[\phi] \in H^1_{\cL^\perp, T}(G_{F^+,S},\ad \overline{r}(1)),\end{equation*} we can find a place $v$ of $F^+$ such that $v$ splits completely in $F(\zeta_{l^N})$ and that 
\begin{equation*}\tr e_{\Frob_\wv, \alpha_v} \phi(\Frob_\wv) \neq 0\end{equation*}
 for one of the places $\wv$ of $F$ above $v$ and choice of eigenvalue $\alpha_v$, and such that $\overline{r}(\Frob_\wv)$ acts semisimply on its $\alpha_v$-eigenspace.
 
 By the Chebotarev density theorem, it suffices to find an element $\sigma_0$ of $G_{F(\zeta_{l^N})}$ such that
 \begin{equation*}\tr e_{\sigma_0, \alpha} \phi(\sigma_0) \neq 0\end{equation*}
 for some eigenvalue $\alpha$ of $\overline{r}(\sigma_0)$.
 
 Let $L / F(\zeta_{l^N})$ be the extension cut out by $\ad \overline{r}$. We have an exact sequence
 \begin{equation*}\xymatrix{0 \ar[r] & H^1(G_{F^+(\zeta_{l^N})}, \ad \overline{r}(1)) \ar[r] & H^1(G_L, \ad\overline{r}(1))^{G_{F^+(\zeta_{l^N})}}.}\end{equation*}
 
 Thus $\phi(G_L)$ is a non-zero $G_{F^+(\zeta_{l^N})}$-submodule of $\ad \overline{r}(1)$. By hypothesis, there exists $\sigma \in G_{F(\zeta_{l^N})}, \alpha$ such that $\overline{r}(\sigma)$ acts semisimply on its $\alpha$-eigenspace and such that
 \begin{equation*}\tr e_{\sigma, \alpha} \phi(G_L) \neq 0.\end{equation*}
 If
  \begin{equation*}\tr e_{\sigma, \alpha} \phi(\sigma) \neq 0\end{equation*}
then we're done. So we can assume that in fact
 \begin{equation*}\tr e_{\sigma, \alpha} \phi(\sigma) =  0.\end{equation*}
For $\tau \in G_L$, $\overline{r}(\tau \sigma)$ is a scalar multiple of $\overline{r}(\sigma)$ and we have $\phi(\tau \sigma) = \phi(\tau) + \phi(\sigma)$, hence
 \begin{equation*}\tr e_{\sigma, \alpha} \phi(\tau \sigma) =  \tr e_{\sigma, \alpha} \phi(\tau).\end{equation*}
 Therefore we can choose $\sigma_0 = \tau \sigma$ for any $\tau \in G_L$ such that
 \begin{equation*}\tr e_{\sigma, \alpha} \phi(\tau) \neq 0.\end{equation*}
This concludes the proof.
\end{proof}

\section{Smooth representation theory of $q$-adic $\GL_n$}
We introduce some more notation. Let $F$ be a finite extension of $\bbQ_p$, and choose a uniformizer $\varpi$. Let $q$ denote the size of the residue field $\frf$ of $F$. We write $G = \GL_n(F)$, $B$ for its standard Borel subgroup. We fix a partition $n = n_1 + n_2$, and let $P$ be the standard parabolic (containing B) corresponding to this partition. Let $P = MN$ be its Levi decomposition. We will use $P$ interchangeably to denote the algebraic subgroup of $\GL_n$ or its $F$-points. This should not cause any confusion. We let $\overline{P}$ denote the opposite parabolic of $P$.

We write $K = \GL_n(\cO_F)$, and $\frp$ for the parahoric subgroup of $G$ corresponding to those elements of $K$ whose reduction modulo $\varpi$ lies in $P(k)$. Let $\ffrm = M(\cO_F)$. We write $\Iw$ for the standard Iwahori subgroup of $G$. We consider smooth representations of $G$ over an algebraically closed field $C$ of characteristic zero.

We use $\Ind$ to denote standard unnormalized parabolic induction and $\nInd$ to denote the normalized parabolic induction $\nInd_P^G \sigma = \Ind_P^G \sigma \delta_P^\frac{1}{2}$, where $\delta_P$ is the modulus character of $P$. We use $\pi \mapsto \pi_N$ to denote the normalized Jacquet functor which is right adjoint to $\nInd$. In particular, if $\pi = \nInd_B^G \chi_1 \otimes \dots \otimes \chi_n$ is irreducible then
\begin{equation*}r_l(\pi)^\vee(1-n) = \oplus_i(\chi_i | \cdot |^{(1-n)/2)}) \circ \Art_F^{-1}.\end{equation*}

If $\psi$ is an unramified character, we write $\St_m(\psi)$ for the unique generic subquotient of
\begin{equation*}\nInd_{B_m}^{\GL_m} \psi \otimes \psi |\cdot| \otimes \dots \otimes \psi |\cdot|^{m-1},\end{equation*}
where $B_m \subset \GL_m$ is the standard Borel subgroup. We refer to this as a \emph{Steinberg} representation.

Write $W$ for the Weyl group of $G$. Given a parabolic subgroup $Q$ of $G$, we write $W_Q \subset W$ for the Weyl group of its Levi factor. Recall that $W_Q \backslash W \slash W_P$ has a canonical set of representatives, which we will denote $[W_Q \backslash W \slash W_P]$, given by taking in each double coset the element of minimal length (cf. the first section of \cite{Cas95}).

\begin{lemma} Let $Q$ be the standard parabolic corresponding to the partition $n = m_1 + \dots + m_r$. Then there is a bijection 
\begin{equation*}W_Q \backslash W \slash W_P \cong \text{the set of partitions }m_i = n_1^i + n_2^i, i=1, \dots, r\end{equation*} \begin{equation*} \text{ such that } \sum_i n_1^i = n_1 \text{ and } \sum_i n_2^i = n_2,\end{equation*}
given as follows. Let 
\begin{equation*}M_i = \{ m_1 + \dots + m_{i-1} + 1, \dots, m_1 + \dots + m_{i-1} + m_i\},
\end{equation*}
$ N_1 = \{1, \dots, n_1\},$ and $ N_2 = \{n_1 + 1, \dots, n\}$. Then we take
\begin{equation*}W_Q w W_P \mapsto \text{the partition }m_i = (\# M_i \cap wN_1) + (\#M_i \cap wN_2).\end{equation*}
(Here $W$ acts in the natural manner on $\{1, \dots, n\}$.)
\end{lemma}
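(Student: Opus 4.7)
The plan is to exhibit the claimed bijection as an instance of the standard parameterization of $(W_Q, W_P)$-double cosets in $S_n$ by two-way contingency tables with prescribed row and column sums, specialized to the case of two columns. We identify $W$ with $S_n$ acting on $\{1, \dots, n\}$ in the natural way, so that $W_P$ is the stabilizer of the partition $\{N_1, N_2\}$ and $W_Q$ is the stabilizer of the partition $\{M_1, \dots, M_r\}$. Let $\varphi \colon W \to \bbZ_{\geq 0}^r \times \bbZ_{\geq 0}^r$ send $w$ to the tuple $(n_1^i, n_2^i)_{i=1}^r$ with $n_j^i = \#(M_i \cap wN_j)$. Since $\{wN_1, wN_2\}$ is a partition of $\{1,\dots,n\}$ we automatically have $n_1^i + n_2^i = m_i$, and summing over $i$ gives $\sum_i n_j^i = n_j$.

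First I would check that $\varphi$ factors through $W_Q \backslash W \slash W_P$: if $p \in W_P$ then $pN_j = N_j$, so $\varphi(wp) = \varphi(w)$; if $q \in W_Q$ then $q^{-1}M_i = M_i$, so $\#(M_i \cap qwN_j) = \#(q^{-1}M_i \cap wN_j) = \#(M_i \cap wN_j)$. This shows well-definedness.

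For surjectivity, given any partition data $(n_1^i, n_2^i)$ with the stated sum constraints, I would explicitly write down a permutation realizing it: inside each block $M_i$ pick the first $n_1^i$ elements and the last $n_2^i$ elements, then let $w$ be any bijection sending $N_1$ to the union of the chosen ``first'' pieces and $N_2$ to the union of the chosen ``second'' pieces. Then $\varphi(w)$ is the prescribed tuple.

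For injectivity, suppose $\varphi(w) = \varphi(w')$. Then $wN_1$ and $w'N_1$ are subsets of $\{1,\dots,n\}$ with equal intersections with each $M_i$, so there is an element $q \in W_Q = \prod_i S_{M_i}$ with $q(wN_1) = w'N_1$ (and hence also $q(wN_2) = w'N_2$). Then $(w')^{-1}qw$ preserves both $N_1$ and $N_2$, so it lies in $W_P$, giving $w' \in W_Q w W_P$. The main (very mild) obstacle is simply the indexing bookkeeping in the last two steps; beyond that, the result is formal from the definitions, so I would present it compactly rather than spelling out all the combinatorics.
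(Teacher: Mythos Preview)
Your proof is correct and complete. The paper actually states this lemma without proof, treating it as an elementary combinatorial fact; your argument via the standard contingency-table parameterization of $(W_Q,W_P)$-double cosets is exactly the expected justification, and the well-definedness, surjectivity, and injectivity steps are all sound.
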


Given $Q$ as in the lemma, we write $L_i$ for the $i^\text{th}$ block factor of its Levi subgroup. Given a partition $m_i = n_1^i + n_2^i$, corresponding to a double coset $W_Q w W_P$, we write $\frp_i^w$ for the parahoric subgroup of $L_i$ corresponding to those elements of $L_i(\cO_F)$ whose reduction modulo $\varpi$ lies in the two-block standard parabolic given by $m_i = n_1^i + n_2^i.$

\begin{lemma}\label{invariants} Let $Q$ be as above. \begin{enumerate} \item If $w \in [W_Q \backslash W \slash W_P]$, then $L_i \cap w \frp w^{-1} = \frp^w_i.$
\item For each $i$, let $\pi_i$ be a smooth representation of $L_i$. Then \begin{equation*}(\nInd_Q^G \pi_1 \otimes \dots \otimes \pi_r)^\frp = \oplus_{w \in [W_Q \backslash W \slash W_P]} \pi_1^{\frp^w_1} \otimes \dots \otimes \pi_r^{\frp^w_r}.\end{equation*}
\end{enumerate}
\end{lemma}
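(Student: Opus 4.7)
The plan is to reduce both parts to calculations in the finite group $\GL_n(\frf)$ via reduction modulo $\varpi$, together with the double coset decomposition of $G$ with respect to $Q$ and $\frp$.

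For part (1), I first observe that $w$ may be represented by a permutation matrix, so $w \in K$, and consequently $w\frp w^{-1}$ is the preimage in $K$ of $wP(\frf)w^{-1}$ under the reduction map $K \to \GL_n(\frf)$. Intersecting with $L_i$ and using that $L_i \cap K = L_i(\cO_F)$, I am reduced to computing $L_i(\frf) \cap wP(\frf)w^{-1}$. Since $P(\frf)$ stabilizes the partition $N_1 \sqcup N_2$ of $\{1,\dots,n\}$, its conjugate $wP(\frf)w^{-1}$ stabilizes $wN_1 \sqcup wN_2$; intersecting with $L_i$ (which only acts on the $M_i$-coordinates) gives the standard parabolic of $L_i$ whose flag has block sizes $(\#(M_i \cap wN_1),\#(M_i \cap wN_2)) = (n_1^i, n_2^i)$. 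Taking preimages in $L_i(\cO_F)$ then recovers the parahoric $\frp_i^w$.

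For part (2), the plan is to use a double coset decomposition
\[ G = \bigsqcup_{w \in [W_Q \backslash W / W_P]} Q w \frp, \]
which follows from the Iwasawa decomposition $G = QK$ together with the classical finite Bruhat decomposition $\GL_n(\frf) = \bigsqcup_w Q(\frf) w P(\frf)$, lifted through the surjection $K \twoheadrightarrow \GL_n(\frf)$ whose kernel $1 + \varpi M_n(\cO_F)$ is contained in $\frp$. Writing $\sigma = \pi_1 \otimes \cdots \otimes \pi_r$, any $f \in (\nInd_Q^G \sigma)^\frp$ is determined by its values $(f(w))_w$ at the representatives; well-definedness and $\frp$-invariance force $f(w)$ to satisfy $\delta_Q^{1/2}(q)\sigma(q) f(w) = f(w)$ for every $q \in Q \cap w\frp w^{-1}$. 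Since this subgroup is compact, $\delta_Q^{1/2}$ is trivial on it, so the condition reduces to $\sigma$-invariance under $Q \cap w\frp w^{-1}$.

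As $\sigma$ is inflated from $L$, this invariance depends only on the image of $Q \cap w\frp w^{-1}$ under the projection $Q \twoheadrightarrow L$. The key step is the Iwahori-type factorization
\[ Q \cap w\frp w^{-1} = (L \cap w\frp w^{-1})(N_Q \cap w\frp w^{-1}), \]
valid for $w \in [W_Q \backslash W / W_P]$; granting it, the image in $L$ equals $L \cap w\frp w^{-1} = \prod_i (L_i \cap w\frp w^{-1}) = \prod_i \frp_i^w$ by part (1), so $f(w) \in \pi_1^{\frp_1^w} \otimes \cdots \otimes \pi_r^{\frp_r^w}$ and summing over $w$ gives the claimed direct sum. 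The main obstacle will be verifying the Iwahori-type factorization; I would obtain it by lifting the corresponding finite-group identity $Q(\frf) \cap wP(\frf)w^{-1} = (L(\frf) \cap wP(\frf)w^{-1})(N_Q(\frf) \cap wP(\frf)w^{-1})$ (which depends critically on $w$ being a minimal length double coset representative) through the reduction map, using that $1 + \varpi M_n(\cO_F)$ itself decomposes compatibly with the Levi factorization of $Q$.
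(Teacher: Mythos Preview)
Your argument is correct and follows essentially the same route as the paper: obtain the double coset decomposition $G = \coprod_w Qw\frp$, read off $(\nInd_Q^G \sigma)^\frp = \bigoplus_w \sigma^{Q \cap w\frp w^{-1}}$, and then identify each summand via part (1). You are in fact more explicit than the paper about the Iwahori-type factorization $Q \cap w\frp w^{-1} = (L \cap w\frp w^{-1})(N_Q \cap w\frp w^{-1})$ needed to pass from $Q$-invariance to $L$-invariance; the paper simply writes ``applying the first part of the lemma gives the result''.

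One small point to tighten in part (1): you assert that $L_i(\frf) \cap wP(\frf)w^{-1}$ is the \emph{standard} parabolic of $L_i(\frf)$ with block sizes $(n_1^i,n_2^i)$, but this is exactly where the minimality of $w$ enters --- without it you only get some (possibly non-standard) parabolic stabilizing the span of $M_i \cap wN_1$. The paper handles this by first recording the order-preserving characterization of minimal double coset representatives (so that $M_i \cap wN_1$ is an initial segment of $M_i$); you should insert the same observation before concluding.
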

\begin{proof} We recall that an element $w \in W$ is of minimal length in its double coset if and only if it is of minimal length in $W_Q w$ and $w W_P$. If $R$ is the parabolic corresponding to a partition $n = a_1 + \dots + a_s$ then an element is minimal in its left $W_R$-coset if and only if it is order preserving on the sets 
\begin{equation*}
\{a_1 + \dots + a_{i-1} + 1, \dots, a_1 + \dots + a_{i-1} + a_{i}\}
\end{equation*}
 for each $i=1, \dots, s$. This is well known, but can be deduced easily from e.g. the results on p. 20 of \cite{Bjo05}. The first part now follows from a direct calculation (alternatively, one could apply \cite{Cas95}, Proposition 1.3.3).

For the second part, we use the Bruhat decomposition to deduce that
\begin{equation*}G = \coprod_{w \in [W_Q \backslash W \slash W_P]} Qw\frp.\end{equation*}
Then we have from the definition of induction
\begin{equation*}(\nInd_Q^G \pi_1 \otimes \dots \otimes \pi_r)^\frp = \oplus_{w \in [W_Q \backslash W \slash W_P]}(\pi_1 \otimes \dots \otimes \pi_r)^{Q \cap w\frp w^{-1}},\end{equation*}
and applying the first part of the lemma gives the result.
\end{proof}
Let $\pi$ be an irreducible smooth representation of $G$. If $\pi$ is generic, then it has an expression $\pi = \nInd^G_Q \pi_1 \otimes \dots \otimes \pi_r$ as above, where each $\pi_i$ is an irreducible essentially square-integrable representation of $L_i$ (cf. \cite{Pra08}).

\begin{corollary} Suppose that $\pi^\frp \neq 0$. Then there exist unramified characters $\chi_1, \dots, \chi_s$ and $\psi_1, \dots, \psi_t$ of $F^\times$ such that \begin{equation*}\pi = \nInd^G_Q \chi_1 \otimes \dots \otimes \chi_s \otimes \St_2(\psi_1) \otimes \dots \otimes \St_2(\psi_t).\end{equation*}
\end{corollary}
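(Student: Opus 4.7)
By the Bernstein-Zelevinsky classification recalled immediately before the corollary, write $\pi = \nInd_Q^G \pi_1 \otimes \cdots \otimes \pi_r$ with each $\pi_i$ an essentially square-integrable representation of $L_i = \GL_{m_i}(F)$, and use the classification of such representations to write $\pi_i = \St_{m_i}(\psi_i)$ for some character $\psi_i$ of $F^\times$. Applying Lemma~\ref{invariants}(ii), the hypothesis $\pi^\frp \neq 0$ yields some $w \in [W_Q\backslash W/W_P]$ --- equivalently, a family of two-block partitions $m_i = n_1^i + n_2^i$ --- such that $\pi_i^{\frp_i^w} \neq 0$ for every $i$. The problem therefore reduces to classifying essentially square-integrable representations of $\GL_m(F)$ that admit non-zero invariants under the parahoric $\frp(a,b)$ attached to a two-block partition $m = a + b$, $a, b \geq 0$.

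If $a = 0$ or $b = 0$ then $\frp(a,b) = \GL_m(\cO_F)$, and since $\St_m(\psi)$ is spherical only when $m = 1$ and $\psi$ is unramified, we must have $m_i = 1$ with $\pi_i$ an unramified character of $F^\times$. If instead $a, b \geq 1$ then $\frp(a,b)$ strictly contains the standard Iwahori of $\GL_m(F)$, call it $\Iw_m$, so $\St_m(\psi)^{\frp(a,b)} \subseteq \St_m(\psi)^{\Iw_m}$; the latter vanishes unless $\psi$ is unramified, in which case it is one-dimensional and the Iwahori-Hecke operator $T_s$ attached to each simple reflection $s$ acts on it by $-1$ --- i.e.\ through the sign character of the finite Weyl group $S_m$. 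The projector onto $\frp(a,b)$-invariants therefore acts on $\St_m(\psi)^{\Iw_m}$ by a scalar proportional to $\sum_{w \in S_a \times S_b}(-1)^{\ell(w)}$, which vanishes as soon as $a \geq 2$ or $b \geq 2$. Hence $a = b = 1$, forcing $m_i = 2$ and $\pi_i = \St_2(\psi_i)$ with $\psi_i$ unramified.

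Assembling the two cases yields each $m_i \in \{1, 2\}$ with $\pi_i$ of the required form, giving the desired expression $\pi = \nInd_Q^G \chi_1 \otimes \cdots \otimes \chi_s \otimes \St_2(\psi_1) \otimes \cdots \otimes \St_2(\psi_t)$. The key non-formal input is the description of $\St_m(\psi)^{\Iw_m}$ (for $\psi$ unramified) as the sign character of the finite Hecke algebra of $S_m$; this is what forces the blocks of $Q$ supporting a Steinberg factor to have size exactly two. Everything else is a routine application of Lemma~\ref{invariants}(ii) together with the Bernstein-Zelevinsky classification of essentially square-integrable representations of $\GL_m(F)$.
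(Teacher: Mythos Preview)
Your argument follows the same route as the paper's: reduce via Lemma~\ref{invariants}(ii) to the question of which essentially square-integrable $\pi_i$ can have nonzero invariants under a two-block parahoric of $\GL_{m_i}$, and show this forces $m_i\le 2$. The paper compresses the last step into a single sentence (``$\St_n(\psi)$ has invariants under a two-block parahoric subgroup only if $n=2$''), whereas you supply a concrete justification via the sign character of the finite Hecke algebra; that extra detail is fine and correct.

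There is, however, a genuine gap at the start. You write ``use the classification of such representations to write $\pi_i = \St_{m_i}(\psi_i)$ for some character $\psi_i$ of $F^\times$'', but the Bernstein--Zelevinsky classification does \emph{not} say this: an essentially square-integrable representation of $\GL_m(F)$ is a generalized Steinberg $\St_{m/d}(\sigma)$ built from a supercuspidal $\sigma$ of $\GL_d(F)$, and only when $d=1$ is it of the form $\St_m(\psi)$ for a character. The paper handles this by first observing $\Iw\subset\frp$, so $\pi^\frp\neq 0$ forces $\pi^{\Iw}\neq 0$, hence $\pi$ is a subquotient of an unramified principal series and each $\pi_i$ automatically has the form $\St_{m_i}(\psi_i)$ with $\psi_i$ unramified. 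Your own argument in fact recovers this \emph{a posteriori} (once you apply Lemma~\ref{invariants}(ii), each $\pi_i$ has nonzero invariants under some parahoric containing $\Iw_{m_i}$, hence $\pi_i^{\Iw_{m_i}}\neq 0$), but as written the claim is asserted before it is justified. Move the Iwahori observation to the beginning, or defer the identification $\pi_i=\St_{m_i}(\psi_i)$ until after the lemma is applied, and the proof is complete.
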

\begin{proof} Since $\Iw \subset \frp$, $\pi$ must be a quotient of an unramified principal series. Therefore if we write 
\begin{equation*}\pi = \nInd^G_Q \pi_1 \otimes \dots \otimes \pi_r\end{equation*}
then each $\pi_i$ is either an unramified character or a Steinberg representation $\St_n(\psi)$. The representation $\St_n(\psi)$ has invariants under a two-block parahoric subgroup only if $n=2$, so by the previous lemma the factors $\pi_i$ must have the given form. Since $\pi$ is generic the normalized induction is independent of the order of the factors, hence we can write $\pi$ as in the statement of the corollary.
\end{proof}
Following \cite{Vig98}, we introduce Hecke algebras $H_\frp = \cH_\bbZ(G, \frp)$ and $H_\ffrm = \cH_\bbZ(M, \ffrm)$.  We recall from section II of that paper that $H_\ffrm$ has a subalgebra $H_\ffrm^-$ consisting of elements with `negative' support. In particular, this subalgebra contains the Hecke operators $T^j$ defined below. Moreover, there is a natural homomorphism $t : H_\ffrm^- \otimes C \rightarrow H_\frp \otimes C$, given on characteristic functions of double cosets by $\chi_{\ffrm g \ffrm} \mapsto \delta_{\overline{P}}(g)^\frac{1}{2} \chi_{\frp g \frp}$.

\begin{proposition} \label{intertwining} Let $\pi$ be an admissible representation of $G$, and let $q : \pi^\frp \rightarrow \pi_N^\ffrm$ be the natural projection. Then $q$ is an isomorphism, and for  all $v \in \pi^\frp, h \in H_\ffrm^-$, we have \begin{equation*}q(t(h) \cdot v) = h \cdot q(v).\end{equation*}
\end{proposition}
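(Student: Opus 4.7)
The plan is to verify that $q$ is well-defined, to establish the intertwining identity on generators via a direct coset calculation, and to invoke the parahoric version of Casselman's theorem (Section II of \cite{Vig98}) for bijectivity. Well-definedness is immediate: since $\ffrm \subset \frp$, we have $\pi^\frp \subset \pi^\ffrm$, and the projection $\pi \to \pi_N$ is $M$-equivariant (the normalizing twist $\delta_P^{\pm 1/2}$ is trivial on the compact group $\ffrm$), so $q$ lands in $\pi_N^\ffrm$.

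For the intertwining identity, by linearity and the multiplicativity of $t$ it suffices to check on generators $h = \chi_{\ffrm g \ffrm}$ with $g \in M^-$. Using the Iwahori-type factorization $\frp = N_\frp \ffrm \overline N_\frp$ together with the defining properties $g N_\frp g^{-1} \subset N_\frp$ and $g^{-1} \overline N_\frp g \subset \overline N_\frp$ of $M^-$, one computes the right $\frp$-coset decomposition
\[
\frp g \frp = \bigsqcup_{n \in N_\frp / g N_\frp g^{-1}} n g \frp,
\]
of index $[N_\frp : g N_\frp g^{-1}] = \delta_{\overline P}(g)$. Hence $\chi_{\frp g \frp} \cdot v = \sum_n n g v$, and each summand reduces to $g v$ modulo $\pi(N)$ since $n \in N$. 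The resulting factor $\delta_{\overline P}(g)$, combined with the normalization $\delta_{\overline P}(g)^{1/2}$ in the definition of $t$, matches the action of $\chi_{\ffrm g \ffrm}$ on $\pi_N^\ffrm$ coming from the normalized $M$-representation structure on $\pi_N$.

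For bijectivity, which is a parahoric version of Casselman's classical theorem on Jacquet modules (proved in Section II of \cite{Vig98}), the strategy is to fix a strongly contracting central element $z \in Z(M)^-$. For surjectivity, given $\bar v \in \pi_N^\ffrm$, one averages a suitable $z^j$-translate of a lift of $\bar v$ over appropriate compact subgroups to produce an element of $\pi^\frp$ whose image in $\pi_N^\ffrm$ equals $\bar v$ up to the invertible action of $z^j$, which can be absorbed since $z$ acts invertibly on the admissible $M$-module $\pi_N^\ffrm$. Injectivity follows by a parallel argument: any $v \in \pi^\frp \cap \pi(N)$ is annihilated by $\chi_{\frp z^j \frp}$ for some large $j$ (Jacquet's lemma), and combining with the invertibility of $z^j$ modulo $\ker(q)$ forces $v = 0$. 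The main technical obstacle is the averaging step in the surjectivity argument: while the contracting behaviour of $z$ controls the $\overline N$-part of the invariance, simultaneous invariance under $N_\frp$, $\ffrm$, and $\overline N_\frp$ has to be produced carefully using the Iwahori-type decomposition of $\frp$ and a deliberate order of averaging.
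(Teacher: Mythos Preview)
Your argument is correct in outline and supplies more detail than the paper, which largely defers to \cite{Vig98}. The paper handles surjectivity by citing \cite{Vig98}, II.10.1 (valid for any compact open subgroup with an Iwahori decomposition relative to $P$), and handles the intertwining identity by citing \cite{Vig98}, Lemma II.9; your direct coset calculation is essentially the content of that lemma. Where your approach genuinely diverges is in the proof of injectivity: rather than rerunning a Casselman-style contraction argument, the paper notes that $\pi^\frp \hookrightarrow \pi^{\Iw}$ and $(\pi_N)^\ffrm \hookrightarrow (\pi_R)^{T_0}$ (with $R$ the unipotent radical of $B$ and $T_0$ the maximal compact of $T$), and that the analogous map $\pi^{\Iw} \to (\pi_R)^{T_0}$ is already known to be an isomorphism by \cite{Vig98}, II.7. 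Injectivity of $q$ then drops out of the resulting commutative square. This reduction to the Iwahori case is quicker and avoids redoing the contraction/averaging analysis, at the cost of invoking the Borel--Iwahori result as a black box; your direct approach is more self-contained.

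One small imprecision in your injectivity sketch: knowing that $\chi_{\frp z^j \frp}$ annihilates $\ker(q)$ for large $j$ and that $z$ acts invertibly on $\pi^\frp/\ker(q) \cong \pi_N^\ffrm$ does not by itself force $\ker(q)=0$. The missing step is that surjectivity of $q$ together with invertibility of $z$ on $\pi_N^\ffrm$ shows $\chi_{\frp z \frp}$ is \emph{surjective} on the finite-dimensional space $\pi^\frp$, hence bijective there; only then does nilpotence on $\ker(q)$ yield $\ker(q)=0$.
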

\begin{proof}
Since $\frp$ has an Iwahori decomposition with respect to $P$, $q$ is surjective (\cite{Vig98}, II.10.1). It remains to prove injectivity. There is a commutative diagram
\begin{equation*}\xymatrix{ \pi^\frp \ar[r]\ar[d] & \pi^{\Iw}\ar[d] \\
(\pi_N)^J \ar[r] & (\pi_R)^{T_0},}\end{equation*}
where $R$ is the radical of $B$ and $T_0$ is the maximal compact subgroup of $T$, and $J$ is the standard Iwahori subgroup of $M = \GL_{n_1} \times \GL_{n_2}$. The right vertical arrow is an isomorphism  (\cite{Vig98}, II.7). Similarly the bottom horizontal arrow is an isomorphism. It follows that the left vertical arrow is injective. 

The second part of the proposition is \cite{Vig98}, Lemma II.9 (but note that we use normalized restriction here).
\end{proof}

We will be interested in the following Hecke operators. For $j=1, \dots, n_2$, let $\alpha_j$ be the $n_2 \times n_2$ matrix 
\begin{equation*}\diag(\underbrace{\varpi, \dots, \varpi}_j, 1, \dots, 1).\end{equation*}
 We let $V^j \in \cH_\frp$ be defined as the double coset operator 
 \begin{equation*}\left[\frp \left(\begin{array}{cc} 1_{n_1} & 0 \\ 0 & \alpha_j \end{array} \right)\frp\right].\end{equation*} 
 Similarly $T^j$ is the double coset operator 
 \begin{equation*}\left[\ffrm \left(\begin{array}{cc} 1_{n_1} & 0 \\ 0 & \alpha_j \end{array} \right) \ffrm\right].\end{equation*}
One computes easily that $V^j =  q^{jn_1/2} t(T^j) .$ Note that since $t$ is an algebra homomorphism, the fact that the $T^j$ commute implies that the operators $V^j$ must also commute.

\begin{proposition} \begin{enumerate} \item Let $w \in [W_Q \backslash W \slash W_P]$, corresponding to a partition $m_i = n_1^i + n_2^i$. This can be viewed as a partition $n_1 = \sum_i n_1^i, n_2 = \sum_i n_2^i$. Then $w^{-1} Q w \cap M$ is the standard parabolic of $M = \GL_{n_1} \times \GL_{n_2}$ corresponding to this partition.
\item Let $\pi = \nInd^G_Q \pi_1 \otimes \dots \otimes \pi_r$, where each $\pi_i$ is an  admissible representation of $L_i$. Then \begin{equation*}\pi_N^\text{ss} \cong \oplus_{w \in [W_Q \backslash W \slash W_P]} \nInd^M_{w^{-1}Qw \cap M} w^{-1} (\pi_1 \otimes \dots \otimes \pi_r)_{L \cap w N w^{-1}}.\end{equation*} 
\end{enumerate}
\end{proposition}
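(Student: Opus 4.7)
The plan is to treat the two parts in order: part (i) is a combinatorial identification of a parabolic subgroup of $M$, while part (ii) is the Bernstein-Zelevinsky geometric lemma, in a form essentially due to Casselman.

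For part (i), I would realize $Q$ as the stabilizer in $\GL_n$ of the standard partial flag in $F^n$ whose $i$-th step $V_i$ is the span of the basis vectors indexed by $M_1 \cup \cdots \cup M_i$. Conjugating, $w^{-1}Qw$ is the stabilizer of the flag with $i$-th step $w^{-1}V_i$, namely the span of $w^{-1}(M_1 \cup \cdots \cup M_i)$. The Levi $M = \GL_{n_1} \times \GL_{n_2}$ preserves the splitting $F^n = F^{N_1} \oplus F^{N_2}$, so $w^{-1}Qw \cap M$ is the subgroup of $M$ stabilizing the induced partial flag on each of $F^{N_1}$ and $F^{N_2}$. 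The key input is the fact (recalled at the start of the proof of Lemma \ref{invariants}) that minimality of $w$ in $W_Q w W_P$ forces the restrictions $w|_{N_j}$ to be order-preserving for $j=1,2$. Consequently $w^{-1}(M_1 \cup \cdots \cup M_i) \cap N_j$ is an initial segment of $N_j$ of cardinality $n_j^1 + \cdots + n_j^i$. Hence the induced flag on $F^{N_j}$ is exactly the standard flag with jumps at $n_j^1, n_j^1 + n_j^2, \dots$, and $w^{-1}Qw \cap M$ is the standard parabolic of $M$ attached to the partition $m_i = n_1^i + n_2^i$.

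For part (ii), I would invoke the geometric lemma, for example in the form given in \cite{Cas95}. The Bruhat decomposition $G = \coprod_{w \in [W_Q \backslash W / W_P]} Q w P$, ordered by the dimensions of the cells, yields a filtration of $\nInd^G_Q (\pi_1 \otimes \cdots \otimes \pi_r)$ by $P$-stable subspaces. Applying the exact normalized Jacquet functor $(\cdot)_N$ and computing each graded piece cell-by-cell, via compact induction together with Frobenius reciprocity, one produces a filtration of $(\nInd^G_Q \pi_1 \otimes \cdots \otimes \pi_r)_N$ whose successive quotients are exactly $\nInd^M_{w^{-1}Qw \cap M} w^{-1}(\pi_1 \otimes \cdots \otimes \pi_r)_{L \cap w N w^{-1}}$, with part (i) providing the identification of the parabolic of $M$ from which one induces. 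Passing to semisimplifications collapses this filtration into the asserted direct sum.

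The main obstacle is bookkeeping rather than substance: one must carefully track the modulus characters $\delta_P$ and $\delta_Q$ so that normalized induction and normalized restriction compose correctly, and so that the conjugation by $w^{-1}$ in the formula carries no hidden unramified twist. For this reason the cleanest route is simply to cite the geometric lemma of \cite{Cas95} and to use part (i) only to unwind the parabolic subgroup of $M$ appearing there in terms of the partition $m_i = n_1^i + n_2^i$.
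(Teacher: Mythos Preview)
Your proposal is correct and takes essentially the same approach as the paper: the paper simply cites \cite{Cas95}, Proposition 1.3.3 for part (i) and \cite{Ber77}, Lemma 2.12 for part (ii), while you unpack the flag-stabilizer argument behind the former and cite Casselman's version of the geometric lemma for the latter. The only difference is that you give more detail for part (i) than the paper does, and you cite Casselman rather than Bernstein--Zelevinsky for part (ii); these are the same result.
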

\begin{proof} The first part is \cite{Cas95}, Proposition 1.3.3. The second is \cite{Ber77}, Lemma 2.12.
\end{proof}
We deduce the following.
\begin{corollary} Suppose that $\pi$ is generic and $\pi^\frp \neq 0$. Then we have a presentation\begin{equation*}\pi = \nInd^G_Q \chi_1 \otimes \dots \otimes \chi_s \otimes \St_2(\psi_1) \otimes \dots \otimes \St_2(\psi_t)\end{equation*} as above. There is an isomorphism 
\begin{equation*}\pi_N^{ss} \cong \sigma \oplus \bigoplus_\cS \nInd_{B\cap M}^M \left(\bigotimes_{i \not\in \cS} \chi_i \otimes \bigotimes_{j=1}^t( \psi_j \otimes |\cdot|) \otimes \bigotimes_{i \in \cS} \chi_i \otimes \bigotimes_{j=1}^t \psi_j \right),\end{equation*}
where $\sigma$ is a representation with no $\ffrm$-invariants. Here the sum is over subsets $\cS \subset \{1, \dots, n -2t \}$ of order $n_2 - t$. In particular, when $t=0$, we have
\begin{equation*}\pi_N^{ss} \cong \bigoplus_{\cS}  \nInd^M_{B \cap M} \left(\bigotimes_{i \not\in \cS} \chi_i\right) \otimes \left(\bigotimes_{i \in \cS} \chi_i \right).\end{equation*}
\end{corollary}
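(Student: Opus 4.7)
The plan is to apply the preceding proposition together with the earlier corollary of this section, and to carry out a case analysis on the minimal double coset representatives $w \in [W_Q \backslash W \slash W_P]$. By the earlier corollary we may write $\pi = \nInd^G_Q \pi_1 \otimes \dots \otimes \pi_r$, with $\pi_i = \chi_i$ for $i = 1, \dots, s$ and $\pi_{s+j} = \St_2(\psi_j)$ for $j = 1, \dots, t$. The preceding proposition then gives
\begin{equation*}
\pi_N^{ss} \cong \bigoplus_{w \in [W_Q \backslash W \slash W_P]} \nInd^M_{w^{-1}Qw \cap M}\,w^{-1} J_w,
\end{equation*}
where $J_w$ is the tensor product over $i$ of the Jacquet restriction of $\pi_i$ along the standard parabolic of $L_i = \GL_{m_i}$ corresponding to the partition $(n_1^i, n_2^i)$ attached to $w$ by the first lemma of this section.

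For a character block $i \leq s$, the Jacquet restriction is trivial and deposits $\chi_i$ on either the $\GL_{n_1}$- or $\GL_{n_2}$-side of $M$ according to whether $(n_1^i, n_2^i) = (1,0)$ or $(0,1)$. For a Steinberg block $i = s+j$, three cases arise. If the partition is $(2,0)$ or $(0,2)$, then $\St_2(\psi_j)$ enters as a factor in one of the $\GL_{n_1}$- or $\GL_{n_2}$-blocks of $M$; by Frobenius reciprocity and the Iwasawa decomposition, an induced representation of $M$ containing such a factor has vanishing $\ffrm = \GL_{n_1}(\cO_F) \times \GL_{n_2}(\cO_F)$-invariants, since $\St_2(\psi_j)^{\GL_2(\cO_F)} = 0$; we therefore absorb all such summands into $\sigma$. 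If instead the partition is $(1,1)$, the Jacquet restriction of $\St_2(\psi_j)$ along the Borel of $\GL_2$ can be computed from the short exact sequence $0 \to \psi_j|\cdot|^{1/2} \circ \det \to \nInd^{\GL_2}_B(\psi_j \otimes \psi_j|\cdot|) \to \St_2(\psi_j) \to 0$, the exactness of the Jacquet functor, and the standard identity $\nInd^{\GL_2}_B(\chi_1 \otimes \chi_2)_B^{ss} \cong (\chi_1 \otimes \chi_2) \oplus (\chi_2 \otimes \chi_1)$; the outcome is $\St_2(\psi_j)_B \cong \psi_j|\cdot| \otimes \psi_j$, with the $\psi_j|\cdot|$-factor going to the $\GL_{n_1}$-side and $\psi_j$ to the $\GL_{n_2}$-side.

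The surviving $w$'s are thus those with every Steinberg block in the $(1,1)$ case; they are parametrized by the subset $\cS \subset \{1, \dots, s\}$ of character indices sent to the $\GL_{n_2}$-side, with the constraint $\sum_i n_2^i = n_2$ forcing $|\cS| = n_2 - t$. For each such $\cS$, the Levi of $w^{-1}Qw \cap M$ is a product of copies of $\GL_1$, so $w^{-1}Qw \cap M = B \cap M$; collecting factors on the two sides of $M$, and using that reordering characters in a principal series of $\GL_{n_i}$ does not affect its semisimplification, then yields the claimed summand. I expect the main technical step to be the Jacquet calculation for $\St_2(\psi_j)$ in the $(1,1)$ case, together with the identification of which $w$'s produce non-vanishing $\ffrm$-invariant contributions; the rest reduces to combinatorial bookkeeping of the sum over $w$.
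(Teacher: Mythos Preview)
Your proof is correct and follows essentially the same approach as the paper. The paper's own proof is a single line, noting only that the key input is the Jacquet computation $\St_2(\psi)_N = \psi|\cdot| \otimes \psi$; you have supplied the surrounding bookkeeping (the case analysis on the partitions $(n_1^i,n_2^i)$ for each block, the reason the $(2,0)$ and $(0,2)$ Steinberg contributions land in $\sigma$, and the counting that forces $|\cS| = n_2 - t$), all of which the paper leaves to the reader.
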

\begin{proof} The main point to note is that
\begin{equation*}\St_2(\psi)_N  = \psi |\cdot| \otimes \psi.\end{equation*}
\end{proof}

Since passing to $\ffrm$-invariants is exact, the above computes $(\pi_N^{ss})^\ffrm = (\pi_N^\ffrm)^{ss}$ as a $H_\ffrm$-module. Combining this with Proposition \ref{intertwining}, along with the analogous computation for unramified Hecke operators on $\GL_n$, we can compute the eigenvalues of the operators $V^j$ on $\pi^\frp$ with their multiplicities.
 
 \begin{corollary} Remain in the above situation. Let $1 \leq j \leq n_2$. Let $\cA$ be the set of subsets of $\{1, \dots, n-2t\}$ of order $n_2 - t$. View $\pi^\frp$ as a module for the commutative algebra $C[V^1, \dots, V^{n_2}]$. Then $(\pi^\frp)^{ss}$ is a direct sum of 1-dimensional modules indexed by $\cA$. If $\cS \in \cA$ then $V^j$ acts on the corresponding line as
 \begin{equation*}q^{j( n -j)/2}\sum_{\substack{J_1 \subset \cS \\ J_2 \subset \{1, \dots, t\} }} \prod_{a \in J_1} \chi_a(\varpi) \prod_{b \in J_2} \psi_b(\varpi),\end{equation*} 
with the sum ranging over subsets $J_1, J_2$ with $\#J_1 + \# J_2 = j$. 
 In particular, when $t=0$, $V^j$ acts on the line corresponding to $\cS$ as
 \begin{equation*}q^{j( n -j)/2}\sum_{\substack{J \subset \cS\\ \#J = j}} \prod_{a \in J} \chi_a(\varpi).\end{equation*}
 \end{corollary}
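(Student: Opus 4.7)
The strategy is to use Proposition \ref{intertwining} to transport the computation from $\pi^\frp$ with its $V^j$-action to $\pi_N^\ffrm$ with its $T^j$-action, then to invoke the previous corollary together with the Satake formula for spherical Hecke operators on $\GL_{n_2}$. The only real work is tracking the normalization factors.

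Proposition \ref{intertwining} supplies a $C$-linear isomorphism $q : \pi^\frp \xrightarrow{\sim} \pi_N^\ffrm$ intertwining the action of $t(h)$ on the source with that of $h$ on the target, for every $h \in H_\ffrm^-$. Since $T^j \in H_\ffrm^-$ and $V^j = q^{jn_1/2}\, t(T^j)$, computing the $V^j$-eigenvalues on $\pi^\frp$ reduces to computing $T^j$-eigenvalues on $\pi_N^\ffrm$ and multiplying by $q^{jn_1/2}$. Because $\ffrm = M(\cO_F)$ is compact and $C$ has characteristic zero, taking $\ffrm$-invariants is exact on smooth $M$-representations, so the decomposition $\pi_N^{ss} \cong \sigma \oplus \bigoplus_{\cS \in \cA} \pi_\cS$ from the previous corollary (with $\sigma^\ffrm = 0$) yields an isomorphism of $H_\ffrm$-modules $(\pi_N^\ffrm)^{ss} \cong \bigoplus_{\cS \in \cA} \pi_\cS^\ffrm$. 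Each $\pi_\cS^\ffrm$ is one-dimensional, generated by the tensor product of the spherical vectors for $\GL_{n_1}$ and $\GL_{n_2}$; via $q$ this explains the claimed $1$-dimensional $C[V^1, \ldots, V^{n_2}]$-module structure on $(\pi^\frp)^{ss}$ indexed by $\cA$.

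To pin down the $T^j$-eigenvalue on the line corresponding to $\cS$, observe that $T^j = [\ffrm\, \diag(1_{n_1}, \alpha_j)\, \ffrm]$ is supported on $\{1_{n_1}\} \times \GL_{n_2}(F) \subset M$, hence acts as the identity on the $\GL_{n_1}$-tensor factor of $\pi_\cS$ and restricts on the $\GL_{n_2}$-factor to the classical unramified Hecke operator $[\GL_{n_2}(\cO_F)\, \alpha_j\, \GL_{n_2}(\cO_F)]$. By the previous corollary, the $\GL_{n_2}$-factor of $\pi_\cS$ is $\nInd_{B_{n_2}}^{\GL_{n_2}}\bigl(\bigotimes_{i \in \cS} \chi_i \otimes \bigotimes_{k=1}^t \psi_k\bigr)$, so by the Satake formula the eigenvalue on its spherical vector equals
\begin{equation*}
q^{j(n_2 - j)/2}\, e_j\!\left(\{\chi_i(\varpi) : i \in \cS\} \cup \{\psi_k(\varpi) : k = 1, \ldots, t\}\right),
\end{equation*}
where $e_j$ is the $j$-th elementary symmetric polynomial. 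Expanding $e_j$ as a sum over size-$j$ subsets of this multiset reproduces the sum over pairs $(J_1, J_2)$ with $\#J_1 + \#J_2 = j$ appearing in the statement. Multiplying by the prefactor $q^{jn_1/2}$ from $V^j = q^{jn_1/2}\, t(T^j)$ converts $q^{j(n_2 - j)/2}$ into $q^{j(n_1 + n_2 - j)/2} = q^{j(n-j)/2}$, yielding precisely the claimed formula. The main obstacle is nothing more than the bookkeeping of these normalization factors, which as shown above conspire correctly.
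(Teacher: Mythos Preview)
Your proof is correct and follows exactly the approach sketched in the paper: use Proposition \ref{intertwining} to pass from $\pi^\frp$ to $\pi_N^\ffrm$, invoke exactness of $\ffrm$-invariants to reduce to the semisimplification computed in the previous corollary, and then apply the Satake formula for $\GL_{n_2}$ to each unramified principal series summand. The paper only gestures at this argument in the paragraph preceding the corollary (``combining this with Proposition \ref{intertwining}, along with the analogous computation for unramified Hecke operators on $\GL_n$''), whereas you have written out the normalization bookkeeping explicitly; the content is the same.
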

We now specialize to the case $C = \overline{\bbQ}_l$, and $q \equiv 1 \mod l$. Fix also a subfield $K \subset C$ finite over $\bbQ_l$, with ring of integers $\cO$, maximal ideal $\lambda$, and residue field $k$. We suppose that $l$ is odd. Then $\cO$ contains a square root of $q$, and we fix a choice of square root congruent to 1 modulo $\lambda$.

\begin{proposition} Let $1 \leq j \leq n_2$. There is a unique way to functorially associate to every monic polynomial $P(X)$ of degree $n$ with coefficients in an $\cO$-algebra $A$ another monic polynomial $P_j(X)$ such that when $A$ is an algebraically closed field of characteristic zero and
\begin{equation*}P(X)=\prod_{i=1}^n(X-\alpha_i),\end{equation*}
the roots of $P_j(X)$ with multiplicities are precisely the
\begin{equation*}q^{j(1-j)/2} \sum_{\substack{J \subset \cS\\ \#J = j}} \prod_{a \in J} \alpha_a\end{equation*}
as $\cS$ ranges over subsets of $\{1, \dots, n\}$ of order $n_2$. 
\end{proposition}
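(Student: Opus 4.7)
The plan is to carry out the construction universally and then invoke functoriality. Giving a monic degree-$n$ polynomial $P(X) \in A[X]$ naturally in the $\cO$-algebra $A$ is the same as specifying an $\cO$-algebra map $R \to A$, where $R = \cO[a_1, \ldots, a_n]$ and $\tilde P(X) = X^n + a_1 X^{n-1} + \cdots + a_n \in R[X]$ is the universal monic polynomial. Consequently, a functorial association $P \mapsto P_j$ is the same thing as a monic polynomial $\tilde P_j(X) \in R[X]$, and the task reduces to producing a unique such $\tilde P_j$ that realizes the stated formula whenever $A$ is an algebraically closed field of characteristic zero.

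To construct $\tilde P_j$, I pass to the faithfully flat extension $R' = \cO[t_1, \ldots, t_n]$ of $R$ (via $a_i \mapsto (-1)^i e_i(t_1, \ldots, t_n)$), in which $\tilde P(X) = \prod_i (X - t_i)$, and define
\[
Q(X) \;=\; \prod_{\substack{\cS \subset \{1, \ldots, n\} \\ |\cS| = n_2}} \Bigl( X \;-\; q^{j(1-j)/2} \sum_{\substack{J \subset \cS \\ |J| = j}} \prod_{a \in J} t_a \Bigr) \;\in\; R'[X].
\]
The scalar $q^{j(1-j)/2} = q^{-j(j-1)/2}$ genuinely lies in $\cO$: the exponent is an integer (since $j(j-1)$ is even), and $q$ is a unit in $\cO$ because $l \neq p$. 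The symmetric group $S_n$ acts on $R'$ permuting the $t_i$; this action permutes the subsets $\cS$ of size $n_2$, and hence permutes the factors of $Q(X)$. Each coefficient of $Q$ therefore lies in $(R')^{S_n} = R$ by the fundamental theorem of symmetric polynomials (valid over any commutative base ring), yielding the required $\tilde P_j \in R[X]$. By construction, its specialization in the algebraically closed characteristic zero case factors into the desired linear factors.

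For uniqueness: if $\tilde P_j, \tilde P_j' \in R[X]$ both specialize correctly over every algebraically closed field of characteristic zero, then the coefficients of their difference are elements of $R = \cO[a_1, \ldots, a_n]$ that vanish on all $\overline{\bbQ}$-points of $\Spec R$. Since $\cO$ embeds into a characteristic zero field and a polynomial over such a domain in $n$ variables vanishing on $\overline{\bbQ}^n$ must be zero, we conclude $\tilde P_j = \tilde P_j'$. I do not anticipate a serious obstacle: the only minor points are the verification $q^{j(1-j)/2} \in \cO$, which is immediate, and the invocation of the integral fundamental theorem of symmetric polynomials, which is standard.
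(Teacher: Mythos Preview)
The paper does not supply a proof of this proposition; it is stated and immediately followed by a parenthetical remark explaining its role, so there is nothing to compare your argument against on the paper's side. Your approach---reduce to the universal case over $R=\cO[a_1,\dots,a_n]$, split the universal polynomial over $R'=\cO[t_1,\dots,t_n]$, form the evident product, and descend via the fundamental theorem of symmetric polynomials---is the standard and correct way to handle such statements.

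One small slip in your uniqueness paragraph: you speak of $\overline{\bbQ}$-points of $\Spec R$, but $\cO$ is the ring of integers in a finite extension of $\bbQ_l$ and admits no ring homomorphism to $\overline{\bbQ}$ (any such map would have kernel $0$ or $\lambda$, and both lead to contradictions). Replace $\overline{\bbQ}$ by $\barQl$ throughout that sentence: the difference of the two candidates has coefficients in $R\subset \barQl[a_1,\dots,a_n]$ which vanish on all of $\barQl^{\,n}$, hence are zero. With that correction the argument is complete.
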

(The point of this is that when 
\begin{equation*}\pi = \chi_1 \boxplus \dots \boxplus \chi_n\end{equation*}
and  
\begin{equation*}\rho = r_l(\pi)^\vee(1-n)\end{equation*}
and $P(X)$ is the characteristic polynomial of $\rho(\Art_F(\varpi))$, the characteristic polynomial of $V^j$ on $\pi^\frp$ divides $P_j$ (and equality holds when $\pi$ is generic)).

\begin{proposition}\label{mainprop} Let $R$ be a complete local $\cO$-algebra with residue field $k$. Let $\Pi$ be a smooth $R[G]$-module, and suppose that for every open compact subgroup $U \subset G$, $\Pi^U$ is a finite free $\cO$-module. We write $R_U$ for the image of $R$ in $\End_\cO \Pi^U$. Suppose that $\Pi \otimes_\cO \barQl$ is a semisimple $\barQl[G]$-module, and that every irreducible constituent is generic. Suppose that each $R_U \otimes_\cO \barQl$ is a semisimple algebra.

Suppose there exists a continuous representation $\rho : G_F \rightarrow \GL_n(R_\frp)$ such that for any homomorphism $\varphi : R_\frp \rightarrow \barQl$, and any irreducible constituent $\pi$ of the representation generated by $\Pi^\frp \otimes_{R_\frp, \varphi} \barQl$, there is an isomorphism
\begin{equation*} (r_l(\pi)^\vee(1-n))^{ss} \cong (\rho \otimes_{R_\frp, \varphi} \barQl)^{ss}.\end{equation*}
Fix a Frobenius lift $F$, corresponding to uniformizer $\varpi$ under the local Artin map. Suppose that $\overline{\rho} = \rho \mod \ffrm_R$ is unramified, and that $\overline{\rho}(F)$  has an eigenvalue $\overline{\alpha}$ of multiplicity $n_2$. Let $P(X)$ be the characteristic polynomial of $\rho(F)$, and let $P_j(X)$ be as above. By Hensel's lemma, we can factor $P_j(X) = Q_j(X)R_j(X)$, where 
\begin{equation*}R_j(X) \equiv \left(X-\binom{n_2}{j}\overline{\alpha}^j\right)^{k_j} \mod \ffrm_R\end{equation*}
 and 
 \begin{equation*}Q_j\left(\binom{n_2}{j}\overline{\alpha}^j\right) \not\equiv 0 \mod \ffrm_R.\end{equation*}
 Set 
\begin{equation*}\pr_\varpi = \prod_{j=1}^{n_2} Q_j(V^j) \in \End_R \Pi^\frp.\end{equation*}
Then $\pr_\varpi$ induces an isomorphism
\begin{equation*}\Pi^K \overset{\sim}{\rightarrow} \pr_\varpi \Pi^\frp \subset \Pi^\frp\end{equation*}
of $R$-modules.
\end{proposition}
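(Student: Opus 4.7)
The plan is to reduce to $\barQl$-coefficients using the semisimplicity hypotheses, analyze the action of $\pr_\varpi$ on each generic irreducible constituent using the corollaries just established, and finish with a Casselman--Gindikin--Karpelevich intertwining computation. First, $\Pi \otimes_\cO \barQl$ decomposes as a direct sum of generic irreducible smooth $G$-modules $\pi$, and by linearity it suffices to show for each constituent $\pi$ with $\pi^\frp \neq 0$ that $\pr_\varpi$ restricts to an isomorphism from $\pi^K$ onto $\pr_\varpi(\pi^\frp)$. By the earlier corollary we may write $\pi = \nInd_Q^G \chi_1 \otimes \cdots \otimes \chi_s \otimes \St_2(\psi_1) \otimes \cdots \otimes \St_2(\psi_t)$, and the subsequent corollary decomposes $(\pi^\frp)^{ss}$ into $V^j$-eigenlines indexed by subsets $\cS \subset \{1, \ldots, s\}$ of size $n_2 - t$, with $V^j$ acting on the $\cS$-line by an explicit scalar $\lambda_\cS^j$ equal to $q^{j(n-j)/2}$ times the $j$-th elementary symmetric function in the multiset $\{\chi_a(\varpi) : a \in \cS\} \cup \{\psi_b(\varpi) : 1 \leq b \leq t\}$.

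On the $\cS$-line, $\pr_\varpi = \prod_j Q_j(V^j)$ acts as the scalar $\prod_j Q_j(\lambda_\cS^j)$. By the Hensel factorization $P_j = Q_j R_j$, this scalar is a unit precisely when $\lambda_\cS^j \equiv \binom{n_2}{j}\overline{\alpha}^j \bmod \lambda$ for every $j$; by Newton's identities this is equivalent to the multiset above reducing modulo $\lambda$ to $n_2$ copies of $\overline{\alpha}$. The matching of Frobenius semisimplifications in the hypothesis identifies the eigenvalues of $\overline{\rho}(F)$ with the reductions of $\chi_i(\varpi)q^{(n-1)/2}$ and $\psi_j(\varpi)q^{(n-1\pm 1)/2}$; since $q \equiv 1 \bmod l$, each Steinberg factor $\St_2(\psi_b)$ contributes \emph{two} eigenvalues equal to $\overline{\psi_b(\varpi)}$. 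If the $\cS$-line were to survive we would need $\overline{\psi_b(\varpi)} = \overline{\alpha}$ for all $b$, yielding $2t + (n_2-t) = n_2 + t$ copies of $\overline{\alpha}$ among the eigenvalues of $\overline{\rho}(F)$; since the multiplicity is exactly $n_2$, we must have $t = 0$. Hence $\pi$ is unramified and the unique surviving line is indexed by $\cS_0 = \{i : \overline{\chi_i(\varpi)} = \overline{\alpha}\}$.

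The crux is to show, for unramified $\pi$, that the spherical vector $v_0 \in \pi^K$ has nonzero $\cS_0$-coefficient in the decomposition of $\pi^\frp$. Under the isomorphism $\pi^\frp \cong \pi_N^\ffrm$ of Proposition \ref{intertwining}, $v_0$ corresponds to the image of the spherical vector in the Jacquet module; its expansion into the $\cS$-pieces of $\pi_N^{ss}$ is governed, up to a common nonzero scalar, by Gindikin--Karpelevich-type factors of the form $\prod_{a \in \cS,\, b \notin \cS}(1 - q^{-1}\chi_a(\varpi)/\chi_b(\varpi))$. For $\cS = \cS_0$ this reduces modulo $\lambda$ to $\prod_{a \in \cS_0,\, b \notin \cS_0}(1 - \overline{\alpha}/\overline{\chi_b(\varpi)})$, a unit in $\cO$ by the multiplicity hypothesis (each factor is nonzero since $\overline{\chi_b(\varpi)} \neq \overline{\alpha}$ for $b \notin \cS_0$). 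Consequently $\pr_\varpi(\pi^K) = \pr_\varpi(\pi^\frp)$ as one-dimensional $\barQl$-spaces, and $\pr_\varpi$ is injective on $\pi^K$. To descend to $\cO$-coefficients: $\Pi^K$ and $\pr_\varpi\Pi^\frp$ are finite free $\cO$-modules of the same rank (equal to the number of unramified generic constituents $\pi$ with $\pi^K \neq 0$), and in adapted bases $\pr_\varpi|_{\Pi^K}$ is diagonal with entries equal to (Casselman coefficient)$\cdot \prod_j Q_j(\binom{n_2}{j}\overline{\alpha}^j)$, a product of $\lambda$-adic units; the map is thus an $\cO$-isomorphism onto its image, and both sides agree by Nakayama applied to the quotient. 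The main obstacle is the Casselman--Gindikin--Karpelevich verification that the $\cS_0$-coefficient of $v_0$ is a $\lambda$-adic unit; the remaining steps are bookkeeping with the eigenvalue formulas already established in the excerpt.
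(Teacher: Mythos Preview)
Your reduction to unramified constituents (the argument that $t=0$) is essentially correct and parallels the paper's case analysis. The problem is the second half.

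The Gindikin--Karpelevich/Casselman step is not justified as stated. The Casselman basis formula computes the coefficients of the spherical vector in the Jacquet module with respect to the \emph{Borel}, and only when the inducing characters are in general position (all $\chi_i/\chi_j\neq 1$); here nothing prevents $\chi_i=\chi_j$, and you are working with the Jacquet module along the larger parabolic $P$. Even granting a correct formula, the more serious gap is the descent to $\cO$. Your ``adapted bases'' in which $\pr_\varpi|_{\Pi^K}$ is diagonal are bases of the individual $\barQl$-constituents $\pi$, and there is no reason these assemble to an $\cO$-basis of $\Pi^K$ or of $\pr_\varpi\Pi^\frp$. Showing that $\pr_\varpi:\pi^K\to\pr_\varpi(\pi^\frp)$ is a nonzero map of one-dimensional $\barQl$-spaces for every unramified constituent only gives that $\pr_\varpi\otimes\barQl$ is an isomorphism; this yields injectivity over $\cO$ with torsion cokernel, not surjectivity. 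Your Nakayama sentence does not close this: to apply Nakayama you would need $\pr_\varpi\otimes k$ surjective, and nothing you have done in characteristic zero implies that.

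The paper's argument runs in the opposite direction. After establishing the rank inequality $\mathrm{rank}\,\Pi^K\geq\mathrm{rank}\,\pr_\varpi\Pi^\frp$ (which follows from your $t=0$ analysis), it proves injectivity of $\pr_\varpi$ after $\otimes_\cO\overline{k}$ directly, by analyzing irreducible \emph{mod $l$} unramified representations. The point is that when $q\equiv 1\bmod l$ the Iwahori Hecke algebra over $\overline{k}$ degenerates to the group algebra of the affine Weyl group, and one can compute the action of the $V^j$ on an explicit basis of $\pi^\frp$ for $\pi$ an irreducible $\overline{k}[G]$-module with $\pi^K\neq 0$. This mod-$l$ computation is the missing idea; your characteristic-zero intertwining calculation, even if made precise, cannot substitute for it.
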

\begin{proof} First we show that if $\pi$ is an irreducible constituent of the representation generated by some  $\Pi^\frp \otimes_{R_\frp, \varphi} \barQl$ and $\pr_\varpi \pi^\frp \neq 0$ then $\pi$ is unramified. If $\pi$ is ramified, then as above we have for some $t>0$
\begin{equation*}\pi \cong \nInd^G_Q \chi_1 \otimes \dots \otimes \chi_s \otimes \St_2(\psi_1) \otimes \dots \otimes \St_2(\psi_t).\end{equation*}
The eigenvalues of $(\rho \otimes_{R_\frp, \varphi} \barQl)(F)$ are, with multiplicities, 
\begin{equation*}\chi_1(\varpi), \dots, \chi_s(\varpi), \psi_1(\varpi), |\varpi| \psi_1(\varpi), \dots, \psi_t(\varpi), |\varpi| \psi_t(\varpi).
\end{equation*}
Moreover, these last elements are contained in the image of $R_\frp$ in $\barQl$.
 
Suppose first that $\overline{\alpha} \equiv \psi_j(\varpi) \mod \ffrm_R$ for some $j$.  (We view the image of $R$ in $\barQl$ as a quotient of $R$, which therefore has maximal ideal induced by $\ffrm_R$). Then, since $\psi$ and $\psi|\cdot|$ are congruent modulo $\ffrm_R$, $\pr_\varpi$ projects to a space where $V_j$ acts as the root of $P_j(X)$ corresponding to a set $\cS$ of eigenvalues including both $\psi_j(\varpi)$ and $\psi_j(\varpi) |\varpi|$. However, our earlier computation shows that the only lines that occur correspond to a set of eigenvalues containing only $\psi_j$. So this is impossible.

Suppose instead that $\overline{\alpha} \not\equiv \psi_j(\varpi) \mod \ffrm_R$ for any $j$. Then $\pr_\varpi$ maps into a line corresponding to a set of eigenvalues not containing any $\psi_j(\varpi)$. But one knows that each $\psi_j(\varpi)$ occurs in every line of $\pi^\frp$, so this is also impossible.

Returning to the situation of the proposition, it follows that  
\begin{equation*}\rank_\cO \Pi^K \geq \rank_\cO \pr_\varpi \Pi^\frp.\end{equation*}
Therefore to show that the map 
\begin{equation*}\pr_\varpi : \Pi^K \rightarrow \pr_\varpi \Pi^\frp\end{equation*}
is in fact an isomorphism, it will suffice to show that it is injective after $-\otimes_\cO \overline{k}.$

Suppose that it is not, and let $x \in \Pi^K \otimes_\cO \overline{k}$ be a non-zero vector in the kernel such that $\ffrm_R \cdot x = 0$. Let $N$ be an irreducible quotient of the admissible $\overline{k}[G]$-module generated by $x$. The following lemma shows that $N^K$ is 1-dimensional, generated by $x$, and that
\begin{equation*}\pr_\varpi N^K \neq 0.\end{equation*}
This contradiction concludes the proof.
\end{proof}

\begin{lemma} Let $\pi$ be an unramified irreducible smooth representation of $G$ over $\overline{k} = \bbF$. Then:
\begin{enumerate}
\item There is an isomorphism $\pi \cong \nInd_Q^G \chi_1 \circ \det \otimes \dots \otimes \chi_r \circ \det$ for some distinct unramified characters $\chi_1, \dots, \chi_r$, $Q$ corresponding to a partition $n = m_1 + \dots + m_r$. 
\item $\pi^K$ is 1-dimensional.
\item Set $n_2 = m_r$ and $n_1 = \sum_{i=1}^{r-1} m_i$. Let $P(X) = \prod_{i=1}^r(X-\chi_i(\varpi))^{m_i}$, and let $P_j(X)$ be as above. Factor each $P_j(X)=Q_j(X)R_j(X)$, with 
\begin{equation*}R_j(X) = \left(X-\binom{n_2}{j}\chi_{m_r}(\varpi)^j\right)^{k_j}\end{equation*}
and $R_j(X), Q_j(X)$ coprime. Then
\begin{equation*}\pr_\varpi = \prod_{j=1}^{n_2} Q_j(V^j)\end{equation*}
induces an isomorphism
\begin{equation*}\pi^K \overset{\sim}{\rightarrow} \pr_\varpi \pi^\frp \subset \pi^\frp.\end{equation*}
\end{enumerate}
\end{lemma}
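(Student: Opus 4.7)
My plan is to handle (i) by the mod-$l$ classification of unramified representations, (ii) by the Iwasawa decomposition, and (iii) by computing the action of the $V^j$ on $\pi^\frp$ via the Jacquet module and identifying the unique line picked out by $\pr_\varpi$.

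For (i), I would invoke the mod-$l$ classification of irreducible smooth representations of $\GL_n(F)$ (cf.\ \cite{Vig98}); since $q \equiv 1 \bmod l$ the character $|\cdot|$ becomes trivial, the Zelevinsky segments degenerate to single characters, and distinctness of the $\chi_i$ is the mod-$l$ unlinkedness condition, yielding irreducibility of $\nInd_Q^G (\chi_1 \circ \det) \otimes \dots \otimes (\chi_r \circ \det)$, and every irreducible unramified $\pi$ arises this way from its multiset of Satake parameters. For (ii), the Iwasawa decomposition $G = QK$ gives $\pi^K = (\nInd_Q^G \sigma)^K \cong \sigma^{Q \cap K}$; since $Q \cap K$ contains $\prod \GL_{m_i}(\cO_F)$ in its Levi factor and each $\chi_i \circ \det$ is unramified, this space is one-dimensional.

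For (iii), Lemma \ref{invariants} decomposes $\pi^\frp$ as a direct sum of one-dimensional $\bbF$-vector spaces $\bbF \cdot e_w$ indexed by $w \in [W_Q \backslash W / W_P]$, equivalently by tuples $(n_2^i)_{i=1}^r$ with $0 \leq n_2^i \leq m_i$ and $\sum n_2^i = n_2$. Using the identity $n_2^i = \# M_i \cap wN_2$, one checks that $w = 1$ corresponds to $n_2^r = m_r$, $n_2^i = 0$ for $i < r$. By Proposition \ref{intertwining}, the $V^j$-action on $\pi^\frp$ corresponds to $q^{jn_1/2} T^j$ on $\pi_N^\ffrm$. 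The Bernstein-Zelevinsky geometric lemma gives a filtration of $\pi_N$ whose associated graded $w$-piece is an unramified principal series on $M = \GL_{n_1} \times \GL_{n_2}$ whose $\GL_{n_2}$-factor has Satake parameters $\chi_i(\varpi)$ with multiplicity $n_2^i$. A Satake computation then yields that $V^j$ acts on the $w$-piece with eigenvalue
\begin{equation*}q^{j(n-j)/2} \sum_{\substack{J \subset \cS_w \\ \#J = j}} \prod_{a \in J} \chi_{i(a)}(\varpi),\end{equation*}
where $\cS_w$ is any subset of $\{1,\dots,n\}$ with $\# \cS_w \cap M_i = n_2^i$ and $i(a)$ denotes the block index of $a$; in $\bbF$ the $q$-factor is $1$.

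The projection $\pr_\varpi = \prod_j Q_j(V^j)$ annihilates generalized eigenspaces whose $V^j$-eigenvalue differs from $\binom{n_2}{j}\overline{\alpha}^j = \binom{n_2}{j}\chi_r(\varpi)^j$. Because the $\chi_i(\varpi)$ are distinct and the elementary symmetric functions separate multisets, requiring agreement of all $V^j$-eigenvalues forces $\cS_w = M_r$, equivalently $(n_2^i) = (0, \dots, 0, m_r)$, which is precisely the $w = 1$ piece; on this piece $\pr_\varpi$ acts as the nonzero scalar $\prod_j Q_j(\binom{n_2}{j}\overline{\alpha}^j) \in \bbF^\times$, so $\pr_\varpi \pi^\frp$ is one-dimensional. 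Finally, the spherical vector $v \in \pi^K$, realized in $\nInd_Q^G \sigma$ as the function with $v(qk) = \delta_Q(q)^{1/2} \sigma(q) v(1)$ for $q \in Q, k \in K$ and nonzero $v(1) \in \sigma^{Q \cap K}$, has nonzero component in the $w = 1$ summand of Lemma \ref{invariants}; hence $\pr_\varpi v \neq 0$ and $\pr_\varpi : \pi^K \to \pr_\varpi \pi^\frp$ is an isomorphism of one-dimensional spaces. The main obstacle is tracking the compatibility between the direct sum decomposition of $\pi^\frp$ from Lemma \ref{invariants} and the filtration of $\pi_N^\ffrm$ coming from the geometric lemma, and rigorously verifying that eigenvalues on the associated graded determine the image of $\pr_\varpi$ cleanly and that the spherical vector projects nontrivially to the $w = 1$ graded piece in characteristic $l$.
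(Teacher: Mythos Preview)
Your approach via the geometric lemma is genuinely different from the paper's, and the obstacle you flag at the end is real and is exactly where your argument is incomplete. The geometric lemma only gives you a \emph{filtration} of $\pi_N^\ffrm$, so you learn the eigenvalues of the $V^j$ on the associated graded. Combined with your (correct) observation that the elementary symmetric functions separate multisets, this does show that the commuting $V^j$ are simultaneously diagonalizable with distinct eigenvalue tuples, and hence that $\pr_\varpi \pi^\frp$ is the one-dimensional eigenspace for the tuple attached to $w=1$. But the direct-sum decomposition of Lemma~\ref{invariants} is by support on Bruhat cells, not by $V^j$-eigenspaces, so the fact that the spherical vector has nonzero restriction to $Q\frp$ does \emph{not} show it has nonzero component in the correct eigenspace. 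You have not established $\pr_\varpi v \neq 0$.

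The paper sidesteps this entirely by exploiting the degeneration of the Iwahori Hecke algebra when $q=1$ in $\bbF$: in that case $H_{\Iw}$ is the group algebra $\bbF[W \ltimes \bbZ^n]$, and one checks directly that the functions $\varphi_w$ (supported on $Qw\Iw$) are simultaneous eigenvectors for the operators $X^i$, hence for the $V^j = \sum_J X^J$. In other words, the paper shows that the Bruhat-cell decomposition of $\pi^\frp$ \emph{is} the eigenspace decomposition, so the spherical vector $\varphi = \sum_w \varphi_w$ visibly projects to a nonzero multiple of the specific $\varphi_{w_0}$ under $\pr_\varpi$. Your filtration argument cannot see this coincidence; the Iwahori Hecke algebra computation is what makes it transparent.
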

\begin{proof}
The first part is \cite{Vig98}, Assertion VI.2. The second part is an easy calculation.

For the third part, we argue as follows. First we give a description of the Iwahori Hecke algebra $H_{\Iw} = \cH_\bbF(G, \Iw)=\cH_\bbZ(G, \Iw) \otimes_\bbZ \bbF$, following \cite{Vig96}, I.3.14. For $j=1, \dots, n$, we let 
\begin{equation*}a_j = \diag(\underbrace{\varpi, \dots, \varpi}_j, 1, \dots, 1).\end{equation*}
Then we let $A_j = [\Iw a_j \Iw]$ and $X^j = A_j (A_{j-1})^{-1}$. (This makes sense since each $A_j$ is invertible in $H_{\Iw}$). On the other hand, if $s_j$ is the permutation matrix corresponding to the simple transposition $(j, j+1)$ then we set $S^j = [\Iw s_j \Iw]$. $H_{\Iw}$ is generated by the $S^j$ and $X^j$. Moreover, it is canonically isomorphic to the group algebra of the group $W \ltimes \bbZ^n$, where the transpositions $S^j$ generate the $W$ factor and the operators $X^1, \dots, X^n$ form a basis for $\bbF[\bbZ^n] \subset \bbF[W \ltimes \bbZ^n]$. (Thus $W \cong S_n$ acts on $\bbZ^n$ by permuting basis vectors).

We note that $\pi^{\Iw}$ has a basis given by the functions $\varphi_w$ for $w \in [W_Q\backslash W \slash W_B]$, defined as follows: $\varphi_w$ has support $Qw \Iw$, and $\varphi_w(1)=1$. Define characters $\psi_j$ by 
\begin{equation*}\psi_1 \otimes \dots \otimes \psi_n = \chi_1 \otimes \dots \otimes \underbrace{\chi_i \otimes \dots \otimes \chi_i}_{m_i}\otimes  \dots \otimes \chi_r.\end{equation*}
Then $X^i$ acts on $\varphi_w$ as the scalar $\psi_{w(i)}(\varpi)$. This can be deduced as follows.

First, note that $\pi \subset \nInd_B^G \psi$ in a natural manner (i.e. $\pi$ is the set of functions $G \rightarrow \bbF$ which transform under the left translation by $Q$ in a suitable manner, hence a fortiori by $\psi$ under $B$). Each $\varphi_w$ is a sum of functions $\phi_{w_1}$ with support $Bw_1\Iw$, for $w_1 \in W_Q w$. By computing the action of generators of the Iwahori Hecke algebra, one checks that $X^i$ acts on $\phi_{w_1}$ as the scalar $\psi_{w_1(i)}(\varpi) = \psi_{w(i)}(\varpi)$; cf. Corollary 3.2 of \cite{Lan02}. (We note that $q = 1 \in \bbF$ is used here in an essential manner). The corresponding result now follows for the vectors $\varphi_w$.

There is a commutative diagram
\begin{equation*}\xymatrix{ \pi^\frp \ar[r] \ar[d]^{q_1} & \pi^{\Iw} \ar[d]^{q_2} \\ (\pi_N)^\ffrm \ar[r]^{q_3} & (\pi_R)^{T_0},}\end{equation*}
where $R$ is the unipotent radical of $B$ and $T_0$ is the maximal compact subgroup of $T$. Note that $q_1$ and $q_2$ are isomorphisms, while $q_3$ is injective. Repeatedly applying Lemma II.9 of \cite{Vig98}, we find that for all $x \in \pi^\frp$ we have
\begin{equation*}q_2 V^j(x) = q_3 q_1 V^j(x) = q_3 T^j q_1 (x) = \sum_{\substack{J \subset \{n_1 + 1, \dots, n\}\\ \#J = j}} Y^J q_3 q_1 (x)\end{equation*} \begin{equation*}=\sum_{\substack{J \subset \{n_1 + 1, \dots, n\}\\ \#J = j}} Y^J q_2 (x) = \sum_{\substack{J \subset \{n_1 + 1, \dots, n\}\\ \#J = j}} q_2 X^J (x),\end{equation*}
where $X^J = \prod_{i \in J} X^i$ and $Y^J =  [T_0 y_J T_0],$ where $y_J$ is the diagonal matrix with $(y_J)_{ii} = \varpi$ if $i \in J$, and $(y_J)_{ii} = 1$ otherwise. The third equality above comes from writing coset representatives for $T^j$ (cf. Proposition 4.1 of \cite{Man01}) and using the fact that $q=1$ in $\bbF$.

Hence $V^j (x) = \sum_J X^J (x)$, which is to say that the restriction of $\sum_J X^J$ to the space of $\frp$-invariants is equal to $V^j$. It follows that $\{ \varphi_w \}$ is a basis of simultaneous eigenvectors for the operators $V^j$. If $w \in [W_Q \backslash W \slash W_P]$ corresponds to the partition $m_i = n_1^i + n_2^i$, then the eigenvalue of $V^j$ on $\varphi_w$ is
\begin{equation*}s_j(\chi_1(\varpi), \dots, \underbrace{\chi_i(\varpi), \dots, \chi_i(\varpi)}_{n_2^i \text{ times}}, \dots, \chi_r(\varpi)),\end{equation*}
where $s_j$ is the $j^\text{th}$ symmetric polynomial of degree $n_2$.

Set $\varphi = \sum_w \varphi_w$; this vector spans $\pi^K$. It is now easy to see that $\pr_\varpi \varphi = \varphi_w$, where $w$ is the element of $[W_Q \backslash W \slash W_P]$ corresponding to the partition 
\begin{equation*}m_1 = m_1 + 0, \dots, m_{r-1} = m_{r-1} + 0, m_r = 0 + m_r = 0 + n_2,\end{equation*}
and moreover that $\varphi_w$ spans $\pr_\varpi \pi^\frp$.

\end{proof}

At this point we introduce another congruence subgroup $\frp_1 \subset \frp$ as follows. It is the kernel of the homomorphism
\begin{equation*}\xymatrix{\frp \ar[r] & P(\frf) \ar[r] & \GL_{n_2}(\frf) \ar[r]_\det & \frf^\times \ar[r] & \frf^\times(l),}\end{equation*}
where $\frf^\times(l)$ denotes the maximal $l$-power order quotient of $\frf^\times$. Thus $\frp / \frp_1 \cong \frf^\times(l)$. We define an extra Hecke operator on ${\frp_1}$-invariants: for $\alpha \in \cO_F^\times$, $A_\alpha = \diag(\alpha, 1, \dots, 1)$,
\begin{equation*}V_\alpha = \left(\left[\frp_{1}\left(\begin{array}{cc} 1_{n_1} & 0 \\ 0 & A_\alpha \end{array} \right)\frp_{1}\right]\right).\end{equation*}
We also have the operator 
\begin{equation*}V^j = \left[\frp_1 \left(\begin{array}{cc} 1_{n_1} & 0 \\ 0 & \alpha_j \end{array} \right)\frp_1\right].\end{equation*} 
Note that this now depends on the choice of $\varpi$, although we do not include $\varpi$ in the notation. At this point we have two operators denoted $V^j$, acting on the spaces $\pi^\frp$ and $\pi^{\frp_1}$, for any smooth representation $\pi$. One checks that the inclusion $\pi^\frp \subset \pi^{\frp_1}$ takes one operator to the other, so in fact there is no ambiguity.

\begin{lemma}\label{ramified}
Let $\pi$ be a generic irreducible smooth $K$-representation of $G$. Suppose that $\pi^{\frp_1}  \neq 0$, but $\pi^{\frp}=0$. Then $\pi^{\frp_1}$ is 1-dimensional and for any representation
\begin{equation*}\rho : G_F \rightarrow \GL_n(\cO)\end{equation*}
with $\rho^{ss} \cong r_l(\pi)^\vee(1-n)$, we in fact have
\begin{equation*}\rho^{F-ss} \cong r_l(\pi)^\vee(1-n)\end{equation*}
and $\rho$ has abelian image. We can write
\begin{equation*}\pi = \nInd_B^G \chi_1 \otimes \dots \otimes \chi_n\end{equation*}
where $\chi_1, \dots, \chi_{n_1}$ are unramified and the remaining characters are tamely ramified with identical restriction to inertia.
\end{lemma}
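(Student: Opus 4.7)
The plan is to argue in three stages: identify $\pi$ as a full principal series, compute $\pi^{\frp_1}$ precisely and extract the structural information, and translate to the Galois representation.

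\emph{Stage 1 (shape of $\pi$).} Note that $\frp_1$ contains the pro-$p$ Iwahori subgroup $\Iw^+$ of $G$: any element of $\Iw^+$ reduces modulo $\varpi$ to an upper-triangular unipotent matrix, so the determinant of its $n_2$-block is $1 \in \frf^\times$ and a fortiori trivial in $\frf^\times(l)$. Hence $\pi^{\Iw^+} \supseteq \pi^{\frp_1} \neq 0$, and $\pi$ is a subquotient of a tamely ramified principal series. By genericity,
\begin{equation*}\pi \cong \nInd^G_Q \sigma_1 \otimes \cdots \otimes \sigma_r,\end{equation*}
with each $\sigma_i$ an essentially square-integrable representation of $\GL_{m_i}(F)$ induced from tamely ramified characters. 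The analogue of Lemma \ref{invariants} for $\frp_1$, together with a direct analysis of Steinberg blocks, shows that if some $m_i \geq 2$ then every $\frp_1$-invariant in $\sigma_i$ is already $\frp$-invariant; combined with $\pi^\frp = 0$, this forces each $m_i = 1$, so $\pi = \nInd_B^G \chi_1 \otimes \cdots \otimes \chi_n$ for tamely ramified characters $\chi_i$.

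\emph{Stage 2 (invariants and structure).} Check that $\frp_1$ has an Iwahori decomposition with respect to $P$ with Levi factor $\ffrm_1 := \ker(\ffrm \to \frf^\times(l))$; as in Proposition \ref{intertwining} this yields an isomorphism $\pi^{\frp_1} \overset{\sim}{\rightarrow} (\pi_N)^{\ffrm_1}$. Bernstein--Zelevinsky expresses $\pi_N^{ss}$ as a direct sum indexed by subsets $S \subset \{1,\ldots,n\}$ of size $n_1$ of principal series of $M = \GL_{n_1}\times\GL_{n_2}$, with characters $\{\chi_i\}_{i \in S}$ on the first factor and $\{\chi_i\}_{i \notin S}$ on the second. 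The $\ffrm_1$-invariants of the $S$-th summand are non-zero precisely when $\chi_i$ is unramified for all $i \in S$ and (by Frobenius reciprocity applied to $\GL_{n_2}(\cO_F)$ via the determinant) all the $\chi_i$ for $i \notin S$ share a common restriction $\omega$ to $\cO_F^\times$ that factors through $\frf^\times(l)$; in that case the space is 1-dimensional. The hypotheses $\pi^\frp = 0$ (so $\omega$ is non-trivial) and $\pi^{\frp_1} \neq 0$ force, after a harmless reordering, $\chi_1,\ldots,\chi_{n_1}$ unramified and $\chi_{n_1+1},\ldots,\chi_n$ tamely ramified with common non-trivial restriction $\omega$ to inertia; the unique contributing subset is $S = \{1,\ldots,n_1\}$, and thus $\dim \pi^{\frp_1} = 1$.

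\emph{Stage 3 (Galois side).} Since $\pi$ is fully induced from the Borel, $r_l(\pi)$ has trivial monodromy, and $r_l(\pi)^\vee(1-n) \cong \bigoplus_i (\chi_i|\cdot|^{(1-n)/2})\circ \Art_F^{-1}$ is a direct sum of characters of $G_F$: $n_1$ unramified and $n_2$ sharing a common tame inertia character. In particular $\rho^{ss}|_{I_F}$ has finite image of a specific multiplicity structure (two isotypes of multiplicities $n_1$ and $n_2$), which admits no non-trivial monodromy operator lifting this structure. Hence $\rho^{F-ss} \cong r_l(\pi)^\vee(1-n)$, and $\rho$ decomposes into the two inertia isotypes preserved by Frobenius, giving abelian image. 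The main obstacle is the Frobenius-reciprocity computation in Stage 2: showing that the $\ffrm_1$-invariants of a tamely ramified principal series of $\GL_{n_2}$ are 1-dimensional precisely when all inducing characters share a common restriction to $\cO_F^\times$ is what produces the ``identical restriction to inertia'' conclusion.
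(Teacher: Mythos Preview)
Your Stages 1 and 2 are correct. In Stage 2 you take a somewhat different route from the paper: rather than invoking the subgroup $\Iw' = \{(a_{ij}) \in \Iw : \prod_{i>n_1} a_{ii} \equiv 1 \bmod \varpi\}$ and citing \cite{Clo08}, Lemma 3.1.6, you compute $(\pi_N)^{\ffrm_1}$ directly via the geometric lemma and a Frobenius reciprocity argument on $\GL_{n_2}(\cO_F)$. This is a valid and more self-contained alternative, and it yields the 1-dimensionality of $\pi^{\frp_1}$ in the same stroke.

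Stage 3, however, has a genuine gap. You claim that the inertia isotype structure of $\rho^{ss}$ (two isotypes, of multiplicities $n_1$ and $n_2$) ``admits no non-trivial monodromy operator''. This is false as stated: the monodromy $N$ in $WD(\rho)$ commutes with the inertia action and so preserves each isotypic piece, but nothing about inertia alone prevents $N$ from being a nonzero nilpotent on (say) the $n_1$-dimensional unramified piece. What actually forces $N=0$ is the Frobenius relation $r(\phi) N r(\phi)^{-1} = qN$ together with the fact that no two of the Galois characters $\chi_i|\cdot|^{(1-n)/2}$ differ by a twist by $|\cdot|$ --- and this is precisely the irreducibility (equivalently genericity) of the full principal series $\pi = \nInd_B^G \chi_1 \otimes \cdots \otimes \chi_n$, which you established in Stage 2 but did not invoke here. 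The paper's argument proceeds exactly this way: genericity of $\pi$ forces $N=0$ in any Weil--Deligne representation with underlying $W_F$-representation equal to this sum of characters; applying this to $WD(\rho^{F-ss})$ gives $\rho^{F-ss} \cong r_l(\pi)^\vee(1-n)$, and the abelian-image conclusion then follows from the general fact that a representation of $W_F$ with open kernel is abelian if and only if its Frobenius-semisimplification is.
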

\begin{proof}
Consider the subgroups $U_1 \subset U_0 \subset GL_2(\cO_F)$ defined as follows:
\begin{equation*} U_1 = \left \{ \left(\begin{array}{cc} \ast & \ast \\ 0 & 1 \end{array} \right) \text{ mod } \varpi \right \}, U_0 = \left \{ \left(\begin{array}{cc} \ast & \ast \\ 0 & \ast \end{array} \right) \text{ mod } \varpi \right \}.
\end{equation*}
Since $\pi^{\frp_1} \neq 0$, we have a presentation
\begin{equation*}\pi = \nInd^G_Q \chi_1 \otimes \dots \otimes \chi_s \otimes \St_2(\psi_1) \otimes \dots \otimes \St_2(\psi_t),\end{equation*}
with for each $j$ $\St_2(\psi_j)^{U_1}\neq 0$. If $t > 0$ then by conductor considerations we see that each $\psi_j$ is unramified and $\St_2(\psi_j)^{U_1}=\St_2(\psi_j)^{U_0}$, and hence $\pi^\frp \neq 0$, a contradiction (compare \cite{Clo08}, proof of Lemma 3.1.5). It follows that $t = 0$. Now note that $\frp_1$ contains the subgroup $\Iw' \subset \Iw$ defined as follows:
\begin{equation*}\Iw' = \{ (a_{ij}) \in \Iw \text{ such that } \prod_{i=n_1 + 1}^n a_{ii} \equiv 1 \mod \varpi \}.\end{equation*}
We can now apply \cite{Clo08}, Lemma 3.1.6  (or more precisely its proof) and the genericity of $\pi$ to deduce that it must have the form stated above.

Since $\pi$ is generic, any Weil-Deligne representation $(r, N)$ with $r = r_l(\pi)^\vee(1-n)^{ss}$ necessarily has $N=0$. Applying this to $WD(\rho^{F-ss})$ gives the result, on noting that a representation of $W_F$ with open kernel is abelian if and only if its Frobenius-semisimplification is. (Here we use $WD$ to denote the associated Weil-Deligne representation).
\end{proof}

\begin{proposition}\label{hecke}
Let $R$ be a complete local $\cO$-algebra with residue field $k$. Let $\Pi$ be a smooth $R[G]$-module, and suppose that for every open compact subgroup $U \subset G$, $\Pi^U$ is a finite free $\cO$-module. We write $R_U$ for the image of $R$ in $\End_\cO \Pi^U$. Suppose that $\Pi \otimes_\cO \barQl$ is a semisimple $\barQl[G]$-module, and that every irreducible constituent is generic. Suppose that each $R_U \otimes_\cO \barQl$ is a semisimple algebra.

Suppose there exists a continuous representation $\rho : G_F \rightarrow \GL_n(R_{\frp_1})$ such that for any homomorphism $\varphi : R_{\frp_1} \rightarrow \barQl$, and any irreducible constituent $\pi$ of the representation generated by $\Pi^{\frp_1} \otimes_{R_{\frp_1}, \varphi} \barQl$, there is an isomorphism
\begin{equation*} (r_l(\pi)^\vee(1-n))^{ss} \cong (\rho \otimes_{R_{\frp_1}, \varphi} \barQl)^{ss}.\end{equation*}
Fix a Frobenius lift $F$, corresponding to uniformizer $\varpi$ under the local Artin map. Suppose that $\overline{\rho} = \rho \mod \ffrm_R$ is unramified, and that $\overline{\rho}(F)$  has an eigenvalue $\overline{\alpha}$ of multiplicity $n_2$.  Define $\pr_\varpi$ as above. Let $R'$ denote the image of $R$ in $\End_\cO(\pr_\varpi \Pi^{\frp_1}).$

Then we can decompose $\rho\otimes_{R_{\frp_1}}R' = s \oplus \psi$ canonically as a sum of two subrepresentations such that $s$ is unramified, $\psi$ is tamely ramified and its restriction to inertia acts as a scalar character $\phi$. Finally we have for every $\alpha \in \cO_F^\times$, $V_\alpha = \phi(\Art_F(\alpha))$ in $R' \subset \End_{\cO}(\pr_\varpi \Pi^{\frp_1}).$
\end{proposition}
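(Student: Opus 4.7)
The plan is to reduce the proposition to a pointwise verification at each $\barQl$-algebra homomorphism $\varphi : R' \to \barQl$, in the spirit of Proposition \ref{mainprop}. First I would observe that since $R$ is commutative and each $R_U \otimes_\cO \barQl$ is semisimple by hypothesis, the quotient $R' \otimes_\cO \barQl$ is a product of copies of $\barQl$; combined with the $l$-torsion freeness of $R'$ (as a subring of $\End_\cO \pr_\varpi \Pi^{\frp_1}$), this yields an injection $R' \hookrightarrow \prod_\varphi \barQl$, and hence $M_n(R') \hookrightarrow \prod_\varphi M_n(\barQl)$, so every equation in $R'$ or $M_n(R')$ may be verified pointwise. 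Fix $\varphi$ and let $\pi$ be an irreducible constituent of the $\barQl[G]$-subrepresentation generated by $\pr_\varpi \Pi^{\frp_1} \otimes_{R',\varphi} \barQl$; the eigenvalue argument from the proof of Proposition \ref{mainprop} (together with Lemma \ref{ramified} in the case $\pi^\frp=0$) rules out any Steinberg factor $\St_2(\psi_j)$ and forces $\pi = \nInd_B^G(\chi_1 \otimes \cdots \otimes \chi_n)$ with $\chi_1,\ldots,\chi_{n_1}$ unramified and $\chi_{n_1+1},\ldots,\chi_n$ sharing a common restriction $\phi_{0,\varphi}$ to $\cO_F^\times$---these being precisely the $n_2$ characters whose Frobenius eigenvalues are congruent to $\overline{\alpha}$.

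\textbf{Splitting of $\rho$.} The polynomial $\mathrm{char}_{\rho(F)}(X) \in R'[X]$ reduces modulo $\ffrm_{R'}$ to a polynomial with $(X-\overline{\alpha})^{n_2}$ as a coprime factor, so Hensel's lemma lifts this to a coprime factorization $\mathrm{char}_{\rho(F)}(X) = P_s(X)\,P_\psi(X)$ over $R'$. Using a Bezout identity $Q_s P_\psi + Q_\psi P_s = 1$ together with Cayley--Hamilton, orthogonal idempotents $e_s, e_\psi \in R'[\rho(F)] \subset \End_{R'}(R'^n)$ arise, yielding $R'^n = V_s \oplus V_\psi$ with $V_\psi$ free of rank $n_2$. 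At each $\varphi$ this matches the pointwise direct-sum decomposition of $\rho_\pi$ from the principal-series description above, so by pointwise verification $\rho(g)$ commutes with $e_s$ and $e_\psi$ for all $g \in G_F$, giving $\rho\otimes_{R_{\frp_1}} R' = s\oplus \psi$. Unramifiedness of $s$ on inertia is then pointwise. For the scalar action of $\psi$ on inertia, I would define $\phi(\sigma) \in R'$ as, say, the $(1,1)$-entry of $\psi(\sigma)$ in any chosen $R'$-basis of $V_\psi$: pointwise $\psi_\pi(\sigma)$ is scalar so $\psi(\sigma) - \phi(\sigma) \cdot 1_{V_\psi}$ vanishes at every $\varphi$, hence in $M_{n_2}(R')$; this also shows $\phi$ is independent of the basis and a homomorphism, and pointwise tameness yields tameness of $\phi$.

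\textbf{The identity $V_\alpha = \phi(\Art_F(\alpha))$.} This too is verified pointwise. Because $\frp/\frp_1 \cong \frf^\times(l)$ is abelian, $\frp_1$ is normal in $\frp$, so the double coset $\frp_1 g_\alpha \frp_1$ with $g_\alpha = \diag(1_{n_1}, A_\alpha)$ collapses to the single right coset $g_\alpha\,\frp_1$. Hence $V_\alpha$ acts on $\pi^{\frp_1}$ as the scalar by which the image of $g_\alpha$ in $\frp/\frp_1$ acts via the character of $\frp/\frp_1$ through which $\frp$ operates on $\pi^{\frp_1}$. A direct inspection of the principal-series model identifies this character as $\alpha \mapsto \chi_{n_1+1}(\alpha) = \phi_{0,\varphi}(\alpha)$, which equals $\phi_\pi(\Art_F(\alpha))$ under the standard normalization of local class field theory. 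The pointwise equality then yields the identity in $R'$ (and shows in particular that $V_\alpha$ lies in $R'$). The main obstacle lies in this last step: identifying the $\frp/\frp_1$-character on $\pi^{\frp_1}$ requires unpacking the Iwasawa decomposition of $G$ relative to the parahoric $\frp$ and its normal subgroup $\frp_1$, and tracking conventions carefully; a secondary difficulty is confirming that the Proposition \ref{mainprop} eigenvalue argument remains effective under the weaker hypothesis $\pi^{\frp_1}\neq 0$ (rather than $\pi^\frp\neq 0$), which is where Lemma \ref{ramified} is essential.
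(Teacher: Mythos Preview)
Your approach is essentially that of the paper: embed $R' \hookrightarrow \prod_\varphi \barQl$, classify the constituents $\pi$ with $\pr_\varpi \pi^{\frp_1} \neq 0$ via Proposition \ref{mainprop} and Lemma \ref{ramified}, split $\rho \otimes R'$ using Hensel's lemma on the characteristic polynomial of Frobenius, and then verify the remaining assertions pointwise. The paper phrases the invariance of the splitting slightly differently---it first notes that $\rho'$ has abelian image (checked pointwise) and deduces invariance directly---but this is equivalent to your formulation.

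There is, however, one genuine step you assert without justification. In the ramified case you claim that the tamely ramified characters $\chi_{n_1+1},\ldots,\chi_n$ are ``precisely the $n_2$ characters whose Frobenius eigenvalues are congruent to $\overline{\alpha}$.'' Neither Proposition \ref{mainprop} nor Lemma \ref{ramified} gives this: Lemma \ref{ramified} only tells you the shape of $\pi$, not which Frobenius eigenvalues the ramified characters carry. Without this identification you cannot conclude that the Hensel piece $s$ (Frobenius eigenvalues $\not\equiv \overline{\alpha}$) is unramified, nor that $\psi$ (eigenvalues $\equiv \overline{\alpha}$) is the piece with scalar inertia. The paper supplies this by computing the $V^j$-eigenvalue on the one-dimensional space $\pi^{\frp_1}$ in two ways: directly from the principal-series model one gets the $j$-th elementary symmetric function of $\chi_{n_1+1}(\varpi),\ldots,\chi_n(\varpi)$, while the condition $\pr_\varpi \pi^{\frp_1} \neq 0$ forces the eigenvalue to be the corresponding symmetric function of the unique size-$n_2$ multiset of Frobenius eigenvalues congruent to $\overline{\alpha}$. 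Equating these for all $j$ identifies the two multisets and hence $s^{ss}$, $\psi^{ss}$ with the unramified and ramified blocks respectively. This $V^j$ comparison is the missing ingredient; what you flag as the ``main obstacle'' (the $V_\alpha$ computation) is by contrast routine once this is in place, and your treatment of it is fine.
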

\begin{proof}  Let $\varphi, \pi$ be as in the statement of the proposition. Proposition \ref{mainprop} and Lemma \ref{ramified} show that if $\pr_\varpi \pi^{\frp_1} \neq 0$, then $\pi$ is either unramified or a ramified principal series, and in either case the image of $\rho \otimes_{R_{\frp_1},\varphi} \barQl$ is abelian. Let $\rho' = \rho \otimes_{R_{\frp_1}} R'.$

Let $P$ be the characteristic polynomial of $\rho'(F)$, and factor $P(X)=A(X)B(X)$, where ${B}(\overline{\alpha})= 0$ and $\overline{A}$, $\overline{B}$ are coprime. Then we decompose
\begin{equation*}R'^n = B(F)R'^n \oplus A(F)R'^n = s \oplus \psi, \text{ say.}\end{equation*}
Since $\rho'$ has abelian image, this decomposition is $\rho'$-invariant.

To prove the rest of the proposition, we can replace $R'$ with its image in $\barQl$, for some $\varphi, \pi$ with $\pr_\varpi \pi^{\frp_1} \neq 0$. We replace $\rho'$ with $\rho' \otimes_{R', \varphi} \barQl$. If $\pi$ is unramified then $s$ and $\psi$ are both unramified and $V_\alpha$ acts trivially on $\pr_\varpi \pi^{\frp_1}$.

If on the other hand $\pi$ is ramified then we must have
\begin{equation*}\pi \cong \nInd^G_B \chi_1 \otimes \dots \otimes \chi_n\end{equation*}
where $\chi_1, \dots, \chi_{n_1}$ are unramified and the remaining characters are ramified with equal restriction to inertia. Now $\pr_\varpi \pi^{\frp_1} \neq 0$ implies that $\pr_\varpi \pi^{\frp_1} = \pi^{\frp_1}$. One now computes that $V^j$ acts on the one dimensional space $\pi^{\frp_1}$ as the scalar
\begin{equation*}q^{j( n -j)/2}\sum_{\substack{J \subset \cS\\ \#J = j}} \prod_{a \in J} \chi_a(\varpi),\end{equation*}
$\cS = \{ n_1 +1 , \dots, n\}$. On the other hand $V^j$ acts on $\pr_\varpi \pi^{\frp_1}$ as the scalar
\begin{equation*}q^{j( n -j)/2}\sum_{\substack{J \subset \cS'\\ \#J = j}} \prod_{a \in J} \chi_a(\varpi),\end{equation*}
$\cS'$ equal to the set of $i$ such that $\chi_i(\varpi) \equiv \overline{\alpha}$ mod $\ffrm_R$.

Since these are the same, we find that
\begin{equation*}s^{ss} \cong (\chi_1 \oplus \dots \oplus \chi_{n_1})|\cdot|^{(1-n)/2} \circ \Art_F^{-1}\end{equation*}
and that
\begin{equation*}\psi^{ss} \cong (\chi_{n_1+1} \oplus \dots \oplus \chi_n)|\cdot|^{(1-n)/2} \circ \Art_F^{-1}.\end{equation*}
The final line of the proposition follows from the computation of the action of $V_\alpha$ on $\pi^{\frp_1}$.
\end{proof}

\section{Automorphic forms on definite unitary groups}\label{twk}

The constructions in the first part of this section are now standard, cf. \cite{Ger09}, \cite{Gue10}. The main point of repeating them here is to convince the reader that they go through with the assumption that $l > n$ weakened to the assumption that $l$ is merely an odd prime. We also take some steps to remove the hypothesis that the open compact subgroup $U$ is ``sufficiently small''.

We suppose that $L$ is an imaginary $CM$ field such that $L/L^+$. Let $c$ denote the non-trivial element of $\Gal(L/L^+)$. Let $B$ denote the matrix algebra $M_n(L)$, and let $(\cdot)^\ast$ be an involution on $B$ of the second kind. We let $G$ be the associated unitary group of transformations $g \in B$ such that $g g^\ast = 1$.

Suppose now that $L/L^+$ is unramified at all finite places and that 4 divides $n[L^+ : \bbQ]$. Under these hypotheses, we can choose $(\cdot)^\ast$ so that
\begin{enumerate} \item For every finite place $v$ of $L^+$, $G$ is quasi-split at $v$.
\item For every infinite place $v$ of $L^+$, $G(L^+_v)\cong U_n(\bbR)$.
\end{enumerate}
We can find a maximal order $\cO_B \subset B$ such that $\cO_B^\ast = \cO_B$ and such that $\cO_{B,w}$ is a maximal order in $B_w$ for every place $w \in L$ split over $L^+$. This defines an integral model for $G$ over $\cO_{L^+}$, which we continue to denote as $G$. We do not prove these facts here, but refer instead to \cite{Clo08}, Section 3.3, with the set $S(B)$ taken to be empty.

Let $v$ be a finite place of $L^+$ which splits as $v = w w^c$ in $L$. Then we can find an isomorphism
\begin{equation*}\iota_v : \cO_{B,v} \rightarrow M_n(\cO_{L,v}) = M_n(\cO_{L_w}) \times M_n(\cO_{L_{w^c}}),\end{equation*}
such that $\iota_v(g^\ast) = {}^t\iota_v(g)^c.$ Projection to the first factor then gives rise to an isomorphism
\begin{equation*}\iota_w : G(\cO_{L_v^+}) \rightarrow \GL_n(\cO_{L_w}).\end{equation*}

Let $l$ be an odd prime number, and suppose that every place of $L^+$ above $l$ splits in $L$. Let $S_l$ denote the set of places of $L^+$ dividing $l$. For each place in $v \in S_l$ we choose a place $\wv$ of $L$ above it and denote the set of these by $\widetilde{S}_l$. 

Let $K$ be a finite extension of $\bbQ_l$ in $\overline{\bbQ}_l$, with ring of integers $\cO$ and residue field $k$. We write $\lambda$ for the maximal ideal of $\cO$. We will suppose $K$ large enough to contain the image of every embedding of $L$ in $\overline{\bbQ}_l$. 

Let $\widetilde{I}_l$ denote the set of embeddings $L \hookrightarrow K$ inducing a place in $\widetilde{S}_l$. To each $\lambda \in (\bbZ^n_+)^{\widetilde{I}_l}$ we associate a finite free $\cO$-module $M_\lambda$ as follows:
\begin{equation*}M_\lambda = \otimes_{\tau \in \widetilde{I}_l} M_{\lambda_\tau},\end{equation*}
where $M_{\lambda_\tau}$ is as constructed in \cite{Ger09}, Section 2.2. (It is the algebraic representation of $\GL_n/\cO$ with highest weight $\diag(t_1, \dots, t_n) \mapsto \prod_{i=1}^n t_i^{\lambda_{\tau, i}})$. 

Then $M_\lambda$ can be viewed as a continuous representation of the group $G(\cO_{L^+, l})$, via the product of the maps
\begin{equation*}\tau \circ \iota_{\wv(\tau)} :  G(\cO_{L^+_{ v(\tau)}})\rightarrow \GL_n(\cO_{L_{\wv(\tau)}}) \rightarrow \GL_n(\cO).\end{equation*}
(Here $\wv(\tau)$ and $v(\tau)$ are the places $L$ and $L^+$, respectively, induced by the embedding $\tau : L \hookrightarrow K$). Similarly $W_\lambda = M_\lambda \otimes_\cO K$ can be viewed as a continuous representation of the group $G(L^+_l)$.

Let $R$ be a finite set of finite places of $L^+$, disjoint from $S_l$ and containing only places which split in $L$. Let $T \supset S_l \cup R$ be a finite set of places of $L^+$ which split in $L$. For each $v \in T$ we choose a place $\wv$ of $L$ above it, extending our previous choice for $v \in S_l$. We suppose that $U = \prod_v U_v$ is an open compact subgroup of $G(\bbA^\infty_{L^+})$ such that $U_v \subset \iota_\wv^{-1} \Iw(\wv)$ for $v \in R$.  (We recall that for a place $\wv$ of $L$, $\Iw(\wv)$ is the subgroup of $\GL_n(\cO_{L_\wv})$ consisting of matrices whose image in $\GL_n(k(\wv))$ is upper triangular. We will also write $\Iw_1(\wv) \subset \Iw(\wv)$ for the subgroup of matrices whose image in $\GL_n(k(\wv))$ is upper-triangular and unipotent).

For each $v \in R$, we choose a character 
\begin{equation*}\chi_v = \chi_{v, 1} \times \dots \times \chi_{v, n} : \Iw(\wv)/\Iw_1(\wv) \rightarrow \cO^\times,\end{equation*}
the decomposition being with respect to the natural isomorphism 
\begin{equation*}\Iw(\wv)/\Iw_1(\wv) \cong (k(\wv)^\times)^n.\end{equation*}
 We set 
\begin{equation*}M_{\lambda, \{\chi_v\}} = M_\lambda \otimes_\cO \left(\bigotimes_{v \in R} \cO(\chi_v)\right) \text{ and }W_{\lambda, \{\chi_v\}} = M_{\lambda, \{\chi_v\}} \otimes_\cO K.\end{equation*}
These are representations of the groups $G(\cO_{L^+, l}) \times \prod_{v \in R} \Iw(\wv)$ and $G(L^+_l) \times \prod_{v \in R} \Iw(\wv)$, respectively.
\begin{definition}

Let $\lambda, U, \{\chi_v\}$ be as above. If $A$ is an $\cO$-module, and $U_v \subset G(\cO_{F^+_v})$ for $v \mid l$, we write $S_{\lambda, \{\chi_v\}}(U, A)$ for the set of functions 
\begin{equation*}f : G(L^+)\backslash G(\bbA^\infty_{L^+}) \rightarrow M_{\lambda,\{\chi_v\}} \otimes_\cO A\end{equation*}
such that for every $u \in U$, we have $f(gu) = u_{S_l \cup R}^{-1} f(g)$, where $u_{S_l \cup R}$ denotes the projection to $\prod_{v \in S_l \cup R} U_v$.

We write $S_{\lambda, \{ \chi_v\}}(\barQl)$ for the set of functions
\begin{equation*}f : G(L^+)\backslash G(\bbA^\infty_{L^+}) \rightarrow W_{\lambda,\{\chi_v\}} \otimes_K \barQl\end{equation*}
such that there exists an open compact subgroup $V$ such that for every $v \in V$, we have $f(gv) = v_{S_l \cup R}^{-1} f(g)$. This space receives an action of the group $G(\bbA_{L^+}^{\infty, R}) \times \prod_{v \in R} \Iw(\wv)$ via
\begin{equation*}(u \cdot f)(g) = (u_{S_l \cup R}) f(gu).\end{equation*}
Thus for $U$ as above, we have $S_{\lambda, \{ \chi_v\}}(\barQl)^U = S_{\lambda, \{ \chi_v\}}(U, \barQl)$.

If $R$ is empty then we write $S_{\lambda, \{\chi_v\}}(U, A) = S_\lambda(U, A)$.
\end{definition}
The spaces $S_{\lambda,\{\chi_v\}}(U, A)$ receive an action of the Hecke operators
\begin{equation*}T_w^j = \iota_w^{-1} \left( \left[ \GL_n(\cO_{L_w}) \left(\begin{array}{cc} \varpi_w 1_{j} & 0 \\ 0 & 1_{n-j} \end{array} \right)\GL_n(\cO_{L_w})\right]\right).\end{equation*}
Here $w$ is a place of $L$ split over $L^+$, not in $T$ and $\varpi_w$ is a uniformizer of $L_w$. We let $\bbT^T_{\lambda,\{\chi_v\}}(U, A)$ be the (commutative) $\cO$-subalgebra of $\End_\cO(S_{\{\chi_v\},\lambda}(U, A))$ generated by the operators $T_w^j$, $j=1, \dots n$, and $(T_w^n)^{-1}$ for $w$ as above. Again, if $R$ is empty, then we write 
\begin{equation*}\bbT^T_{\lambda,\{\chi_v\}}(U, A)=\bbT^T_{\lambda}(U, A).\end{equation*}

We briefly recall the relation between the spaces defined above and the space $\cA$ of automorphic forms on $G(L^+)\backslash G(\bbA_{L^+})$. 
\begin{proposition} Choose an isomorphism $\iota : \barQl \overset{\sim}{\rightarrow} \bbC$. For each $\tau \in \widetilde{I}_l$, we have $\iota\tau : F \hookrightarrow \bbC$. Let $\xi_{\lambda_{\iota \tau}}$ be the $\bbC$-representation of $G(L^+_{\tau |_{L^+}}) \cong U_n(\bbR)$ of highest weight $\lambda_{\tau}$, and let $\xi_{\iota \lambda}$ be the representation $\otimes_{\tau \in \widetilde{I}_l} \xi_{\lambda_{\iota \tau}}$ of $G(L^+_\infty)$. Then there is an $\iota$-linear isomorphism of $G(A_{L^+}^{\infty, R}) \times \prod_{v \in R} \Iw(\wv)$-modules
\begin{equation*}\iota : S_{\lambda, \{\chi_v\}}(\barQl) \overset{\sim}{\rightarrow} \Hom_{G(L^+_\infty)}\left((\otimes_{v \in R} \bbC(\iota \chi_v^{-1})) \otimes \xi_{\iota\lambda}^\vee, \cA\right).\end{equation*}
In particular, if $\Pi$ is a $G(\bbA_{L^+})$-constituent of $\cA$ with 
\begin{equation*}\Pi^\infty = \Hom_{G(L^+_\infty)}\left((\otimes_{v \in R} \bbC(\iota \chi_v^{-1})) \otimes \xi_{\iota\lambda}^\vee, \Pi \right) \neq 0,\end{equation*}
 then we can view $\iota^{-1} \Pi^{\infty}$ as a $G(\bbA_{L^+}^{\infty, R}) \times \prod_{v \in R} \Iw(\wv)$-constituent of $S_{\lambda, \{\chi_v\}}(\barQl).$
\end{proposition}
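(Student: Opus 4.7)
The plan is to construct the isomorphism $\iota$ explicitly, check that it is a map of $G(\bbA^{\infty,R}_{L^+}) \times \prod_{v\in R}\Iw(\wv)$-modules, and then verify bijectivity by exhibiting an inverse. The result is entirely parallel to the classical comparison between algebraic and classical spaces of automorphic forms on a definite unitary group (as in \cite{Ger09}, \cite{Gue10}), and the only real point is to check that the various standard constructions make sense without the customary hypotheses ``$l>n$'' and ``$U$ sufficiently small''. I expect the argument to be almost formal, with the sole genuine input being an algebraic-analytic compatibility recalled below.

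The decisive observation is that $M_\lambda$ was defined by restricting, via the maps $\tau \circ \iota_{\wv(\tau)} : G(\cO_{L^+_{v(\tau)}}) \to \GL_n(\cO)$, a tensor product of algebraic representations of $\GL_n/\cO$. Applying $\iota : \barQl \overset{\sim}{\to} \bbC$ one obtains a canonical identification $M_\lambda \otimes_{\cO,\iota} \bbC \cong \xi_{\iota\lambda}$ as representations of the algebraic $\cO_{L^+}$-group $G$ via the two families of embeddings $\iota\tau : L \hookrightarrow \bbC$ and $\iota\tau|_{L^+} : L^+ \hookrightarrow \bbR$. In particular, an element $\gamma \in G(L^+)$ acts identically on both realisations of $M_\lambda \otimes \bbC \cong \xi_{\iota\lambda}$, whether one views it as an element of $G(L^+_l)$ (through the $l$-adic side) or of $G(L^+_\infty)$ (through its real embeddings). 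Using this, given $f \in S_{\lambda,\{\chi_v\}}(\barQl)$ and $\phi \in \xi_{\iota\lambda}^\vee \otimes \bigotimes_{v \in R} \bbC(\iota\chi_v^{-1})$, I set
\[ F_{f,\phi}(g_\infty g^\infty) = \langle\, \iota f(g^\infty),\, g_\infty \cdot \phi\, \rangle \]
for $g_\infty \in G(L^+_\infty)$ and $g^\infty \in G(\bbA^\infty_{L^+})$, using the natural pairing between $\xi_{\iota\lambda} \otimes \bigotimes \bbC(\iota\chi_v)$ and its dual. The left $G(L^+)$-invariance of $F_{f,\phi}$ is then a direct consequence of the algebraic-analytic compatibility above together with the $G(L^+)$-invariance of $f$.

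Smoothness of $F_{f,\phi}$ and the fact that $\phi \mapsto F_{f,\phi}$ is $G(L^+_\infty)$-equivariant are immediate from the defining formula. Equivariance under $G(\bbA^{\infty,R}_{L^+}) \times \prod_{v \in R}\Iw(\wv)$ is a direct unpacking of the right-translation action on $f$ and the $u_v^{-1}$-action on $\iota f(g^\infty)$ through the local components. For bijectivity, I would argue as follows: compactness of $G(L^+_\infty)$ makes $\xi_{\iota\lambda}$ finite-dimensional and unitary, and $G(L^+)\backslash G(\bbA^\infty_{L^+})/U$ is finite for every open compact $U$, so both sides, when restricted to level $U$, are finite-dimensional and the construction is reversible by picking a basis of $\xi_{\iota\lambda}^\vee$ dual to a basis of $\xi_{\iota\lambda}$ and reading off the components of $\iota f(g^\infty)$. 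The main obstacle — to the extent that there is one — is simply to confirm the well-definedness and $G(L^+)$-invariance of $F_{f,\phi}$ without assuming that $U$ is torsion-free in the relevant sense; this reduces to checking, on each double coset $G(L^+) t_i U$, that the stabiliser $U \cap t_i^{-1} G(L^+) t_i$ acts compatibly on $M_{\lambda,\{\chi_v\}}$ through its $l$-adic and $R$-components, which it does by the same algebraic-analytic identification used above.
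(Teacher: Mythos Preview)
Your overall strategy is the standard one and matches what the paper invokes (the reference to \cite{Clo08}, Proposition~3.3.2). However, the explicit formula you wrote down is not correct, and as a consequence your claim of left $G(L^+)$-invariance fails.

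With $F_{f,\phi}(g_\infty g^\infty) = \langle \iota f(g^\infty),\, g_\infty \cdot \phi \rangle$ and $\gamma \in G(L^+)$, the left-invariance of $f$ gives
\[
F_{f,\phi}(\gamma g) \;=\; \langle \iota f(g^\infty),\, \gamma_\infty g_\infty \cdot \phi\rangle,
\]
and there is no reason for this to equal $\langle \iota f(g^\infty),\, g_\infty\cdot\phi\rangle$: the element $\gamma_\infty$ does not act trivially on $\xi_{\iota\lambda}^\vee$. The algebraic--analytic compatibility you correctly identified is never actually used here, because your formula contains no term on which $\gamma$ acts through its $l$-adic embedding.

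The fix is to insert the twist by the $l$-adic component of $g^\infty$: set
\[
F_{f,\phi}(g_\infty, g^\infty) \;=\; \bigl\langle \iota\bigl(g^\infty_l \cdot f(g^\infty)\bigr),\, g_\infty \cdot \phi \bigr\rangle,
\]
where $g^\infty_l$ acts on the $W_\lambda$-factor via the $G(L^+_l)$-action. Now
\[
F_{f,\phi}(\gamma g) = \bigl\langle \iota\bigl(\gamma_l g^\infty_l f(g^\infty)\bigr),\, \gamma_\infty g_\infty\cdot\phi\bigr\rangle
= \bigl\langle \gamma_\infty^{-1}\iota\bigl(\gamma_l g^\infty_l f(g^\infty)\bigr),\, g_\infty\cdot\phi\bigr\rangle,
\]
and the compatibility $\gamma_\infty\cdot\iota(v) = \iota(\gamma_l\cdot v)$ for $\gamma\in G(L^+)$ gives exactly $F_{f,\phi}(g)$. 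With this correction your checks of $G(L^+_\infty)$-equivariance, of $G(\bbA^{\infty,R}_{L^+})\times\prod_{v\in R}\Iw(\wv)$-equivariance, and of bijectivity all go through as you outlined; in particular the inverse is obtained by evaluating at $g_\infty=1$ and undoing the $g^\infty_l$-twist. None of this requires $l>n$ or any smallness hypothesis on $U$.
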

\begin{proof} The proof is almost identical to the first part of the proof of \cite{Clo08}, Proposition 3.3.2. 
\end{proof}

We end this section with two lemmas about these spaces, generalizing \cite{Clo08}, Lemma 3.3.1.

\begin{lemma}\label{suffsmall} Suppose that for all $t \in G(\bbA_{L^+}^\infty)$, the group 
\begin{equation*}t^{-1} G(L^+) t \cap U\end{equation*}
 contains no element of order $l$. Then the functor $A \mapsto S_{\lambda, \{\chi_v\}}(U, A)$ on $\cO$-modules is exact. 
\end{lemma}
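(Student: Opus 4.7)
The plan is to reduce the assertion to an elementary statement about invariants of finite groups of order prime to $l$.

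First, I would invoke the standard finiteness result for definite unitary groups: since $G$ is anisotropic over $L^+$ modulo its center in the relevant sense and $G(L^+_\infty)$ is compact, the double coset space $G(L^+)\backslash G(\bbA_{L^+}^\infty)/U$ is finite. Choose representatives $t_1,\dots,t_r$ so that
\begin{equation*}
G(\bbA_{L^+}^\infty) = \coprod_{i=1}^r G(L^+)\,t_i\,U.
\end{equation*}
Evaluation at the representatives $t_i$ then identifies
\begin{equation*}
S_{\lambda,\{\chi_v\}}(U,A) \;\overset{\sim}{\longrightarrow}\; \bigoplus_{i=1}^r \bigl(M_{\lambda,\{\chi_v\}}\otimes_\cO A\bigr)^{\Gamma_i},
\end{equation*}
where $\Gamma_i := t_i^{-1} G(L^+) t_i \cap U$ acts on $M_{\lambda,\{\chi_v\}}\otimes_\cO A$ through the projection to the $(S_l\cup R)$-component (this is the standard translation of the transformation rule $f(gu)=u_{S_l\cup R}^{-1}f(g)$ via $f(\gamma t_i u) = f(t_i u) = u_{S_l\cup R}^{-1} f(t_i)$, forcing $f(t_i)$ to be $\Gamma_i$-invariant).

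Each $\Gamma_i$ is finite, being a discrete subgroup of the compact group $\prod_{v\mid\infty}G(L^+_v)\times U$ (under a suitable embedding coming from the product formula and the compactness of $G(L^+_\infty)$). By hypothesis $\Gamma_i$ contains no element of order $l$, so by Cauchy's theorem $\#\Gamma_i$ is prime to $l$ and hence invertible in $\cO$. Consequently the averaging idempotent $e_i = (\#\Gamma_i)^{-1}\sum_{\gamma\in\Gamma_i}\gamma$ is defined in $\cO[\Gamma_i]$, exhibits $(M_{\lambda,\{\chi_v\}}\otimes_\cO A)^{\Gamma_i}$ as a direct summand of $M_{\lambda,\{\chi_v\}}\otimes_\cO A$, and the functor of $\Gamma_i$-invariants on $\cO$-modules is (naturally isomorphic to) the image of $e_i$ applied to $M_{\lambda,\{\chi_v\}}\otimes_\cO(-)$. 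Since $M_{\lambda,\{\chi_v\}}$ is a finite free $\cO$-module, tensoring with it over $\cO$ is exact, and $e_i$ is a projector, so each summand is exact in $A$.

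Finally, a finite direct sum of exact functors is exact, so $A\mapsto S_{\lambda,\{\chi_v\}}(U,A)$ is exact. The main point to be careful about is the identification of $\Gamma_i$ and the verification that the action factors through the compact $(S_l\cup R)$-component so that $\Gamma_i$ is indeed finite; this is routine given the definite signature of $G$ at infinity and the explicit form of the transformation condition defining $S_{\lambda,\{\chi_v\}}(U,A)$.
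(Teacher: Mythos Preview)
Your proposal is correct and follows essentially the same approach as the paper: decompose $G(\bbA_{L^+}^\infty)$ into finitely many double cosets $G(L^+)t_iU$, identify $S_{\lambda,\{\chi_v\}}(U,A)$ with $\bigoplus_i (M_{\lambda,\{\chi_v\}}\otimes_\cO A)^{\Gamma_i}$ via evaluation at the $t_i$, and observe that each $\Gamma_i$ is finite of order prime to $l$, so taking $\Gamma_i$-invariants is exact. The paper's proof is terser (it cites \cite{Gro99}, Proposition 4.3 for the finiteness and prime-to-$l$ order of $\Gamma_i$ and leaves the exactness of invariants implicit), but your added details about the averaging idempotent and the compactness argument for finiteness are all standard and correct.
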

\begin{proof} Write $G(\bbA_{L^+}^\infty) = \coprod_i G(L^+)t_jU$, a finite union. Then we have
\begin{equation*}S_{\lambda, \{\chi_v\}}(U, A) = \oplus_j (M_{\lambda, \{\chi_v\}} \otimes_\cO A)^{ t_j^{-1} G(L^+) t_j \cap U},\end{equation*}
\begin{equation*} f \mapsto (f(t_j))_j.\end{equation*}
Now note that for each $j$, $t_j^{-1} G(L^+) t_j \cap U$ is a finite group of order prime to $l$ (compare \cite{Gro99}, Proposition 4.3).
\end{proof}

\begin{lemma}\label{freeness}
 Suppose that for all $t \in G(\bbA_{L^+}^\infty)$, the group 
 \begin{equation*}t^{-1} G(L^+) t \cap U\end{equation*}
  contains no element of order $l$, and let $V \subset U$ be a normal open compact subgroup with $U/V$ abelian, of $l$-power order. The group $U/V$ operates on the space $S_{\lambda, \{ \chi_v \}}(U, A)$ via the diamond operators $u \mapsto [VuV]$. Then:
\begin{enumerate} \item The map $\tr_{U/V} : S_{\lambda, \{\chi_v\}}(V, A)_{U/V} \rightarrow S_{\lambda, \{\chi_v\}}(U, A)$ is an isomorphism.
\item $S_{\lambda, \{\chi_v\}}(V, \cO)$ is a free $\cO[U/V]$-module.
\end{enumerate}
\end{lemma}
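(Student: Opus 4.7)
The plan is to refine the double coset decomposition used in Lemma \ref{suffsmall} from $U$-double cosets to $V$-double cosets, and to show that the resulting $\cO[\Delta]$-module (where $\Delta := U/V$) is free on an explicit set of generators.

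Fix representatives $\{t_j\}$ for $G(L^+) \backslash G(\bbA^\infty_{L^+}) / U$ and set $\Gamma_j = t_j^{-1} G(L^+) t_j \cap U$. By the hypothesis and Cauchy's theorem, $|\Gamma_j|$ is coprime to $l$. In the refinement $G(L^+) t_j U = \bigcup_{\bar\delta \in \Delta} G(L^+) t_j \tilde\delta V$ (with $\tilde\delta \in U$ any lift of $\bar\delta$), a direct check shows that two pieces $G(L^+) t_j \tilde\delta V$ and $G(L^+) t_j \tilde\delta' V$ coincide precisely when $\bar\delta \bar\delta'^{-1}$ lies in the image $\bar\Gamma_j$ of $\Gamma_j$ in $\Delta$. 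But $\bar\Gamma_j$ is simultaneously a quotient of $\Gamma_j$, hence of order coprime to $l$, and a subgroup of the $l$-group $\Delta$, so $\bar\Gamma_j = 1$; equivalently $\Gamma_j \subset V$. Hence the refined double cosets are canonically indexed by pairs $(j, \bar\delta)$.

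Applying Lemma \ref{suffsmall} to $V$ (which inherits the hypothesis from $U \supset V$) and then identifying each $(M_{\lambda, \{\chi_v\}} \otimes_\cO A)^{\tilde\delta^{-1} \Gamma_j \tilde\delta}$ with $(M_{\lambda, \{\chi_v\}} \otimes_\cO A)^{\Gamma_j}$ via multiplication by $\tilde\delta$ (acting through its image in $\prod_{v \in S_l \cup R} U_v$), one obtains a canonical isomorphism
\begin{equation*}
S_{\lambda, \{\chi_v\}}(V, A) \;\cong\; \bigoplus_j \cO[\Delta] \otimes_\cO \bigl(M_{\lambda, \{\chi_v\}}^{\Gamma_j} \otimes_\cO A\bigr).
\end{equation*}
A short computation using the definition of the diamond operator $u \mapsto [VuV]$ shows that this identification is $\cO[\Delta]$-equivariant, with $\Delta$ acting by translation on the first tensor factor.

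Both parts of the lemma then follow formally. For (ii), since $|\Gamma_j|$ is invertible in $\cO$, the module $M_{\lambda, \{\chi_v\}}^{\Gamma_j}$ is the image of the averaging projector on the free $\cO$-module $M_{\lambda, \{\chi_v\}}$, hence a direct summand and itself free; tensoring with $\cO[\Delta]$ then produces a free $\cO[\Delta]$-module, and finite direct sums of these stay free. For (i), on any free module of the form $\cO[\Delta] \otimes_\cO X$ the norm $\sum_{\delta \in \Delta} \delta$ descends to an isomorphism from the coinvariants $X$ onto the invariants $\cO\cdot(\sum_\delta \delta) \otimes X$; summing over $j$ identifies this norm map with $\tr_{U/V}$ and its target with the decomposition of $S_{\lambda, \{\chi_v\}}(U, A)$ supplied by Lemma \ref{suffsmall}. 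The main technical point, where one has to be careful, is confirming that the $\Delta$-action on the decomposition is the naive permutation action after one translates coefficients by $\tilde\delta$, i.e.\ that no character twist intervenes; a direct calculation with the diamond operator pinpoints the cancellation that makes this so.
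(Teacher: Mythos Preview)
Your proof is correct and takes a genuinely different route from the paper's. The paper proves part (i) by a duality argument: it reduces to cyclic $U/V$ and $A=\cO$, sets up a perfect pairing between $S_{\lambda,\{\chi_v\}}(V,\cO)$ and a companion space $S_{\lambda^\vee,\{\chi_v^{-1}\}}(V,\cO)$ for the dual weight, and then uses Pontryagin duality to convert the isomorphism question for $\tr_{U/V}$ into the tautology $S_{\lambda^\vee,\{\chi_v^{-1}\}}(U,K/\cO) = S_{\lambda^\vee,\{\chi_v^{-1}\}}(V,K/\cO)^{U/V}$. Part (ii) is then deduced from (i) via Nakayama and a dimension count (using, at the last step, exactly your observation that $t^{-1}G(L^+)t\cap U = t^{-1}G(L^+)t\cap V$).

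Your approach front-loads that observation: once you know $\Gamma_j\subset V$, the refinement of the double coset decomposition gives an explicit identification of $S_{\lambda,\{\chi_v\}}(V,A)$ with $\bigoplus_j \cO[\Delta]\otimes_\cO (M_{\lambda,\{\chi_v\}}^{\Gamma_j}\otimes_\cO A)$ as a $\Delta$-module, and both parts drop out simultaneously. This is more elementary---no dual weight, no pairing, no Pontryagin duality, no reduction to cyclic quotients---and it treats all $\cO$-modules $A$ uniformly rather than reducing to $A=\cO$. The paper's duality argument, by contrast, is a portable template that would adapt to settings where an explicit free basis is harder to write down. Your flagged ``main technical point'' about the absence of a character twist is indeed the only place one must compute carefully, and the cancellation you describe (the coefficient twist by $\tilde\delta_{S_l\cup R}$ exactly absorbing the change of evaluation point) is what makes it work.
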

\begin{proof} For the first part of the lemma, we can suppose that $U/V$ is cyclic; let $\sigma$ be a generator. By the previous lemma we can also assume that $A = \cO$. 

Let $\lambda^\vee$ be the weight defined by $\lambda^\vee_{\tau,i} = -\lambda_{\tau,  n  + 1 - i}$. Then we have $(W_{\lambda, \{\chi_v\}})^\vee \cong W_{\lambda^\vee, \{\chi_v^{-1}\}}$ as representations of $G(L^+_l) \times \prod_{v \in R} \Iw(\wv)$ (cf. \cite{Jan87}, Corollary II.2.5). Thus $(M_{\lambda, \{\chi_v\}})^\vee$ defines a lattice in $W_{\lambda^\vee, \{\chi_v^{-1}\}}$ which is invariant under the action of $G(\cO_{L^+,l}) \times \prod_{v \in R} \Iw(\wv)$. If $W$ is a compact open subgroup of $G(\bbA^\infty_{L^+})$, and $A$ is an $\cO$-module, let us temporarily write $S_{\lambda^\vee, \{ \chi_v^{-1} \}}(W, A)$ for the set of functions
\begin{equation*}f : G(L^+)\backslash G(\bbA^\infty_{L^+}) \rightarrow (M_{\lambda, \{\chi_v\}})^\vee \otimes_\cO A\end{equation*}
such that for every $w \in W$, we have $f(gw) = w_{S_l \cup R}^{-1} f(g)$.

We now have a pairing $S_{\lambda, \{\chi_v\}}(V, \cO) \times S_{\lambda^\vee, \{\chi^{-1}_v\}}(V, \cO) \rightarrow \cO$ given by the formula
\begin{equation*}(f,g)_V = \sum_{t \in G(L^+)\backslash G(\bbA^\infty_{L^+}) \slash V}  \frac{\langle f(t), g(t) \rangle} {\# (t^{-1} G(L^+) t \cap V)},\end{equation*}
where $\langle, \rangle$ is the natural duality pairing. We use the same formula to define a pairing $(,)_U$ on the spaces of level $U$. In fact, these are perfect pairings, and for $u \in U$ we have
\begin{equation*}([VuV]f, g)_V = (f, [Vu^{-1}V]g)_V.\end{equation*}
 Moreover, the diagram
\begin{equation*}\begin{array}{ccccc}S_{\lambda, \{\chi_v\}}(V, \cO)  & \times & S_{\lambda^\vee, \{\chi^{-1}_v\}}(V, \cO) & \rightarrow &  \cO \\
\downarrow {\tr_{U/V}} & & \rotatebox[origin=c]{90}{$\subset$} & & \parallel \\
S_{\lambda, \{\chi_v\}}(U, \cO)  & \times & S_{\lambda^\vee, \{\chi^{-1}_v\}}(U, \cO) & \rightarrow & \cO
\end{array}\end{equation*}
commutes. The map $\tr_{U/V}$ being an isomorphism is equivalent to the exactness of the sequence
\begin{equation*}\xymatrix@1{0 \ar[r] & (\sigma - 1) S_{\lambda, \{\chi_v\}}(V, \cO) \ar[r] & S_{\lambda, \{\chi_v\}}(V, \cO)  \ar[r]^{\tr_{U/V}} & \ar[r] S_{\lambda, \{\chi_v\}}(U, \cO) \ar[r] & 0.}\end{equation*}
Applying Pontryagin duality, and the pairing constructed above, this is equivalent to the natural map
\begin{equation*}S_{\lambda^\vee, \{\chi^{-1}_v\}}(U, \cO) \otimes K/\cO \rightarrow (S_{\lambda^\vee, \{\chi^{-1}_v\}}(V, \cO) \otimes K/\cO)^{\sigma = 1}\end{equation*}
being an isomorphism. But by the same argument as in the proof of previous lemma, this is just the map
\begin{equation*}S_{\lambda^\vee, \{\chi^{-1}_v\}}(U, K/\cO) \rightarrow (S_{\lambda^\vee, \{\chi^{-1}_v\}}(V, K/\cO))^{U/V},\end{equation*}
and the right hand side equals the left hand side by definition.

The second part of the lemma now follows from the first part as follows. Let $r = \dim_k S_{\lambda, \{\chi_v\}}(U, k)$. By Nakayama's lemma, there is a surjection
\begin{equation*}\cO[U/V]^r \rightarrow S_{\lambda, \{\chi_v\}}(V, \cO)\end{equation*}
of $\cO[U/V]$-modules. To show that this is an isomorphism, it's enough to check that 
\begin{equation*}\dim_K S_{\lambda, \{\chi_v\}}(V, K) = \#(U/V) r = \#(U/V) \dim_K S_{\lambda, \{\chi_v\}}(U, K).\end{equation*}
But this follows from the fact that for any $t$ we have $tG(L^+)t^{-1} \cap U = t G(L^+) t^{-1} \cap V$, along with the description of  $S_{\lambda, \{\chi_v\}}(V, K)$ given in the proof of the previous lemma.
\end{proof}

\subsection*{Galois representations} Keep the assumptions of the previous section.
\begin{theorem}\label{galois} Suppose that $\pi$ be an irreducible $G(\bbA^{\infty,R}_{L^+}) \times \prod_{v \in R} \Iw(\wv)$-constituent of $S_{\lambda, \{ \chi_v\}}(\barQl)$. Then there exists a continuous semisimple representation
\begin{equation*}r_l(\pi) : G_L \rightarrow \GL_n(\barQl)\end{equation*}
such that:
\begin{enumerate} \item If $v \not\in S_l$ is a place of $L^+$ which splits as $v = w w^c$ in $L$, then
\begin{equation*}(r_l(\pi)|_{G_{L_w}})^{ss} \cong (r_l(\pi_v \circ \iota_w^{-1}))^{ss}.\end{equation*}
\item $r_l(\pi)^c \cong r_l(\pi)^\vee(1-n).$
\item If $v$ is an inert place and $\pi_v$ has a fixed vector for a hyperspecial maximal compact in $G(L^+_v)$ then $r_l(\pi)$ is unramified at $v$.
\item If $v \in R$ and $\pi_v^{\iota_\wv^{-1} \Iw(\wv)} \neq 0$, then for every $\sigma \in I_{L_\wv}$, we have
\begin{equation*}\cha_{r_l(\pi)(\sigma)}(X) = \prod_{j=1}^n \left(X - \chi_{v, j}^{-1}(\Art_{L_\wv}^{-1}(\sigma))\right).\end{equation*}
\item If $v \in S_l$ splits as $v = w w^c$ then $r_l(\pi)|_{G_{F_w}}$ is de Rham. If $\pi_w$ is unramified then $r_l(\pi)|_{G_{F_w}}$ is even crystalline. Moreover, for each $\tau \in \widetilde{I}_l$, we have
\begin{equation*}\text{HT}_\tau(r_l(\pi)) = \{ \lambda_{{\tau}, j} + n - j\}_{j=1, \dots, n}.\end{equation*}
\end{enumerate}
Finally if $r_l(\pi)$ is irreducible then for each place $v$ of $L^+$ which splits as $L$ as $w w^c$, the representation $\pi_w \circ \iota_w^{-1}$ is generic.
\end{theorem}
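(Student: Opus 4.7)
The plan is to use base change from the unitary group $G$ to $\GL_n/L$ and then exploit the characterization of the Galois representation together with the genericity of local components of cuspidal automorphic representations of $\GL_n$. More precisely, I will first transfer $\pi$ (or its extension to the relevant global automorphic representation of $G$) to an automorphic representation $\Pi$ of $\GL_n(\bbA_L)$ via the results of Labesse on stable base change for unitary groups of a definite form. This base change is available in our setting because $G$ is quasi-split at all finite places, $L/L^+$ is unramified everywhere, and the infinite components of $\pi$ are discrete series (in fact finite dimensional since $G(L^+_\infty)$ is compact). The output is an automorphic representation $\Pi$ of $\GL_n(\bbA_L)$ with $\Pi^c \cong \Pi^\vee$ (up to a twist), and with $\Pi_w \cong \pi_v \circ \iota_w^{-1}$ at every finite place $v = ww^c$ of $L^+$ that splits in $L$.

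Next I would write $\Pi$ as an isobaric sum $\Pi = \Pi_1 \boxplus \cdots \boxplus \Pi_k$, where each $\Pi_i$ is a cuspidal automorphic representation of $\GL_{n_i}(\bbA_L)$ with $\sum n_i = n$. Each $\Pi_i$ is of the type to which Theorem~\ref{glc} applies (after possibly replacing each $\Pi_i$ by a twist, using conjugate self-duality of $\Pi$ and the fact that being cohomological is preserved under the isobaric decomposition, since the weights at infinity are determined by $\lambda$). Applying Theorem~\ref{glc} to each $\Pi_i$ produces a Galois representation $r_{l, \iota}(\Pi_i)$, and by comparing the characterizing properties (i)--(v) of the theorem (agreement of local Langlands parameters at all split unramified primes and Hodge--Tate weights at primes above $l$) with the analogous properties used to characterize $r_l(\pi)$, one finds
\begin{equation*}
r_l(\pi)^{\mathrm{ss}} \cong \bigoplus_{i=1}^k r_{l, \iota}(\Pi_i)^{\mathrm{ss}}
\end{equation*}
as semisimple $G_L$-representations. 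Here it is enough to check the isomorphism on Frobenius traces at a set of primes of Dirichlet density one, which follows from property (i) and the fact that the local Langlands correspondence at unramified places is an isomorphism of Satake parameters.

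Now the hypothesis that $r_l(\pi)$ is irreducible forces $k = 1$, so $\Pi$ is itself cuspidal. By the Jacquet--Shalika / Shahidi theorem (genericity of local components of cuspidal automorphic representations of $\GL_n$), every local component $\Pi_w$ is generic. In particular, for any split place $v = ww^c$, the representation $\pi_v \circ \iota_w^{-1} \cong \Pi_w$ is generic, as desired.

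The main obstacle is of a bookkeeping nature: one must ensure that the descent/base change machinery really produces an automorphic $\Pi$ of the expected infinitesimal character and with the expected local components at split places, and that its isobaric constituents are regular algebraic and conjugate self-dual (so that Theorem~\ref{glc} applies to each $\Pi_i$ individually). For $G$ definite, this is precisely the content of Labesse's work together with the standard analysis of the infinitesimal character of $\Pi_\infty$ in terms of $\lambda$; once these inputs are in hand the argument reduces to the linear algebra step of identifying $r_l(\pi)$ with the sum of the $r_{l,\iota}(\Pi_i)$, which is routine via Chebotarev.
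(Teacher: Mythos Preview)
Your proposal is correct and is essentially an unpacking of the paper's own proof, which simply cites Theorem~2.3 of \cite{Gue10} for the existence of $r_l(\pi)$ with properties (i)--(v) and Corollary~5.10 of \cite{Sha74} for the genericity statement. Guerberoff's result is proved exactly along the lines you sketch (Labesse base change to $\GL_n/L$, isobaric decomposition, application of the known Galois representation constructions to each cuspidal factor), and your deduction of the final sentence---$r_l(\pi)$ irreducible forces $\Pi$ cuspidal, whence each $\Pi_w$ is generic by Shalika---is precisely the content of the second citation.
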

\begin{proof} This follows from Theorem 2.3 of \cite{Gue10} and Corollary 5.10 of \cite{Sha74}.
\end{proof}

\begin{proposition}\label{residualrep} Let $\ffrm$ be a maximal ideal of $\bbT^T_\lambda(U, \cO)$. Then there is a unique continuous semisimple representation
\begin{equation*}\overline{r}_\ffrm : G_L \rightarrow \GL_n(\bbT^T_\lambda(U,\cO)/\ffrm)\end{equation*}
such that:
\begin{enumerate} \item $\overline{r}_\ffrm^c \cong \overline{r}_\ffrm^\vee(1-n).$
\item $\overline{r}_\ffrm$ is unramified outside $T$. For all $v \not\in T$ splitting in $L$ as $v = w w^c$, the characteristic polynomial of $\overline{r}_\ffrm(\Frob_w)$ is
\begin{equation*}X^n + \dots + (-1)^j(\bbN w)^{j(j-1)/2} T_w^j X^{n-j} + \dots + (-1)^n (\bbN w)^{n(n-1)/2} T_w^{n}.\end{equation*}
\end{enumerate}
\end{proposition}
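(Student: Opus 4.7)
I would prove uniqueness by Chebotarev density and Brauer--Nesbitt, and establish existence by producing a characteristic-zero Galois representation for each minimal prime $\frp$ of $\bbT := \bbT^T_\lambda(U,\cO)$ contained in $\ffrm$ via Theorem~\ref{galois}, reducing these modulo the maximal ideal, patching them together, and descending the resulting $\overline{\bbF}_l$-representation to the finite residue field $\bbT/\ffrm$. For uniqueness, any two semisimple representations satisfying (ii) agree on the characteristic polynomials of Frobenius at every place $w$ of $L$ above a split prime $v \notin T$; such Frobenii are dense in $G_L$ by the Chebotarev density theorem, so Brauer--Nesbitt forces the two representations to be isomorphic.

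\textbf{Existence at each minimal prime.} The Hecke algebra $\bbT$ is a commutative $\cO$-subalgebra of $\End_\cO S_\lambda(U,\cO)$, and hence finite over $\cO$; its minimal primes therefore correspond to characters $\bbT \to \barQl$. (If necessary I would first replace $U$ by a sufficiently small normal open subgroup $V$ to which Lemma~\ref{suffsmall} applies, and descend back to $U$ via Lemma~\ref{freeness} and the Hecke-equivariance of the trace map.) Each minimal prime $\frp \subset \ffrm$ then cuts out, via the proposition relating $S_\lambda(\barQl)$ to the space $\cA$ of automorphic forms, an irreducible automorphic constituent $\pi_\frp$; Theorem~\ref{galois} furnishes a semisimple $r_l(\pi_\frp) : G_L \to \GL_n(\barQl)$. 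Combining clause (i) of Theorem~\ref{galois} with the Satake isomorphism at unramified split places shows that for $v = w w^c \notin T$ the characteristic polynomial of $r_l(\pi_\frp)(\Frob_w)$ is precisely the polynomial appearing in (ii), with each $T_w^j$ interpreted via $\bbT \to \bbT/\frp$. Choosing a $G_L$-stable lattice in $r_l(\pi_\frp)$, reducing modulo the maximal ideal, and semisimplifying then yields $\overline{r}_\frp : G_L \to \GL_n(\overline{\bbF}_l)$ whose Frobenius characteristic polynomials are the image of the polynomial of (ii) in $\bbT/\ffrm$, and which inherits property (i) from the self-duality clause of Theorem~\ref{galois}.

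\textbf{Patching, descent, and main obstacle.} For any two minimal primes $\frp,\frp' \subset \ffrm$, the representations $\overline{r}_\frp$ and $\overline{r}_{\frp'}$ have identical characteristic polynomials of Frobenius modulo $\ffrm$, hence are isomorphic over $\overline{\bbF}_l$ by Chebotarev and Brauer--Nesbitt. The common representation has trace function valued in the finite field $\bbT/\ffrm$; since finite fields have trivial Brauer group, a semisimple $\overline{\bbF}_l$-representation whose character lies in a subfield descends uniquely to that subfield, producing the desired $\overline{r}_\ffrm : G_L \to \GL_n(\bbT/\ffrm)$. I expect the subtlest step to be precisely this descent: since the paper does not assume $l > n$, one cannot directly invoke the Taylor pseudorepresentation formalism in small characteristic, so the argument instead relies on Brauer--Nesbitt together with triviality of the Brauer group of a finite field (or, equivalently, on Chenevier's determinant-law formalism, which is valid in arbitrary characteristic).
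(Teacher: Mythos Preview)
Your proposal is correct and follows essentially the same route as the paper, which simply defers to the proof of \cite{Clo08}, Proposition~3.4.2: pick a minimal prime $\frp\subset\ffrm$, obtain an automorphic constituent and hence a Galois representation via Theorem~\ref{galois}, reduce and semisimplify, then descend to $\bbT/\ffrm$ using Brauer--Nesbitt and the triviality of the Brauer group of a finite field; uniqueness is Chebotarev plus Brauer--Nesbitt exactly as you wrote.

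Two small remarks. First, the parenthetical about passing to a sufficiently small $V\subset U$ is unnecessary here: $S_\lambda(U,\cO)$ is already a finite $\cO$-module (the double coset space is finite and $M_\lambda$ is finite over $\cO$), so $\bbT$ is finite over $\cO$ without any hypothesis on $U$; Lemmas~\ref{suffsmall} and~\ref{freeness} are only needed later for exactness and freeness statements. Second, you do not need to ``patch'' over all minimal primes in $\ffrm$; a single $\frp\subset\ffrm$ already yields $\overline{r}_\ffrm$, and your Brauer--Nesbitt argument shows the result is independent of the choice of $\frp$.
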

\begin{proof} This follows quickly from the above theorem; see the proof of \cite{Clo08}, Proposition 3.4.2 for details.
\end{proof}
We will fix henceforth a maximal ideal $\ffrm$ such that $\overline{r}_\ffrm$ is absolutely irreducible (or \emph{non-Eisenstein}).

\begin{proposition}\label{heckerep} In this case, $\overline{r}_\ffrm$ admits an extension to a continuous homomorphism
\begin{equation*}\overline{r}_\ffrm : G_{L^+} \rightarrow \cG_n(\bbT_\lambda^T(U, \cO)/\ffrm),\end{equation*}
with the property that $\overline{r}_\ffrm^{-1}(\GL_n \times \GL_1(\bbT_\lambda^T(U, \cO)/\ffrm)) = G_L$ and that $\nu \circ \overline{r}_\ffrm = \epsilon^{1-n} \delta_{L/L^+}^{\mu_\ffrm}$ for some $\mu_\ffrm \in \bbZ/2\bbZ$.
Moreover, $\overline{r}_\ffrm$ admits a continuous lift, unique up to conjugacy, to a representation
\begin{equation*}r_\ffrm : G_{L^+} \rightarrow \cG_n(\bbT_\lambda^T(U, \cO)_\ffrm)\end{equation*}
satisfying the following: 
\begin{enumerate}
\item If $v \not\in T$ is a finite place of $L^+$ which splits as $w w^c$ in $L$, then $r_\ffrm$ is unramified at $w$ and $w^c$, and $r_\ffrm(\Frob_w)$ has characteristic polynomial
\begin{equation*}X^n + \dots + (-1)^j(\bbN w)^{j(j-1)/2} T_w^j X^{n-j} + \dots + (-1)^n (\bbN w)^{n(n-1)/2} T_w^{n}.\end{equation*}
\item If $v$ is a finite place of $L^+$ inert in $L$, and $U_v$ is a hyperspecial maximal compact subgroup of $G(L^+_v)$, then $r_\ffrm$ is unramified at $v$.
\item $\nu \circ r_\ffrm = \epsilon^{1-n} \delta^{\mu_\ffrm}_{L/L^+}$.
\end{enumerate}
\end{proposition}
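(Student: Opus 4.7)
The proof follows the pattern of \cite{Clo08}, Proposition 3.4.4, and I would organise it in three stages.

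First, I would extend $\overline{r}_\ffrm$ from $G_L$ to $G_{L^+}$. The hypothesis $\overline{r}_\ffrm^c \cong \overline{r}_\ffrm^\vee(1-n)$ provided by Proposition \ref{residualrep} gives a non-degenerate bilinear form on the underlying space, unique up to a scalar by absolute irreducibility and Schur's lemma. Fixing a choice of lift $c_0 \in G_{L^+}$ of the non-trivial element of $\Gal(L/L^+)$, one assembles a candidate map $G_{L^+} \to \cG_n(\bbT^T_\lambda(U,\cO)/\ffrm)$ by sending $c_0$ to $(A, \mu)j$ for appropriate $A$ and sign $\mu$; the cocycle condition needed for this to define a homomorphism corresponds to an obstruction in $H^2(\Gal(L/L^+),\{\pm 1\})$ that is killed by the appropriate choice of $\mu_\ffrm \in \bbZ/2\bbZ$, and the formula for $\nu \circ \overline{r}_\ffrm$ then drops out.

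Second, I would construct $r_\ffrm$ via a pseudorepresentation argument. Since $\bbT^T_\lambda(U,\cO)_\ffrm$ is a reduced finite flat $\cO$-algebra away from $l$-torsion and each minimal prime corresponds to an automorphic representation $\pi$ of $G(\bbA_{L^+})$ as in Theorem \ref{galois}, one gets an injection
\begin{equation*}
\bbT^T_\lambda(U,\cO)_\ffrm \hookrightarrow \prod_\pi \overline{\bbQ}_l,
\end{equation*}
and direct-summing the associated Galois representations $r_l(\pi)$ produces a continuous semisimple $r : G_L \to \GL_n(\prod_\pi \overline{\bbQ}_l)$. The conditions of Theorem \ref{galois} applied pointwise show that $\tr r(\Frob_w)$ is given by the polynomial in $T^j_w$ from Proposition \ref{residualrep}; by continuity and Chebotarev, the traces of all elements of $G_{L,T}$ land in the image of $\bbT^T_\lambda(U,\cO)_\ffrm$, and the characteristic polynomials have coefficients there as well. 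Because $\overline{r}_\ffrm$ is absolutely irreducible, the Carayol--Nyssen--Rouquier theorem (in the form used for instance in \cite{Clo08}) upgrades this pseudorepresentation to an honest continuous representation $r_\ffrm : G_{L,T} \to \GL_n(\bbT^T_\lambda(U,\cO)_\ffrm)$, lifting $\overline{r}_\ffrm|_{G_L}$ and satisfying (i) and (ii) by construction combined with a Chebotarev/density argument at inert places (using the compatibility of $r_l(\pi)$ with base change, or directly from the fact that $\overline{r}_\ffrm$ is already unramified there).

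Third, I would extend $r_\ffrm$ from $G_L$ to a representation into $\cG_n(\bbT^T_\lambda(U,\cO)_\ffrm)$. The self-duality $r_\ffrm^c \cong r_\ffrm^\vee \epsilon^{1-n}\delta^{\mu_\ffrm}_{L/L^+}$ that one needs follows from the pointwise self-dualities given by Theorem \ref{galois}(ii): it is a relation among traces, and both sides agree at each $\barQl$-point, hence agree in the reduced ring $\bbT^T_\lambda(U,\cO)_\ffrm$. One then repeats the formal construction from the first stage to produce the extension to $\cG_n$, lifting the extension of $\overline{r}_\ffrm$ already built. Uniqueness up to conjugacy is immediate from the absolute irreducibility of $\overline{r}_\ffrm|_{G_L}$, since any two lifts have identical traces (Chebotarev plus (i)) and then conjugate over $\GL_n$; the conjugating element can be adjusted by a scalar so as to respect the $\cG_n$-structure.

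The main obstacle is the third stage: one must verify carefully that the $\cG_n$-valued extension of $r_\ffrm$ exists over the non-field ring $\bbT^T_\lambda(U,\cO)_\ffrm$ and reduces to the chosen extension of $\overline{r}_\ffrm$. This comes down to checking that the scalar ambiguity in the conjugate self-duality isomorphism for $r_\ffrm$ can be chosen, uniquely, to be compatible with the residual choice — a formal computation using the irreducibility of $r_\ffrm$ over $\bbT^T_\lambda(U,\cO)_\ffrm/\ffrm$ together with Hensel's lemma.
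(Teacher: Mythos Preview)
Your proposal is correct and follows essentially the same route as the paper, which simply refers the reader to the first part of \cite{Clo08}, Proposition 3.4.4. One small point worth flagging: the paper explicitly notes that the hypothesis $l>2$ enters here via \cite{Clo08}, Lemma 2.1.12, and this is precisely the ``Hensel's lemma'' step you allude to in your final paragraph---it guarantees that the scalar discrepancy in the self-duality pairing for $r_\ffrm$ (which lies in $1+\ffrm_{\bbT}$) admits a square root, so that the $\cG_n$-extension can be chosen to lift the residual one.
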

\begin{proof} This is proved exactly as the first part of Proposition 3.4.4 of \cite{Clo08}. We note that we use here the fact that $l>2$, in the form of Lemma 2.1.12 of \cite{Clo08}.
\end{proof}

\subsection*{A patching argument}

We now suppose $R = \emptyset$, so that $T = S_r \coprod S_l$, for some set $S_r$ of primes. We will define a local deformation problem $\cD_v$ for each $v \in T$, by giving a suitable quotient $R_v$ of the universal lifting ring $R^\square_\wv$ at $\wv$. For $v \in S_r$, take $R_v = R^{\overline{\square}}_\wv$. For $v \in S_l$, we will take the ring $R_\wv^{\lambda,\text{cr}}$. See Section \ref{galoisdef} for the definitions of these rings.

Thus we have the global deformation problem
\begin{equation*}\cS = \left(L/L^+, T, \widetilde{T}, \cO, \overline{r}, \epsilon^{1-n} \delta^{\mu_\ffrm}_{L/L^+}, \{R_\wv^{\overline{\square}}\}_{v \in S_r} \cup \{R_\wv^{\lambda,\text{cr}}\}_{v \in S_l}\right).\end{equation*}
We set $R^\text{loc} =  \left( \widehat{\bigotimes}_{v \in S_r} R_\wv^{\overline{\square}} \right)\widehat{\otimes}_\cO \left(\widehat{\bigotimes}_{v \in S_l} R_\wv^{\lambda,\text{cr}}\right).$

Suppose now that $U=\prod_v U_v$ has the following form:
\begin{enumerate} \item $U_v = G(\cO_{L_v^+})$ if $v \in S_l$.
 \item $U_v = G(\cO_{L_v^+})$ if $v \not\in T$ is split in $L$.
\item $U_v$ is a hyperspecial maximal compact of $G(L_v^+)$ if $v$ is inert in $L$.
\item $U_v$ is arbitrary for $v \in S_r$.
\end{enumerate}
 Suppose also that for all $t \in G(\bbA_{L^+}^\infty)$, the group $t^{-1} G(F^+) t \cap U$ is trivial.
 \begin{theorem}\label{taylorwiles} The lifting
\begin{equation*}r_\ffrm : G_{L^+} \rightarrow \cG_n(\bbT_\lambda^T(U, \cO)_\ffrm)\end{equation*}
defined above is of type $\cS$. Suppose moreover that $\overline{r}_\ffrm(G_{L^+(\zeta_l)})$ is adequate. Let $r : G_{L^+} \rightarrow \cG_n(\cO)$ be another lifting of $\overline{r}_\ffrm$ of type $\cS$. Suppose there exists a homomorphism $f' : \bbT_\lambda^T(U, \cO)_\ffrm \rightarrow \cO$ such that:
\begin{enumerate} \item For $v \in S_l$, $(f' \circ r_\ffrm)|_{G_{L_\wv}}$ and $r|_{G_{L_\wv}}$ lie on the same component of $\Spec R_\wv^{\lambda, \text{cr}} \otimes_\cO \barQl$.
\item For $v \in S_r$, $(f' \circ r_\ffrm)|_{G_{L_\wv}} \leadsto r|_{G_{L_\wv}}.$
\end{enumerate}
Then there exists a second homomorphism $f : \bbT_\lambda^T(U, \cO)_\ffrm \rightarrow \cO$ such that $r$ and $f \circ r_\ffrm$ are conjugate by an element of $\GL_n(\cO)$.

\end{theorem}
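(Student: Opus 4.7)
The proof is a Taylor-Wiles-Kisin patching argument. The essential new ingredient, required by the weakening of `big' to `adequate', is the construction of the Taylor-Wiles Hecke modules at parahoric level via the projector of Proposition \ref{hecke}, which accommodates Frobenius generalized eigenspaces of dimension greater than one in $\overline{r}_\ffrm$.

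First I would verify that $r_\ffrm$ itself is of type $\cS$. At $v \in S_r$ this is automatic, since $R_\wv^{\overline{\square}}$ is just the maximal reduced $l$-torsion free quotient of the unrestricted lifting ring; at $v \in S_l$, Theorem \ref{galois}(v) gives crystallinity of $r_\ffrm|_{G_{L_\wv}}$ with the prescribed Hodge-Tate weights, so $r_\ffrm|_{G_{L_\wv}}$ factors through $R_\wv^{\lambda, \textup{cr}}$.

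For each $N \geq 1$, apply Proposition \ref{Chebotarev} (using the adequacy of $\overline{r}_\ffrm(G_{L^+(\zeta_l)})$) to obtain a set $Q_N$ of primes of $L^+$ split in $L$, of common cardinality $q$ independent of $N$, with $\bbN v \equiv 1 \bmod l^N$, equipped with decompositions $\overline{r}_\ffrm|_{G_{L_\wv}} = \overline{s}_v \oplus \overline{\psi}_v$, Frobenius acting semisimply on each $\overline{\psi}_v$. At each $v \in Q_N$, let $n_2^v = \dim \overline{\psi}_v$ and let $\frp_v \supset \frp_{v,1}$ be the parahoric and congruence subgroup of $\GL_n(L_\wv)$ from Section 5 corresponding to the partition $n = (n - n_2^v) + n_2^v$. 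Define level structures $U_0$ and $U_N$ agreeing with $U$ outside $Q_N$ and with $\iota_\wv^{-1}(\frp_v)$, $\iota_\wv^{-1}(\frp_{v,1})$ respectively at $v \in Q_N$; set $\Delta_N := U_0/U_N \cong \prod_{v \in Q_N} k(\wv)^\times(l)$. By Lemma \ref{freeness}, $S_\lambda(U_N, \cO)_\ffrm$ is free over $\cO[\Delta_N]$. Applying the projector $\pr_{\varpi_\wv}$ at each $v \in Q_N$ extracts a direct summand $H_N \subset S_\lambda(U_N, \cO)_\ffrm$; Proposition \ref{hecke} then implies that $H_N$ carries a Galois representation of type $\cS_{Q_N}$, with the inertia action on the $\psi_v$-summand factoring through the diamond characters, and $H_N$ remains free over $\cO[\Delta_N]$.

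The patching itself follows Kisin's standard formalism. Frame at $T$ and let $R_\infty := R^{\textup{loc}}[[x_1, \dots, x_g]]$ with $g$ as in Proposition \ref{Chebotarev}, so that each $R^{\square_T}_{\cS_{Q_N}}$ is a quotient of $R_\infty$; let $S_\infty := \cO[[\Delta_\infty]][[y_1, \dots, y_h]]$, $h = n^2\#T - 1$, where $\Delta_\infty$ is a free $\bbZ_l$-module surjecting compatibly onto each $\Delta_N$. A diagonal/compactness limit produces a finite $S_\infty$-module $H_\infty$, free over $S_\infty$, carrying compatible $R_\infty$- and framed-global deformation ring actions, whose reduction modulo the augmentation ideal of $S_\infty$ recovers the framed Hecke module at level $U$. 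The local dimension formulas of Section 3 make $\dim R_\infty$ and $\dim S_\infty$ match, so freeness of $H_\infty$ over $S_\infty$ forces $\Supp_{R_\infty} H_\infty$ to be a union of full irreducible components of $\Spec R_\infty$. Now $f'$ and the chosen framing of $r$ determine $\cO$-points of $R_\infty$; hypotheses (i) and (ii) are exactly the statement that these points lie on a common irreducible component of $\Spec R^{\textup{loc}}[1/l]$, hence on a common component of $\Spec R_\infty[1/l]$. Since the $f'$-point lies in $\Supp_{R_\infty} H_\infty$, so does the $r$-point, which is therefore pulled back from a homomorphism $f : \bbT^T_\lambda(U, \cO)_\ffrm \to \cO$ with $r$ conjugate to $f \circ r_\ffrm$.

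The main obstacle is the construction of $H_N$ with the correct freeness over the diamond operators and the correct Galois decomposition at $v \in Q_N$ in the case where $\overline{\alpha}_v$ has multiplicity greater than one: the $\Iw_1$-level projectors of \cite{Clo08} are no longer adequate, and one must work at parahoric level $\frp_v$ and use the refined projector $\pr_{\varpi_\wv}$ of Proposition \ref{hecke}. Once this module-theoretic input is in place, the patching and commutative-algebra conclusion go through in the pattern of \cite{Clo08} essentially unchanged.
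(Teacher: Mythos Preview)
Your proposal is correct and follows essentially the same Taylor--Wiles--Kisin patching argument as the paper, with the same key inputs: Proposition~\ref{Chebotarev} for the existence of the auxiliary primes, the parahoric projectors of Propositions~\ref{mainprop} and~\ref{hecke} to build the Hecke modules $H_N$, and Lemma~\ref{freeness} for freeness over the diamond algebra. One small point of exposition: when you pass from ``$f'$ lies in $\Supp_{R_\infty} H_\infty$'' to ``so does $r$'', you implicitly use that the $f'$-point lies on a \emph{unique} component of $\Spec R_\infty[1/l]$ (this is exactly the content of $\leadsto$ in (ii), together with the formal smoothness of $R_\wv^{\lambda,\mathrm{cr}}[1/l]$ at (i)); the paper makes this step explicit, and you should too.
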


\begin{proof}
That $r_\ffrm$ is of type $\cS$ follows immediately from its construction and the fact that $\bbT^T_\lambda(U, \cO)_\ffrm$ is reduced and $l$-torsion free. Let
\begin{equation*}q_0 = [L^+ : \bbQ]n(n-1)/2 + [L^+ : \bbQ]n(1- (-1)^{{\mu_\ffrm} -n})/2.\end{equation*}
By Proposition \ref{Chebotarev}, we may choose for every $N \geq 1$ a tuple \[(Q_N, \widetilde{Q}_N, \{\psi_{\wv}\}_{v \in Q_N})\] such that \begin{itemize}
\item $\#Q_N = q$ for all $N$.
\item $\bbN v \equiv 1 \mod l^N$ for all $v \in Q_N$.
\item The ring $R^{\square_T}_{\cS_{Q_N}}$ can be topologically generated over $R^\text{loc}$ by
\begin{equation*}g = q-q_0 = q - [L^+ : \bbQ]n(n-1)/2 + [L^+ : \bbQ]n(1- (-1)^{\mu_\ffrm -n})/2\end{equation*}
elements.
\item $\psi_{\wv}$ is a subrepresentation of $\overline{r}|_{G_{L_\wv}}$ of dimension $d_N^v$.
\end{itemize}
For each $v, N \geq 1$, let $\frp_N^\wv,\frp_{N,1}^\wv$ be the parahoric subgroup of $\GL_n(\cO_{L_\wv})$ corresponding to the partition $n = (n - d_N^v) + d_N^v$, as defined previously. We let
\begin{equation*}U_1(Q_N) = \prod_v U_1(Q_N)_v, U_0(Q_N) = \prod_v U_0(Q_N)_v\end{equation*}
be the compact open subgroups of $G(\bbA^\infty_{L^+})$ with $U_i(Q_N) = U_v$ for $v \not\in Q_N$, $U_1(Q_N) =  \iota_{\wv}^{-1} \frp_{N,1}^\wv$, $U_0(Q_N) = \iota_{\wv}^{-1} \frp_{N}^\wv$ for $v \in Q_N$. 

We have natural maps
\[ \xymatrix@1{\bbT_\lambda^{T\cup Q_N}(U_1(Q_N), \cO) \ar[r] & \bbT_\lambda^{T \cup Q_N}(U_0(Q_N), \cO) \ar[r] &}\]
\[ \xymatrix@1{ \ar[r] & \bbT_\lambda^{T \cup Q_N}(U, \cO) \ar[r] &\bbT_\lambda^T(U, \cO).} \]
The first two arrows are surjective, the last is injective. Thus $\ffrm$ determines maximal ideals denoted $\ffrm_{Q_N}, \ffrm_{Q_N}, \ffrm, \ffrm$ of these four algebras.
After localizing at $\ffrm$ the last map is an isomorphism, since $\overline{r}_\ffrm$ is absolutely irreducible (cf. the proof of \cite{Clo08}, Corollary 3.4.5).

For each $v \in Q_N$, let $\phi_\wv \in G_{L_\wv}$ be a Frobenius lift and let $\varpi_\wv$ be the uniformizer with $\Art_{L_\wv}\varpi_\wv = \phi_\wv |_{L_\wv^\text{ab}}$. In Proposition \ref{mainprop} we have defined commuting projection operators $\pr_{\varpi_\wv}$ at each place $\wv$. For $i=0, 1$, we set
\begin{equation*}H_{i, Q_N} = \left(\prod_{v \in Q_N} \pr_{\varpi_\wv} \right)S_\lambda(U_i(Q_N), \cO)_{\ffrm_{Q_N}},\end{equation*}
and let $\bbT_{i, Q_N}$ denote the image of $\bbT_\lambda^{T \cup Q_N}(U_i(Q_N), \cO)$ in $\End_\cO(H_{i, Q_N}).$ Let $H = S_\lambda(U, \cO)_\ffrm$. 

Let $\Delta_{Q_N} = U_0(Q_N)/U_1(Q_N)$. Let $\fra_{Q_N} \subset \cO[\Delta_{Q_N}]$ denote the augmentation ideal. For each $\alpha \in \cO_{L_\wv}^\times$, let $V_\alpha$ denote the Hecke operator 
\begin{equation*}V_\alpha = \iota_\wv^{-1}\left(\left[\frp_{N,1}^\wv\left(\begin{array}{cc} 1_{n - d_N^v} & 0 \\ 0 & A_\alpha \end{array} \right)\frp_{N,1}^\wv\right]\right),\end{equation*}
where $A_\alpha=\diag(\alpha, 1, \dots, 1).$

We claim that the following hold:
\begin{enumerate} \item For each $N$, the map
\begin{equation*}\prod_{v \in Q_N} \pr_{\varpi_\wv} : H \rightarrow H_{0, Q_N}\end{equation*}
is an isomorphism.
\item For each $N$, $H_{1, Q_N}$ is free over $\cO[\Delta_{Q_N}]$ with
\begin{equation*}H_{1, Q_N}/\fra_{Q_N} \overset{\sim}{\rightarrow} H_{0, Q_N}.\end{equation*}
\item For each $N$ and for each $v \in Q_N$, there is a character $V_\wv : \cO_{L_\wv}^\times \rightarrow \bbT_{1, Q_N}^\times$ such that
\begin{enumerate} \item for any $\alpha$, $V_\alpha = V_\wv(\alpha)$ on $H_{1, Q_N}$.
\item ($r_{\ffrm_{Q_N}} \otimes \bbT_{1, Q_N})|_{G_{L_\wv}} \cong s \oplus \psi$ with $s$ unramified, lifting $\overline{s}_\wv$, $\psi$ lifting $\overline{\psi}_\wv$, and $\psi|_{I_{L_\wv}}$ acting as the scalar character $V_\wv \circ \Art^{-1}_{L_\wv}|_{I_{L_{\wv}}}$.
\end{enumerate}
\end{enumerate}
In fact these follow respectively from Proposition \ref{mainprop}, Lemma \ref{freeness} and Proposition \ref{hecke}.

For each $N$, the lift $r_{\ffrm_{Q_N}} \otimes \bbT_{1, Q_N}$ is of type $\cS_{Q_N}$ and gives rise to a surjection $R^\text{univ}_{\cS_{Q_N}} \rightarrow \bbT_{1,Q_N}$. Thinking of $\Delta_{Q_N}$ as the maximal $l$-power order quotient of $\prod_{v \in Q_N} I_{L_\wv}$, we obtain a homomorphism $\Delta_{Q_N} \rightarrow (R^\text{univ}_{\cS_{Q_N}})^\times$ as follows: for each $v \in Q_N$, decompose $r^\text{univ}_{\cS_{Q_N}}|_{G_{L_\wv}} = s \oplus \psi$ and take a diagonal entry of $\psi$ in some basis. This does not depend on the choice of basis.

We thus have homomorphisms $\cO[\Delta_{Q_N}] \rightarrow R^\text{univ}_{\cS_{Q_N}} \rightarrow R^{\square_T}_{\cS_{Q_N}}$ and natural isomorphisms $R^\text{univ}_{\cS_{Q_N}}/\fra_{Q_N} \cong R^\text{univ}_\cS$ and $R^{\square_T}_{\cS_{Q_N}}/\fra_{Q_N} \cong R^{\square_T}_{\cS}$. Moreover, this makes the maps $R^\text{univ}_{\cS_{Q_N}} \rightarrow \bbT_{1,Q_N}$ into homomorphisms of $\cO[\Delta_{Q_N}]$-algebras.

At this point we can apply Lemma \ref{patching} below, with the following identifications: we take 
\begin{equation*}T = \bbT^T_\lambda(U, \cO), H=S_\lambda(U, \cO)_\ffrm.\end{equation*}
\begin{equation*}R = R^\text{univ}_\cS, R_N = R^\text{univ}_{\cS_{Q_N}}.\end{equation*}
\begin{equation*}t = \#T,\text{ and }q, g \text{ are as constructed above}.\end{equation*}
We choose a lifting $r^\text{univ}_\cS : G_{L^+} \rightarrow \cG_n(R^\text{univ}_\cS)$ representing the universal deformation, and for every $N$ a lifting $r^\text{univ}_{\cS_{Q_N}} : G_{L^+} \rightarrow \cG_n(R^\text{univ}_{\cS_{Q_N}})$ reducing to the this one. This induces compatible isomorphisms
\begin{equation*}R^{\square_T}_{\cS} \rightarrow R^\text{univ}_\cS \widehat{\otimes}_\cO \cT\text{ and }R^{\square_T}_{\cS_{Q_N}} \rightarrow R^\text{univ}_{\cS_{Q_N}} \widehat{\otimes}_\cO \cT.\end{equation*}
The rings $R^{\square_T}_{\cS}$ and $R^{\square_T}_{\cS_{Q_N}}$ are naturally $R^\text{loc}$-algebras, and we take $R^L = R^\text{loc}$. We take 
\begin{equation*}S_\infty = \cO[[x_1, \dots, x_{n^2t}, y_1, \dots, y_q]],\end{equation*}
 and let $\fra$ be the kernel of the augmentation map $S_\infty \rightarrow \cO$. Finally applying the lemma, we are given a $R_\infty$-module $H_\infty$ satisfying the following:

\begin{enumerate} \item $H_\infty$ receives an action of $S_\infty$, which commutes with the $R_\infty$ action, and $H_\infty/\fra H_\infty \cong H$, compatibly with the homomorphism $R^\text{loc} \rightarrow R^{\square_T}_{\cS} \rightarrow R^\text{univ}_\cS$.
\item $H_\infty$ is a finite free $S_\infty$-module.
\item The action of $S_\infty$ on $H_\infty$ factors through the action of $R_\infty$.
\end{enumerate}

In particular, we have that $\depth_{R_\infty} H_\infty = \dim S_\infty = 1 + n^2 \#T + q$. On the other hand, $R_\infty$ is equidimensional of dimension $1 + n^2 \#T + n(n-1)/2[L^+ : \bbQ] + g.$ It follows that
\begin{equation*}1 + n^2 \#T + q \leq 1 + n^2 \#T + q - [L^+ : \bbQ]n(1 - (-1)^{n-\mu_\ffrm})/2. \end{equation*}
Thus equality holds, with $n \equiv \mu_\ffrm$ mod $2$. In particular, the support of $H_\infty$ on $R_\infty$ is a union of irreducible components (cf. Lemma 2.3 of \cite{Tay08}).

For each $v \in T$, let $\cC_v$ be the unique component of $R^{\overline{\square}}_\wv$ or $R_\wv^{\lambda,\text{cr}}$ containing $(f' \circ r_\ffrm)|_{G_{L_\wv}}.$ Define a new deformation problem
\begin{equation*}\cS' = \left(L/L^+, T, \widetilde{T}, \cO, \overline{r}, \epsilon^{1-n} \delta^{\mu_\ffrm}_{L/L^+}, \{R^{\cC_v}_\wv\}_{v \in S_r} \cup \{R_\wv^{\lambda, \cC_v}\}_{v \in S_l}\right).\end{equation*}
There is a natural surjection $R^\text{univ}_\cS \rightarrow R^\text{univ}_{\cS'}$.

The hypotheses of the theorem imply that the closed point of $R_\infty[1/l]$ induced by $f'$ is contained in a unique irreducible component, hence the entirety of this irreducible component is contained in the support of $H_\infty$. But the subset $\Spec R^\text{univ}_{\cS'} \subset \Spec R_\infty$ is contained in this component. In particular,
\begin{equation*}\Supp_{R^\text{univ}_{\cS'}} H_\infty \otimes_{R_\infty} R^\text{univ}_{\cS'} = \Spec R^\text{univ}_{\cS'}.\end{equation*}
Thus $H \otimes_{R^\text{univ}_\cS} R^\text{univ}_{\cS'}$ is a nearly faithful $R^\text{univ}_{\cS'}$-module, and the homomorphism $R^\text{univ}_{\cS'} \rightarrow \cO$ induced by $r$ actually factors through $\bbT_\lambda^T(U, \cO)_\ffrm$. This concludes the proof of the theorem.
\end{proof}

\begin{corollary}\label{minimalfinite} We have $\mu_\ffrm \equiv n \mod 2$. Moreover, let $\cS'$ be the deformation problem defined in the proof above. Then with the hypotheses of the theorem, $R^\text{univ}_{\cS'}$ is a finite $\cO$-algebra.
\end{corollary}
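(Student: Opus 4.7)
The two assertions can both be read off from the Taylor--Wiles patching already set up in the proof of Theorem~\ref{taylorwiles}, and I do not expect any genuinely new obstacle.

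For the parity statement, I would simply invoke the inequality displayed near the end of that proof, namely
\begin{equation*}
1 + n^2\#T + q \;\leq\; 1 + n^2\#T + q - [L^+:\bbQ]\,n\,(1-(-1)^{n-\mu_\ffrm})/2,
\end{equation*}
which came from $\depth_{R_\infty}H_\infty = \dim S_\infty$ on the left and the equidimensionality of $R_\infty$ on the right. Since the non-negative quantity being subtracted must in fact be $\leq 0$, it must vanish, forcing $n \equiv \mu_\ffrm \pmod 2$. This is already essentially recorded in the proof of the theorem, and no further work is required.

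For the finiteness of $R^\text{univ}_{\cS'}$ over $\cO$, I would exploit the output of the theorem that $M := H \otimes_{R^\text{univ}_\cS} R^\text{univ}_{\cS'}$ is a nearly faithful $R^\text{univ}_{\cS'}$-module. The first step is to verify that $H = S_\lambda(U, \cO)_\ffrm$ is itself a finite $\cO$-module: since $t^{-1}G(L^+)t \cap U$ is trivial for every $t$, Lemma~\ref{suffsmall} identifies $S_\lambda(U, \cO)$ with a finite direct sum of copies of $M_\lambda$, which is finite free over $\cO$, and localization at $\ffrm$ inherits this finiteness. Consequently $M$ is also finite over $\cO$.

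Let $I \subset R^\text{univ}_{\cS'}$ denote the annihilator of $M$. Near-faithfulness says $I$ is a nilpotent ideal, say $I^N = 0$. The quotient $R^\text{univ}_{\cS'}/I$ then embeds into $\End_\cO(M)$, a finite $\cO$-algebra, so $R^\text{univ}_{\cS'}/I$ is finite over $\cO$. A standard d\'evissage finishes the argument: the filtration
\begin{equation*}
R^\text{univ}_{\cS'} \;\supset\; I \;\supset\; I^2 \;\supset\; \ldots \;\supset\; I^N = 0
\end{equation*}
has successive quotients $I^i/I^{i+1}$ which are finitely generated over the Noetherian ring $R^\text{univ}_{\cS'}$ and killed by $I$, hence finite over $R^\text{univ}_{\cS'}/I$, hence finite over $\cO$. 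Assembling these shows $R^\text{univ}_{\cS'}$ is finite over $\cO$, as required. The only point that demands any attention is the freeness of $H$, and this is immediate from the sufficient-smallness hypothesis on $U$.
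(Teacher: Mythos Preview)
Your argument is correct and follows essentially the same route as the paper. The parity statement is read off exactly as you say, and for finiteness both you and the paper identify a nilpotent ideal $J$ of $R^\text{univ}_{\cS'}$ with $R^\text{univ}_{\cS'}/J$ finite over $\cO$ (the paper phrases this as ``$R^\text{univ}_{\cS'}/J$ is a quotient of the finite $\cO$-algebra $\bbT_\lambda^T(U,\cO)_\ffrm$'', which amounts to your embedding into $\End_\cO(M)$). The only cosmetic difference is in the closing commutative-algebra step: you run a d\'evissage on the powers of $J$ directly over $\cO$, whereas the paper first reduces modulo $\lambda$ to see that $R^\text{univ}_{\cS'}/\lambda$ is Artinian and then invokes complete Nakayama (Matsumura, Theorem~8.4); these are interchangeable.
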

\begin{proof} The proof actually shows that $R^\text{univ}_{\cS'}/J$ is a quotient of the finite $\cO$-algebra $\bbT_\lambda^T(U, \cO)_\ffrm$, for some nilpotent ideal $J$ of $R^\text{univ}_{\cS'}$. Then $R^\text{univ}_{\cS'}/\lambda$ is an Artinian $k$-algebra, hence of finite length; now apply \cite{Mat07}, Theorem 8.4.
\end{proof}

We suppose that all rings in the statement of the following lemma are Noetherian.

\begin{lemma}\label{patching} Let $R \rightarrow T$ be a surjective $\cO$-algebra homomorphism, with $T$ a finite $\cO$-algebra. Let $H$ be a finite $T$-module. Fix positive integers $t, q$. Suppose that we have for each $N \geq 1$ a surjective homomorphism $R_N \rightarrow T_N$ of $S_N = \cO[(\bbZ/l^N\bbZ)^q]$ algebras which reduces to the homomorphism $R \rightarrow T$ on quotienting out by the augmentation ideal. Let $H_N$ be a finite $T_N$-module which is free of finite rank as an $S_N$-module. Suppose that this rank is independent of $N$.

Let $\cT = \cO[[x_1, \dots, x_{n^2 t}]]$, and let $S_\infty = \cT[[y_1, \dots, y_q]]$. Let $R^L$ be an $\cO$-algebra and suppose that $R_N \widehat{\otimes}_\cO \cT, R \widehat{\otimes}_\cO \cT$ are $R^L$ algebras in such a way that $R_N \widehat{\otimes} \cT \rightarrow R \widehat{\otimes}_\cO \cT$ is a homomorphism of $R^L$-algebras. Finally suppose that each $R_N \widehat{\otimes}_\cO \cT$ can be topologically generated over $R^L$ by $g$ generators. Let $R_\infty = R^L[[x_1, \dots, x_g]]$. Let $\fra$ be the ideal of $S_\infty$ generated by the $x_i$ and $y_j$. Then:
\begin{enumerate} \item There exists an $R_\infty$-module $H_\infty$ which receives a commuting action of $S_\infty$, and $H_\infty/\fra H_\infty \cong H$, compatibly with the homomorphism $R^L \rightarrow R \widehat{\otimes}_\cO \cT \rightarrow R$.
\item $H_\infty$ is a finite free $S_\infty$-module.
\item The action of $S_\infty$ on $H_\infty$ factors through the action of $R_\infty$.
\end{enumerate}
\end{lemma}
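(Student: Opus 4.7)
The plan is to execute the standard Taylor--Wiles patching via a Koenig-type compactness argument. For each $N \geq 1$, I would fix a surjection $\phi_N \colon R_\infty \twoheadrightarrow R_N \widehat{\otimes}_\cO \cT$ of $R^L$-algebras, which exists by the hypothesis that the target is topologically generated over $R^L$ by $g$ elements, and a surjection $\psi_N \colon S_\infty \twoheadrightarrow S_N \widehat{\otimes}_\cO \cT$ defined by $y_j \mapsto \gamma_j^{(N)} - 1$ for a choice of generators $\gamma_j^{(N)}$ of the $q$ cyclic factors of $(\bbZ/l^N\bbZ)^q$ and by the identity on $\cT$. Via $\phi_N$ and $\psi_N$, the module $M_N := H_N \widehat{\otimes}_\cO \cT$ becomes an $R_\infty$-module with commuting $S_\infty$-action, is free of rank $r$ over $S_N \widehat{\otimes}_\cO \cT$, and has $S_\infty$-action factoring through $R_\infty$.

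Next, I would fix any decreasing sequence of open ideals $\fra_1 \supset \fra_2 \supset \cdots$ of $S_\infty$ with $\bigcap_M \fra_M = 0$ and $S_\infty/\fra_M$ finite as an $\cO$-module, for instance $\fra_M = (\varpi^M, x_i^M, y_j^M)$. Since $\psi_N$ is continuous with respect to the maximal-ideal-adic topologies, $M_N/\fra_M M_N$ is a module over the finite quotient $S_\infty/\fra_M$ for each $M$ and all sufficiently large $N$, and by the assumption that the $S_N$-rank of $H_N$ is $r$ independently of $N$, its $\cO$-length is bounded uniformly in $N$. The set of isomorphism classes of tuples consisting of such a bounded-length module together with its $(R_\infty, S_\infty)$-module structure is therefore finite at each level $M$. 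A diagonal argument then extracts a subsequence $N_1 < N_2 < \cdots$ and compatible $(R_\infty, S_\infty)$-equivariant surjections $M_{N_{M+1}}/\fra_{M+1} M_{N_{M+1}} \twoheadrightarrow M_{N_M}/\fra_M M_{N_M}$, and I set
\begin{equation*}
H_\infty \;:=\; \plim_M \; M_{N_M}/\fra_M M_{N_M},
\end{equation*}
endowed with its induced $R_\infty$- and $S_\infty$-module structures.

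Each conclusion is then verified at finite level and passes to the inverse limit. Freeness of $H_\infty$ over $S_\infty$ follows from the fact that each $M_{N_M}/\fra_M M_{N_M}$ is free of rank $r$ over $S_\infty/\fra_M$; the factorization of the $S_\infty$-action through $R_\infty$ is inherited level by level from the analogous property of the $M_N$. Finally, since $\psi_N$ carries $\fra \subset S_\infty$ onto the augmentation ideal of $S_N \widehat{\otimes}_\cO \cT$, one has $M_N/\fra M_N \cong H_N/\fra_N H_N \cong H$ (the second isomorphism being the module-theoretic reduction implicit in the hypothesis that $R_N \to T_N$ reduces to $R \to T$), and this identification is compatible with the $R^L$-algebra structure because each $\phi_N$ was chosen to be an $R^L$-algebra map.

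The main obstacle is the diagonalization step in the second paragraph, which rests on the uniform $\cO$-length bound on the quotients $M_N/\fra_M M_N$. This bound depends essentially on the hypothesis that the $S_N$-ranks of the modules $H_N$ are the same for all $N$; without it the pigeonhole argument that produces the convergent subsequence would fail and the inverse limit $H_\infty$ would not be well-behaved.
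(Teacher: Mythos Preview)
Your proposal is correct and outlines exactly the standard Taylor--Wiles--Kisin patching argument; the paper itself does not give a proof but simply refers the reader to the proof of Theorem~3.6.1 of \cite{Bar10}, where this very argument is carried out. The only cosmetic point is that in the diagonalization one usually arranges $\ker\psi_{N_M}\subset\fra_M$ so that $M_{N_M}/\fra_M M_{N_M}$ is genuinely free of rank $r$ over $S_\infty/\fra_M$, which you use when concluding that $H_\infty$ is $S_\infty$-free; you allude to this with ``for all sufficiently large $N$'', and it is indeed the only place where care is needed.
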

\begin{proof}
We do not prove this here but note that it can easily be extracted from e.g. the proof of Theorem 3.6.1  of \cite{Bar10}.
\end{proof}
\section{Minimal automorphy lifting}

 \begin{theorem}\label{minimal} Let $F$ be an imaginary CM field with totally real subfield $F^+$ and let $c$ be the non-trivial element of $\Gal(F/F^+)$. Let $n \in \bbZ_{\geq 1}$ and let $l$ be an odd prime. Let $K \subset \overline{\bbQ}_l$ denote a finite extension of $\bbQ_l$ with ring of integers $\cO$, residue field $k$, and maximal ideal $\lambda$. Let
 \begin{equation*} \rho : G_F \rightarrow \GL_n(\cO)\end{equation*}
 be a continuous representation and let $\overline{\rho} = \rho \mod \lambda$. Let $\mu : G_{F^+} \rightarrow \cO^\times$ be a continuous character. Suppose that $(\rho, \mu)$ satisfy the following properties:
  \begin{enumerate} \item $\rho^c \cong \rho^\vee \epsilon^{1-n} \mu|_{G_F}.$
  \item $\mu(c_v)$ is independent of $v \mid \infty$.
  \item $\rho$ is ramified at only finitely many places.
 \item $\overline{\rho}$ is absolutely irreducible and $\overline{\rho}(G_{F(\zeta_l)}) \subset \GL_n(k)$ is adequate.
 \item $\zeta_l \not\in F$.
 \item There is a continuous representation $\rho' : G_F \rightarrow \GL_n(\cO)$, a continuous character $\mu' : G_{F^+} \rightarrow \cO^\times$, a RAECSDC automorphic representation $(\pi, \chi)$ of $\GL_n(\bbA_F)$ which is potentially unramified above $l$ and $\iota : \overline{\bbQ}_l \overset{\sim}{\rightarrow} \bbC$ such that 
 \begin{enumerate} 
 \item $\rho' \otimes_\cO \barQl \cong r_{l, \iota}(\pi) : G_F \rightarrow \GL_n(\barQl).$
 \item $\mu' \otimes_\cO \barQl \cong r_{l, \iota}(\chi).$
 \item $(\overline{\rho},\overline{\mu}) = (\overline{\rho}',\overline{\mu}')$.
 \item For all places $v \nmid l$ of $F$, either \begin{itemize} \item $\pi_v$ and $\rho|_{G_{F_v}}$ are both unramified, or \item $\rho'|_{G_{F_v}} \leadsto \rho|_{G_{F_v}}.$
 \end{itemize}
 \item For all places $v \mid l, \rho'|_{G_{F_v}} \sim \rho |_{G_{F_v}}$.
 \end{enumerate}
 \end{enumerate}
 Then $(\rho, \mu)$ is automorphic. Moreover, if $\pi$ has level prime to $l$ and $\rho$ is crystalline, then $(\rho, \mu)$ is automorphic of level prime to $l$.
 \end{theorem}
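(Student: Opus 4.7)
The plan is to reduce to the setup of Section \ref{twk} by a soluble base change, and then invoke Theorem \ref{taylorwiles}.

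First, I would choose a soluble extension $L$ of CM imaginary fields over $F$, with totally real subfield $L^+$, such that:
\begin{itemize}
\item $L/L^+$ is unramified at all finite places, and $4 \mid n[L^+ : \bbQ]$;
\item every place of $L^+$ above $l$ splits in $L$, and $\pi$ is unramified at every place of $L$ above $l$ (using that $\pi$ is potentially unramified above $l$);
\item every finite place of $L^+$ where $\rho|_{G_L}$ or $\rho'|_{G_L}$ is ramified splits in $L/L^+$;
\item $\overline{\rho}(G_{L(\zeta_l)}) = \overline{\rho}(G_{F(\zeta_l)})$, so that $\overline{\rho}|_{G_{L(\zeta_l)}}$ is still absolutely irreducible with adequate image; here hypothesis (v) that $\zeta_l \notin F$ is used, together with a standard group-theoretic argument.
\end{itemize}
By Lemma \ref{basechange}, it suffices to prove that $\rho|_{G_L}$ is automorphic.

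Next, I would descend $\pi_L$ (the base change of $\pi$ to $L$, which remains cuspidal since $\overline{\rho}|_{G_L}$ is absolutely irreducible) along stable base change to a cuspidal automorphic representation $\Pi$ of the definite unitary group $G$ of Section \ref{twk}. I would choose a sufficiently small level $U = \prod_v U_v$, with $U_v$ hyperspecial at all places not in $T := S_l \cup S_r$, where $S_r$ consists of the finite places $v \nmid l$ of $L^+$ for which $\rho|_{G_{L_\wv}}$ or $\rho'|_{G_{L_\wv}}$ is ramified, and with $U_v = G(\cO_{L^+_v})$ for $v \in S_l$ (since $\pi_L$ is unramified above $l$ by the base change condition). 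Then $\Pi$ contributes a nonzero vector to $S_\lambda(U, \cO)$ for the weight $\lambda$ determined by the Hodge--Tate weights of $\rho$, producing a non-Eisenstein maximal ideal $\ffrm$ of $\bbT_\lambda^T(U,\cO)$ with $\overline{r}_\ffrm \cong \overline{\rho}|_{G_L}$ (extended to $G_{L^+}$ via Proposition \ref{heckerep}), and an $\cO$-algebra homomorphism $f' : \bbT_\lambda^T(U,\cO)_\ffrm \to \cO$ with $f' \circ r_\ffrm \cong \rho'|_{G_L}$.

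Now I would verify the hypotheses of Theorem \ref{taylorwiles}: the adequacy of $\overline{r}_\ffrm(G_{L^+(\zeta_l)})$ is exactly hypothesis (iv) after base change; at each $v \in S_l$, the local restrictions of $f' \circ r_\ffrm$ and of $\rho|_{G_L}$ lie on the same irreducible component of $\Spec R_\wv^{\lambda, \text{cr}} \otimes_\cO \barQl$ by hypothesis (vi)(e); and at each $v \in S_r$, $(f' \circ r_\ffrm)|_{G_{L_\wv}} \leadsto \rho|_{G_{L_\wv}}$ by hypothesis (vi)(d), either directly or via the proposition on generic unramified lifts in the unramified case. Theorem \ref{taylorwiles} then supplies a homomorphism $f : \bbT_\lambda^T(U,\cO)_\ffrm \to \cO$ with $\rho|_{G_L} \cong f \circ r_\ffrm$, so $\rho|_{G_L}$ is automorphic; Lemma \ref{basechange} then gives the automorphy of $\rho$. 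For the final assertion, if $\pi$ is of level prime to $l$ and $\rho$ is crystalline, one chooses $U_v$ hyperspecial for $v \mid l$, so the resulting descent to $\GL_n(\bbA_F)$ is of level prime to $l$.

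The main obstacle is the construction of the base change $L/F$: one must simultaneously arrange that $L/L^+$ is everywhere unramified with $4 \mid n[L^+ : \bbQ]$, that $\pi_L$ is unramified above $l$, that all ramified places split, that $\pi_L$ remains cuspidal and descends to a cuspidal representation of the definite unitary group $G$, and, most delicately, that adequacy of $\overline{\rho}(G_{F(\zeta_l)})$ survives restriction to $G_{L(\zeta_l)}$ — this last point is where the hypothesis $\zeta_l \notin F$ is essential. A secondary technical point is choosing the level $U$ to be sufficiently small in the sense of Lemma \ref{suffsmall} while still supporting $\Pi$.
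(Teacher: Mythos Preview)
Your outline matches the paper's approach, but it omits a genuine step that the paper devotes roughly a third of its proof to: the reduction by twisting to the case $\mu = \chi = \delta_{F/F^+}^n$. The issue is that Theorem~\ref{taylorwiles} requires as input a lifting $r : G_{L^+} \rightarrow \cG_n(\cO)$ of $\overline{r}_\ffrm$ with multiplier $\nu \circ r = \epsilon^{1-n}\delta_{L/L^+}^{\mu_\ffrm}$. You never construct this $r$ from $\rho$; you only produce $f'$ from $\rho'$. The self-duality hypothesis on $\rho$ gives $\rho^c \cong \rho^\vee \epsilon^{1-n} \mu|_{G_F}$, and unless $\mu|_{G_L}$ is trivial (or of the right shape), $\rho|_{G_L}$ will not extend to a $\cG_n$-valued homomorphism with the prescribed multiplier. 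The paper fixes this in two stages: first it finds a Hecke character $\psi$ with $\psi \circ \bbN_{F/F^+} = \chi \circ \bbN_{F/F^+}$ and twists both $\pi$ and $\rho$ to reduce to $\chi = \delta_{F/F^+}^n$ and $\overline{\mu} = \delta_{F/F^+}^n$; then it finds a character $\theta$ of $G_F$ with $\mu|_{G_F} = \theta\theta^c$ and twists $\rho$ by $\theta^{-1}$ to reach $\mu = \delta_{F/F^+}^n$. One must also check that these twists preserve the local hypotheses (vi)(d) and (vi)(e), which the paper does explicitly (and which requires some care about ramification of $\theta$ at the places $\wv$ versus $\wv^c$).

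A secondary point you flag but do not resolve is the ``sufficiently small'' condition. The paper handles this by adjoining to $S_r$ an auxiliary split place $v_1$ of residue characteristic not dividing the order of any element of $G(L^+)$, at which both $\rho$ and $\pi$ are unramified, and taking $U_{v_1} = \iota_{\wv_1}^{-1}\Iw_1(\wv_1)$. This forces $t^{-1}G(L^+)t \cap U$ to be trivial for all $t$. Since $v_1$ is then in $S_r$, one must also verify $\rho'|_{G_{L_{\wv_1}}} \leadsto \rho|_{G_{L_{\wv_1}}}$ there, which follows from the genericity of $\pi_{v_1}$ (you allude to this mechanism but should place $v_1$ explicitly in $T$). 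Finally, the paper phrases the linear-disjointness condition as $L$ being linearly disjoint from $\overline{F}^{\ker\overline{\rho}}(\zeta_l)$ over $F$; this is equivalent to your image condition together with $\zeta_l \notin L$, and the latter is what actually guarantees that $\overline{r}_\ffrm(G_{L^+(\zeta_l)})$ is adequate.
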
 
 \begin{proof} We use twisting and soluble base change to reduce the theorem to Theorem \ref{taylorwiles}, proved above. Let $C$ be the set of places of $F^+$ above which $\rho$ or $\pi$ is ramified, together with the places of $F^+$ dividing $l$. After a soluble base change we can assume that each place of $C$ splits in $F$ and that $\rho$ is crystalline and $\pi$ has level prime to $l$. The final sentence of the theorem will follow from the observation that in the soluble extensions $L/F$ below, we can take all primes of $F$ above $l$ to split completely in $L$.
  
Let us suppose first that $\mu = \chi = \delta_{F/F^+}^n$. 
Let $L/F$ be an imaginary CM extension such that:
\begin{enumerate} \item $L/F$ is soluble.
\item $L$ is linearly disjoint with $\overline{F}^{\ker \overline{\rho}}(\zeta_l)$ over $F$.
\item 4 divides $[L^+:F^+]$.
\item $L/L^+$ is unramified at all finite places.
\end{enumerate}

In particular, the hypotheses of Section \ref{twk} are satisfied. We write $G_{/\cO_{L^+}}$ for the algebraic group constructed there. By Th\'eor\`eme 5.4 and Corollaire 5.3 of \cite{Lab09}, there exists an automorphic representation $\Pi$ of $G(\bbA_{L^+})$ such that $\pi_L$ is a strong base change of $\Pi$, in the terminology of that paper.

Let $v_1$ be a place of $L^+$ split in $L$, of residue characteristic not dividing the order of any element of $G(L^+)$, and above which both $\rho|_{G_{L}}$ and $\pi_L$ are unramified.  For a place $w$ of $L$ above $v_1$, we have $\rho' |_{G_{F_w}} \leadsto \rho|_{G_{F_w}}$ (since $\pi_w$ is generic).

We let $S_r$ be the set of primes of $L^+$ above which $\pi_L$ or $\rho|_{G_L}$ are ramified, along with the prime $v_1$. Let $S_l$ be the set of places of $L^+$ dividing $l$. Let $T = S_l \cup S_r$, and choose for every $v \in T$ a place $\wv$ of $L$ above $v$. Denote the set of such places $\widetilde{T}$.

Choose an open compact subgroup $U = \prod_v U_v$ of $G(\bbA^\infty_{L^+})$ satisfying the following: 
\begin{enumerate}\item $U_v = G(\cO_{L_v^+})$ if $v \in S_l$.
 \item $U_v = G(\cO_{L_v^+})$ if $v \not\in T$ is split in $L$.
\item $U_v$ is a hyperspecial maximal compact of $G(L_v^+)$ if $v$ is inert in $L$.
\item $U_{v_1} = \iota_{\wv_1}^{-1}\Iw_1(\wv_1)$.
\item $\Pi_v^{U_v} \neq 0$ for $v \in S_r$, $v \neq v_1$.
\end{enumerate}
Then for all $t \in G(\bbA_{L^+}^\infty)$, the group $t^{-1} G(L^+) t \cap U$ is trivial (the addition of the place $v_1$ ensures this). Let $\widetilde{I}_l$ denote the set of embeddings $L \hookrightarrow K$ giving rise to an element of $\widetilde{T}$. For $\tau \in \widetilde{I}_l$, giving rise to $\wv$, let $\lambda_\tau \in \bbZ^n_+$ be such that $HT_\tau(\rho'|_{G_{L_\wv}}) = \{ \lambda_{\tau, j} + n - j \}_{j=1, \dots, n}.$ Then $\lambda \in (\bbZ^n_+)^{\widetilde{I}_l}$ defines a space of automorphic forms $S_\lambda(U, \cO)$, as in the previous section.

The action of the Hecke operators $T^j_w$ on the space $(\iota^{-1}\Pi^\infty)^U$ defines a homomorphism $\bbT^T_\lambda(U, \cO) \rightarrow \barQl$. After extending $K$, we can suppose that this homomorphism is valued in $\cO$. Let $\ffrm$ be the maximal ideal containing the kernel. 

In the notation of the previous section, $\ffrm$ is non-Eisenstein, so the representation $\overline{\rho}'|_{G_L}$ admits an extension to a continuous representation
\begin{equation*}\overline{r}_\ffrm : G_{L^+} \rightarrow \cG_n(k).\end{equation*}
Since $\zeta_l \not\in L$, $\overline{r}_\ffrm(G_{L^+(\zeta_l)})$ is adequate. After possibly extending $K$ again, $\rho|_{G_L}$ admits an extension to a continuous representation
\begin{equation*}r : G_{L^+} \rightarrow \cG_n(\cO)\end{equation*}
lifting $\overline{r}_\ffrm$. In particular, $r$ is a lifting of type $\cS$. Theorem \ref{taylorwiles} now applies, and then Lemma \ref{basechange} shows that $\rho$ is automorphic.

We now suppose that $\mu, \chi$ are arbitrary and reduce to the case treated above. By Lemma 4.1.4 of \cite{Clo08}, we can find a character $\psi : \bbA_F^\times/F^\times \rightarrow \bbC^\times$ of type $A_0$ such that
\begin{equation*}\psi \circ \bbN_{F/F^+} = \chi \circ \bbN_{F/F^+},\end{equation*}
with $\psi$ unramified above $l$ and at all primes $v$ where $\pi$ and $\rho$ are unramified. Twisting both $\pi$ and $\rho$, we can therefore assume that $\chi = \delta_{F/F^+}^n$ and $\overline{\mu} = \delta^n_{F/F^+}$. (It is easy to see that this preserves the hypotheses (d) and (e) above). Now let $S$ be the set of primes of $F^+$ dividing $l$ or at which $\pi$ or $\rho$ are ramified. For each place $v \in S$, choose a place $\wv$ of $F$ lying above it. 

Applying Lemma 4.1.6 of \cite{Clo08}, we can find a character $\theta : G_F \rightarrow \barQl^\times$ such that $\overline{\theta}$ is trivial, $\mu|_{G_F} = \theta \theta^c$, and such that for each $v \in S$, $\theta$ is unramified at $\wv^c$. Since $\mu$ is crystalline, $\theta$ is crystalline at each prime above $l$. Making a further soluble base change, we can suppose that $\theta$ is also unramified outside $S$.

We now twist $\rho$ by $\theta^{-1}$ to reduce to the case where $\mu = \delta^n_{F/F+}$, treated above. Thus the proof of the theorem will be complete as soon as we show that this preserves the hypotheses (d) and (e) above. For $v \in S$, we have
\begin{equation*}\rho'|_{G_{F_{\wv^c}}} \leadsto (\rho' \otimes \theta^{-1})|_{G_{F_{\wv^c}}} \leadsto (\rho \otimes \theta^{-1})|_{G_{F_{\wv^c}}}.\end{equation*}
(Use that $\theta$ is unramified at $\wv^c$ and apply Lemma 3.4.2 of \cite{Bar10}). Then we have
\begin{equation*}\rho'|_{G_{F_{\wv}}} \cong (\rho'^\vee)|^c_{G_{F_{\wv^c}}} \otimes \epsilon^{1-n} \leadsto (\rho\otimes \theta^{-1})^\vee|^c_{G_{F_{\wv^c}}} \otimes \epsilon^{1-n} \cong (\rho \otimes \theta^{-1})|_{G_{F_{\wv}}}.\end{equation*} 
The theorem follows.
\end{proof} 
\begin{remark} In many cases the hypothesis (d) can be weakened to the following: for each $v \nmid l$, either
\begin{itemize} \item $\rho|_{G_{F_v}}$ and $\pi_v$ are both unramified, or
\item $\rho'|_{G_{F_{v}}} \sim \rho|_{G_{F_v}}.$
\end{itemize}
We explain this remark as follows. If $\rho_1 \sim \rho_2$ and $\iota WD(\rho_1)^{F-ss} = \rec_K(\pi)$ for some generic irreducible smooth representation $\pi$ of $\GL_n(M)$ over $\bbC$, then in fact $\rho_1 \leadsto \rho_2$, as one can check that the generic fibre of the local lifting ring is formally smooth at $\rho_1$ in this case.  Thus one can use the weaker hypothesis whenever local-global compatibility is known to hold in this form for $\pi$, at the place $v$. At the time of writing this is known to be true, for example, when $\pi_v$ becomes unramified after a finite base change (cf. \cite{Che09}) or when $\pi$ has slightly regular weight in the sense of \cite{Shi09}.
\end{remark}

\section{Ordinary forms on definite unitary groups}

We take up the assumptions of the beginning of Section \ref{twk}. Thus $L/L^+$ is an imaginary $CM$ field and $G$ is a unitary group over $\cO_{L^+}$, $S_l$ is the set of places of $L^+$ dividing $l$, and $R$ is a set of places disjoint from $L^+$. We suppose that $T$ is a set of places of $L^+$ containing $R \cup S_l$, and that every place in $T$ is split in $L$. We fix an open compact subgroup $U = \prod_v U_v$.

We are going to follow \cite{Ger09} in recalling some constructions in Hida theory and proving an $R=\bbT$ type result in the ordinary case. As such we will briefly sketch the theory from \emph{op. cit.}, and refer the reader to that paper for a more complete development. For integers $0 \leq b \leq c$, and $v \in S_l$, we consider the subgroup $\Iw(\wv^{b, c}) \subset \GL_n(\cO_{L_\wv})$ defined as those matrices which are congruent to an upper-triangular matrix modulo $\wv^c$ and congruent to a unipotent upper-triangular matrix modulo $\wv^b$. We set $U(\frl^{b, c}) = U^l \times \prod_{v \in S_l} \Iw(\wv^{b, c})$. 

We define some additional Hecke operators at the places dividing $l$ on the spaces $S_{\lambda, \{\chi\}}(U(\frl^{b,c}), A).$ Let $v$ be a place of $L^+$ dividing $l$, and let $\varpi_\wv$ be a uniformizer of $L_\wv$. We have an $n \times n$ matrix
\begin{equation*}\alpha^j_{\varpi_\wv} = \diag(\underbrace{\varpi_\wv, \dots, \varpi_\wv}_j, 1, \dots, 1).\end{equation*}
We set
\begin{equation*}U^j_{\lambda, \varpi_\wv} = (w_0 \lambda)(\alpha^j_{\varpi_\wv})^{-1} \left[U(\frl^{b,c}) \iota_\wv^{-1} (\alpha^j_{\varpi_\wv}) U(\frl^{b,c})\right].\end{equation*}
We refer to \cite{Ger09}, Section 2.2 for the definition of the character $w_0 \lambda$.

If $u \in T(\cO_{L_\wv})$ then we write 
\begin{equation*}\langle u \rangle = \left[ U(\frl^{b,c}) \iota_\wv^{-1} (u) U(\frl^{b,c})\right].\end{equation*}
It is proved in \cite{Ger09} that for any $\cO$-module $A$, these operators act on the spaces $S_{\lambda, \{\chi\}}(U(\frl^{b,c}), A)$ and commute with the inclusions
\begin{equation*}S_{\lambda, \{\chi\}}(U(\frl^{b,c}), \cO) \subset S_{\lambda, \{\chi\}}(U(\frl^{b',c'}), \cO),\end{equation*}
when $b \leq b'$ and $c \leq c'$. We write $\bbT^T_{\lambda, \{\chi_v\}}(U(\frl^{b, c}), A)$ for the $\cO$-subalgebra of $\End_\cO(S_{\lambda, \{\chi_v\}}(U, A))$ generated by the operators $T^j_w$ and $(T^n_w)^{-1}$ as above and all the operators $\langle u \rangle$ for 
\begin{equation*}u \in T(\cO_{L^+, l}) = \prod_{v \in S_l} T(\cO_{L^+_v}).\end{equation*}
With these identifications, the operators $\langle u \rangle$ endow each Hecke algebra $\bbT^T_{\lambda, \{\chi_v\}}(U(\frl^{b, c}), A)$ with the structure of algebra for the completed group ring
\begin{equation*}\Lambda^+ = \cO[[T(\cO_{L^+, l})]]\end{equation*}
and for its subring
\begin{equation*}\Lambda = \cO[[T(\frl)]],\end{equation*}
where $T(\frl)$ is defined by the exact sequence
\begin{equation*}\xymatrix{ 0 \ar[r] & T(\frl) \ar[r] & \prod_{v \in S_l} T(\cO_{L^+_v}) \ar[r] & \prod_{v \in S_l} k(v)^\times \ar[r] & 0.}\end{equation*}

We have the ordinary idempotent $e = \lim_{r\rightarrow \infty} U(\frl)^{r!}$, where we set
\begin{equation*}U(\frl) = \prod_{v \in S_l} \prod_{j=1}^n U^j_{\lambda, \varpi_\wv}.\end{equation*}
\begin{definition} We define the ordinary Hecke algebra 
\begin{equation*}\bbT^{T,\text{ord}}_{\lambda, \{\chi_v\}}(U(\frl^{b,c}), A) = e\bbT^T_{\lambda, \{\chi_v\}}(U(\frl^{b,c}), A).\end{equation*}
Equivalently, $\bbT^{T,\text{ord}}_{\lambda, \{\chi_v\}}(U(\frl^{b,c}), A) $ is the image of $\bbT^T_{\lambda, \{\chi_v\}}(U(\frl^{b,c}), A)$ in the algebra of $\cO$-endomorphisms of the space
\begin{equation*}S^\text{ord}_{\lambda, \{\chi_v\}}(U(\frl^{b,c}), A)=eS_{\lambda, \{\chi_v\}}(U(\frl^{b,c}), A).\end{equation*}
\end{definition}

We also consider the space 
\begin{equation*}S_{\lambda, \{ \chi_v \}}(U(\frl^\infty), K/\cO) = \ilim_c S_{\lambda, \{\chi_v\}}(U(\frl^{c, c}), K/\cO),\end{equation*}
which receives a faithful action of the algebra
\begin{equation*}\bbT^T_{\lambda, \{\chi_v\}}(U(\frl^\infty), K/\cO) = \plim_c \bbT^T_{\lambda, \{\chi_v\}}(U(\frl^{c,c}), K/\cO).\end{equation*}
We recall (\cite{Ger09}, Lemma 2.4.7) that this algebra is naturally isomorphic to
\begin{equation*}\bbT^T_{\lambda, \{\chi_v\}}(U(\frl^\infty), \cO) = \plim_c \bbT^T_{\lambda, \{\chi_v\}}(U(\frl^{c,c}), \cO).\end{equation*}
Finally we can apply the idempotent $e$ to these spaces, in which case we decorate them with `ord' superscripts.

\begin{proposition}\label{free}  Suppose that for all $t \in G(\bbA_{L^+}^\infty)$, the group 
\begin{equation*}t^{-1} G(L^+) t \cap U\end{equation*}
 contains no element of order $l$. Then $S^\text{ord}_{\lambda, \{\chi_v\}}(U(\frl^\infty), K/\cO)^\vee$ is a free $\Lambda$-module of rank
\begin{equation*}r = \dim_kS^\text{ord}_{\lambda, \{ \chi_v \}}(U(\frl^{1,1}), \bbF).\end{equation*}
\end{proposition}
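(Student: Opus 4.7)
My plan is to follow the standard Hida-theoretic strategy: I would establish freeness over an appropriate finite group ring at each finite level $c$, invoke a control theorem to identify the rank, and then pass to the inverse limit.

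For each $c \geq 1$, I would apply Lemma~\ref{freeness} to the pair $V = U(\frl^{c,c}) \subset U' = U(\frl^{1,c})$. The quotient $U'/V = \prod_{v \in S_l} \Iw(\wv^{1,c})/\Iw(\wv^{c,c})$ is naturally identified with the finite abelian $l$-group $T(\frl)/T(\frl^c)$, where $T(\frl^c) \subset T(\frl)$ denotes the subgroup of elements congruent to $1$ modulo $\wv^c$ at each place. Since the hypothesis that $t^{-1}G(L^+)t \cap U$ contains no element of order $l$ is inherited by the smaller subgroup $U(\frl^{c,c}) \subset U$, the lemma yields that $S_{\lambda,\{\chi_v\}}(U(\frl^{c,c}), \cO)$ is free over $\cO[T(\frl)/T(\frl^c)]$ under the diamond operator action. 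The ordinary idempotent $e$ commutes with diamond operators (both are drawn from the commutative subalgebra of the Iwahori-Hecke algebra at $l$ generated by the $X^\lambda$), so the ordinary part is a direct summand of a free module, hence projective; since $T(\frl)/T(\frl^c)$ is a finite abelian $l$-group, the group ring $\cO[T(\frl)/T(\frl^c)]$ is local, and finitely generated projective modules over it are free. Thus $S^{\text{ord}}_{\lambda,\{\chi_v\}}(U(\frl^{c,c}), \cO)$ is a free $\cO[T(\frl)/T(\frl^c)]$-module.

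To pin down the rank, I would invoke the control theorem from \cite{Ger09} (Corollary~2.5.4, or its proof), which asserts that the natural inclusion $S^{\text{ord}}_{\lambda,\{\chi_v\}}(U(\frl^{1,1}), \cO) \hookrightarrow S^{\text{ord}}_{\lambda,\{\chi_v\}}(U(\frl^{1,c}), \cO)$ is an isomorphism. Combining this with the coinvariant isomorphism $S_{\lambda,\{\chi_v\}}(U(\frl^{c,c}), \cO)_{T(\frl)/T(\frl^c)} \overset{\sim}{\to} S_{\lambda,\{\chi_v\}}(U(\frl^{1,c}), \cO)$ supplied by the first part of Lemma~\ref{freeness}, one identifies the $\cO[T(\frl)/T(\frl^c)]$-rank of $S^{\text{ord}}_{\lambda,\{\chi_v\}}(U(\frl^{c,c}), \cO)$ as $r = \dim_k S^{\text{ord}}_{\lambda,\{\chi_v\}}(U(\frl^{1,1}), \bbF)$, independently of $c$.

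Finally, Pontryagin duality gives
\[ S^{\text{ord}}_{\lambda,\{\chi_v\}}(U(\frl^\infty), K/\cO)^\vee = \plim_c S^{\text{ord}}_{\lambda,\{\chi_v\}}(U(\frl^{c,c}), K/\cO)^\vee \cong \plim_c \Hom_\cO(S^{\text{ord}}_{\lambda,\{\chi_v\}}(U(\frl^{c,c}), \cO), \cO), \]
an inverse limit of free $\cO[T(\frl)/T(\frl^c)]$-modules of constant rank $r$, whose transition maps (duals of inclusions of free modules, hence surjective) are compatible with the surjections $\cO[T(\frl)/T(\frl^{c+1})] \twoheadrightarrow \cO[T(\frl)/T(\frl^c)]$. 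Taking the limit then produces a free $\Lambda = \plim_c \cO[T(\frl)/T(\frl^c)]$-module of rank $r$, as desired. The main technical obstacle is the control theorem invoked in the second paragraph, which is the only genuinely non-formal ingredient; everything else is mechanical once one has Lemma~\ref{freeness} in hand and the identification of $\Iw(\wv^{1,c})/\Iw(\wv^{c,c})$ with the appropriate diagonal congruence subgroup.
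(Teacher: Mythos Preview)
Your proof is correct and follows essentially the same Hida-theoretic approach as the paper, which simply defers to Proposition 2.5.3 of \cite{Ger09} (replacing Geraghty's Lemma 2.2.6 with Lemma \ref{suffsmall} to accommodate the weakened hypothesis on $U$). You have spelled out in detail exactly the argument that Geraghty's proposition encodes: freeness at each finite level via Lemma \ref{freeness}, rank identification via the control theorem, and passage to the inverse limit by Pontryagin duality.
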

\begin{proof} The proof is the same as that of Proposition 2.5.3 of \cite{Ger09}, but references to Lemma 2.2.6 of that paper should be replaced with references to Lemma \ref{suffsmall} above.
\end{proof}
The following is Definition 2.6.2 of \cite{Ger09}.
\begin{definition}  We define a homomorphism $T(\frl) \rightarrow \bbT^{T, \text{ord}}_{0, \{\chi_v\}}(U(\frl^\infty), \cO)^\times$
 by 
 \begin{equation*}u \mapsto \left( \prod_{\tau \in \widetilde{I}_l} \prod_{i=1}^n \tau(u_i)^{1-i}\right) \langle u \rangle.\end{equation*}
  This gives rise to an $\cO$-algebra homomorphism $\Lambda \rightarrow \bbT^{T, \text{ord}}_{0, \{\chi_v\}}(U(\frl^\infty), \cO)$, and we write
\begin{equation*}\bbT^{T, \text{ord}}_{\{\chi_v\}}(U(\frl^\infty), \cO)\end{equation*}
for the $\bbT^{T, \text{ord}}_{0, \{\chi_v\}}(U(\frl^\infty), \cO)$, endowed with this $\Lambda$-algebra structure. 
This is the \emph{universal ordinary Hecke algebra of level $U$}.
\end{definition}

\subsection*{Galois representations}
The following proposition is proved in exactly the same way as Proposition \ref{residualrep}.
\begin{proposition} Let $\ffrm$ be a maximal ideal of $\bbT^{T, \text{ord}}_{0, \{\chi_v\}}(U(\frl^\infty), \cO)$. Then there is a unique continuous semisimple representation
\begin{equation*}\overline{r}_\ffrm : G_L \rightarrow \GL_n(\bbT^{T, \text{ord}}_{ \{\chi_v\}}(U(\frl^\infty), \cO)/\ffrm)\end{equation*}
such that:
\begin{enumerate} \item $\overline{r}_\ffrm^c \cong \overline{r}_\ffrm^\vee(1-n).$
\item $\overline{r}_\ffrm$ is unramified outside $T$. For all $v \not\in T$ splitting in $L$ as $v = w w^c$, the characteristic polynomial of $\overline{r}_\ffrm(\Frob_w)$ is
\begin{equation*}X^n + \dots + (-1)^j(\bbN w)^{j(j-1)/2} T_w^j X^{n-j} + \dots + (-1)^n (\bbN w)^{n(n-1)/2} T_w^{n}.\end{equation*}
\end{enumerate}
\end{proposition}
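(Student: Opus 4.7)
The plan is to mimic the proof of Proposition \ref{residualrep} above (i.e.\ of Proposition 3.4.2 of \cite{Clo08}), the only additional wrinkle being that the ordinary Hecke algebra is obtained as a projective limit and is not a priori of finite type over $\cO$. I will produce the residual representation via pseudorepresentations.

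First, I would reduce to the case of finite level. By construction,
\[ \bbT^{T, \mathrm{ord}}_{\{\chi_v\}}(U(\frl^\infty), \cO) = \plim_c \bbT^{T, \mathrm{ord}}_{0, \{\chi_v\}}(U(\frl^{c,c}), \cO), \]
and $\ffrm$ pulls back to a maximal ideal $\ffrm_c$ in each finite-level factor. For each $c$, the algebra $\bbT^{T, \mathrm{ord}}_{0, \{\chi_v\}}(U(\frl^{c,c}), \cO)$ is a finite (and hence semilocal) $\cO$-algebra, acting faithfully on the finite free $\cO$-module $S^{\mathrm{ord}}_{0, \{\chi_v\}}(U(\frl^{c,c}), \cO)$. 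Thus after inverting $l$ it decomposes as a product of fields corresponding to the irreducible constituents $\pi$ contributing to this ordinary space. For each such constituent $\pi$ (an irreducible $G(\bbA_{L^+}^{\infty,R}) \times \prod_{v \in R} \Iw(\wv)$-subrepresentation of $S_{\lambda, \{\chi_v\}}(\barQl)$ for an appropriate $\lambda$), Theorem \ref{galois} produces a continuous semisimple Galois representation $r_l(\pi)$ whose conjugate self-duality and characteristic polynomials of Frobenius at unramified split primes $v = ww^c$ outside $T$ agree with the Hecke-theoretic formula in part (ii).

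Second, I would assemble these individual Galois representations into a pseudorepresentation valued in the Hecke algebra. For each split prime $v \notin T$ with $v = ww^c$, define
\[ P_w(X) = X^n + \dots + (-1)^j (\bbN w)^{j(j-1)/2} T_w^j X^{n-j} + \dots + (-1)^n (\bbN w)^{n(n-1)/2} T_w^n \in \bbT^{T, \mathrm{ord}}_{0, \{\chi_v\}}(U(\frl^{c,c}), \cO)[X]. \]
Its image in any quotient by a minimal prime over $(l)$ (equivalently, in each field factor of $\bbT \otimes_\cO \barQl$) is the characteristic polynomial of $r_l(\pi)(\Frob_w)$. By Chebotarev density, the trace of each $r_l(\pi)$ (and of its dual) is determined on a dense subset by the coefficients of the $P_w$. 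Using Taylor's theorem on the existence of pseudorepresentations (and its extension by Rouquier), together with the fact that the product over all $\pi$ gives a determinant-trace datum with values in $\prod_\pi \barQl$ whose image actually lies in the reduced, $l$-torsion free algebra $\bbT^{T, \mathrm{ord}}_{0, \{\chi_v\}}(U(\frl^{c,c}), \cO)$, we obtain a continuous $n$-dimensional pseudocharacter on $G_L$ with values in the Hecke algebra. (The continuity follows because these algebras are finite $\cO$-modules.)

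Third, I reduce this pseudocharacter modulo $\ffrm_c$. The quotient $\bbT^{T, \mathrm{ord}}_{0, \{\chi_v\}}(U(\frl^{c,c}), \cO)/\ffrm_c$ is a finite field, and over a field every continuous $n$-dimensional pseudocharacter is the trace of a unique (up to isomorphism) continuous semisimple representation $\overline{r}_{\ffrm_c} : G_L \rightarrow \GL_n(\bbT^{T, \mathrm{ord}}_{\{\chi_v\}}(U(\frl^\infty), \cO)/\ffrm)$; the image field of $\overline{r}_{\ffrm_c}$ is independent of $c$ (it depends only on $\ffrm$), so we simply write $\overline{r}_\ffrm$. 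The required properties (i) and (ii) follow by construction: property (ii) is immediate from the definition of $P_w$, and property (i) follows from the corresponding property for each $r_l(\pi)$ combined with the injectivity of semisimple representations into pseudocharacters over a field. Uniqueness is likewise forced by Chebotarev together with the semisimplicity assumption.

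The main obstacle is essentially bookkeeping: verifying that the pseudocharacter indeed lands in the Hecke algebra rather than just its total ring of fractions, and that passing to the limit over $c$ gives something compatible with the fixed maximal ideal $\ffrm$ of the full universal ordinary Hecke algebra. Both points follow from the finiteness and reducedness (modulo $l$-power torsion, which plays no role after reduction mod $\ffrm$) of the finite-level algebras, exactly as in the proof of Proposition 3.4.2 of \cite{Clo08}.
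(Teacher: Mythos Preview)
Your proposal is correct and follows essentially the same approach as the paper, which simply points to Proposition~\ref{residualrep} (and hence to the proof of \cite{Clo08}, Proposition~3.4.2) via pseudorepresentations; you have merely made explicit the reduction from the projective limit to finite level, which is the only extra step needed in the ordinary setting. One small quibble: the finite-level Hecke algebras here are already $l$-torsion free (they act faithfully on finite free $\cO$-modules), so your parenthetical about $l$-power torsion is unnecessary.
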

We will fix henceforth a maximal ideal $\ffrm$ such that $\overline{r}_\ffrm$ is absolutely irreducible. The following is proved in exactly the same way as Proposition \ref{heckerep}.
\begin{proposition} In this case $\overline{r}_\ffrm$ admits an extension to a continuous homomorphism
\begin{equation*}\overline{r}_\ffrm : G_{L^+} \rightarrow \cG_n(\bbT^{T, \text{ord}}_{\{\chi_v\}}(U(\frl^\infty), \cO)/\ffrm),\end{equation*}
with the property that $\overline{r}_\ffrm^{-1}(\GL_n \times \GL_1(\bbT^{T, \text{ord}}_{ \{\chi_v\}}(U(\frl^\infty), \cO)/\ffrm)) = G_L$ and $\nu \circ \overline{r} = \epsilon^{1-n} \delta^{\mu_\ffrm}_{L/L^+}$, for some $\mu_\ffrm \in \bbZ/2\bbZ$.
Moreover, $\overline{r}_\ffrm$ admits a continuous lift, unique up to conjugacy, to a representation
\begin{equation*}r_\ffrm : G_{L^+} \rightarrow \cG_n(\bbT^{T, \text{ord}}_{\{\chi_v\}}(U(\frl^\infty), \cO)_\ffrm)\end{equation*}
satisfying the following: 
\begin{enumerate}
\item If $v \not\in T$ is a finite place of $L^+$ which splits as $w w^c$ in $L$, then $r_\ffrm$ is unramified at $w$ and $w^c$, and $r_\ffrm(\Frob_w)$ has characteristic polynomial
\begin{equation*}X^n + \dots + (-1)^j(\bbN w)^{j(j-1)/2} T_w^j X^{n-j} + \dots + (-1)^n (\bbN w)^{n(n-1)/2} T_w^{n}.\end{equation*}
\item If $v$ is a finite place of $L^+$ inert in $L$, and $U_v$ is a hyperspecial maximal compact subgroup of $G(L^+_v)$, then $r_\ffrm$ is unramified at $v$.
\item Suppose that if $v \in R$ then $U_v = \iota_\wv^{-1} \Iw(\wv)$. Then for every $\sigma \in I_{L_\wv}$, we have
\begin{equation*}\cha_{r_\ffrm(\sigma)}(X) = \prod_{j=1}^n \left(X - \chi_{v, j}^{-1}(\Art_{L_\wv}^{-1}(\sigma))\right).\end{equation*}
\item $\nu \circ r_\ffrm = \epsilon^{1-n} \delta^{\mu_\ffrm}_{L/L^+}$.
\end{enumerate}
\end{proposition}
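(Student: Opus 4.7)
The plan is to follow the template of Proposition \ref{heckerep} (equivalently Proposition 3.4.4 of \cite{Clo08}), the only change being that the role of $\bbT_\lambda^T(U,\cO)_\ffrm$ is played by the localized big ordinary Hecke algebra $\bbT:=\bbT^{T,\text{ord}}_{\{\chi_v\}}(U(\frl^\infty),\cO)_\ffrm$, which is again a complete local Noetherian $\cO$-algebra with residue field $k$.

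First I would produce the $\cG_n$-extension of $\overline{r}_\ffrm$. Since $\overline{r}_\ffrm$ is absolutely irreducible and satisfies $\overline{r}_\ffrm^c\cong\overline{r}_\ffrm^\vee(1-n)$, Lemma 2.1.12 of \cite{Clo08} (which requires $l>2$, as remarked in the proof of Proposition \ref{heckerep}) produces an extension to a continuous $\overline{r}_\ffrm:G_{L^+}\rightarrow\cG_n(\bbT/\ffrm)$ with the stated preimage of $\GL_n\times\GL_1$, and a multiplier $\epsilon^{1-n}\delta_{L/L^+}^{\mu_\ffrm}$ for some parity $\mu_\ffrm\in\bbZ/2\bbZ$.

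Next I would construct the lift. For each homomorphism $\varphi:\bbT\rightarrow\barQl$ lying over $\ffrm$, the map $\varphi$ factors through $\bbT/\frp$ for a minimal prime $\frp$, and the classical points are dense (the homomorphisms coming from level $U(\frl^{b,c})$, for varying $b\leq c$, via the action of Hecke operators on an eigenform $\Pi$ with $e\Pi^{U(\frl^{b,c})}\neq 0$). Each such classical point gives, by Theorem \ref{galois} applied to the automorphic representation of $G$ whose Hecke eigensystem is $\varphi$, a continuous representation $r_\varphi:G_L\rightarrow\GL_n(\barQl)$ whose traces on Frobenii $\Frob_w$ for $w\not\in T$ agree with $\varphi$ applied to the Hecke polynomial in (i), and whose characteristic polynomial on $I_{L_\wv}$ for $v\in R$ is as in (iii). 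Since $\overline{r}_\ffrm$ is absolutely irreducible, standard gluing (via Carayol's lemma, or equivalently via Chenevier's determinants and the reconstruction of an actual representation from a residually irreducible pseudo-representation) produces a continuous representation $G_L\rightarrow\GL_n(\bbT)$ whose composition with every $\varphi$ is conjugate to $r_\varphi$. By the $\cG_n$-extension at residual level, together with the same Lemma 2.1.12 argument applied now over $\bbT$, this extends to a continuous $r_\ffrm:G_{L^+}\rightarrow\cG_n(\bbT)$ with $\nu\circ r_\ffrm=\epsilon^{1-n}\delta_{L/L^+}^{\mu_\ffrm}$.

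Finally I would verify the four listed properties by density. Property (i) and the characteristic polynomial identity at split unramified $v\notin T$ hold by construction. Property (ii), unramifiedness at inert $v$ with hyperspecial $U_v$, and property (iii), the inertia characteristic polynomial at $v\in R$, are pointwise statements that hold at every classical $\varphi$ by Theorem \ref{galois}, so hold in $\bbT$ because the image of $\bbT$ in $\prod_\varphi\barQl$ has trivial kernel (as $\bbT$ is $\cO$-torsion free and reduced by construction of the ordinary Hecke algebra, cf.\ \cite{Ger09}). Property (iv) comes from the $\cG_n$-extension. Uniqueness up to $1+M_n(\ffrm_\bbT)$-conjugation follows from Chebotarev density applied to (i) together with absolute irreducibility of $\overline{r}_\ffrm$. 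The main, and essentially only, non-routine point in this template is the gluing step, ensuring that a residually irreducible compatible family over a dense set of points really descends to an actual representation valued in $\bbT$ itself; this is exactly where the hypothesis that $\overline{r}_\ffrm$ is absolutely irreducible (i.e.\ non-Eisenstein) is used.
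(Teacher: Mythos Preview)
Your proposal is correct and follows essentially the same approach as the paper: the paper's own proof simply states that this is proved in exactly the same way as Proposition~\ref{heckerep}, which in turn points to Proposition~3.4.4 of \cite{Clo08} and notes the use of Lemma~2.1.12 of \cite{Clo08} (requiring $l>2$). Your write-up is just an expanded version of that template, with the gluing via pseudo-representations/Carayol and the density-of-classical-points verification spelled out.
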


\subsection*{Another patching argument}

We specialize to the case where $T = R \cup S_l \cup \{v_1\}$, for some place $v_1$ of $L^+$ split in $L$. We will assume the following:
\begin{enumerate} \item For each each $v \in R \cup S_l$, $\overline{r}_\ffrm(G_{L_\wv})$ is trivial.
\item For each $v \in R$, $\bbN v \equiv 1 \mod l$, and that if $l^N \| (\bbN v - 1)$ then $l^N > n$ and $\cO$ contains an $(l^N)^\text{th}$ root of unity.
\item $\overline{r}_\ffrm$ is unramified above $v_1$, and $\bbN v_1 \not\equiv 1 \mod l$. 
\item For each $v \in R$, the characters $\chi_{v,1}, \dots, \chi_{v,n} : \cO^\times_{L_\wv} \rightarrow \cO^\times$ become trivial after reduction modulo $\lambda$.
\end{enumerate}

We now specify our open compact subgroup $U$ as follows:
\begin{enumerate} \item $U_v = G(\cO_{L_v^+})$ if $v \in S_l$.
 \item $U_v = G(\cO_{L_v^+})$ if $v \not\in T$ is split in $L$.
\item $U_v$ is a hyperspecial maximal compact of $G(L_v^+)$ if $v$ is inert in $L$.
\item $U_v = \Iw(\wv)$ for $v \in R$.
\item $U_{v_1} = \Iw(\wv_1).$
\end{enumerate}

Then for all $t \in G(\bbA_{L^+}^\infty)$, the group $t^{-1} G(L^+) t \cap U$ contains no element of order $l$ (by the choice of $v_1$). Let $\ffrm = \ffrm_{\{1\}}$ be a non-Eisenstein maximal ideal of  $\bbT^{T, \text{ord}}_{\{1\}}(U(\frl^\infty), \cO)$. Using that the $\chi_v$ are trivial modulo $\lambda$, we can identify the spaces
\begin{equation*}S^\text{ord}_{0, \{\chi_v\}}(U(\frl^\infty), k) = S^\text{ord}_{0, \{1\}}(U(\frl^\infty), k).\end{equation*}
Thus we get a maximal ideal $\ffrm_{\{\chi_v \}}$ of the algebra $\bbT^{T, \text{ord}}_{\{\chi_v\}}(U(\frl^\infty), \cO)$.

We will define a deformation problem, by giving the local lifting ring $R_v$ corresponding to a local deformation problem $\cD_v$ for each $v \in T$. We are going to consider liftings to the category $\cC_\Lambda$ of complete Noetherian local $\Lambda$-algebras with residue field $k$, instead of the category $\cC_\cO$ used above. See \cite{Ger09},  Definition 4.1.3. This is a minor technical point which will not appear again, so we make no further mention of it. 

For $v \in R$, let $R_v = R_\wv^\chi$. For $v \in S_l$, we will take $R_v = R_{\Lambda_\wv}^{\triangle, ar}$. We take $R_{v_1} = R^a_{\wv_1}$. (See Section \ref{galoisdef} for the definitions of these quotients).

We now have a deformation problem $\cS_{\{\chi_v\}}$ given by the tuple
\begin{equation*} \left(L/L^+, T, \widetilde{T}, \Lambda, \overline{r}_{\ffrm_{\{\chi_v\}}} , \epsilon^{1-n} \delta^{\mu_\ffrm}_{L/L^+}, \{R_\wv^\chi\}_{v \in R} \cup \{R_{\Lambda_\wv}^{\triangle, ar}\}_{v \in S_l} \cup \{ R_{\wv_1}^a\}\right).\end{equation*}
We set $R^\text{loc}_{\{\chi_v\}} =  \left( \widehat{\bigotimes}_{v \in R} R_\wv^\chi\right) \widehat{\otimes}_\cO\left(\widehat{\bigotimes}_{v \in S_l} R_{\Lambda_\wv}^{\triangle, ar} \right) \widehat{\otimes}_\cO R^a_{\wv_1}.$
Using the natural isomorphism $\widehat{\otimes}_{v \in S_l} \Lambda_\wv \cong \Lambda$, we see that $R^\text{loc}_{\{\chi_v\}}$ is naturally a $\Lambda$-algebra.

\begin{theorem}\label{taylorwilesord} With assumptions as above, the lifting
\begin{equation*}r_{\ffrm_{\{\chi_v\}}} : G_{L^+} \rightarrow \cG_n(\bbT^{T, \text{ord}}_{\{\chi_v\}}(U(\frl^\infty), \cO)_{\ffrm_{\{\chi_v\}}})\end{equation*}
defined above is of type $\cS_{\{\chi_v\}}$. Suppose moreover that $\overline{r}_\ffrm(G_{L^+(\zeta_l)})$ is adequate. Let $r : G_{L^+} \rightarrow \cG_n(\cO)$ be a lifting of $\overline{r}_\ffrm$ of type $\cS_{\{1\}}$, which is unramified above $v_1$. Suppose there exists a homomorphism $f' : \bbT^{T, \text{ord}}_{\{1\}}(U(\frl^\infty), \cO)_\ffrm \rightarrow \cO$ such that $(f' \circ r_\ffrm)$ is unramified above $v_1$.

Then there exists a second homomorphism $f : \bbT^{T, \text{ord}}_{\{1\}}(U(\frl^\infty), \cO)_\ffrm \rightarrow \cO$ such that $r$ and $f \circ r_\ffrm$ are conjugate by an element of $\GL_n(\cO)$.

\end{theorem}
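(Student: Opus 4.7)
The plan is to adapt the Taylor--Wiles patching proof of Theorem \ref{taylorwiles} to the ordinary, $\Lambda$-adic setting, with the crystalline local rings $R^{\lambda,\text{cr}}_\wv$ at $v \mid l$ replaced by the $\Lambda$-ordinary rings $R^{\triangle, ar}_{\Lambda_\wv}$, and the rings $R^{\overline{\square}}_\wv$ at $v \in S_r$ replaced by $R^\chi_\wv$ for $v \in R$ and $R^a_{\wv_1}$ at the auxiliary place $v_1$. That $r_{\ffrm_{\{\chi_v\}}}$ is of type $\cS_{\{\chi_v\}}$ is essentially immediate from Proposition \ref{heckerep}: at $v \in R$ the characteristic-polynomial condition on inertia matches the definition of $R^\chi_\wv$; at $v \in S_l$ the ordinary triangulation of Hida theory supplies a lift of the shape required by $R^{\triangle, ar}_{\Lambda_\wv}$; and at $v_1$, the hypothesis $\bbN v_1 \not\equiv 1 \bmod l$ together with Iwahori level forces $r_{\ffrm_{\{\chi_v\}}}|_{I_{L_{\wv_1}}}$ to act unipotently with trivial reduction, which is precisely the condition defining $R^a_{\wv_1}$.

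Next I would invoke Proposition \ref{Chebotarev} with
\[
q_0 = [L^+:\bbQ]n(n-1)/2 + [L^+:\bbQ]n(1-(-1)^{\mu_\ffrm - n})/2
\]
to produce, for every $N \geq 1$, a Taylor--Wiles datum $(Q_N, \widetilde{Q}_N, \{\overline{\psi}_\wv\}_{v \in Q_N})$ with $\#Q_N = q$, $\bbN v \equiv 1 \bmod l^N$ for $v \in Q_N$, and $R^{\square_T}_{\cS_{Q_N}}$ topologically generated over $R^{\text{loc}}_{\{\chi_v\}}$ by $g = q - q_0$ elements. For each $v \in Q_N$, with residual subrepresentation dimension $d^v_N$, I introduce the parahoric subgroups $\frp^\wv_N \supset \frp^\wv_{N,1}$ corresponding to the partition $n = (n - d^v_N) + d^v_N$ and form open compacts $U_i(Q_N) \subset G(\bbA^\infty_{L^+})$ equal to $U$ outside $Q_N$ and to $\iota^{-1}_\wv \frp^\wv_{N, i}$ at $v \in Q_N$. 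The patching module at level $N$ is then the ordinary, Taylor--Wiles-projected Hida space
\[
H_{i, Q_N} = e \Bigl(\prod_{v \in Q_N} \pr_{\varpi_\wv}\Bigr) S^{\text{ord}}_{0, \{\chi_v\}}\bigl(U_i(Q_N)(\frl^\infty), \cO\bigr)_{\ffrm_{\{\chi_v\}}},
\]
with $\pr_{\varpi_\wv}$ as in Proposition \ref{mainprop}. Three ingredients then combine: Proposition \ref{mainprop} identifies the Taylor--Wiles projection on the $U_0(Q_N)$-invariants with the original $U$-invariants localised at $\ffrm_{\{\chi_v\}}$; Lemma \ref{freeness} upgrades this to freeness over $\cO[\Delta_{Q_N}]$ at level $U_1(Q_N)$, where $\Delta_{Q_N} = U_0(Q_N)/U_1(Q_N)$; and Proposition \ref{free} provides the Hida-theoretic freeness over $\Lambda$. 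Proposition \ref{hecke} then supplies the Galois-theoretic compatibility needed to turn $H_{1, Q_N}$ into an $R^{\text{univ}}_{\cS_{Q_N}}$-module carrying a matched $\cO[\Delta_{Q_N}]$-structure.

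I would then apply a $\Lambda$-coefficient version of Lemma \ref{patching}: take $S_\infty = \Lambda[[x_1, \dots, x_{n^2 \#T}, y_1, \dots, y_q]]$ and $R_\infty = R^{\text{loc}}_{\{\chi_v\}}[[x_1, \dots, x_g]]$ to obtain a finite free $S_\infty$-module $H_\infty$ on which $R_\infty$ acts (through $S_\infty$), whose augmentation quotient returns the level-$U$ ordinary Hida module localised at $\ffrm$. A depth-versus-dimension comparison, using the equidimensionality of $R^{\triangle, ar}_{\Lambda_\wv}/Q$ from \cite{Ger09} together with the equidimensionality assertions of Propositions \ref{levelraising} and \ref{smallprimes}, shows that $\depth_{R_\infty} H_\infty = \dim S_\infty = \dim R_\infty$; this forces $n \equiv \mu_\ffrm \pmod 2$ and, by Lemma 2.3 of \cite{Tay08}, ensures that $\Supp_{R_\infty} H_\infty$ is a union of irreducible components of $\Spec R_\infty$.

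The final step is the component-selection argument, transplanted to the ordinary setting. I define $\cS'$ by replacing each local lifting ring of $\cS_{\{1\}}$ by the unique irreducible component containing the local restriction of $f' \circ r_\ffrm$: the hypothesis that $f' \circ r_\ffrm$ is unramified above $v_1$ singles out a unique component of $R^a_{\wv_1}$ via Proposition \ref{smallprimes}, and analogous uniqueness at $v \in R$ and $v \in S_l$ follows from the irreducibility of $R^{\triangle, ar}_{\Lambda_\wv}/Q$ and of the relevant component of $R^\chi_\wv$. Hence $\Spec R^{\text{univ}}_{\cS'}$ lies in a single irreducible component of $\Spec R_\infty$ that already meets $\Supp_{R_\infty} H_\infty$; consequently $H \otimes_{R^{\text{univ}}_{\cS_{\{1\}}}} R^{\text{univ}}_{\cS'}$ is a nearly faithful $R^{\text{univ}}_{\cS'}$-module, and the homomorphism $R^{\text{univ}}_{\cS'} \to \cO$ induced by $r$ factors through $\bbT^{T, \text{ord}}_{\{1\}}(U(\frl^\infty), \cO)_\ffrm$, producing the desired $f$. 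The main obstacle I expect is the bookkeeping that unifies the $\Lambda$-action coming from Hida theory with the $\cO[\Delta_{Q_N}]$-action coming from the Taylor--Wiles primes: one must check that both actions survive the ordinary idempotent $e$ and the projectors $\pr_{\varpi_\wv}$, so that the patching lemma outputs a module which is simultaneously finite free over the enlarged parameter ring $S_\infty$ (incorporating the Iwasawa variables) and controlled by $R_\infty$ through $R^{\text{loc}}_{\{\chi_v\}}$.
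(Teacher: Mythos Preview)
Your outline captures the Taylor--Wiles--Kisin patching correctly up to the point where you produce a module $H_\infty$ whose support is a union of components of $\Spec R_\infty$. The gap is in your final ``component-selection'' step.

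You propose to define $\cS'$ by choosing, at each place of $T$, the component of the local lifting ring containing $f'\circ r_\ffrm$, and then to conclude that the component containing $r$ lies in the support. But the theorem imposes \emph{no} compatibility between $r$ and $f'\circ r_\ffrm$ at the places $v\in R$: the lifting $r$ is merely of type $\cS_{\{1\}}$, i.e.\ it lands in $R^1_\wv$, and by Proposition~\ref{levelraising}(i) this ring has \emph{many} irreducible components when $\chi_{v,j}=1$. So knowing that the component of $f'\circ r_\ffrm$ is in the support does not tell you that the (possibly different) component of $r$ is. Your sentence ``analogous uniqueness at $v\in R$ \dots\ follows from \dots\ the irreducibility of \dots\ the relevant component of $R^\chi_\wv$'' conflates the two deformation problems: for $\chi=1$ the ring is not irreducible, and for nontrivial $\chi$ it is not the ring that $r$ is required to land in.

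The paper fixes this with Taylor's ``Ihara-avoidance'' trick: one patches \emph{simultaneously} for $\{1\}$ and for a fixed auxiliary $\{\chi_v\}$ with the $\chi_{v,j}$ pairwise distinct (this is where the hypothesis $l^N>n$ for $l^N\|(\bbN v-1)$ is used). The two patched systems agree modulo $\lambda$. On the $\{\chi_v\}$ side, Proposition~\ref{levelraising}(ii) makes each $R^{\chi_v}_\wv$ irreducible, so the components of $R^{\square_T}_{\{\chi_v\},\infty}/Q$ are indexed solely by the components of $R^a_{\wv_1}$. One then argues: the existence of $f'$ puts the unramified component $\cC_{v_1}^{ur}$ in the support on the $\{1\}$ side; the mod-$\lambda$ identification transfers this to the $\{\chi_v\}$ side, where irreducibility at $R$ forces the full $\cC_{v_1}^{ur}$-component into the support; transferring back mod $\lambda$ and using that every prime of $R^{\square_T}_{\{1\},\infty}/Q$ minimal over $\lambda$ contains a unique minimal prime (Proposition~\ref{levelraising}(i) and Proposition~\ref{smallprimes}), one concludes that \emph{every} component of $\Spec R^{\square_T}_{\{1\},\infty}/Q$ with $\cC_{v_1}=\cC_{v_1}^{ur}$ lies in the support. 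This covers $r$ regardless of where it sits at the places of $R$. Your proposal needs this two-system comparison; without it the argument does not close.
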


\begin{proof}
That $r_{\ffrm_{\{\chi_v\}}}$ is of type $\cS_{\{\chi_v\}}$ is Lemma 4.1.7 of \cite{Ger09}. To prove the above theorem we will apply the Taylor-Wiles-Kisin method in a similar manner to before. Let
\begin{equation*}q_0 = [L^+ : \bbQ]n(n-1)/2 + [L^+ : \bbQ]n(1- (-1)^{\mu_\ffrm - n})/2.\end{equation*}
We can fix for every $N \geq 1$ a tuple ($Q_N, \widetilde{Q}_N, \{\psi_{\wv}\}_{v \in Q_N}$) such that \begin{itemize}
\item $\#Q_N = q$ for all $N$.
\item $\bbN v \equiv 1 \mod l^N$ for all $v \in Q_N$.
\item The ring $R^{\square_T}_{\cS_{\{\chi_v\}, Q_N}}$ can be topologically generated over $R^\text{loc}_{\{\chi_v\}}$ by
\begin{equation*}g = q-q_0 = q - [L^+ : \bbQ]n(n-1)/2 + [L^+ : \bbQ]n(1- (-1)^{\mu_\ffrm -n})/2\end{equation*}
elements.
\item $\psi_{\wv}$ is a subrepresentation of $\overline{r}|_{G_{L_\wv}}$ of dimension $d_N^v$.
\end{itemize}
This can be deduced from Proposition \ref{Chebotarev} just as Proposition 4.2.1 of \emph{op. cit.} is deduced from Proposition 2.5.9 of \cite{Clo08}. For each $v, N \geq 1$, let $\frp_N^\wv,\frp_{N,1}^\wv$ be the parahoric subgroup of $\GL_n(\cO_{L_\wv})$ corresponding to the partition $n = (n - d_N^v) + d_N^v$, as defined previously. We let
\begin{equation*}U_1(Q_N) = \prod_v U_1(Q_N)_v, U_0(Q_N) = \prod_v U_0(Q_N)_v\end{equation*}
be the compact open subgroups of $G(\bbA^\infty_{L^+})$ with $U_i(Q_N) = U_v$ for $v \not\in Q_N$, $U_1(Q_N) =  \iota_{\wv}^{-1} \frp_{N,1}^\wv$, $U_0(Q_N) = \iota_{\wv}^{-1} \frp_{N}^\wv$ for $v \in Q_N$. 

We have natural maps
\begin{equation*}\bbT^{T\cup Q_N,\text{ord}}_{\{\chi_v\}}(U_1(Q_N)(\frl^\infty), \cO) \rightarrow \bbT^{T\cup Q_N,\text{ord}}_{\{\chi_v\}}(U_0(Q_N)(\frl^\infty), \cO)\end{equation*}
\begin{equation*}\rightarrow \bbT^{T \cup Q_N, \text{ord}}_{\{\chi_v\}}(U(\frl^\infty), \cO) \rightarrow \bbT^{T, \text{ord}}_{\{\chi_v\}}(U(\frl^\infty), \cO).\end{equation*}
The first two arrows are surjective, the last is injective. Thus ${\ffrm_{\{\chi_v\}}}$ determines maximal ideals denoted $\ffrm_{\{\chi_v\}, Q_N}, \ffrm_{\{\chi_v\}, Q_N}, {\ffrm_{\{\chi_v\}}}, {\ffrm_{\{\chi_v\}}}$ of these four algebras.
After localizing at ${\ffrm_{\{\chi_v\}}}$ the last map is an isomorphism, since $\overline{r}_\ffrm$ is absolutely irreducible (cf. the proof of \cite{Clo08}, Corollary 3.4.5). We set $\bbT_{\{\chi_v\}} = \bbT^{T, \text{ord}}_{\{\chi_v\}}(U(\frl^\infty), \cO)_{\ffrm_{\{\chi_v\}}}$.

For each $v \in Q_N$, let $\phi_\wv \in G_{L_\wv}$ be a Frobenius lift and let $\varpi_\wv$ be the uniformizer with $\Art_{L_\wv}\varpi_\wv = \phi_\wv |_{L_\wv^\text{ab}}$. In Proposition \ref{mainprop} we have defined a projection operator $\pr_{\varpi_\wv}$ for each $v \in Q_N$. For each $\alpha \in \cO_{L_\wv}^\times$, we have the Hecke operator $V_\alpha$ defined above.

Let $\Delta_{Q_N} = U_0(Q_N)/U_1(Q_N)$. Let $\fra_{Q_N}$ denote the kernel of the augmentation map $\Lambda[\Delta_{Q_N}] \rightarrow \Lambda$. For $i=0, 1$, let $H_{i, Q_N}$ be defined by
\begin{equation*}H_{i, \{\chi_v\}, Q_N}^\vee = \left(\prod_{v \in Q_N} \pr_{\varpi_\wv}\right) S^\text{ord}_{0, \{\chi_v\}}(U_i(Q_N)(\frl^\infty), K/\cO)_{\ffrm_{\{\chi_v\}, Q_N}},\end{equation*}
where $(-)^\vee = \Hom_\cO( -, K/\cO)$ denotes Pontryagin dual. Let $\bbT_{i, \{\chi_v\}, Q_N}$ denote the image of $\bbT^{T\cup Q_N,\text{ord}}_{\{\chi_v\}}(U_i(Q_N)(\frl^\infty), \cO)$ in $\End_\Lambda(H_{i, \{\chi_v\}, Q_N}).$ Similarly we define $H_{\{\chi_v\}}$ by
\begin{equation*}H_{\{\chi_v\}}^\vee =  S^\text{ord}_{0, \{\chi_v\}}(U(\frl^\infty), K/\cO)_{\ffrm_{\{\chi_v\}, Q_N}}.\end{equation*}

 We claim that the following hold:
\begin{enumerate} \item For each $N$, the map
\begin{equation*}\left( \prod_{v \in Q_N} \pr_{\varpi_\wv}  \right)^\vee: H_{0, \{\chi_v\}, Q_N}\rightarrow H_{\{\chi_v\}}\end{equation*}
is an isomorphism.
\item For each $N$, $H_{1, \{\chi_v\}, Q_N}$ is free over $\Lambda[\Delta_{Q_N}]$ with
\begin{equation*}H_{1, \{\chi_v\}, Q_N}/\fra_{Q_N} \overset{\sim}{\rightarrow} H_{0, \{\chi_v\}, Q_N},\end{equation*}
the isomorphism induced by restriction.
\item For each $N$ and for each $v \in Q_N$, there is a character $V_\wv : \cO_{L_\wv}^\times \rightarrow \bbT_{1, \{\chi_v\}, Q_N}^\times$ such that
\begin{enumerate} \item for any $\alpha$, $V_\alpha = V_\wv(\alpha)$ on $H_{1, \{\chi_v\}, Q_N}$.
\item ($r_{\ffrm_{\{\chi_v\}, Q_N}} \otimes \bbT_{1, \{\chi_v\}, Q_N})|_{G_{L_\wv}} \cong s \oplus \psi$ with $s$ unramified, lifting $\overline{s}_\wv$, $\psi$ lifting $\overline{\psi}_\wv$, and $\psi|_{I_{L_\wv}}$ acting as the scalar character $V_\wv \circ \Art^{-1}_{L_\wv}|_{I_{L_{\wv}}}$.
\end{enumerate}
\end{enumerate}
These are proved as in Lemma 4.2.2 of \cite{Ger09}, by passing to the limit from corresponding facts in the finite level case and using Proposition \ref{free} above. 

For each $N$, the lift $r_{\ffrm_{\{\chi_v\}, Q_N}} \otimes \bbT_{1, \{\chi_v\}, Q_N}$ is of type $\cS_{\{\chi_v\}, Q_N}$ and gives rise to a surjection $R^\text{univ}_{\cS_{\{\chi_v\}, Q_N}} \rightarrow \bbT_{1,\{\chi_v\}, Q_N}$. Thinking of $\Delta_{Q_N}$ as the maximal $l$-power order quotient of $\prod_{v \in Q_N} I_{L_\wv}$, we obtain a homomorphism $\Delta_{Q_N} \rightarrow (R^\text{univ}_{\cS_{\{\chi_v\}, Q_N}})^\times$ as follows: for each $v \in Q_N$, decompose $r^\text{univ}_{\cS_{\{\chi_v\}, Q_N}}|_{G_{L_\wv}} = s \oplus \psi$ and take a diagonal entry of $\psi$ in some basis. This does not depend on the choice of basis.

We thus have homomorphisms $\Lambda[\Delta_{Q_N}] \rightarrow R^\text{univ}_{\cS_{\{\chi_v\}, Q_N}} \rightarrow R^{\square_T}_{\cS_{\{\chi_v\}, Q_N}}$ and natural isomorphisms $R^\text{univ}_{\cS_{\{\chi_v\}, Q_N}}/\fra_{Q_N} \cong R^\text{univ}_{\cS_\{\chi_v\}}$ and $R^{\square_T}_{\cS_{\{\chi_v\}, Q_N}}/\fra_{Q_N} \cong R^{\square_T}_{\cS_{\{\chi_v\}}}$. Moreover, this makes the maps $R^\text{univ}_{\cS_{\{\chi_v\}, Q_N}} \rightarrow \bbT_{1,\{\chi_v\},Q_N}$ into homomorphisms of $\Lambda[\Delta_{Q_N}]$-algebras.

At this point we apply a patching argument, for the details of which we refer to the proof of Theorem 4.3.1 of \cite{Ger09}. For each $v \in R$, we choose a character $\chi_v = \chi_{v, 1} \times \dots \times \chi_{v, n}$ such that the $\chi_{v, j}$ are all distinct (this can be done since we assumed that $l^N \| \bbN v - 1 \Rightarrow l^N > n$ for each $v \in R$). We define the following rings:
\begin{equation*}\begin{array}{rcl} \cT & = & \cO[[x_1, \dots, x_{n^2 \#T}]]\\ 
\Delta_\infty & = & \bbZ_l^q \\
S_\infty & = & \Lambda \widehat{\otimes}_\cO \cT[[\Delta_\infty]]\\
R^{\square_T}_{\{\chi_v\},\infty} & = & R^\text{loc}_{\{\chi_v\}}[[Y_1, \dots, Y_{q'}]]\\
R^{\square_T}_{\{1\},\infty} & = & R^\text{loc}_{\{1\}}[[Y_1, \dots, Y_{q'}]]\\
\fra & = & \ker(S_\infty \rightarrow \Lambda).
\end{array}\end{equation*}
After patching, one is given the following:
\begin{enumerate}\item  Surjective homomorphisms
\begin{equation*}R^{\square_T}_{\{\chi_v\},\infty} \rightarrow R^\text{univ}_{\cS_{\{\chi_v\}}} \text{ and }R^{\square_T}_{\{1\},\infty} \rightarrow R^\text{univ}_{\cS_{\{1\}}}.\end{equation*}
\item Modules $H^\square_{1, \{\chi_v\}, \infty}$ for $R^{\square_T}_{\{\chi_v\},\infty}$ and $H^\square_{1, \{1\}, \infty}$ for $R^{\square_T}_{\{1\},\infty}.$ 
\item Commuting actions of $S_\infty$ on these modules, such that they become free of finite rank over $S_\infty$.
\end{enumerate}
These satisfy the following:
\begin{enumerate} \item The $S_\infty$ actions factor through the respective $R^{\square_T}_{\{\chi_v\},\infty}$ and $R^{\square_T}_{\{1\},\infty}$ actions.
\item There are isomorphisms 
\begin{equation*}H^\square_{1, \{\chi_v\}, \infty}/\fra \cong H_{\{\chi_v\}}\end{equation*}
\begin{equation*}H^\square_{1, \{ 1\}, \infty}/\fra \cong H_{\{1\}}\end{equation*}
compatible with the homomorphisms $R^{\square_T}_{\{\chi_v\},\infty} \rightarrow R^\text{univ}_{\cS_{\{\chi_v\}}} \rightarrow \bbT_{\{\chi_v\}}$ and $R^{\square_T}_{\{1\},\infty} \rightarrow R^\text{univ}_{\cS_{\{1\}}} \rightarrow \bbT_{\{1\}}$.
\item After modding out by $\lambda$, the objects decorated with $\{\chi_v\}$ can be identified with the objects decorated with $\{1\}$, and the various maps between them are then the same.
\end{enumerate}
As in the proof of Theorem \ref{taylorwiles}, we deduce that $\mu_\ffrm \equiv n \mod 2$ and that for every minimal prime $Q$ of $\Lambda$, the support of $H^\square_{1, \{\chi_v\}, \infty}/Q$ in $\Spec R^{\square_T}_{\{\chi_v\},\infty}/Q$ is a union of irreducible components. The same statement holds for the support of $H^\square_{1, \{1\}, \infty}/Q$ in $\Spec R^{\square_T}_{\{1\},\infty}/Q.$ 

By Lemma 3.3 of \cite{Bar09}, giving an irreducible component $\cC$ of $R^{\square_T}_{\{\chi_v\},\infty}$ or  $R^{\square_T}_{\{1\},\infty}$ is the same as giving an irreducible component $\cC_v$ of the local lifting ring $R_v$ for each $v \in T$. We will write suggestively $\cC = \otimes_v \cC_v$.  

Take a minimal prime $Q$ of $\Lambda$, and consider $R^{\square_T}_{\{\chi_v\},\infty}/Q$. It satisfies:
\begin{enumerate}  \item Every generic point of $\Spec R^{\square_T}_{\{\chi_v\},\infty}/Q$ has characteristic 0.
\item For each $v \neq v_1$, $R_v$ is irreducible. In particular, there is a bijection between the irreducible components of $R^{\square_T}_{\{\chi_v\},\infty}/Q$ and the irreducible components of $R_{v_1}$.
\end{enumerate}

On the other hand, $R^{\square_T}_{\{1\},\infty}/Q$ satisfies the following properties:
\begin{enumerate}  \item Every generic point of $\Spec R^{\square_T}_{\{1\},\infty}/Q$ has characteristic 0.
\item Every prime of $R^{\square_T}_{\{1\},\infty}/Q$ minimal over $\lambda$ contains a unique minimal prime.
\end{enumerate}
Justification for the fact that the individual factors of $R^\text{loc}_{\{\chi_v\}}$ and $R^\text{loc}_{\{1\}}$ have the relevant properties has been given in Section \ref{galoisdef}. One now just needs to know that they are preserved under completed tensor products, and this is the content of Lemma 3.3 of \cite{Bar09}.

We now argue as follows. Using the existence of $f$, we see that $\Supp_{ R^{\square_T}_{\{1\},\infty}}H^\square_{1, \{1\}, \infty}$ contains an irreducible component $\cC = \otimes_v \cC_v$ with $\cC_{v_1} = \cC_{v_1}^{ur}$, where $\cC_{v_1}^{ur}$ is the irreducible component of $R^a_{\wv_1}$ which classifies unramified liftings. (The local component at $v_1$ is generic). 

Using the identification modulo $\lambda$, we see that \begin{equation*}\Supp_{R^{\square_T}_{\{\chi_v\},\infty}/\lambda}H^\square_{1, \{\chi_v\}, \infty}/\lambda\end{equation*} contains an irreducible component $\otimes_v \overline{\cC}_v$ of $\Spec R^{\square_T}_{\{\chi_v\},\infty}/\lambda$ with $\overline{\cC}_{v_1} = \overline{\cC}_{v_1}^{ur}$, in the obvious notation. Using the properties (1) and (2) of $\Spec R^{\square_T}_{\{\chi_v\},\infty}$ above, we see that $\Supp_{R^{\square_T}_{\{\chi_v\},\infty}}H^\square_{1, \{\chi_v\}, \infty}$ contains an irreducible component $\cC'_v = \otimes_v \cC'_v$ with $\cC'_{v_1} = \cC_{v_1}^{ur}$. (Here we use the following property of $R^a_{\wv_1}$: every prime of $R^a_{\wv_1}$ minimal over $\lambda$ contains a unique minimal prime.)

Applying the identification modulo $\lambda$ in the opposite direction, we find that
\begin{equation*}\Supp_{R^{\square_T}_{\{1\},\infty}/\lambda}H^\square_{1, \{1\}, \infty}/\lambda\end{equation*}
contains every irreducible component $\overline{\cC}' = \otimes_v \overline{\cC}'_v$ with $\overline{\cC}'_{v_1} = \overline{\cC}_{v_1}^{ur}$. Now applying properties (1) and (2) of $\Spec R^{\square_T}_{\{1\},\infty}$ above, we deduce that for every minimal prime $Q$ of $\Lambda$, $\Supp_{ R^{\square_T}_{\{1\},\infty}/Q}H^\square_{1, \{1\}, \infty}/Q$ contains every irreducible component $\cC'' = \otimes_v{\cC''_v}$ with $\cC''_{v_1} = \cC_{v_1}^{ur}$.

Now consider the deformation problem
\begin{equation*}\cS'_{\{ 1 \}}=\left(L/L^+, T, \widetilde{T}, \Lambda, \overline{r}_{\ffrm} , \epsilon^{1-n} \delta^{\mu_\ffrm}_{L/L^+}, \{R_\wv^1\}_{v \in R} \cup \{R_{\Lambda_\wv}^{\triangle, ar}\}_{v \in S_l} \cup \{ R_{\wv_1}^{{ur}} )\}\right).\end{equation*}
As in the proof of Theorem \ref{taylorwiles}, we see that $H_{\{1\}}  \otimes_{R^\text{univ}_{\cS_{\{1\}}}} R^\text{univ}_{\cS'_{\{1\}}}$ is a nearly faithful $R^\text{univ}_{\cS'_{\{1\}}}$-module, hence the homomorphism $R^\text{univ}_{\cS'_{\{1\}}} \rightarrow \cO$ induced by $r'$ actually factors through $\bbT_{\{1\}}$. (Recall that $\bbT_{\{1\}}$ is reduced). This concludes the proof of the theorem.
\end{proof} 

\begin{corollary}\label{ordinaryfinite} With the hypotheses of the theorem, $R^\text{univ}_{\cS'_{\{1\}}}$ is a finite $\Lambda$-module.
\end{corollary}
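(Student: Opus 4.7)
The plan is to mimic the proof of Corollary \ref{minimalfinite}, replacing $\cO$ by $\Lambda$ throughout. The one extra ingredient beyond what is already contained in the proof of Theorem \ref{taylorwilesord} is the classical Hida control theorem, which in the notation of this paper takes the form: $\bbT_{\{1\}} := \bbT^{T,\text{ord}}_{\{1\}}(U(\frl^\infty),\cO)_{\ffrm}$ is a finite $\Lambda$-algebra.

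First I would extract from the proof of Theorem \ref{taylorwilesord} the statement that $H_{\{1\}} \otimes_{R^\text{univ}_{\cS_{\{1\}}}} R^\text{univ}_{\cS'_{\{1\}}}$ is nearly faithful as an $R^\text{univ}_{\cS'_{\{1\}}}$-module; in other words, the kernel $J$ of the action of $R^\text{univ}_{\cS'_{\{1\}}}$ on this module is a nilpotent ideal. Since the action of $R^\text{univ}_{\cS_{\{1\}}}$ on $H_{\{1\}}$ factors through the natural surjection $R^\text{univ}_{\cS_{\{1\}}} \twoheadrightarrow \bbT_{\{1\}}$ (the image contains the operators $T^j_w$ and the diamond operators $\langle u\rangle$, which generate $\bbT_{\{1\}}$ over $\cO$), the same is true for the action on the tensor product; hence $R^\text{univ}_{\cS'_{\{1\}}}/J$ is identified with a quotient of $\bbT_{\{1\}}$.

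Second I would apply Proposition \ref{free}: the module $S^\text{ord}_{0,\{1\}}(U(\frl^\infty), K/\cO)^\vee$ is finite free over $\Lambda$, and $\bbT_{\{1\}}$ acts faithfully on it by construction, so $\bbT_{\{1\}}$ embeds in a matrix algebra over $\Lambda$ and is therefore a finite $\Lambda$-algebra. Consequently $R^\text{univ}_{\cS'_{\{1\}}}/J$ is finite over $\Lambda$.

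Finally, finiteness of $R^\text{univ}_{\cS'_{\{1\}}}$ itself over $\Lambda$ is a formal consequence: writing $J^n = 0$, each quotient $J^i/J^{i+1}$ is a finitely generated module (as $R^\text{univ}_{\cS'_{\{1\}}}$ is Noetherian) over the finite $\Lambda$-algebra $R^\text{univ}_{\cS'_{\{1\}}}/J$, hence finite over $\Lambda$; induction on $n$ then gives the conclusion. Equivalently, and in closer parallel with Corollary \ref{minimalfinite}, one may observe that $R^\text{univ}_{\cS'_{\{1\}}}/\ffrm_\Lambda R^\text{univ}_{\cS'_{\{1\}}}$ is an Artinian local $k$-algebra and invoke \cite{Mat07}, Theorem 8.4. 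I do not foresee any real obstacle here: the substantive input has already been absorbed into Theorem \ref{taylorwilesord} and into the Hida finiteness of $\bbT_{\{1\}}$.
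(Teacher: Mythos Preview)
Your proposal is correct and follows exactly the approach the paper has in mind: the paper's proof simply says to argue as in Corollary \ref{minimalfinite}, using that $\bbT_{\{1\}}$ is a finite $\Lambda$-algebra, and you have spelled out precisely this, including both the filtration argument and the Matsumura Theorem 8.4 variant for the final step.
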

\begin{proof}
This is proved in exactly the same way as Corollary \ref{minimalfinite}, using that $\bbT_{\{1\}}$ is a finite $\Lambda$-algebra.
\end{proof}

 \section{Ordinary automorphy lifting}
 
\begin{theorem}\label{ordinary} Let $F$ be an imaginary CM field with totally real subfield $F^+$ and let $c$ be the non-trivial element of $\Gal(F/F^+)$. Let $n \in \bbZ_{\geq 1}$ and let $l$ be an odd prime. Let $K \subset \overline{\bbQ}_l$ denote a finite extension of $\bbQ_l$ with ring of integers $\cO$, residue field $k$ and maximal ideal $\lambda$. Let
 \begin{equation*} \rho : G_F \rightarrow \GL_n(\cO)\end{equation*}
 be a continuous representation and let $\overline{\rho} = \rho \mod \lambda$. Let $\mu : G_{F^+} \rightarrow \cO^\times$ be a continuous character. Suppose that $\rho$ satisfies the following properties:
 \begin{enumerate} \item $\rho^c \cong \rho^\vee \epsilon^{1-n} \mu|_{G_F}.$
 \item $\mu(c_v)$ is independent of $v \mid \infty$.
 \item $\rho$ is ramified at only finitely many places.
 \item For each $v \mid l$, $\rho|_{G_{F_v}}$ is ordinary, in the sense of Definition \ref{ord}.
 \item $\overline{\rho}$ is absolutely irreducible and $\overline{\rho}(G_{F(\zeta_l)}) \subset \GL_n(k)$ is adequate.
 \item $\zeta_l \not\in F$.
 \item There is a continuous representation $\rho' : G_F \rightarrow \GL_n(\cO)$, a continuous character $\mu' : G_{F^+} \rightarrow \cO^\times$, a RAECSDC automorphic representation $(\pi, \chi)$ of $\GL_n(\bbA_F)$ and $\iota : \overline{\bbQ}_l \overset{\sim}{\rightarrow} \bbC$ such that 
 \begin{enumerate} 
 \item $\rho' \otimes_\cO \barQl \cong r_{l, \iota}(\pi) : G_F \rightarrow \GL_n(\barQl).$
 \item $\mu' \otimes_\cO \barQl \cong r_{l, \iota}(\chi).$
 \item $(\overline{\rho},\overline{\mu}) = (\overline{\rho}',\overline{\mu}')$.
 \item $\pi$ is $\iota$-ordinary at all places dividing $l$.
 \end{enumerate}
 \end{enumerate}
 Then $\rho$ is ordinarily automorphic. If moreover $\rho$ is crystalline (resp. potentially crystalline) at each place of $F$ above $l$ then $\rho$ is ordinarily automorphic of level prime to $l$ (resp. of level potentially prime to $l$).
 \end{theorem}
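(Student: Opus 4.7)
The plan is to reduce to the ordinary Taylor-Wiles-Kisin Theorem \ref{taylorwilesord} by soluble base change, character twisting, and transfer to a definite unitary group, following the recipe of Theorem \ref{minimal}. First I would choose a soluble CM extension $L/L^+$ of $F/F^+$ such that: $L/L^+$ is unramified at finite places with $4 \mid [L^+:\bbQ]$; every place of ramification of $\rho, \rho', \pi$ splits in $L/L^+$; for each ramified place $v$ of $\rho|_{G_L}$ away from $l$, $\bbN v \equiv 1 \mod l^N$ for large $N$ (with $l^N > n$ and $\cO$ containing an $l^N$-th root of unity); an auxiliary split place $v_1$ is added with $\bbN v_1 \not\equiv 1 \mod l$, above which $\rho$ and $\pi$ are unramified; and $L$ is linearly disjoint from $\overline{F}^{\ker \overline{\rho}}(\zeta_l)$ over $F$, preserving adequacy. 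By Lemma \ref{basechange} it suffices to establish ordinary automorphy after this base change, and restriction preserves the ordinary hypothesis at $l$. Using Lemmas 4.1.4 and 4.1.6 of \cite{Clo08}, choosing the twisting character crystalline (hence ordinary) at $l$, I would then reduce to $\mu = \chi = \delta_{F/F^+}^n$.

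Next, transfer $\pi_L$ to an $\iota$-ordinary automorphic representation $\Pi$ of $G(\bbA_{L^+})$ for $G$ the definite unitary group of Section \ref{twk} via \cite{Lab09}. Choose $U = \prod_v U_v$ with $U_v = \Iw(\wv)$ for $v \in R \cup \{v_1\}$ and $U_v$ hyperspecial otherwise, and pick characters $\chi_v = \chi_{v,1} \times \dots \times \chi_{v,n}$ for $v \in R$ which are trivial mod $\lambda$, pairwise distinct, and matched to the tame ramification of $\rho|_{G_{L_\wv}}$ (possible by the residue-field congruences imposed above). Then $t^{-1} G(L^+) t \cap U$ is torsion-free for all $t$, Proposition \ref{free} applies, and the Hecke action on the ordinary Iwahori-invariants of $\Pi$ yields, after possibly enlarging $K$, a non-Eisenstein maximal ideal $\ffrm$ of $\bbT^{T,\text{ord}}_{\{1\}}(U(\frl^\infty), \cO)$ together with a homomorphism $f' : \bbT^{T,\text{ord}}_{\{1\}}(U(\frl^\infty), \cO)_\ffrm \to \cO$ with $f' \circ r_\ffrm$ unramified at $v_1$.

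The representation $\rho|_{G_L}$ extends (possibly after enlarging $K$ again) to a continuous lift $r : G_{L^+} \to \cG_n(\cO)$ of $\overline{r}_\ffrm$, and it is of type $\cS_{\{1\}}$: ordinary of the appropriate weight at each $v \in S_l$ by hypothesis (iv), unramified at $v_1$, and with unipotent inertial action at places in $R$. Theorem \ref{taylorwilesord} then supplies $f : \bbT^{T,\text{ord}}_{\{1\}}(U(\frl^\infty), \cO)_\ffrm \to \cO$ with $f \circ r_\ffrm$ conjugate to $r$, exhibiting $r|_{G_L}$ as the Galois representation attached to an $\iota$-ordinary RAECSDC automorphic representation of $\GL_n(\bbA_L)$; Lemma \ref{basechange} then descends to ordinary automorphy of $\rho$ over $F$. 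The final sentence follows by arranging, via an additional soluble base change absorbed into the construction of $L$, that $\pi$ is unramified at all primes above $l$, so the witnessing automorphic form has level prime to $l$ when $\rho$ is crystalline (and level potentially prime to $l$ when $\rho$ is only potentially crystalline).

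The main obstacle is the matching of local components at primes dividing $l$: one must check that the ordinary deformation ring $R_{\Lambda_\wv}^{\triangle, ar}$ records the correct Hodge-Tate weights and $\Lambda$-algebra structure, so that $r|_{G_{L_\wv}}$ and $f' \circ r_\ffrm|_{G_{L_\wv}}$ determine the same minimal prime of $\Lambda$ --- this is precisely where $\pi$ being $\iota$-ordinary, matched with the ordinarity of $\rho$ at $l$ via $\rho'$, enters essentially, allowing Theorem \ref{taylorwilesord} to be applied without any further irreducible-component hypothesis at $l$. A secondary subtlety is arranging the tame characters $\chi_v$ at $v \in R$ so that the companion problem $\cS_{\{\chi_v\}}$ used in the patching is well-behaved: the distinctness of the $\chi_{v,j}$ ensures $R_\wv^{\chi}$ is irreducible of the expected dimension, while their triviality mod $\lambda$ is what permits the trivial-character Hecke algebra to detect the representation $r$ on the nose through the modulo-$\lambda$ identification built into the proof of Theorem \ref{taylorwilesord}.
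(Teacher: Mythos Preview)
Your proposal is correct and follows essentially the same route as the paper: reduce to $\mu=\chi=\delta_{F/F^+}^n$ by twisting, make a soluble CM base change to a field $L$ where the hypotheses preceding Theorem \ref{taylorwilesord} hold (including the auxiliary place $v_1$ with $\bbN v_1\not\equiv 1\bmod l$ and the condition $l^N>n$ at ramified primes), transfer $\pi_L$ to the definite unitary group, and apply Theorem \ref{taylorwilesord} followed by Lemma \ref{basechange}. The paper's own proof is terser, simply referring to the analogous reduction in \cite{Ger09}.

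Two minor points of confusion, neither fatal. First, the characters $\chi_v$ at $v\in R$ are not ``matched to the tame ramification of $\rho|_{G_{L_\wv}}$''; they are purely auxiliary, chosen \emph{inside} the proof of Theorem \ref{taylorwilesord} to make $R_\wv^{\chi}$ irreducible, while the lift $r$ you feed in must be of type $\cS_{\{1\}}$ (i.e.\ inertia acts with characteristic polynomial $(X-1)^n$). You should therefore include in your list of conditions on $L$ that $\overline{r}(G_{L_\wv})$ is trivial for $v\in R\cup S_l$ and that $\rho|_{I_{L_\wv}}$ becomes unipotent for $v\in R$, both achievable by soluble base change. Second, your worry about matching minimal primes of $\Lambda$ at $l$ is unnecessary: Theorem \ref{taylorwilesord} imposes no component-matching hypothesis at places above $l$, precisely because the level-raising argument with the $\chi_v$ and the $\Lambda$-freeness handle this internally.
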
 
 \begin{proof} As in the proof of Theorem \ref{minimal}, we reduce to the case that $\chi = \mu = \delta^n_{F/F^+}$. The reduction of the theorem in this case to Theorem \ref{taylorwilesord}, proved above, is analogous to the reduction of Theorem 5.3.2 of \cite{Ger09} to Theorem 4.3.1 of that paper. The only changes are as follows. First, instead of introducing the places $S_a$, one chooses a place $v$ of $F^+$ split in $F$ such that $\bbN v \not\equiv  1 \mod l$ and such that both $\rho$ and $\pi$ are unramified above $v$. After making a soluble base change one takes the place $v_1$ of Theorem \ref{taylorwilesord} to be a place of $L$ above $v$. The choice of $U_{v_1}$ there ensures that for all $t \in G(\bbA_{L^+}^\infty)$, the group $t^{-1} G(L^+) t \cap U$ contains no element of order $l$. Second, one has to ensure that if $l^N \| (\bbN v - 1)$ then $l^N > n$ at the primes at which $\rho$ or $\pi$ ramify. (The final sentence of the theorem requires a fixed weight version of Theorem \ref{taylorwilesord}, as in Theorem 4.3.1 of \cite{Ger09}. The modifications that one must make to the proof in this case are analogous and slightly simpler to those made for the variable weight version, so we omit them here).
\end{proof}

\section{Finiteness theorems}

Let $F$ be an imaginary CM field with totally real subfield $F^+$, and let $l$ be an odd prime. Let $S$ be a finite set of places of $F^+$, each of which splits in $F$, and containing all of the places above $l$, and let $\widetilde{S}$ be a set of places of $F$ containing exactly one place above each place of $S$. Let $K$ be a finite extension of $\bbQ_l$, contained in $\barQl$, with ring of integers $\cO$ and residue field $k$. Fix an isomorphism $\iota : \barQl \overset{\sim}{\rightarrow} \bbC.$

\subsection*{A minimal finiteness theorem}

Let $(\pi, \chi)$ be a RAECSDC automorphic representation of $\GL_n(\bbA_F)$ of weight $\iota_\ast \lambda$, unramified outside $S$. Let $\rho : G_F \rightarrow \GL_n(\cO)$ be a continuous representation with $r_{l, \iota}(\pi) \cong \rho \otimes_\cO \barQl$. Let $\mu$ be a character $G_{F^+} \rightarrow \cO^\times$ with $r_{l,\iota}(\chi) = \mu \otimes_\cO \barQl$. Suppose that $\overline{\rho}$ is absolutely irreducible, and let $\overline{r}$ be an extension to a representation
\begin{equation*}\overline{r} : G_{F^+} \rightarrow \cG_n(k)\end{equation*}
with $\nu \circ \overline{r} = \overline{\mu} \epsilon^{1-n} \delta_{F/F^+}^{\kappa}$ for some $\kappa \in \bbZ/2\bbZ$. For each $v \in S$, fix an irreducible component $\cC_v$ of $R^{\overline{\square}}_\wv$ if $v \nmid l$ or $R^{\lambda, \text{cr}}_\wv$ if $v \mid l$ such that $\rho|_{G_{F_\wv}}$ lies on no other irreducible component. 

We consider the global deformation problem
\begin{equation*}\cS = \left(F/F^+, S, \widetilde{S}, \cO, \overline{r}, \mu \epsilon^{1-n} \delta^\kappa_{F/F^+}, \{ R^{\cC_v}_\wv\}_{v \in S, v \nmid l} \cup  \{ R^{\lambda, \cC_v}_\wv\}_{v \in S, v \mid l}\right).\end{equation*}
\begin{theorem} With hypotheses as above, suppose further that the group $\overline{\rho}(G_{F(\zeta_l)})$ is adequate and that $\zeta_l \not\in F$. Then $\kappa =0$ and $R^\text{univ}_\cS$ is a finite $\cO$-module.
\end{theorem}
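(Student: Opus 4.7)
The plan is to apply a soluble CM base change to reach the setting of Theorem \ref{taylorwiles}, invoke Corollary \ref{minimalfinite}, and then descend back to $F^+$.

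First, mimicking the proof of Theorem \ref{minimal}, Lemmas 4.1.4 and 4.1.6 of \cite{Clo08} let us twist $(\rho,\mu)$ to reduce to the case $\mu = \chi = \delta_{F/F^+}^n$; this twist preserves both the $\cO$-finiteness of $R^{\mathrm{univ}}_\cS$ and the parity $\kappa$. I then choose a soluble imaginary CM extension $L/F$ such that $L/L^+$ is unramified at all finite places, $4 \mid n[L^+:\bbQ]$, every prime of $S$ splits completely in $L$, and $L$ is linearly disjoint from $\overline{F}^{\ker \overline{\rho}}(\zeta_l)$ over $F$; the last condition ensures $\overline{\rho}(G_{L(\zeta_l)}) = \overline{\rho}(G_{F(\zeta_l)})$ remains adequate and $\zeta_l \notin L$. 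By Th\'eor\`eme 5.4 of \cite{Lab09}, $\pi$ descends to an automorphic representation $\Pi$ on the unitary group $G$ of Section \ref{twk}. Adjoining an auxiliary split prime $v_1$ of $L^+$ as in the proof of Theorem \ref{minimal}, and transporting the components $\cC_v$ across the identifications $L_w \cong F_\wv$ (available since each $v \in S$ splits completely in $L$), we obtain a base-changed deformation problem $\cS_L'$ over $L^+$ to which Theorem \ref{taylorwiles} applies.

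Corollary \ref{minimalfinite} then yields $\mu_\ffrm \equiv n \pmod 2$ and the $\cO$-finiteness of $R^{\mathrm{univ}}_{\cS_L'}$. Using the identity $\delta_{F/F^+}|_{G_{L^+}} = \delta_{L/L^+}$, one computes that $\mu_\ffrm = n + \kappa$, whence $\kappa \equiv 0$. It remains to descend the finiteness to $R^{\mathrm{univ}}_\cS$. The natural restriction of deformations gives, via Yoneda, an $\cO$-algebra homomorphism $\varphi : R^{\mathrm{univ}}_{\cS_L'} \to R^{\mathrm{univ}}_\cS$. The plan is to show $\varphi$ is a finite ring morphism; by Nakayama's lemma this reduces to showing that $R^{\mathrm{univ}}_\cS / I R^{\mathrm{univ}}_\cS$ is finite-dimensional over $k$, where $I$ denotes the maximal ideal of $R^{\mathrm{univ}}_{\cS_L'}$. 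This quotient pro-represents the sub-functor of $\Def_\cS$ consisting of those deformations whose restriction to $G_{L^+}$ is equivalent to the constant deformation of $\overline{r}|_{G_{L^+}}$, and such deformations are controlled by continuous cohomology of the finite group $\Gal(L^+/F^+)$ with coefficients in $\ad \overline{r}$, subject to the prescribed local conditions at places of $S$.

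The main obstacle is this last descent step: one must verify that the functor of $\cS$-deformations trivial over $L^+$ is pro-Artinian. This rests on the interplay between the finiteness of $\Gal(L^+/F^+)$, the finiteness of $\ad \overline{r}$ as a $k$-module, and the type-$\cS$ local conditions which cut out the relevant cocycles; it uses in an essential manner that each place of $S$ splits completely in $L$, so that the local conditions on $G_{F_\wv}$ and on $G_{L_w}$ for $w \mid \wv$ may be canonically identified.
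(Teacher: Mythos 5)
Your route is the paper's route: twist to make $\chi=\mu=\delta_{F/F^+}^n$, pass to a suitable soluble CM extension $L/F$, descend $\pi$ to the unitary group via Labesse, apply Theorem \ref{taylorwiles} and Corollary \ref{minimalfinite} to get $\mu_\ffrm\equiv n \bmod 2$ (hence $\kappa=0$) and the finiteness of the base-changed universal ring over $\cO$, and then descend along the restriction map $\varphi: R^{\text{univ}}_{\cS^L}\rightarrow R^{\text{univ}}_{\cS}$. The genuine gap is at the descent step, which you yourself label ``the main obstacle'': you correctly reduce, by topological Nakayama, to showing that $R^{\text{univ}}_{\cS}/\ffrm_{R^{\text{univ}}_{\cS^L}}R^{\text{univ}}_{\cS}$ is Artinian, but you do not prove this, and the justification you sketch does not work as stated. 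Saying that deformations whose restriction to $G_{L^+}$ is the constant deformation are ``controlled by continuous cohomology of $\Gal(L^+/F^+)$ with coefficients in $\ad\overline{r}$'' at best bounds the reduced tangent space of this quotient (via inflation, with coefficients in $(\ad\overline{r})^{G_{L^+}}$), and a complete Noetherian local $k$-algebra with finite-dimensional tangent space need not be Artinian (e.g. $k[[t]]$). So finiteness of $\varphi$, which is exactly what the whole theorem hinges on, is left unestablished. Moreover, your suggestion that the split-completeness of $S$ in $L$ and the type-$\cS$ local conditions are essential to this step misplaces the difficulty: split-completeness is only needed so that restrictions of type-$\cS$ liftings are of type $\cS^L$, i.e. so that $\varphi$ exists at all; it plays no role in its finiteness.

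The paper closes precisely this point by invoking the argument of Lemma 3.2.5 of \cite{Gee09} (the Khare--Wintenberger trick). In outline: for an Artinian quotient $A$ of $R^{\text{univ}}_{\cS}/\ffrm_{R^{\text{univ}}_{\cS^L}}R^{\text{univ}}_{\cS}$ (a $k$-algebra), the induced deformation $r_A$ has $r_A|_{G_{L^+}}$ conjugate to the constant deformation of $\overline{r}|_{G_{L^+}}$; since $L$ is chosen linearly disjoint from $\overline{F}^{\ker\overline{\rho}}(\zeta_l)$, the restriction $\overline{r}|_{G_L}$ is absolutely irreducible, so the centralizer of its image in $M_n(A)$ consists of scalars, and one deduces that $r_A$ differs from the constant deformation only by scalar-valued data indexed by the finite group $G_{F^+}/G_{L^+}$, taking values in $1+\mathfrak{m}_A$ and killed by a fixed power; together with the fixed multiplier this bounds $A$ uniformly, so the quotient is Artinian. (Equivalently, one argues that $r_A$ has finite image bounded independently of $A$ and uses that $R^{\text{univ}}_{\cS}$ is topologically generated by traces.) Without this argument, or an appeal to it, your proof is incomplete at its key step; the remainder of your proposal matches the paper's proof.
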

\begin{proof}
After twisting as in the proof of Theorem \ref{minimal}, we can assume that $\chi = \delta_{F/F^+}^n$. Let $L/F$ be an imaginary CM extension such that:
\begin{enumerate} \item $L/F$ is Galois and soluble.
\item $L$ is linearly disjoint with $\overline{F}^{\ker \overline{\rho}}(\zeta_l)$ over $F$.
\item 4 divides $[L^+:F^+]$.
\item $L/L^+$ is unramified at all finite places.
\item Each place of $S$ splits completely in $L^+$.
\end{enumerate}
In particular, the hypotheses of section 6 are satisfied and we are given a unitary group $G_{/\cO_{L^+}}$. There exists an automorphic representation $\Pi$ of $G(\bbA_{L^+})$ such that $\pi_L$ is a strong base change of $\Pi$, in the sense of \cite{Lab09}.

Let $S_l^L$, $S_r^L$ denote the sets of places of $L^+$ above $S_l$ and $S_r$, respectively, and let $\widetilde{S}_l^L$ and $\widetilde{S}_r^L$ be defined analogously. Choose a place $v_1$ of $L^+$ split in $L$ at which $\pi$ is unramified, and such that the residue characteristic of $v_1$ does not divide the order of any element of $G(L^+)$. Let $\wv_1$ we a place of $L$ above $v_1$. Let $T = S_l^L \cup S_r^L \cup \{v_1\}$, and $\widetilde{T} = \widetilde{S}_l^L \cup \widetilde{S}_r^L \cup \{ \wv_1 \}.$ We define one further deformation problem $\cS^L$ by the tuple
\begin{equation*} \left( L/L^+, T, \widetilde{T}, \cO, \overline{r}|_{G_{L^+}}, \epsilon^{1-n}\delta^{\kappa+n}_{L/L^+}, \{R_\wv^{\lambda, \cC_v}\}_{v \in S_l^L} \cup \{ R^{\cC_v}_\wv \}_{v \in S_r^L} \cup \{ R_{\wv_1}^{ur} \}\right).\end{equation*}
(Since every prime of $S$ splits in $L^+$, the components $\cC_v$ for $v \in S$ we chose induce components of the lifting rings for $v \in S_L$ in a natural manner).
There is a natural homomorphism $R^\text{univ}_{\cS^L} \rightarrow R^\text{univ}_\cS$, given by restriction of the universal deformation. The argument of Lemma 3.2.5 of \cite{Gee09} shows that this homomorphism is in fact \emph{finite}, so it suffices to show that $R^\text{univ}_{\cS^L}$ is a finite $\cO$-algebra.

Let $U = \prod_v U_v$ be the open compact subgroup of $G(\bbA_{L^+}^\infty)$ defined as follows:
\begin{enumerate} \item $U_v = G(\cO_{L_v^+})$ if $v \in S^L_l$.
 \item $U_v = G(\cO_{L_v^+})$ if $v \not\in T$ is split in $L$.
\item $U_v$ is a hyperspecial maximal compact of $G(L_v^+)$ if $v$ is inert in $L$.
\item $\Pi_v^{U_v} \neq 0$ for $v \in S^L_r$.
\item $U_{v_1} = \iota_{\wv_1}^{-1} \Iw_1(\wv_1)$.
\end{enumerate}

Let $\widetilde{I}_l$ be the set of embeddings $L \hookrightarrow K$ giving rise to a place of $\widetilde{S}^L_l$. Define an element $\lambda_L \in (\bbZ^n_+)^{\widetilde{I}_l}$ by $\lambda_{L, \tau} = \lambda_{\tau|_F}$. Then $(\iota^{-1} \Pi^\infty)^U \neq 0$, and the action of $\bbT^T_{\lambda_L}(U, \cO)$ on $ (\iota^{-1} \Pi^\infty)^U$ gives rise to a homomorphism $\bbT^T_{\lambda_L}(U, \cO) \rightarrow \barQl$. After possibly extending $K$, we can suppose that this homomorphism takes values in $\cO$. Let $\ffrm$ be the maximal ideal contained in the kernel of this homomorphism. We can suppose that the representation $\overline{r}_\ffrm : G_{L^+} \rightarrow \cG_n(k)$ is equal to $\overline{r}|_{G_L^+}$. (Note that their restrictions to $G_L$ are already isomorphic, so this is a matter of choosing the correct extension of $\overline{r}_\ffrm$ to $\cG_n$. See Lemma 2.1.4 of \cite{Clo08}). 

The hypotheses of Theorem \ref{taylorwiles} and its corollary now apply, and the result follows.
\end{proof}

\subsection*{An ordinary finiteness theorem}

Let $(\pi, \chi)$ be a RAECSDC automorphic representation of $\GL_n(\bbA_F)$, unramified outside $S$ and $\iota$-ordinary. Let $\rho : G_F \rightarrow \GL_n(\cO)$ be a continuous representation with $r_{l, \iota}(\pi) \cong \rho \otimes_\cO \barQl$. Let $\mu : G_{F^+} \rightarrow \cO^\times$ be a de Rham character with $\overline{\mu} = \overline{r}_{l, \iota}(\chi)$. Suppose that $\overline{\rho}$ is absolutely irreducible, and let $\overline{r}$ be an extension to a representation
\begin{equation*}\overline{r} : G_{F^+} \rightarrow \cG_n(k)\end{equation*}
with $\nu \circ \overline{r} = \overline{\mu}  \epsilon^{1-n} \delta_{F/F^+}^{\kappa}$ for some $\kappa \in \bbZ/2\bbZ$. Note that $HT_\tau(\mu) = \{ w \}$ is independent of $\tau : F^+ \hookrightarrow K$. Choose $\lambda \in (\bbZ^n_+)^{\Hom(F, \barQl)}_w$. 

We have the global deformation problem
\begin{equation*}\cS = \left(F/F^+, S, \widetilde{S}, \cO, \overline{r}, \mu \epsilon^{1-n} \delta_{F/F^+}^{\kappa}, \{ R^\square_\wv \}_{v \in S, v \nmid l} \cup \{R^{\lambda, \text{ss-ord}}_\wv \}_{v \in S, v \mid l} \right).\end{equation*}
(We use $R^\square_\wv$ here to denote the unrestricted lifting ring at the places in $S$ not dividing $l$).

\begin{theorem} With hypotheses as above, suppose further that the group $\overline{\rho}(G_{F(\zeta_l)})$ is adequate and that $\zeta_l \not\in F$. Then $\kappa = 0$ and $R^\text{univ}_\cS$ is a finite $\cO$-module.
\end{theorem}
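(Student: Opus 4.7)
The strategy parallels that of the minimal finiteness theorem above, with the ordinary Taylor--Wiles machinery of Sections 8--9 substituted for the crystalline one. As in the proof of Theorem \ref{minimal}, a twisting argument (using Lemmas 4.1.4 and 4.1.6 of \cite{Clo08}) first reduces us to the case $\chi = \mu = \delta_{F/F^+}^n$. I would then choose an imaginary CM extension $L/F$, Galois and soluble, linearly disjoint from $\overline{F}^{\ker \overline{\rho}}(\zeta_l)$ over $F$, satisfying $4 \mid [L^+ : F^+]$, with $L/L^+$ unramified at all finite places, every place of $S$ splitting completely in $L^+$, and (enlarging if necessary) $\overline{\rho}|_{G_{L_w}}$ trivial at each $w$ above $S$. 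The argument of Lemma 3.2.5 of \cite{Gee09}, exactly as used in the minimal finiteness theorem just proved, then supplies a finite natural restriction map $R^{\text{univ}}_{\cS^L} \to R^{\text{univ}}_\cS$ for an appropriate deformation problem $\cS^L$ over $L/L^+$, reducing the theorem to the finiteness of $R^{\text{univ}}_{\cS^L}$ as an $\cO$-module.

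Next I would introduce the definite unitary group $G/\cO_{L^+}$ of Section \ref{twk} and let $\Pi$ be a strong base change of $\pi_L$ to $G(\bbA_{L^+})$ (existing by \cite{Lab09}). Choose an auxiliary place $v_1$ of $L^+$, split in $L$, at which $\rho|_{G_L}$ and $\pi_L$ are unramified, of residue characteristic dividing the order of no element of $G(L^+)$, and satisfying $\bbN v_1 \not\equiv 1 \bmod l$. With $U = \prod_v U_v$ chosen as in Theorem \ref{taylorwilesord} (in particular $U_{v_1} = \iota_{\wv_1}^{-1} \Iw(\wv_1)$, $\Pi_v^{U_v} \neq 0$ at the remaining ramified primes, and standard choices at the other places), the Hecke action on $(\iota^{-1} \Pi^\infty)^U$ cuts out a non-Eisenstein ordinary maximal ideal $\ffrm$ of $\bbT^{T, \text{ord}}_{\{1\}}(U(\frl^\infty), \cO)$, and after adjusting the $\cG_n$-extension of the residual representation we may arrange $\overline{r}_\ffrm = \overline{r}|_{G_{L^+}}$. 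The parity statement $\mu_\ffrm \equiv n \pmod{2}$ from Theorem \ref{taylorwilesord} forces $\kappa = 0$, and Corollary \ref{ordinaryfinite} asserts that the ring $R^{\text{univ}}_{\cS'_{\{1\}}}$ from the proof of that theorem is a finite $\Lambda$-module.

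The final step is to specialize from the variable-weight algebra $\Lambda = \cO[[T(\frl)]]$ to the fixed weight $\lambda_L$. After possibly enlarging $\cO$, the weight $\lambda_L$ determines a continuous character $\Lambda \to \cO$, and modding $R^{\text{univ}}_{\cS'_{\{1\}}}$ out by its kernel yields a finite $\cO$-module. At each place of $L$ above $l$, any semistable ordinary deformation of weight $\lambda_L$ is in particular an arithmetic ordinary deformation of that weight, yielding a natural map $R^{\triangle, ar}_{\Lambda_\wv} \otimes_{\Lambda_\wv} \cO \to R^{\lambda_L, \text{ss-ord}}_\wv$; assembling these globally, together with the compatibilities at primes outside $l$, gives a finite ring map from the specialized $R^{\text{univ}}_{\cS'_{\{1\}}}$ onto $R^{\text{univ}}_{\cS^L}$, yielding the desired finiteness.

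The main obstacle I expect is the precise alignment of local conditions at primes in $S^L_r \setminus \{v_1\}$, where the deformation problem $\cS$ imposes no restriction while the ordinary Taylor--Wiles machinery requires the specific level-raising rings $R^1_\wv$ or $R^{\chi_v}_\wv$ of Proposition \ref{levelraising}. Handling this will require enlarging $L$ further so that each such prime either becomes unramified for $\pi_L$ (in which case the unramified lifting ring applies, as in the minimal case) or satisfies the residue-field hypotheses of Proposition \ref{levelraising}, and then checking carefully that the resulting chain of finite morphisms between the various deformation rings still terminates at the target $R^{\text{univ}}_{\cS^L}$ with the unrestricted local condition at ramified primes intact.
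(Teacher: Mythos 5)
Your overall route (twist, soluble base change, transfer to the definite unitary group, invoke Corollary \ref{ordinaryfinite} to get finiteness over $\Lambda$, then specialize $\Lambda \rightarrow \cO$ at the fixed weight) is the same as the paper's, and those steps are fine. But there is a genuine gap at the one point where this theorem differs from the minimal finiteness theorem: here the global problem $\cS$ imposes \emph{no} condition ($R^\square_\wv$) at the ramified places $v \in S$, $v \nmid l$, while Theorem \ref{taylorwilesord} and Corollary \ref{ordinaryfinite} only apply to deformation problems whose local condition at such places is the level-raising ring $R^1_\wv$ (with $\overline{r}$ trivial, $\bbN v \equiv 1 \bmod l$, $l^N > n$). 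For the finite restriction map $R^\text{univ}_{\cS^L_\lambda} \rightarrow R^\text{univ}_\cS$ to exist at all, the restriction of \emph{every} type-$\cS$ deformation — not just $\rho$ or the automorphic one — must satisfy $\cha_{r(\sigma)}(X) = (X-1)^n$ for all $\sigma$ in inertia at those places, and this is not automatic over $L$. The paper's mechanism, which you are missing, is a \emph{second} soluble base change: since $\overline{r}$ is trivial at these places, the inertial image of any lift is pro-$l$, topologically generated by one element whose eigenvalues are $l$-power roots of unity of order bounded in terms of $n$ and $\bbN v$; hence there is a fixed finite extension $M_\wv$ of $L_\wv$ over which inertia acts unipotently for \emph{every} lift, and one passes to a CM extension $M/L$ with $M_{\widetilde{w}} \supset M_\wv$. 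Only then does the restriction of an arbitrary type-$\cS$ deformation land in the $R^1_\wv$-condition, so that the two problems $\cS^M_\lambda$ (fixed weight, $R^{\lambda_M,\text{ss-ord}}$ at $l$) and $\cS^M$ ($R^{\triangle,ar}_{\Lambda_\wv}$ at $l$) can be compared with $\cS$ via the Lemma 3.2.5 argument of \cite{Gee09}, and $R^\text{univ}_{\cS^M_\lambda}$ is exhibited as a quotient of $R^\text{univ}_{\cS^M} \otimes_\Lambda \Lambda/\wp_\lambda$.

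Your proposed remedies for this "obstacle" do not work. Making $\pi_L$ unramified at such a prime and using the unramified lifting ring, "as in the minimal case," is not available here: in the minimal theorem the global problem $\cS$ itself carries restricted local conditions $\cC_v$, whereas now $R^\text{univ}_\cS$ classifies deformations with arbitrary ramification at these places, so a problem $\cS^L$ with an unramified (or any component-restricted) condition there would admit no restriction map from type-$\cS$ deformations, destroying the reduction. Likewise, arranging the residue-field hypotheses of Proposition \ref{levelraising} ($\bbN v \equiv 1 \bmod l$, $l^N > n$, $\overline{r}$ trivial) is necessary for running Theorem \ref{taylorwilesord}, but by itself does not force an arbitrary lift to have inertial characteristic polynomial $(X-1)^n$ — a lift can still be ramified through a nontrivial finite-order $l$-power character. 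The unipotence-after-base-change step (the passage from $L$ to $M$) is the essential missing idea.
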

\begin{proof} As in the proof of Theorem \ref{minimal}, we can reduce to the case where $\chi = \mu = \delta_{F/F^+}^n$, except that after twisting $\pi$ may be ramified outside $S$. Choose an imaginary CM extension $L/F$ such that
\begin{enumerate} \item $L/F$ is Galois and soluble.
\item $L$ is linearly disjoint with $\overline{F}^{\ker \overline{\rho}}(\zeta_l)$ over $F$.
\item 4 divides $[L:F]$.
\item $L/L^+$ is unramified at all finite places.
\item Each place of $S$ splits completely in $L^+$.
\item $\pi$ is unramified outside the places of $L$ above $S$, and has an Iwahori fixed vector at all finite places of $L$.
\item For each place $v$ of $L$ lying above $S$, $\overline{r}|_{G_{L_v}}$ is trivial.
\item If $v \nmid l$ is a place of $L$ above $S$ then $ \bbN v \equiv 1 \mod l$, and if $l^N \| \bbN v -1$ then $l^N > n$.
\end{enumerate}
Let $S_L$ denote the set of places of $L^+$ lying above a place of $S$, and define $\widetilde{S}_L$ analogously. For each place $v \nmid l$ in $S_L$ there exists a finite extension $M_\wv$ of $L_\wv$ such that for any lift $\tau$ of $\overline{r}|_{G_{L_\wv}}$, $\tau|_{G_{I_{M_\wv}}}$ acts unipotently. We can find a soluble CM extension $M/L$ such that $M$ satisfies all the properties above, and the following:
\begin{itemize} \item Let $\widetilde{w}$ be a place of $M$ above a place $\wv$ of $\widetilde{S}_L$. Then $M_{\widetilde{w}} \supset M_\wv$.
\end{itemize}
Choose an auxiliary prime $v_1$ of $M^+$, split in $M$, such that $\Iw(v_1)$ contains no elements of order $l$. Let $S_M$ be the set of places of $M^+$ above $S$, and let $T = S \cup \{v_1\}$. Choose a place $\wv_1$ of $M$ above $v_1$, and define $\widetilde{S}_M$ and $\widetilde{T}$ similarly. We now define two global deformation problems \[\cS^M_\lambda = \left(M/M^+, T, \widetilde{T}, \cO, \overline{r}|_{G_{M^+}},\epsilon^{1-n}\delta^{\kappa+n}_{M/M^+},\right.\] \[ \left. \{R^1_\wv\}_{v \in S_M, v\nmid l} \cup \{R^{\lambda_M,\text{ss-ord}}_\wv\}_{v \in S_M, v \mid l} \cup R^{ur}_{\wv_1}\right).\]
and
\[\cS^M = \left(M/M^+, T, \widetilde{T}, \Lambda, \overline{r}|_{G_{M^+}}, \epsilon^{1-n}\delta^{\kappa+n}_{M/M^+}, \right.\] \[ \left.\{R^1_\wv\}_{v \in S_M, v\nmid l} \cup \{R^{\triangle, ar}_{\Lambda_\wv}\}_{v \in S_M, v \mid l} \cup R^{ur}_{\wv_1}\right).\]
By construction, there is a map $R^\text{univ}_{\cS_\lambda^M} \rightarrow R^\text{univ}_\cS$, given by restriction of the universal deformation, and the argument of Lemma 3.2.5 of \cite{Gee09} shows that it is finite. Thus it suffices to show that $R^\text{univ}_{\cS_\lambda^M}$ is a finite $\cO$-algebra. 

Let $\chi^{\lambda_M} : \Lambda \rightarrow \cO$ be the $\cO$-algebra homomorphism induced by the tuple of characters $(\chi_1, \dots, \chi_n)$ given by
\begin{equation*}\chi_i : u \mapsto \prod_{\tau : M \hookrightarrow K} \tau(u)^{1 - i + \lambda_{M, \tau, n - i + 1}}.\end{equation*}

Let $\wp_\lambda$ denote the kernel of $\chi^{\lambda_M}$. Then $R^\text{univ}_{\cS_\lambda^M} $ is a quotient of $R^\text{univ}_{\cS^M} \otimes_\Lambda \Lambda/\wp_\lambda$. But Corollary \ref{ordinaryfinite} shows that $R^\text{univ}_{\cS^M}$ is a finite $\Lambda$-module, and the result follows.
 \end{proof}

\bibliographystyle{alpha}
\bibliography{ModLiftBib}

\begin{thebibliography}{BLGHT11}

\bibitem[BB05]{Bjo05}
Anders Bj{\"o}rner and Francesco Brenti.
\newblock {\em Combinatorics of {C}oxeter groups}, volume 231 of {\em Graduate
  Texts in Mathematics}.
\newblock Springer, New York, 2005.

\bibitem[BLGG11]{Bar10}
Thomas Barnet-Lamb, Toby Gee, and David Geraghty.
\newblock The {S}ato-{T}ate conjecture for {H}ilbert modular forms.
\newblock {\em J. Amer. Math. Soc.}, 24:411--469, 2011.

\bibitem[BLGGT]{Bar10a}
Thomas Barnet-Lamb, Toby Gee, David Geraghty, and Richard Taylor.
\newblock Potential automorphy and change of weight.
\newblock Preprint.

\bibitem[BLGHT11]{Bar09}
Thomas Barnet-Lamb, David Geraghty, Michael Harris, and Richard Taylor.
\newblock A family of {C}alabi-{Y}au varieties and potential automorphy {II}.
\newblock {\em P.R.I.M.S.}, 47:29--98, 2011.

\bibitem[BZ77]{Ber77}
I.~N. Bernstein and A.~V. Zelevinsky.
\newblock Induced representations of reductive {${\mathfrak{ p}}$}-adic groups.
  {I}.
\newblock {\em Ann. Sci. \'Ecole Norm. Sup. (4)}, 10(4):441--472, 1977.

\bibitem[Cas]{Cas95}
W.~Casselman.
\newblock Introduction to the theory of admissible representations of
  {$p$}-adic reductive groups.
\newblock Preprint.

\bibitem[Che]{Che09}
Ga\"etan Chenevier.
\newblock Une application des vari\'et\'es de {H}ecke des groupes unitaires.
\newblock Preprint.

\bibitem[CHT08]{Clo08}
Laurent Clozel, Michael Harris, and Richard Taylor.
\newblock Automorphy for some {$l$}-adic lifts of automorphic mod {$l$}
  {G}alois representations.
\newblock {\em Publ. Math. Inst. Hautes \'Etudes Sci.}, (108):1--181, 2008.
\newblock With Appendix A, summarizing unpublished work of Russ Mann, and
  Appendix B by Marie-France Vign{\'e}ras.

\bibitem[Gee]{Gee06}
Toby Gee.
\newblock Automorphic lifts of prescribed types.
\newblock To appear in Mathematische Annalen.

\bibitem[Ger]{Ger09}
David Geraghty.
\newblock Modularity lifting theorems for ordinary {G}alois representations.
\newblock Preprint.

\bibitem[GG]{Gee09}
Toby Gee and David Geraghty.
\newblock Companion forms for unitary and symplectic groups.
\newblock Preprint.

\bibitem[Gro99]{Gro99}
Benedict~H. Gross.
\newblock Algebraic modular forms.
\newblock {\em Israel J. Math.}, 113:61--93, 1999.

\bibitem[Gue]{Gue10}
Lucio Guerberoff.
\newblock Modularity lifting theorems for {G}alois representations of unitary
  type.
\newblock To appear in Compositio Mathematica.

\bibitem[Jan03]{Jan87}
Jens~Carsten Jantzen.
\newblock {\em Representations of algebraic groups}, volume 107 of {\em
  Mathematical Surveys and Monographs}.
\newblock American Mathematical Society, Providence, RI, second edition, 2003.

\bibitem[Kis08]{Kis08}
Mark Kisin.
\newblock Potentially semi-stable deformation rings.
\newblock {\em J. Amer. Math. Soc.}, 21(2):513--546, 2008.

\bibitem[Lab]{Lab09}
J.-P. Labesse.
\newblock Changement de base {CM} et s\`eries discr\'etes.
\newblock Preprint.

\bibitem[Lan02]{Lan02}
Joshua~M. Lansky.
\newblock Parahoric fixed spaces in unramified principal series
  representations.
\newblock {\em Pacific J. Math.}, 204(2):433--443, 2002.

\bibitem[Man01]{Man01}
W.~R. Mann.
\newblock {\em Local level raising for $GL_n$}.
\newblock PhD thesis, Harvard University, 2001.

\bibitem[Mat89]{Mat07}
Hideyuki Matsumura.
\newblock {\em Commutative ring theory}, volume~8 of {\em Cambridge Studies in
  Advanced Mathematics}.
\newblock Cambridge University Press, Cambridge, second edition, 1989.
\newblock Translated from the Japanese by M. Reid.

\bibitem[PR08]{Pra08}
Dipendra Prasad and A.~Raghuram.
\newblock Representation theory of {${\rm GL}(n)$} over non-{A}rchimedean local
  fields.
\newblock In {\em School on {A}utomorphic {F}orms on {${\rm GL}(n)$}},
  volume~21 of {\em ICTP Lect. Notes}, pages 159--205. Abdus Salam Int. Cent.
  Theoret. Phys., Trieste, 2008.

\bibitem[Sha74]{Sha74}
J.~A. Shalika.
\newblock The multiplicity one theorem for {${\rm GL}_{n}$}.
\newblock {\em Ann. of Math. (2)}, 100:171--193, 1974.

\bibitem[Shi]{Shi09}
Sug~Woo Shin.
\newblock {G}alois representations arising from some compact {S}himura
  varieties.
\newblock To appear in Annals of Mathematics.

\bibitem[Tay08]{Tay08}
Richard Taylor.
\newblock Automorphy for some {$l$}-adic lifts of automorphic mod {$l$}
  {G}alois representations. {II}.
\newblock {\em Publ. Math. Inst. Hautes \'Etudes Sci.}, (108):183--239, 2008.

\bibitem[Vig96]{Vig96}
Marie-France Vign{\'e}ras.
\newblock {\em Repr\'esentations {$l$}-modulaires d'un groupe r\'eductif
  {$p$}-adique avec {$l\ne p$}}, volume 137 of {\em Progress in Mathematics}.
\newblock Birkh\"auser Boston Inc., Boston, MA, 1996.

\bibitem[Vig98]{Vig98}
Marie-France Vign{\'e}ras.
\newblock Induced {$R$}-representations of {$p$}-adic reductive groups.
\newblock {\em Selecta Math. (N.S.)}, 4(4):549--623, 1998.

\end{thebibliography}

\end{document}